\DeclareMathAlphabet{\pazocal}{OMS}{zplm}{m}{n}
\newtheorem{theorem}{Theorem}[section]
\newtheorem{lemma}[theorem]{Lemma}
\newtheorem{proposition}[theorem]{Proposition} 
\theoremstyle{definition}
\newtheorem{definition}[theorem]{Definition} 
\newtheorem{problem}[theorem]{Problem} 
\newtheorem{example}[theorem]{Example} 
\newtheorem{remark}[theorem]{Remark} 
\newtheorem{algorithm}{Algorithm}
\newtheorem{assumption}{Assumption}
\theoremstyle{assumption}
\newcommand{\doublewidetilde}[1]{{%
  \mathpalette\double@widetilde{#1}%
}}
\newcommand{\double@widetilde}[2]{%
  \sbox\z@{$\m@th#1\widetilde{#2}$}%
  \ht\z@=.9\ht\z@
  \widetilde{\box\z@}%
}
\newcommand{\vertiii}[1]{{\left\vert\kern-0.25ex\left\vert\kern-0.25ex\left\vert #1 
    \right\vert\kern-0.25ex\right\vert\kern-0.25ex\right\vert}}
\newcommand{\abs}[1]{\left\vert{#1}\right\vert}
\newcommand{\norm}[1]{\left\|{#1}\right\|}
\newcommand{\maxnorm}[1]{\left\|{#1}\right\|_{\infty}}
\newcommand{\maxt}{\max_{0 \leqslant \tau \leqslant t}}
\newcommand{\maxgamma}{{\max_{\Gamma}}}
\newcommand{\baromega}{\overline{\Omega}}
\newcommand{\maxbaromega}{{\max_{\baromega}}}
\newcommand{\maxtimebaromega}{{\baromega; \, [0, t]}}
\newcommand{\maxtimegamma}{{\Gamma; \, [0, t]}}
\newcommand{\maxtimesigma}{{\Sigma; \, [0, t]}}
\newcommand{\maxmaxtimegamma}[1]{\abs{#1}^{\infty}_{\maxtimegamma}}
\newcommand{\normWW}{\mathbb{W}}
\newcommand{\epscone}{{\pazo{C}}_{\varepsilon}}
\newcommand{\barepscone}{{\overline{\pazo{C}}}_{\varepsilon}}
\newcommand{\unitepscone}{{\pazo{C}}_{1}}
\newcommand{\barunitepscone}{\overline{{\pazo{C}}}_{1}}
\newcommand{\baromegagamma}{\overline{\Omega},\, \Gamma} 
\newcommand{\overSigma}[1]{\left.{#1}\right\vert_{y \in \Sigma}}
\newcommand{\overGamma}[1]{\left.{#1}\right\vert_{y \in \Gamma}}
\newcommand{\normKt}{K(t)}
\newcommand{\utilde}[1]{\underaccent{\tilde}{#1}}
\newcommand{\textDN}{{\text{D},\text{N}}}
\newcommand{\funcdiffdn}[1]{{#1}_{\text{D}\text{N}}} %%% IMPORTANT NOTATION, FUNCTION DIFFERENCE
\newcommand{\funcdiffnd}[1]{{#1}_{\text{N}\text{D}}} %%% IMPORTANT NOTATION, FUNCTION DIFFERENCE
\newcommand{\vect}[1]{\boldsymbol{\mathbf{#1}}}
\newcommand{\epso}{\varepsilon_{0}}
\newcommand{\epsone}{\varepsilon_{1}}
\newcommand{\erho}{\tilde{\rho}}
\newcommand{\XX}{X}
\newcommand{\GG}{\vect{G}}
\newcommand{\scrFn}{{H}_{\text{N}}}
\newcommand{\scrFd}{{H}_{\text{D}}}
\newcommand{\unk}{u_{\text{N},k}}
\newcommand{\udk}{u_{\text{D},k}}
\newcommand{\un}{u_{\text{N}}}
\newcommand{\ud}{u_{\text{D}}}
\newcommand{\uudn}{u_{\textDN}}
\newcommand{\unxt}{u_{\text{N}}(x,t)}
\newcommand{\udxt}{u_{\text{D}}(x,t)}
\newcommand{\up}{u_{\text{p}}}
\newcommand{\cp}{C_{\text{p}}}
\newcommand{\cd}{C_{\text{D}}}
\newcommand{\cn}{C_{\text{N}}}
\newcommand{\ap}{a_{\text{p}}}
\newcommand{\bp}{b_{\text{p}}}
\newcommand{\ad}{a_{\text{D}}}
\newcommand{\bd}{b_{\text{D}}}
\newcommand{\an}{a_{\text{N}}}
\newcommand{\bn}{b_{\text{N}}}
\newcommand{\fo}{f_{0}} %% MODIFIED, 2024.05.25
\newcommand{\go}{g_{0}} %% MODIFIED, 2024.05.25
\newcommand{\varpid}{\varpi_{\text{D}}}
\newcommand{\varpin}{\varpi_{\text{N}}}
\newcommand{\uno}{u_{\text{N}}^{0}} %% MODIFIED, 2024.05.25
\newcommand{\udo}{u_{\text{D}}^{0}} %% MODIFIED, 2024.05.25
\newcommand{\smallvn}{{v}_{\text{N}}}
\newcommand{\smallvd}{{v}_{\text{D}}}
\newcommand{\bigUn}{U_{\text{N}}}
\newcommand{\bigUd}{U_{\text{D}}}
\newcommand{\bigUUdn}{U_{\textDN}}
\newcommand{\varbigU}{U_{i}}
\newcommand{\bigUno}{U^{0}_{\text{N}}} %% MODIFIED, 2024.05.25
\newcommand{\bigUdo}{U^{0}_{\text{D}}} %% MODIFIED, 2024.05.25
\newcommand{\bigUnok}{U^{0}_{\text{N}k}} %% MODIFIED, 2024.05.25
\newcommand{\vardiffU}{{V}_{i}}
\newcommand{\diffUn}{{V}_{\text{N}}} %%% difference \bigUn - \bigUno
\newcommand{\diffUd}{{V}_{\text{D}}} %%% difference \bigUd - \bigUdo
\newcommand{\bdiffUUdn}{{V}_{\textDN}}
\newcommand{\adiffUn}{{V}_{\text{N}1}} %%% difference \bigUn - \bigUno
\newcommand{\bdiffUn}{{V}_{\text{N}2}} %%% difference \bigUn - \bigUno
\newcommand{\adiffUd}{{V}_{\text{D}1}} %%% difference \bigUn - \bigUno
\newcommand{\bdiffUd}{{V}_{\text{D}2}} %%% difference \bigUn - \bigUno
\newcommand{\WWn}{{W}_{\text{N}}}
\newcommand{\WWd}{{W}_{\text{D}}}
\newcommand{\diffvarrho}{\tilde{\varrho}}
\newcommand{\smw}{w}
\newcommand{\smwn}{w_{\text{N}}}
\newcommand{\smwd}{w_{\text{D}}}
\newcommand{\smwsmwdn}{w_{\textDN}}
\newcommand{\nonlinpartU}{\pazo{N}} %%% NONLINEAR PART
\newcommand{\bnormal}{b_{\perp}}
\newcommand{\btangential}{\vect{b}_{\parallel}}
\newcommand{\Fn}{F_{\text{N}}}
\newcommand{\Fd}{F_{\text{D}}}
\newcommand{\FFdn}{F_{\textDN}} %%% PAIRING 
\newcommand{\qmd}{{Q}_{\text{D}}}
\newcommand{\qmdp}{\vect{q}_{\text{D}}}
\newcommand{\qmn}{{Q}_{\text{N}}}
\newcommand{\qmnp}{\vect{q}_{\text{N}}}
\newcommand{\Gammaa}{\Gamma^{1}}
\newcommand{\Gammab}{\Gamma^{2}}
\newcommand{\scalet}{t} 
\newcommand{\op}[1]{\operatorname{{#1}}}
\newcommand{\pazo}[1]{\pazocal{{#1}}}
\newcommand{\Hessy}[1]{\operatorname{Hess}_y{\!(#1)}}
\newcommand{\minnu}{\nn_{\star}}
\newcommand{\nn}{\nu}
\newcommand{\nnk}{\nn_{k}}
\newcommand{\lambdao}{\varepsilon_{\circ}}
\newcommand{\nno}{\nn_{\circ}} 
\newcommand{\NN}{\vect{N}}
\newcommand{\matM}{\textsf{M}}
\newcommand{\matI}{\textsf{I}}
\newcommand{\matA}{\textsf{A}}
\newcommand{\matB}{\textsf{B}}
\newcommand{\matC}{\textsf{C}}
\newcommand{\Anaught}{\matA^{(0)}}
\newcommand{\Jac}{\textsf{J}} 
\newcommand{\VV}{\vect{V}}
\newcommand{\Vn}{V_{n}}
\newcommand{\Vnk}{V_{n,k}}
\newcommand{\ddt}[1]{\dfrac{\partial{#1}}{\partial{t}}}
\newcommand{\ddn}[1]{\dfrac{\partial{#1}}{\partial \nn}}
\newcommand{\ddneps}[1]{\dfrac{\partial{#1}}{\partial \nn^{\varepsilon}}}
\newcommand{\ddno}[1]{\dfrac{\partial{#1}}{\partial{\nno}}}
\newcommand{\intG}[1]{ \int_{\Gamma} {#1} \,{{d}}s }
\newcommand{\intS}[1]{ \int_{\Sigma} {#1} \,{{d}}s }
\newcommand{\intO}[1]{ \int_{\Omega} {#1} \, {{d}}x } 
\newcommand{\Gammao}{\Gamma_{\circ}}
\newcommand{\StripS}{\mathcal{S}}
\newcommand{\varset}{\Xi}
\newcommand{\BL}{\mathcal{B}}
\newcommand{\tast}{{t}_{\ast}}
\newcommand{\tastast}{{t}_{\ast \ast}}
\newcommand{\Iast}{{I}_{\ast}}
\newcommand{\Iastast}{{I}_{\ast\ast}}
\newcommand{\Xast}{{X}_{\ast}}
\newcommand{\NNast}{{\NN}_{\ast}}
\newcommand{\xast}{{x}_{\ast}}
\newcommand{\rhoast}{{\rho}_{\ast}} 
\newcommand{\tstar}{{t}^{\star}} 
\newcommand{\Istar}{{I}^{\star}} 
\newcommand{\rstar}{{r}^{\star}} 
\newcommand{\ustar}{{u}^{\star}} 
\newcommand{\Ostar}{{\Omega}^{\star}} 
\newcommand{\Gstar}{{\Gamma}^{\star}}
\newcommand{\eop}{{E}}%% extension operator 
\newcommand{\problemQQQ}{\mathscr{Q}}%% PROBLEM WITH W UNKNOWNS
\newcommand{\tildeap}{\tilde{a}_{\text{p}}}
\newcommand{\tildebp}{\tilde{b}_{\text{p}}}
\newcommand{\tildead}{\tilde{a}_{\text{D}}}
\newcommand{\tildebd}{\tilde{b}_{\text{D}}}
\newcommand{\tildean}{\tilde{a}_{\text{N}}}
\newcommand{\tildebn}{\tilde{b}_{\text{N}}}
\newcommand{\detd}{\op{det}^{k}_{\text{D}}(t)}
\newcommand{\detn}{\op{det}^{k}_{\text{N}}(t)}
\newcommand{\fergy}[1]{{\color{black}{#1}}} 
\newcommand{\alert}[1]{{\color{black}{#1}}}
\begin{document}
%%-----------------------------
%%      the top matter
%%-----------------------------
\title{On the well-posedness of a Hele-Shaw-like system resulting from an inverse geometry problem formulated through a shape optimization setting}\thanks{JFTR is supported by the JSPS Postdoctoral Fellowships for Research in Japan (Grant Number JP24KF0221) and partially by the JSPS Grant-in-Aid for Early-Career Scientists under Japan Grant Number JP23K13012 and by the JST CREST Grant Number JPMJCR2014. MK is supported by JSPS KAKENHI Grant Numbers JP25K00920 and JP24H00184.}
%%\thanks{...}% At most 5 thanks
%
\author{Julius Fergy Tiongson Rabago}\address{Faculty of Mathematics and Physics, Institute of Science and Engineering, Kanazawa University, Kakumamachi, Kanazawa, 920-1192, Ishikawa, Japan, email: jftrabago@gmail.com}
\author{Masato Kimura${}^{1}$}%%%\address{Faculty of Mathematics and Physics, Institute of Science and Engineering, Kanazawa University, Kakumamachi, Kanazawa, 920-1192, Ishikawa, Japan}
\date{...}
\begin{abstract}
The purpose of this study is twofold. First, we revisit a shape optimization reformulation of a prototypical shape inverse problem and briefly propose a simple yet efficient numerical approach for solving the corresponding minimization problem. 
Second, we examine the existence, uniqueness, and continuous dependence of a classical solution to a Hele-Shaw-like system, which is derived from the continuous setting of a numerical discretization of the shape optimization reformulation for the shape inverse problem.
The analysis is based on the methods developed by G.~I.~Bizhanova and V.~A.~Solonnikov in \textit{``On Free Boundary Problems for Second Order Parabolic Equations''} (Algebra Anal.\ \textbf{12} (6) (2000) 98--139), and by V.~A.~Solonnikov in \textit{``Lectures on Evolution Free Boundary Problems: Classical Solutions''} (Lect.\ Notes Math., Springer, 2003, pp.~123--175).
\end{abstract}
%
%%%\begin{resume} ... \end{resume}
%
\subjclass{35R30, 35R35, 35K55, 35S30, 76D27}
\keywords{Hele-Shaw problem, quasi-stationary Stefan-type problem, shape inverse problem, shape optimization, local-in-time existence}
\maketitle
%% main text
%------------------------------------------------------------------------	
% 	SECTION: INTRODUCTION	
%------------------------------------------------------------------------
\section{Introduction}
%
%
%------------------------------------------------------------------------	
% 	SUBSECTION: MAIN EQUATION	
%------------------------------------------------------------------------
\subsection{A Hele-Shaw-like system}
In this study, we examine a Hele-Shaw-like system that arises from a shape optimization reformulation of an inverse geometry problem.
We prove the existence, uniqueness, and continuous dependence of a classical solution to the quasi-stationary moving boundary problem derived from the discretized version of the optimization algorithm used to solve the shape optimization problem.
The inverse problem occurs in contexts such as non-destructive testing to identify inclusions or voids in solids. 
The available information or data are collected from a part of the solid's surface (geometrical boundary), with the goal of determining the inclusion within the interior of the solid.

Let $T$ be a positive number and $t \in [0, T]$.
Let $\Omega(t) \subset \mathbb{R}^{d}$, where $d \in {2,3}$, be a bounded annular domain with a sufficiently smooth boundary $\partial\Omega(t) = \Gamma(t) \cup \Sigma$, consisting of two non-intersecting parts: a fixed boundary $\Sigma$ and a free boundary $\Gamma(t)$.
We assume that $\Sigma$ lies outside the surface $\Gamma(t)$, and denote by $\nn{(t)}$ the unit normal vector to $\Gamma(t)$ directed inward the domain $\Omega(t)$; that is, it goes in the same direction with the outward unit normal vector to $\Sigma$.
The main problem we consider here is the following: given a pair of \textit{positive-valued} functions $f=f(x,t)$ and $g=g(x,t)$ defined on $\Sigma$, for $t>0$, and an initial geometry $\Gammao$ of the moving boundary, we seek to find a surface $\Gamma(t)$ and two functions $\ud = \udxt$ and $\un = \unxt$ satisfying the following system of partial differential equations (PDEs):
\begin{equation}
\label{eq:main_system}
	\left\{\arraycolsep=1.4pt\def\arraystretch{1.2}
	\begin{array}{rcll}
		-\Delta \udxt	&=&0, 		&\quad x \in \Omega(t), \quad t \in [0,T],\\
		\udxt			&=& f(x,t), 	&\quad x \in \Sigma, \quad t \in [0,T],\\
		\udxt			&=&0,		&\quad x \in \Gamma(t), \quad t \in [0,T],\\
		%%%
		-\Delta \unxt	&=&0, 		&\quad x \in \Omega(t), \quad t \in [0,T],\\
		\ddn{}\unxt	&=& g(x,t), 	&\quad x \in \Sigma, \quad t \in [0,T],\\
		\unxt			&=&0,		&\quad x \in \Gamma(t), \quad t \in [0,T],\\
		\Vn(x,t) 		&=& - \ddn{}\left( \udxt - \unxt \right),
								&\quad x \in \Gamma(t), \quad t \in [0,T],\\
		\Gamma(0)	&=&\Gammao, &
	\end{array}
	\right.
\end{equation}
where $\partial/\partial{\nu} =\partial/\partial{\nu(t)}$ is the inward normal derivative operator on $\Gamma(t)$ while $\Vn(x,t)$ represents the velocity of movement of $\Gamma(t)$ in the direction of the normal $\nn{(t)}$ for all $t>0$. 
As alluded above, the problem originates from a reformulation of a shape inverse problem into a shape optimization setting.
Shape optimization is a well-established technique extensively applied in engineering sciences for addressing shape identification problems.
The modern mathematical theory of shape optimization was laid in monographs \cite{DelfourZolesio2011,HenrotPierre2018,SokolowskiZolesio1992}.

\textit{Motivation and review of related work.} As far as we are concerned, shape inverse problems are typically solved numerically from an optimization perspective through shape optimization settings; see, for example, \cite{EpplerHarbrecht2005,RocheSokolowski1996} and the references therein.
The model problem from which the Hele-Shaw-like system \eqref{eq:main_system} was derived is a specific case of the more general conductivity reconstruction problem.
This type of problem is known to be severely ill-posed in the sense of Hadamard \cite{EpplerHarbrecht2005}.
Nevertheless, it has been widely studied in literature, both theoretically and numerically; see, e.g., \cite{EpplerHarbrecht2005, Afraites2022, AkdumanKress2002, AlessandriniIsakovPowell1995, AlessandriniDiazValenzuela1996, BourgeoisDarde2020, ChapkoKress2005, HettlichRundell1998, Isakov2006} and references therein.
For instance, the existence and uniqueness of the solution to the problem based on boundary measurement data have been studied by several authors.; see, e.g., \cite{AlessandriniIsakovPowell1995, AlessandriniDiazValenzuela1996, BourgeoisDarde2020, Isakov2006}.
Aside from these aforementioned investigations, shape optimization reformulations of shape inverse problems, in general, are hardly examined from a different theoretical and numerical point of view.
The main purpose of this investigation, therefore, is to carry out a rigorous analysis of the existence, uniqueness, and continuous dependence on the data of the classical solution of \eqref{eq:main_system} in short-time horizon.
We assert that, to our knowledge, little to no work has been done on the well-posedness of the shape optimization problem from which \eqref{eq:main_system} is derived, especially in the direction of our current study. 
We believe that the system \eqref{eq:main_system} itself, originating from a shape optimization context, which in turn was derived from a shape inverse problem, is novel. 
Therefore, the analysis conducted in this work, inspired by the work of Bizhanova and Solonnikov \cite{BizhanovaSolonnikov2000} and Solonnikov \cite{Solonnikov2003}, offers a new perspective.
%%%To the best of our knowledge, there has been limited research on the well-posedness of the shape optimization problem from which \eqref{eq:main_system} is derived, particularly in the direction of our study. We consider the system \eqref{eq:main_system} to be novel, as it originates from a shape optimization context that was derived from a shape inverse problem.Consequently, the analysis presented in this work, which is based on Solonnikov's methods \cite{Solonnikov2003}, provides a new perspective.
We emphasize, however, that Escher and Simonett demonstrated the existence and uniqueness of classical solutions for the multidimensional expanding Hele-Shaw problem in \cite{EscherSimonett1997}. 
In their study, the problem is formulated over the exterior boundary component, while the interior component remains fixed.
It is also worth mentioning that geometric flows related to shape optimization problems similar to Bernoulli-type free boundary problems are explored by Cardaliaguet and Ley in \cite{CardaliaguetLey2007}. 
The evolution process examined in their work involves a combination of curvature and Hele-Shaw type nonlocal terms. 
They introduce the concept of generalized set solutions, drawing heavily from viscosity solutions (see, e.g., \cite{CardaliaguetRouy2006} or \cite[Chap.~2, p.~69]{Giga2006}), which diverges from our current focus. 
The main result in \cite{CardaliaguetLey2007} is the inclusion preservation principle for generalized solutions, which guarantees the existence, uniqueness, and stability of the flow. 
In addition, the study examines the asymptotic behavior, establishing that the solutions converge to a generalized Bernoulli exterior free-boundary problem.
Meanwhile, from geometric analysis perspective, Plotnikov and Soko\l{}owski \cite{PlotnikovSokolowski2023b} recently provided a comprehensive review of existing results in shape optimization emphasizing the theory of gradient flows for objective functions and their regularizations. 
For the sake of simplicity, their mathematical exposition is confined to two spatial dimensions, and they illustrate their findings through a model problem, leading to an initial result concerning convergence in shape optimization. 
In an earlier work \cite{PlotnikovSokolowski2023a}, Plotnikov and Soko\l{}owski rigorously established the well-posedness of the Cauchy problem arising from a \textit{penalized} gradient flow and conducted an in-depth examination of the case involving the Kohn-Vogelius cost functional.
\begin{remark}
    The analysis conducted in this investigation can be applied to a class of shape optimization problems. 
    Specifically, the same methodology presented in the subsequent sections can be utilized to prove the well-posedness of \eqref{eq:main_system}, where the normal velocity of the boundary of the evolving domain is given by $\Vn(x,t) = -G(x,u(x))$ for $x \in \Gamma(t)$ and $t > 0$. 
    Here, $G$ can be the exact shape gradient of a cost functional such as \eqref{eq:cost_j} and is assumed to be positive for all $x \in \Omega(t)$, for all $t > 0$. 
    Additionally, this approach is applicable to other quasi-stationary moving boundary problems that arise from the discretization of numerical shape optimization schemes used to solve the shape optimal control reformulation of the inverse geometry problem, such as $L^{2}$ tracking of boundary data, under appropriate assumptions on the kernel of the shape gradient of the associated cost functional.
\end{remark}
In the next subsection, we will explore the derivation of system \eqref{eq:main_system} and its connection to a shape optimization problem to provide further context and motivation.
%
%	
%------------------------------------------------------------------------	
% 	SUBSECTION: DERIVATION OF THE PROBLEM
%------------------------------------------------------------------------	
%
%
%-----------------------------------------------------------
\subsection{Derivation of the system}
%-----------------------------------------------------------
Consider a bounded (simply connected) domain $D$ with boundary $\Sigma:=\partial{D}$ and assume the existence of a simply connected open set ${{\omega}}$ such that $\overline{{\omega}} \subset D$, composed of a material with a constant conductivity that is essentially different from the constant conductivity of the material in the annular subregion $\Omega := D \setminus \overline{{\omega}}$.
We are interested in determining the shape and location of the inclusion from the knowledge of the Cauchy data of the electrical potential $u$ on the boundary $\Sigma$.
In other words, through the concept of non-destructive testing and evaluation, we want to identify the inclusion given that the pair of boundary data $f = u \big|_{\Sigma}$ and $g = (\nabla u \cdot \nn) \big|_{\Sigma}$ is known, where $\nn$ is the outward unit normal to $\Sigma$.
As alluded in the previous section, the problem under consideration is a special case of the general conductivity reconstruction problem and has been extensively studied in the literature, particularly in the context of inverse problems.
	
Various numerical techniques can solve the inverse problem, including applying shape optimization methods as demonstrated by Roche and Soko{\l}owski in \cite{RocheSokolowski1996}, and by Eppler and Harbrecht in \cite{EpplerHarbrecht2005}. 
The former study utilized first-order shape optimization algorithms to analyze and numerically address the proposed shape optimization formulation.
On the other hand, \cite{EpplerHarbrecht2005} investigated second-order methods related to the previous study, developed and applied by the same authors in their prior work.
Here, we introduce an alternative numerical approach closely aligned with the shape optimization techniques employed in these references. Notably, our method does not directly require knowledge of the exact shape gradient typically used in shape optimization procedures. 
Nonetheless, our approach can be seen as stemming from the specific shape optimization formulation in \cite{EpplerHarbrecht2005,RocheSokolowski1996}.
In essence, our method simplifies the gradient-based optimization procedure outlined in \cite{RocheSokolowski1996}.
In fact, we will choose a suitable descent direction that improves descent speed for the relevant shape optimization problem.
It is important to highlight in advance that our method operates as a Lagrangian-type numerical scheme, utilizing finite element methods.
This is in contrast to the approaches used in \cite{EpplerHarbrecht2005,RocheSokolowski1996} which rely on the concepts of boundary integral equations.
%------------------------------------------------------------------------	
% 	MODEL ILLUSTRATION
%------------------------------------------------------------------------
%%%%%%%%%%%%%%%%%%%%
\begin{figure}[htp!]\label{fig:illustration}
\centering
\begin{tikzpicture}
	\fill [gray!5,even odd rule] (0,0) circle[radius=2cm] circle[radius=1cm];
	\fill [yellow!50,even odd rule] (0,0) circle[radius=1cm];
	\coordinate (0) at (0,0);
	\draw[line width=0.5mm] (0) circle (2);
	\draw[dashed] (0) circle (1); 
	\node[left] at (-2,0) {$\Sigma$};
	\node[right] at (-0.25,0) {$\omega$};

	\node[right] at (-0.25,2.25) {$\boldsymbol{D}$};
	\node[right] at (-0.25,-1.5) {$\Omega$};
	\node[right] at (-1,0) {$\Gamma$};
	
%%%         \draw [-To](1,0) -- (1.5,0);
%%%         \node[below] at (1.5,0) {$\nu^{\varepsilon}$};
%%%         
         \draw [-To](2,0) -- (2.5,0); 
         \node[above] at (2.4,0.1) {$\nu$};
\end{tikzpicture}
\caption{Conceptual model}
\label{fig:conceptual_model}
\end{figure}
%%%%%%%%%%%%%%%%%%%%	
			 
To further explain the idea about our method, we first review the shape optimization formulation proposed in \cite{RocheSokolowski1996} and \cite{EpplerHarbrecht2005} for the model problem introduced previously; see Figure~\ref{fig:conceptual_model} for a conceptual model.
To determine the inclusion ${{\omega}}$ bounded by $\Gamma$, we apply the concept of nondestructive testing and evaluation, a well-known technique used in engineering and related sciences to evaluate the properties of a material without causing damage. 
That is, we identify ${{\omega}}$ by measuring, for a given current distribution $g \in H^{-1/2}(\Sigma)$, the voltage distribution $f \in H^{1/2}(\Sigma)$ at the boundary $\Sigma$.
This means that we are seeking to find a connected domain $\Omega := D \setminus \overline{{\omega}}$ and an associated harmonic function $u$ which satisfies the overdetermined boundary value problem:
%------------------------------------------------------------------------	
% 	OVERDETERMINED PROBLEM
%------------------------------------------------------------------------
	\begin{equation}
	\label{eq:overdetermined_problem}
	-\Delta u = 0  \ \text{in $\Omega$},\qquad
	u=0	\ \text{on $\Gamma$},\qquad
	u=f \quad \text{and} \quad \nabla u\cdot {\nn} = g \ \text{on $\Sigma$}.
	\end{equation} 
Equation~\eqref{eq:overdetermined_problem} admits a solution only when the true inclusion (cavity or void)\footnote{While the terms inclusion, cavity, and void are related and commonly used in various contexts, such as materials science, geology, and engineering, we have used them interchangeably in this discussion.} ${{\omega}}$ is considered.

We recall in the following theorem a key result about identifiability for this inverse problem.
It guarantees the uniqueness of the inclusion ${{\omega}}$ and consequently, the potential $u$ from a pair of boundary data $(f,g)\neq(0,0)$.
\begin{theorem}[\cite{BourgeoisDarde2020}]
	The Cauchy pair $(f,g)\neq(0,0)$ uniquely determine $\Gamma$ and $u$ satisfying \eqref{eq:overdetermined_problem}.
\end{theorem}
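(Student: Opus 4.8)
The statement is a classical identifiability result, and the proof I would give combines unique continuation for harmonic functions with the maximum principle, applied to a hypothetical pair of solutions. First I would argue by contradiction: suppose $(\Gamma^{1},u^{1})$ and $(\Gamma^{2},u^{2})$ both satisfy \eqref{eq:overdetermined_problem} for the same Cauchy pair $(f,g)\neq(0,0)$, with associated inclusions $\omega^{1},\omega^{2}$ and annular domains $\Omega^{i}=D\setminus\overline{\omega^{i}}$. Since each $\overline{\omega^{i}}$ is compactly contained in $D$ and stays away from $\Sigma$, the set $\Omega^{1}\cap\Omega^{2}$ contains a one-sided neighbourhood of $\Sigma$ in $D$; let $\Omega_{0}$ be the connected component of $\Omega^{1}\cap\Omega^{2}$ whose boundary contains $\Sigma$.

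The first step is to match the two potentials near $\Sigma$. The difference $w:=u^{1}-u^{2}$ is harmonic in $\Omega_{0}$ and, on the smooth relatively open piece $\Sigma\subset\partial\Omega_{0}$, satisfies $w=0$ and $\partial w/\partial\nn=0$. Holmgren's uniqueness theorem — equivalently, real-analyticity of harmonic functions together with unique continuation from the Cauchy data on $\Sigma$ — then gives $w\equiv 0$, i.e. $u^{1}\equiv u^{2}$ throughout $\Omega_{0}$. Note also that $(f,g)\neq(0,0)$ forces $u^{i}\not\equiv 0$, since $u^{i}\equiv 0$ would make both $f$ and $g$ vanish on $\Sigma$.

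The second and decisive step is to show $\Gamma^{1}=\Gamma^{2}$. Suppose not; then $\overline{\omega^{1}}\neq\overline{\omega^{2}}$ and, up to relabelling, $\omega^{2}\setminus\overline{\omega^{1}}\neq\emptyset$. I would pick a connected component $V$ of this open set and observe: $V\subset D\setminus\overline{\omega^{1}}=\Omega^{1}$, so $u^{1}$ is harmonic in $V$; a routine local argument shows $\partial V\subseteq\Gamma^{1}\cup\Gamma^{2}$; on $\partial V\cap\Gamma^{1}$ one has $u^{1}=0$ directly, while on $\partial V\cap\Gamma^{2}$ one has $u^{2}=0$, and — using Step 1 together with the real-analyticity of $u^{1}$ across the (smooth) surface $\Gamma^{2}$ to propagate the trace from the part adjacent to $\Omega_{0}$ — also $u^{1}=0$ there. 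Hence $u^{1}$ is harmonic in $V$ and vanishes on all of $\partial V$, so $u^{1}\equiv 0$ in $V$ by the maximum principle, and then $u^{1}\equiv 0$ in all of $\Omega^{1}$ by unique continuation. This yields $f=u^{1}\big|_{\Sigma}=0$ and $g=(\nabla u^{1}\cdot\nn)\big|_{\Sigma}=0$, contradicting $(f,g)\neq(0,0)$. Therefore $\omega^{1}=\omega^{2}$ and $\Gamma^{1}=\Gamma^{2}$. Finally, with $\Omega^{1}=\Omega^{2}=:\Omega$ fixed, both $u^{1}$ and $u^{2}$ solve the same mixed boundary value problem $-\Delta u=0$ in $\Omega$, $u=0$ on $\Gamma$, $u=f$ on $\Sigma$, so uniqueness for the Dirichlet problem for the Laplacian gives $u^{1}=u^{2}$.

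The main obstacle is the topological bookkeeping hidden in Step 2: one must verify, \emph{without} convexity or nestedness assumptions on $\omega^{1}$ and $\omega^{2}$, that a component $V$ of the symmetric difference is ``accessible'', i.e. that it abuts $\Omega_{0}$ along a relatively open piece of $\Gamma^{1}$ or $\Gamma^{2}$, so that the identity $u^{1}\equiv u^{2}$ from Step 1 transfers the homogeneous boundary datum onto $\partial V$. This is exactly where the standing hypotheses enter — sufficient smoothness (analyticity suffices, or can be arranged locally) of $\Gamma^{1},\Gamma^{2}$, so that a vanishing trace on one sub-arc of such a surface propagates to the whole adjacent component; the assumption that $\Sigma$ lies outside $\Gamma$; and the connectedness of $D$, $\omega^{i}$ and $\Omega^{i}$. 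The remaining ingredients — Holmgren's theorem, the maximum principle, and uniqueness for the Laplace–Dirichlet problem — are standard.
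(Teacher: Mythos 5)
This theorem appears in the paper only as a citation of \cite{BourgeoisDarde2020}; no internal proof is given, so there is no argument of the paper's to compare against. Your overall strategy is nevertheless the standard one for a perfectly conducting (Dirichlet) obstacle: unique continuation from the common Cauchy data on $\Sigma$ to get $u^1\equiv u^2$ on the accessible component $\Omega_0$ of $\Omega^1\cap\Omega^2$, followed by a maximum-principle contradiction if $\Gamma^1\neq\Gamma^2$, and Step~1 of your argument is fine.

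Step~2, however, has a genuine gap in the way you set it up. Taking $V$ a component of $\omega^2\setminus\overline{\omega}^1$ and arguing that $u^1=0$ on $\partial V\cap\Gamma^2$ by ``propagating the trace via real-analyticity of $u^1$ across $\Gamma^2$'' does not work: $\Gamma^2$ is only of class $C^{2+\alpha}$, so the restriction of the (interior-analytic) $u^1$ to $\Gamma^2$ is merely a $C^{2+\alpha}$ function of the surface parameter, and vanishing on one open arc does not propagate along the surface. Worse, the topological obstruction you flag is real. If $\overline{\omega}^1\cup\overline{\omega}^2$ fails to be simply connected (take $\omega^2$ a thickened ``C'' and $\omega^1$ a small disc plugging its gap — both simply connected), then $D\setminus(\overline{\omega}^1\cup\overline{\omega}^2)$ has a bounded inner component $\Omega_{\mathrm{inner}}$ besides $\Omega_0$, and the inner arc of $\Gamma^2$ bounding your $V$ abuts $\Omega_{\mathrm{inner}}$, where no equality $u^1=u^2$ is available; hence $u^1=0$ on $\partial V$ is not established. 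The fix is to take $V:=\Omega^1\setminus\overline{\Omega}_0$ instead. This set is open, bounded, disjoint from a neighbourhood of $\Sigma$, nonempty (it contains $\omega^2\setminus\overline{\omega}^1\neq\emptyset$), and one verifies $\partial V\subset\Gamma^1\cup\bigl(\partial\Omega_0\cap\Gamma^2\bigr)$: on $\Gamma^1$ the boundary condition gives $u^1=0$, while on $\partial\Omega_0\cap\Gamma^2$ the identity $u^1\equiv u^2$ extends continuously from $\Omega_0$ and $u^2=0$ on $\Gamma^2$. In the ``C plus plug'' picture the troublesome inner arc lies in the \emph{interior} of this $V$, not on $\partial V$, so it never enters. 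The maximum principle then yields $u^1\equiv 0$ on $V$, hence on all of $\Omega^1$ by real-analyticity on the connected open set $\Omega^1$, contradicting $(f,g)\neq(0,0)$. With this replacement of $V$ and deletion of the analyticity-along-$\Gamma^2$ step, your argument is sound.
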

In the context of thermal imaging, the Dirichlet data $f$ represent a prescribed surface heat, and correspondingly, $g$ stands for the surface heat flux of the material.
This perspective reinterprets the state variable $u$ as the internal thermal distribution within the material.
It is important to note that the homogeneous Dirichlet boundary condition applied to the unknown shape \fergy{$\Gamma$} models a perfectly conducting inclusion.
Note that when the Dirichlet data on the surface $\Sigma$ is positive, the solution $u$ to \eqref{eq:overdetermined_problem} is also positive due to the maximum principle and unique continuation property \cite{Hormander2003}.
This implies that the outgoing flux $\nabla u \cdot \nn$ on $\Sigma$ is positive, while on $\Gamma$, the outgoing flux is negative.
Moreover, because $u$ takes homogeneous Dirichlet data on the surface \fergy{$\Gamma$} of the unknown inclusion, we have the identity $\abs{\nabla u} \equiv -\nabla u \cdot \nn$ on \fergy{$\Gamma$}.
For a non-conducting inclusion, a Neumann boundary condition governs the problem.
Additionally, if the conductivity of the inclusion differs from that of the surrounding material or object $\Omega$, an inverse transmission problem arises.

Roche and Soko{\l}owski in \cite{RocheSokolowski1996} proposed a shape optimization reformulation of \eqref{eq:overdetermined_problem} as follows:	
%------------------------------------------------------------------------	
% 	COST FUNCTION J: KOHN-VOGELIUS
%------------------------------------------------------------------------		
	\begin{equation}
	\label{eq:cost_j}
		J(\Omega) 
		= \intO{\abs{\nabla (\ud - \un)}^{2}} 
		=\intS{\left(g - \frac{\partial{\ud}}{\partial{\nn}}\right)\left(\un - f\right)}\to \inf,
	\end{equation}
	subject to the underlying well-posed state problems
	%%% STATE PROBLEMS
	\begin{equation}\label{eq:state_equations}
	\left\{
	\begin{aligned}
		-\Delta \ud		=0 		\quad\text{in $\Omega$},\qquad
		\ud			=f		\quad\text{on $\Sigma$},\qquad
		\ud			=0		\quad\text{on $\Gamma$},\\
		%%%
		-\Delta \un		=0 		\quad\text{in $\Omega$},\qquad
		\nabla \un \cdot \nn	=g	\quad\text{on $\Sigma$},\qquad
		\un			=0		\quad\text{on $\Gamma$}.
	\end{aligned}
	\right.
	\end{equation}
In \eqref{eq:cost_j}, the infimum has to be taken over all domains containing an inclusion with sufficiently regular boundary.
The existence of optimal solutions with respect to the shape optimization problem \eqref{eq:cost_j} was established in \cite{RocheSokolowski1996}. 
\begin{remark}\label{rem:remarks_about_existence}
\begin{enumerate}
	\item The proof of the existence of an optimal solution to \eqref{eq:cost_j} over some set of admissible domains is based on properties of harmonic functions in $\mathbb{R}^{2}$ and uses the same arguments as those given by \v{S}ver\'ak \cite{Sverak1993} for a slightly different shape optimization problem. In particular, the method cannot be directly applied in $\mathbb{R}^{d}$ for $d \geqslant 3$.
\item To establish an existence result in dimension $d = 3$, we need to assume that the family of admissible domains, say $U_{\text{ad}}$, meets the following compactness criterion (see, e.g., \cite[Chap.~2]{HenrotPierre2018}).

Given any sequence $\{\Omega_{n}
\}_{n \in \mathbb{N}} \subset U_{\text{ad}}$, there exists a subsequence $\{\Omega_{n_{k}}\}_{k \in \mathbb{N}} = \{ D \setminus \overline{\omega}_{n_{k}} \}_{k \in \mathbb{N}}$, such that the sequence of compacts $\{ \omega_{n_{k}} \}_{k \in \mathbb{N}}$ converges in the Hausdorff metric to the compact $\omega$. 
Moreover, the associated sequence of metric projections $P_{n} : H_{0}^{1}(D) \to H_{0}^{1}(\Omega_{n})$ converges strongly to the metric projection $P : H_{0}^{1}(D) \to H_{0}^{1}(D \setminus \overline{\omega})$. 
For such a family of admissible domains, there is a solution to the shape identification problem in $\mathbb{R}^{3}$.
The hypothesis generally requires some form of uniform regularity for the boundaries $\Gamma$, such as the cone condition or more intricate conditions related to capacity (refer to \cite{DelfourZolesio2011,HenrotPierre2018}). 
For further details, see Henrot, Horn, and Soko\l{}owski \cite{HenrotHornSokolowski1996} for an overview of stability results for the Dirichlet problem, and Bucur and Zol\'{e}sio \cite{BucurZolesio1995} for an approach to existence problems. Additionally, \cite{HenrotPierre2018} provides more discussion on the existence of optimal shapes in shape optimization problems. 
For a recent related result on the existence issue related to Problem~\eqref{eq:cost_j}, see \cite{AfraitesRabago2024}.
\item For technical purposes, in order to establish the existence optimal solution, we require that any admissible inclusion or obstacle $\omega \Subset D$ satisfies the condition that there exists a constant $d_0 > 0$ such that $\operatorname{d}(x, \partial D) > d_0$, for all $x \in \omega$.
This condition is assumed without further notice throughout the paper.
We emphasize that such a requirement is also needed for computing the shape derivatives of the states $u$ and the cost function $J$ with respect to $\omega$ (see, e.g., \cite{AfraitesRabago2024}).
\end{enumerate}
\end{remark}
The equivalence between \eqref{eq:overdetermined_problem} and \eqref{eq:cost_j} issues from the first-order necessary condition of a minimizer of the shape functional $J(\Omega)$; that is,\footnote{Here, the representation of the shape gradient is non-conventional. We must note that $\nu$ denotes the inward unit normal to $\Gamma$ and $\partial / \partial \nu$ is the inward normal derivative on $\Gamma$.}
	\begin{align*}
		{d}J(\Omega)[\VV]
			= \lim_{\varepsilon \searrow 0} \frac{J(\Omega_\varepsilon) - J(\Omega)}{\varepsilon}
			= \left. \frac{{d}}{{d}\varepsilon} J(\Omega_{\varepsilon})\right|_{\varepsilon = 0}
			= \intG{ G \nn \cdot \VV},
	\end{align*}
	where
	\[
		G =  \left(\frac{\partial \ud}{\partial \nn}\right)^{2} - \left(\frac{\partial \un}{\partial \nn}\right)^{2} 
			= \left(\frac{\partial \ud}{\partial \nn} + \frac{\partial \un}{\partial \nn}\right)\left(\frac{\partial \ud}{\partial \nn} - \frac{\partial \un}{\partial \nn}\right)
			=: G^{+}G^{-},
	\]
has to hold for all sufficiently smooth perturbation fields $\VV$.
Here, $\Omega_\varepsilon$ stands for a perturbation of $\Omega$ along $\VV$ that vanishes on the known surface $\Sigma$.
Obviously, by the calculus of variations, the above statement implies the necessary condition $\nabla \un \cdot\nn \equiv \nabla \ud \cdot\nn$ on $\Gamma$.
For details on how to compute ${d}J(\Omega)[\VV]$, and for more discussion on shape optimization methods in general, we refer readers to \cite{DelfourZolesio2011} and \cite{SokolowskiZolesio1992}.

To resolve the minimization problem \eqref{eq:cost_j}, a typical approach is to utilize the kernel of the shape derivative ${d}J(\Omega)[\VV]$, the so-called {shape gradient} (see, e.g., \cite[Thm.~3.6, p.~479--480]{DelfourZolesio2011}), in a gradient-based descent algorithm.
For example, for a smooth boundary $\partial \Omega$ and sufficiently regular states $\ud$ and $\un$, one can take $\vect{0} \not\equiv \VV = -G{\nn} \in L^{2}(\Gamma)^{d}$.
This implies that, by formal expansion, and $t>0$ sufficiently small, the following inequality holds
    	\begin{align*}
    		J(\Omega_t)
    			= J(\Omega) + t \left. \frac{{d}}{{d}\varepsilon} J(\Omega_{\varepsilon})\right|_{\varepsilon = 0} + O(t^{2})
    			&= J(\Omega) + t \intG{G \nn \cdot \VV} + O(t^{2})\\
    			&= J(\Omega) - t \intG{\abs{\VV}^{2}} + O(t^{2})
    			< J(\Omega).
    	\end{align*}
Hence, in practice, opting for the descent direction $\mathbf{V} = -G\nn$ is straightforward. 
However, there are cases where one can deviate from this choice. 
For instance, if $f > 0$, we can choose $\mathbf{V} = -G^{-}\nn$, which still provides a descent direction for $J$. 
Indeed, in such a case, \fergy{$G^{+} > 0$} on $\Gamma$.  %%%% CORRECTED BY FERGY 2025.06.14
Hence, it is evident that when $\mathbf{V} = -G^{-}\nn$, the following holds:
	\begin{equation}\label{eq:descent}
		J(\Omega_t)
			= J(\Omega) + t \intG{  {G^{+}}{G^{-}}\nn \cdot \VV} + O(t^{2})
			%%%
			= J(\Omega) - t \int_{\Gamma} \underbrace{ {G^{+}} }_{>\ 0} \abs{\VV}^{2}\, {d}s + O(t^{2})
			< J(\Omega).
	\end{equation}
A naive idea to numerically solve \eqref{eq:overdetermined_problem} then involves an iterative procedure that minimizes the cost functional $J$. 
For simplicity, let ${\Delta{t}} > 0$ be a fixed number. 
Given an initial shape $\Gammao$, for all $k=1,2, \ldots$, denote the free boundary at iteration $k$ as $\Gamma_{k}$. 
Then, the update at iteration $k+1$ is given by $\Gamma_{k+1} = \{x+\Delta{t} \VV_{k}(x) \mid x \in \Gamma_{k}\}$, for $k=0,1,\ldots$. Accordingly, the main steps for computing the $k$th free boundary $\Gamma_{k}$ using $J$ via a naive gradient-based descent method is given in Algorithm~\ref{algo:boundary_variation}.

%---------------------------------------------------------------------------------------------------
%	THE BOUNDARY VARIATION ALGORITHM
%---------------------------------------------------------------------------------------------------	
\begin{algorithm}\label{algo:boundary_variation}
\textit{Boundary variation algorithm}:
\begin{description}
\setlength{\itemsep}{0.1pt}
	\item[-- \it{Initialization}] Fix a number $\Delta{t} >0$ and choose an initial shape $\Gammao$.  
	\vspace{-2pt}
	\item[-- \it{Iteration}] For $k = 0, 1, \ldots$
		\vspace{-2pt}
		\begin{enumerate}
			\item Solve the corresponding variational formulations of \eqref{eq:state_equations} on $\Omega_{k}$.\vspace{-2pt}
			\item Define $\Vnk :=\nabla\udk \cdot \nnk - \nabla\unk \cdot \nnk$ on $\Gamma ^k$.\vspace{-2pt}
			\item Set $\Gamma_{k+1} = \{x+\Delta{t} \VV_{k}(x) \mid x \in \Gamma_{k}\}$. 
		\end{enumerate}
	\vspace{-2pt}	
	\item[-- \it{Stop Test}] Repeat \textit{Iteration} until convergence.
\end{description}
\end{algorithm}	
The identification of the unknown boundary $\Gamma$ in \eqref{eq:overdetermined_problem} according to Algorithm~\ref{algo:boundary_variation} describes a similar evolutionary equation for the Hele-Shaw-like system \eqref{eq:main_system} (see, e.g., \cite{Crank1984,ElliottOckendon1982,Richardson1972} for the classical Hele-Shaw flow problem) in discrete setting.
Indeed, let $T>0$ be a given final time of interest (this can be interpreted as the condition for \textit{Stop Test} in Algorithm~\ref{algo:boundary_variation}), $N_T$ be a fixed positive integer, and set the time discretization step-size as $\Delta{t} := T/N_T$.
For each time-step index $k = 0,1,\cdots,N_T$, we let $t_{k} = k\Delta{t}$ and denote the time-discretized domain and free boundary by $\Omega_{k} \approx \Omega(k \Delta{t})$ and  $\Gamma_{k} \approx \Gamma(k \Delta{t})$, respectively, and the associated time-discretized functions as $\udk \approx \ud(\cdot, k \Delta{t})$ and $\unk \approx \un(\cdot, k \Delta{t})$.
Then, given an initial free boundary $\Gammao$, Algorithm~\ref{algo:boundary_variation} reduces to solving the moving boundary value problem
 	\begin{equation}
	\label{eq:discrete_main_system}
	\left\{\arraycolsep=1.4pt\def\arraystretch{1.2}
	\begin{array}{rcll}
		-\Delta \udk		&=&0		&\quad \text{in $\Omega$}_{k},\\
		\udk				&=&f 		&\quad \text{on $\Sigma$},\\
		\udk				&=&0		&\quad \text{on $\Gamma$}_{k},\\
		%%%
		-\Delta \unk		&=&0 		&\quad \text{in $\Omega$}_{k},\\
		\nabla\unk \cdot \nnk &=&g 		&\quad \text{on $\Sigma$},\\
		\unk				&=&0		&\quad \text{on $\Gamma$}_{k},\\
		\Vnk 				&=&- \left(\nabla{\udk} \cdot \nnk - \nabla{\unk} \cdot \nnk \right)
											&\quad \text{on $\Gamma$}_{k},\\
		\Gamma(0)			&=& \Gammao. &
	\end{array}
	\right.
	\end{equation}
Notice that the discrete system \eqref{eq:discrete_main_system} is clearly the time-discretized version of system \eqref{eq:main_system} with $f=f(x)$ and $g=g(x)$, $x \in \Sigma$.  
%
%	
%------------------------------------------------------------------------	
% 	SUBSECTION: A NUMERICAL PROCEDURE WITH EXTENDED-REGULARIZED NORMAL FLOWS
%------------------------------------------------------------------------	
%
%
%-----------------------------------------------------------
\subsection{Extended-regularized normal flows for shape identifications}
%-----------------------------------------------------------	
In various studies, it has been observed that directly using $\VV = -G\nn$ can lead to unwanted oscillations on the boundary, which may cause numerical instabilities, especially in finite element methods. 
To address this issue, one typically employs a Sobolev gradient method in numerical optimization (see \cite[Chap.~11]{Neuberger2010}), combined with perimeter or surface area penalization.
A typical approach includes the traction method \cite{AzegamiBook2020} or the $H^{1}$ gradient method \cite{Doganetal2007}, both widely used smoothing techniques in shape optimization problems \cite{MohammadiPironneau2001}. 
On the other hand, it is worth noting that treating inverse geometry problems as shape optimization problems presents significant challenges due to their inherent ill-posed nature \cite{EpplerHarbrecht2005}. 
These problems are prone to instability and are frequently stabilized through numerical regularization techniques \cite{Afraitesetal2007}. 
For example, in a related work \cite{RabagoAzegami2018}, the authors stabilized their shape optimization algorithm, similar to \eqref{eq:overdetermined_problem}, by employing perimeter regularization.

Following the discussed idea, Algorithm~\ref{algo:boundary_variation} can be modified to formulate a more stable numerical shape optimization algorithm to solve \eqref{eq:overdetermined_problem}.
Given an initial shape $\Omega_{0}$ and denoting the domain shape at iteration $k$ as $\Omega_{k}$, the update at iteration $k+1$ is $\Omega_{k+1} = \{x+\Delta{t} \VV_{k}(x) \mid x \in \Omega_{k}\}$, where ${\Delta{t}} > 0$ is a sufficiently small step size and $\VV_{k}$ is the descent deformation field at iteration $k$.
The main steps for computing $\Omega_{k}$ through formulation \eqref{eq:cost_j} via a gradient-based descent method in Hilbert space can then be summarized in the following optimization algorithm.
%---------------------------------------------------------------------------------------------------
%	THE DOMAIN VARIATION ALGORITHM
%---------------------------------------------------------------------------------------------------	
\begin{algorithm}\label{algo:domain_variation}
\textit{Domain variation algorithm}:
\begin{description}
\setlength{\itemsep}{0.1pt}
	\item[-- \it{Initialization}] Fix a number $\Delta{t} >0$ and choose an initial shape $\Omega_{\circ}$.  
	\vspace{-2pt}
	\item[-- \it{Iteration}] For $k = 0, 1, \ldots$
		\vspace{-2pt}
		\begin{enumerate}
			\item Solve the corresponding variational formulations of \eqref{eq:state_equations} on $\Omega_{k}$.\vspace{-2pt}
        			\item Compute $\VV_{k} \in V(\Omega_{k})^{d}$ by solving the variational equation 
			\[
				a(\VV_{k},\vect{\varphi})= \int_{\Gamma_{k}}{\tilde{G}_{k}\nnk \cdot \vect{\varphi} \, ds}, \quad \forall \vect{\varphi} \in V(\Omega_{k})^{d},
			\]
			where $V(\Omega_{k}) := \{ \varphi \in H^{1}(\Omega_{k}) \mid \varphi = 0 \ \text{on $\Sigma$} \}$ and $a$ is a bounded and coercive bilinear form on $V(\Omega_{k})^{d}$.
			\item Set $\Omega_{k+1} = \{x+\Delta{t} \VV_{k}(x) \mid x \in \Omega_{k}\}$.
		\end{enumerate}
	\vspace{-2pt}	
	\item[-- \it{Stop Test}] Repeat \textit{Iteration} until convergence.
\end{description}
\end{algorithm}
In Step 2 of \textit{Iteration}, we can take $\tilde{G}$ as either exactly $G$ or ${G^{-}}$, depending on the sign of $f$ and $g$. 
Meanwhile, for the \textit{Stop Test}, we simply terminate the algorithm after a finite number of iterations.

As emphasized in previous subsections, our main intent in this exposition is to prove the existence and uniqueness of classical solution to \eqref{eq:main_system}. 
Before delving with this problem, however, we first exhibit a numerical example in two spatial dimensions illustrating the feasibility of the proposed optimization algorithm with the choice of preconditioned descent vector field $\VV$ obtained through the vector ${G^{-}}{\nn}$.
Here, we define $a(\vect{\varphi},\vect{\psi}):=(\nabla{\vect{\varphi}}, \nabla{\vect{\psi}})_{\Omega}$, for $\vect{\varphi}, \vect{\psi} \in V(\Omega)$.
Hereon, we shall refer to our algorithm as quasi-stationary Stefan type-scheme or QSSTS as it also provides a kind of a comoving mesh method \cite{SunayamaKimuraRabago2022,SunayamaRabagoKimura2024} for \eqref{eq:main_system}.
\begin{example}%%% BEGIN EXAMPLE HERE
Using Algorithm~\ref{algo:domain_variation}\footnote{Here, we used a non-uniform time step size to clearly highlight the potential of taking $\tilde{G} = G^{-}$. In fact, we determine the step size using a backtracking line search procedure, calculated as ${\Delta{t}_{k}} = c J(\Omega_{k})/|\VV_{k}|^{2}_{\mathbf{H}^{1}(\Omega_{k})}$ at each time step, where $c > 0$ is a scaling factor. This factor is adjusted to prevent reversed triangles within the mesh after updating (see \cite[p.~281]{RabagoAzegami2020}).}, we reconstruct various cavity shapes (see Figure~\ref{fig:numerics}) under the following setup. 
The specimen under investigation is a circular disk with a unit radius, centered at the origin.
For the input data, we set $f \equiv 1$ on $\Sigma$, while synthetic data is used for extra boundary measurements on the outer part of $\Sigma$. 
To avoid ``inverse crimes" (see \cite[p.~179]{ColtonKress2019}), the synthetic data is generated using a different numerical scheme, employing a larger number of discretization points and ${P2}$ finite element basis functions in the FreeFem++ code \cite{Hecht2012}. 
In the inversion procedure, all variational problems are solved using ${P1}$ finite elements with coarse meshes.

We contrast the traditional shape optimization approach (utilizing $\tilde{G} = G$) with the proposed method (employing $\tilde{G} = G^{-}$). 
The geometry of the initial free boundary $\Gammao$ is a circle centered at the origin with a radius of $0.9$.
The experimental findings, outlined in Figure~\ref{fig:numerics}, indicate a clear advantage of the proposed method (QSSTS) over the conventional shape optimization method (SO). 
The QSSTS method not only achieves faster identification of the shape and location of the unknown inclusion, but also does so with greater accuracy in general.
From these numerical results, one can expect that the proposed method will exhibit the same advantages over traditional shape optimization approaches, even when extended to more complex geometries and higher dimensions.
%-----------------------------------------------------------
% FIGURE
%----------------------------------------------------------- 
        \begin{figure}[htp!]
        \centering
        \fergy{
		\scalebox{0.165}{\includegraphics{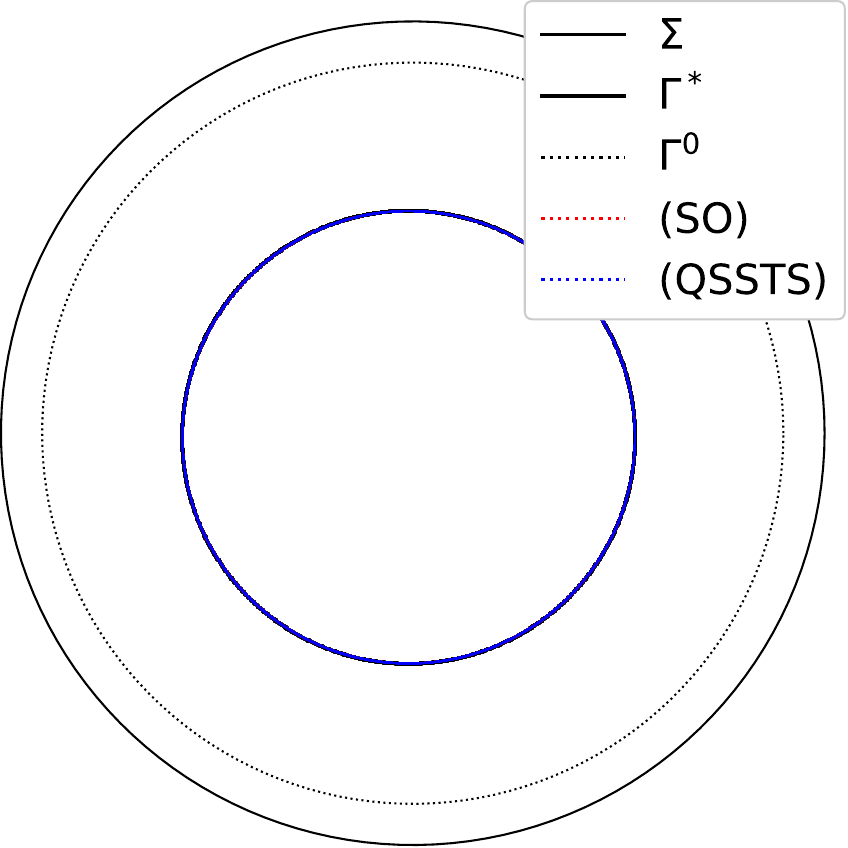}}\hfill
		\scalebox{0.165}{\includegraphics{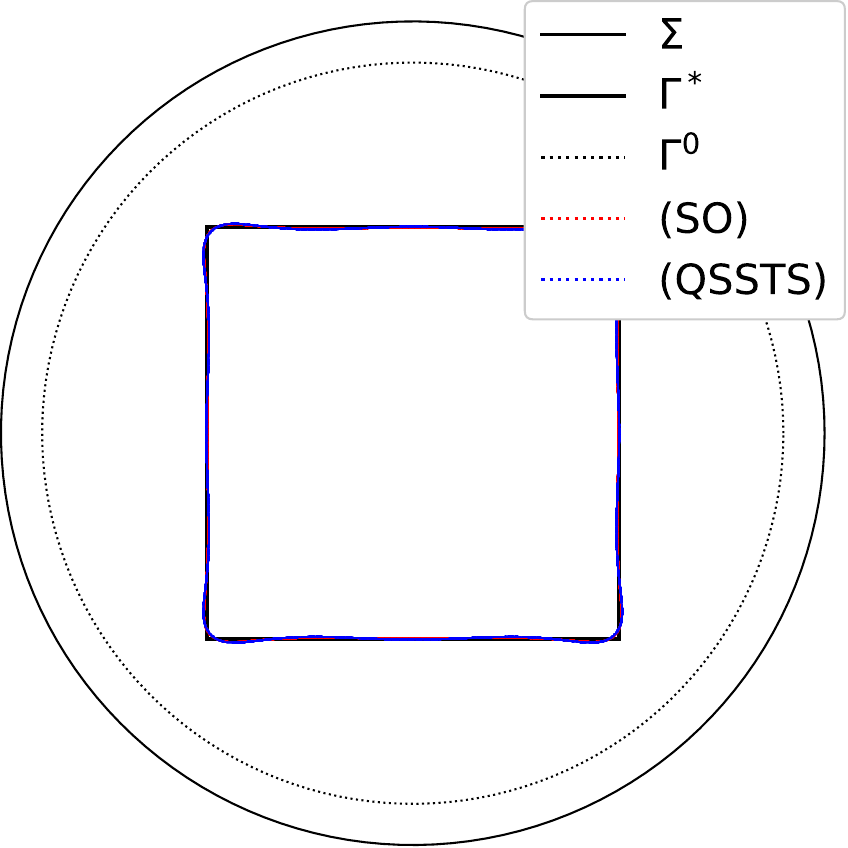}}\hfill
		\scalebox{0.165}{\includegraphics{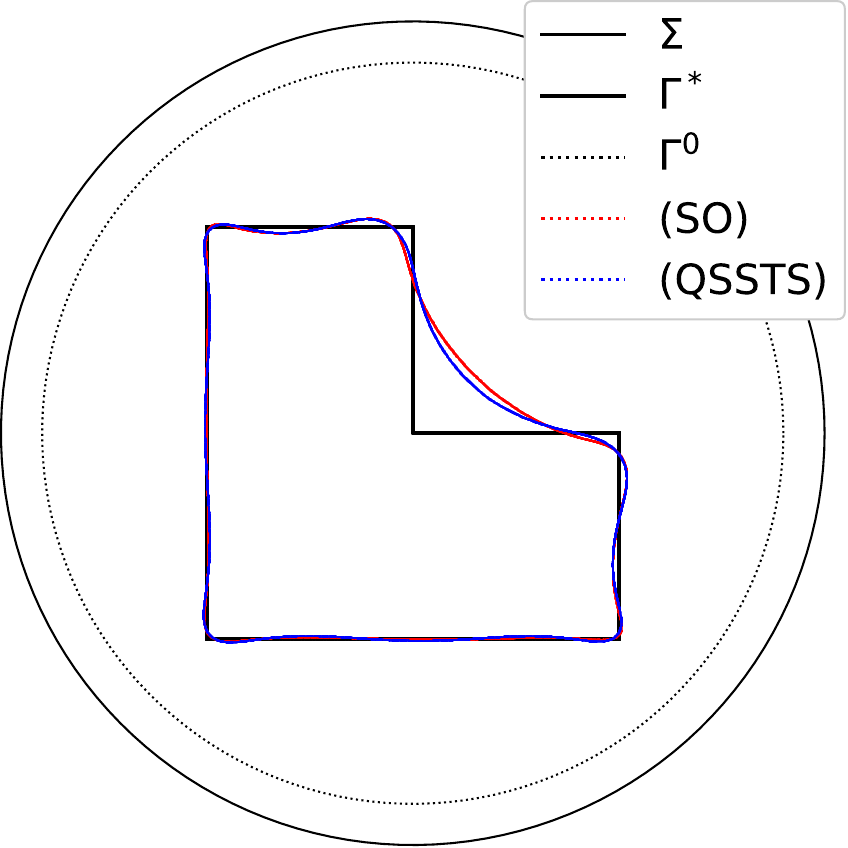}}\hfill
		\scalebox{0.165}{\includegraphics{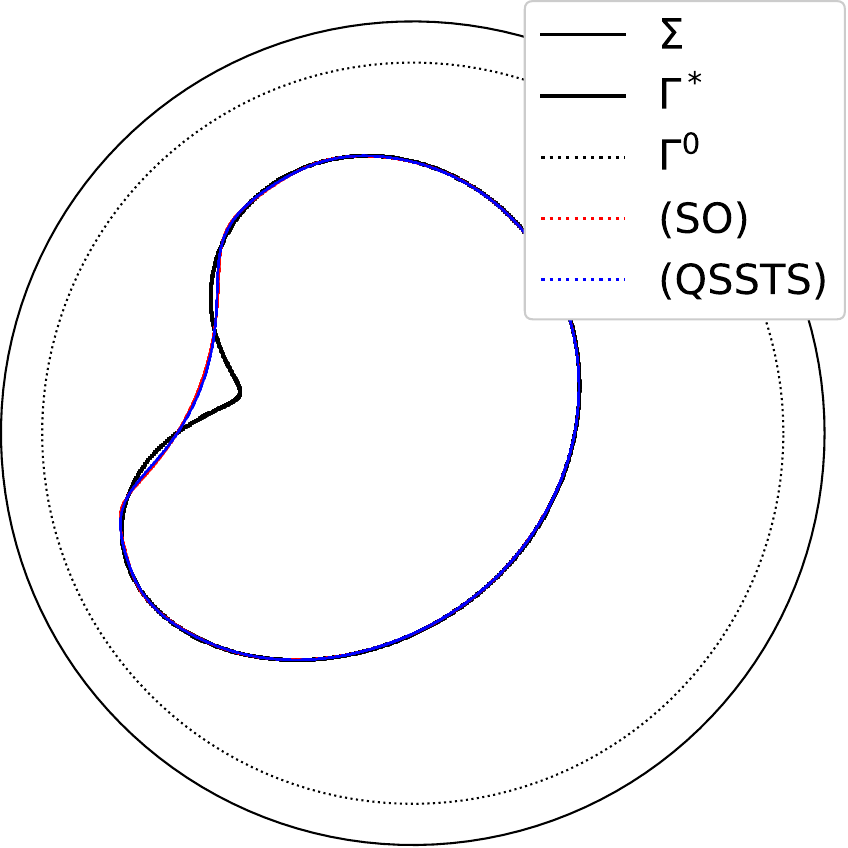}}\hfill
		\scalebox{0.165}{\includegraphics{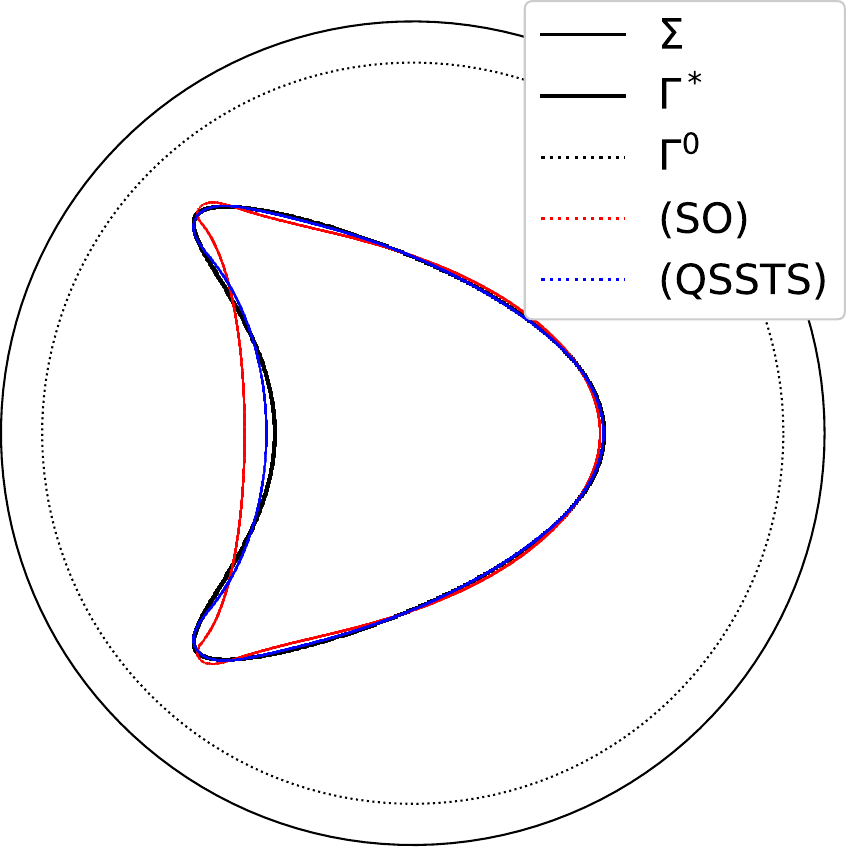}}\hfill
		\scalebox{0.165}{\includegraphics{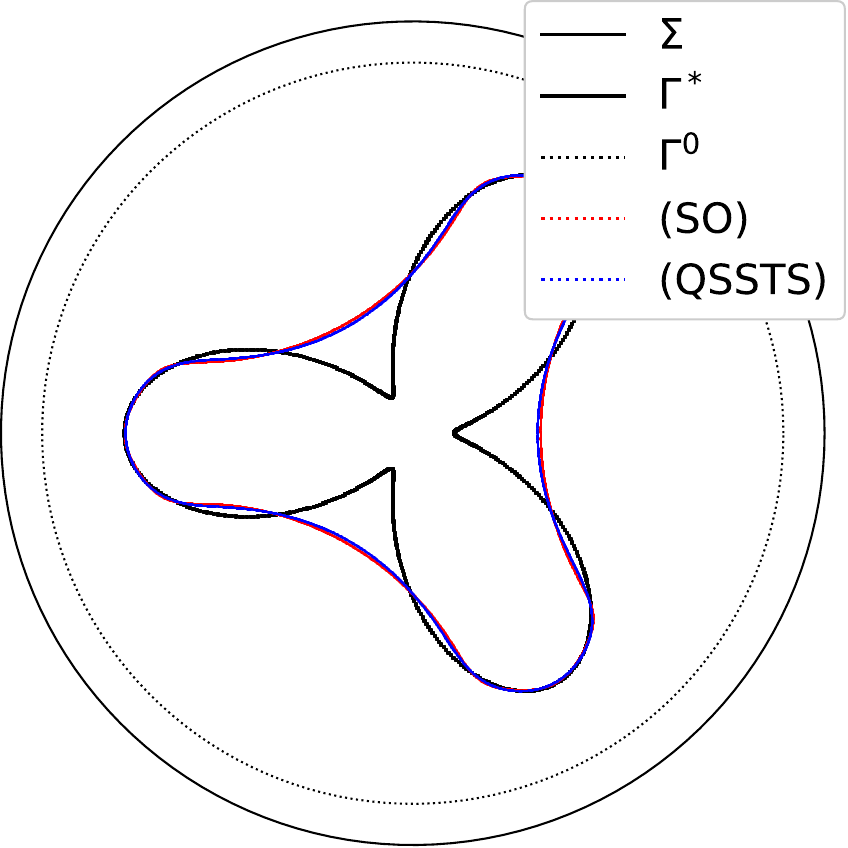}}\\[2em]	
		%%%%
		\scalebox{0.135}{\includegraphics{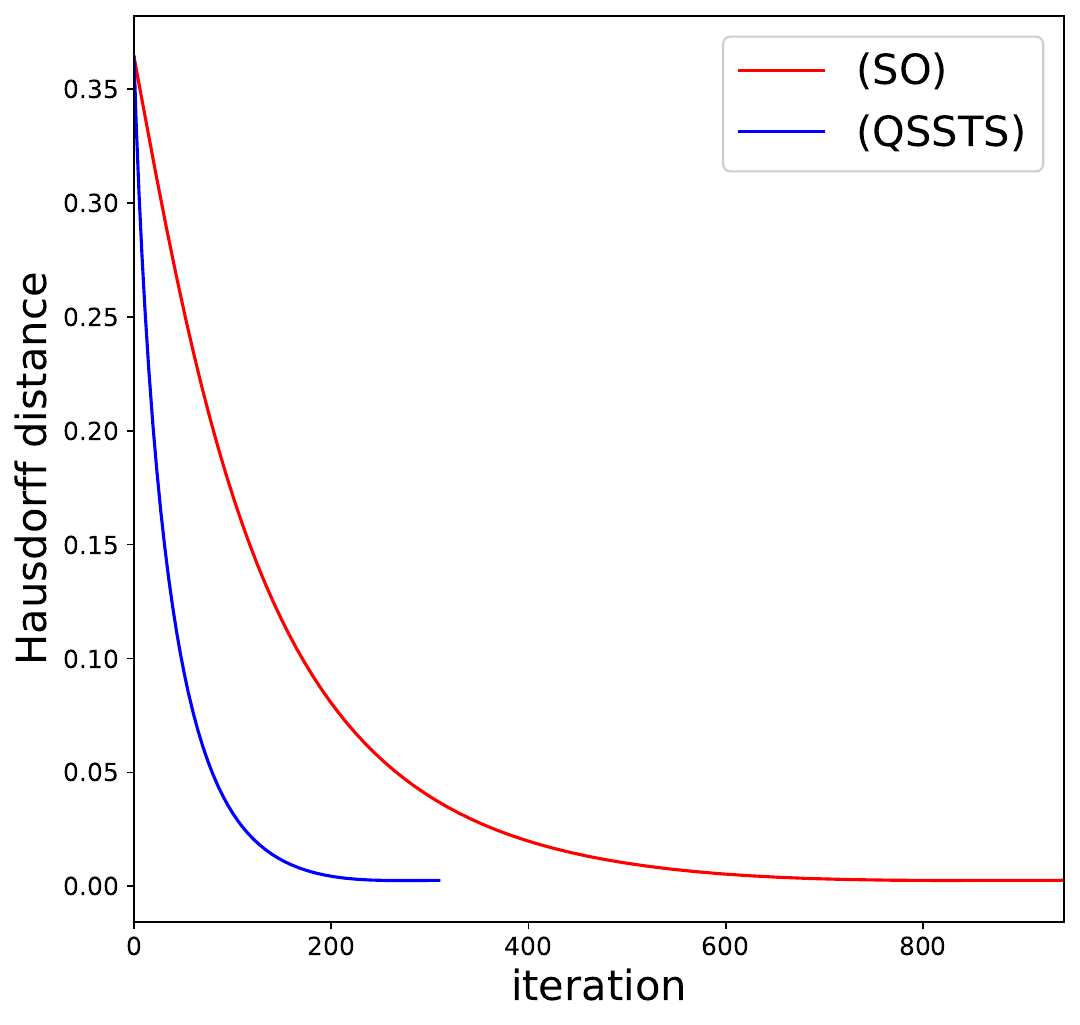}}\hfill
		\scalebox{0.135}{\includegraphics{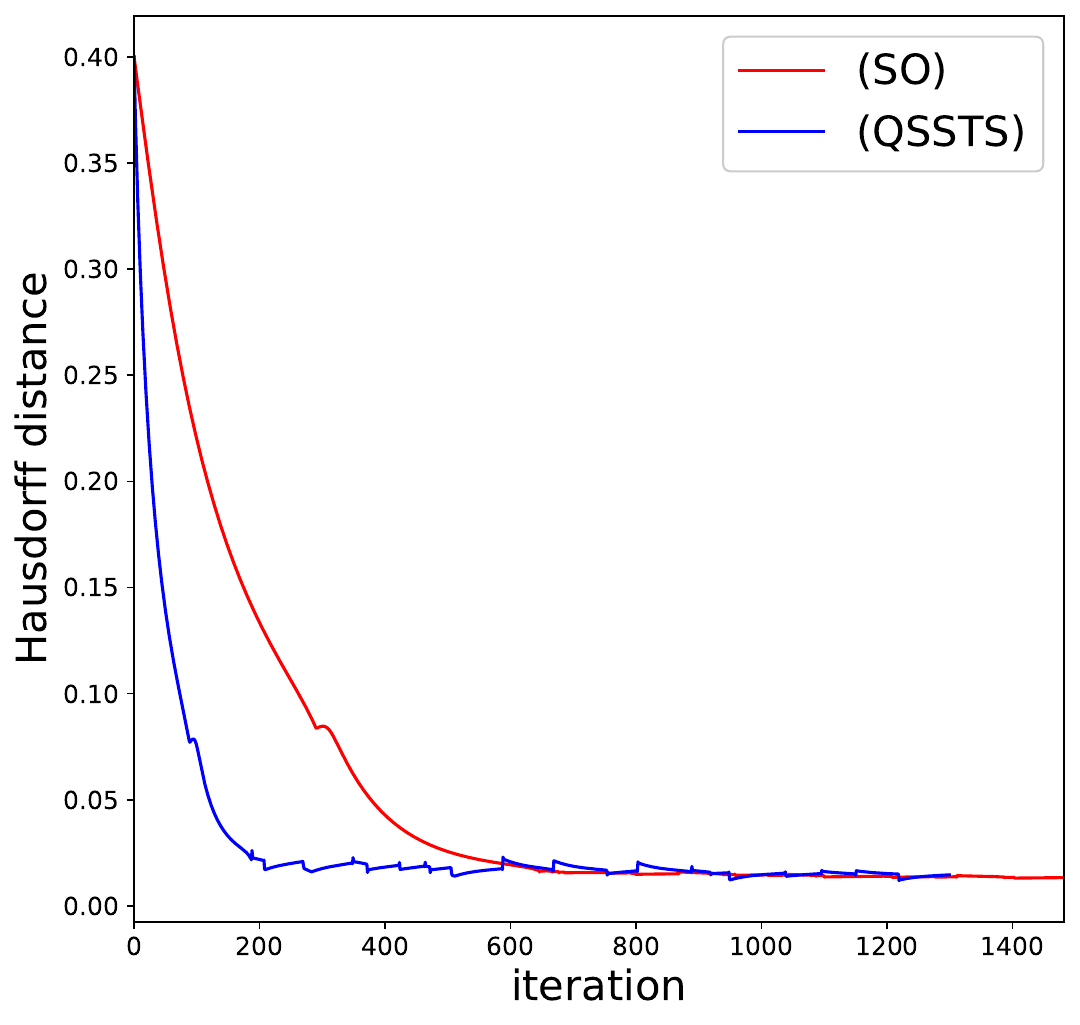}}\hfill
		\scalebox{0.135}{\includegraphics{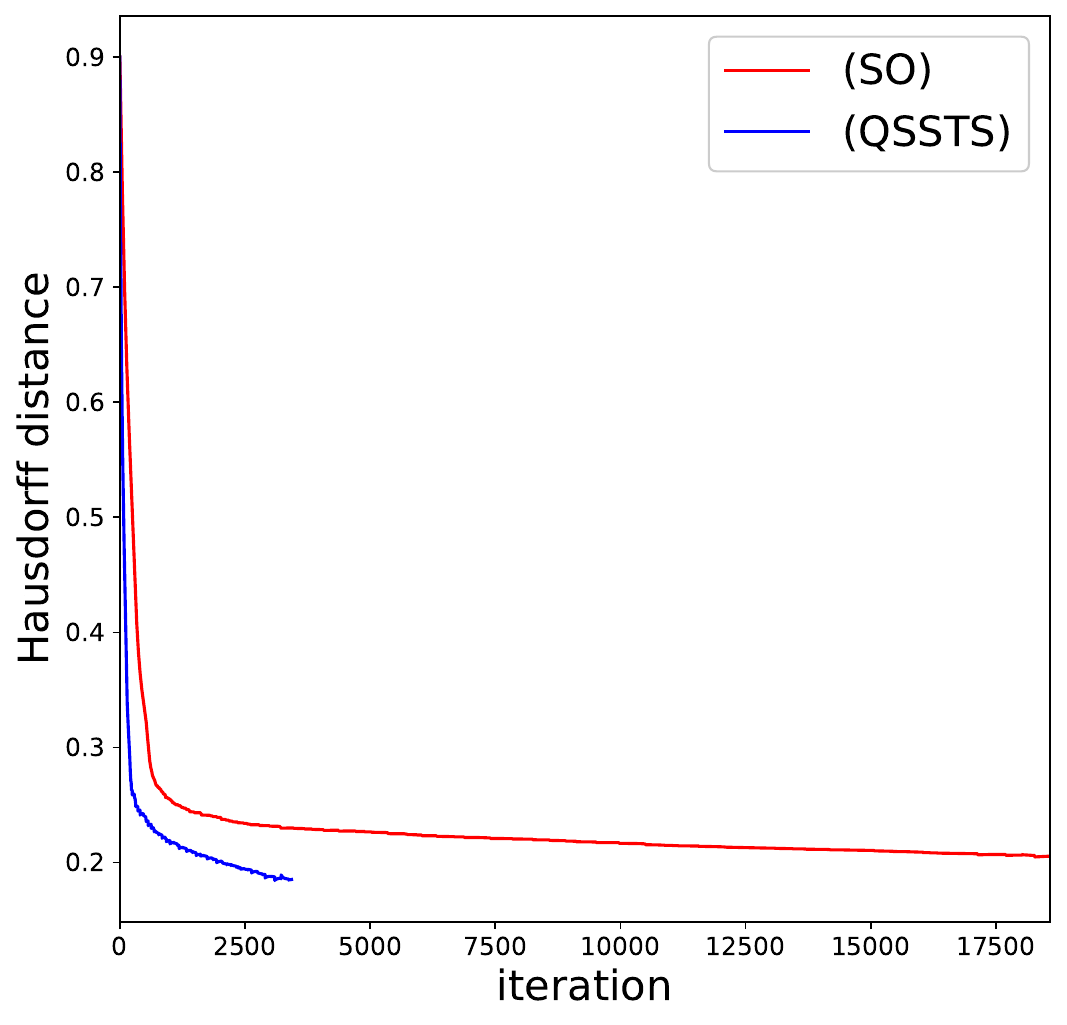}}\hfill
		\scalebox{0.135}{\includegraphics{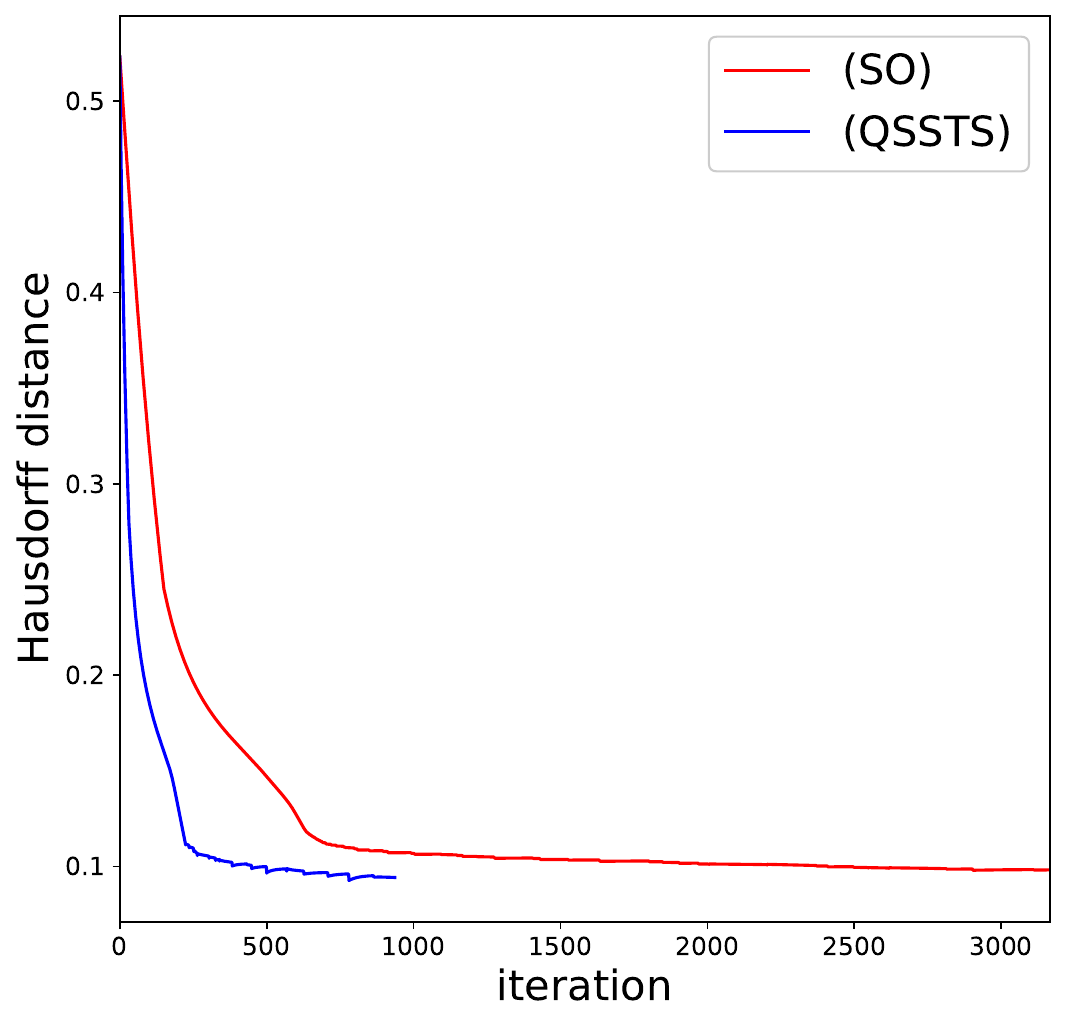}}\hfill
		\scalebox{0.135}{\includegraphics{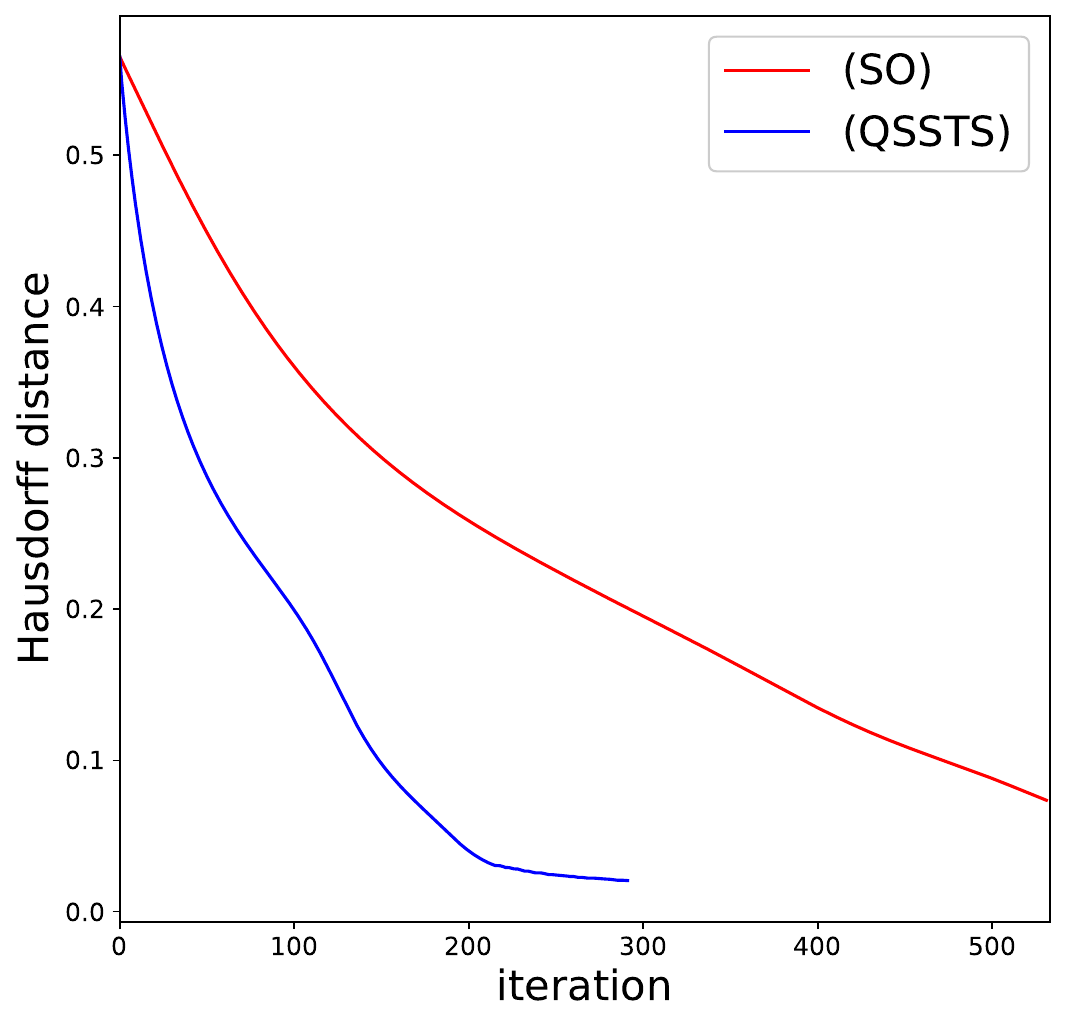}}\hfill
		\scalebox{0.135}{\includegraphics{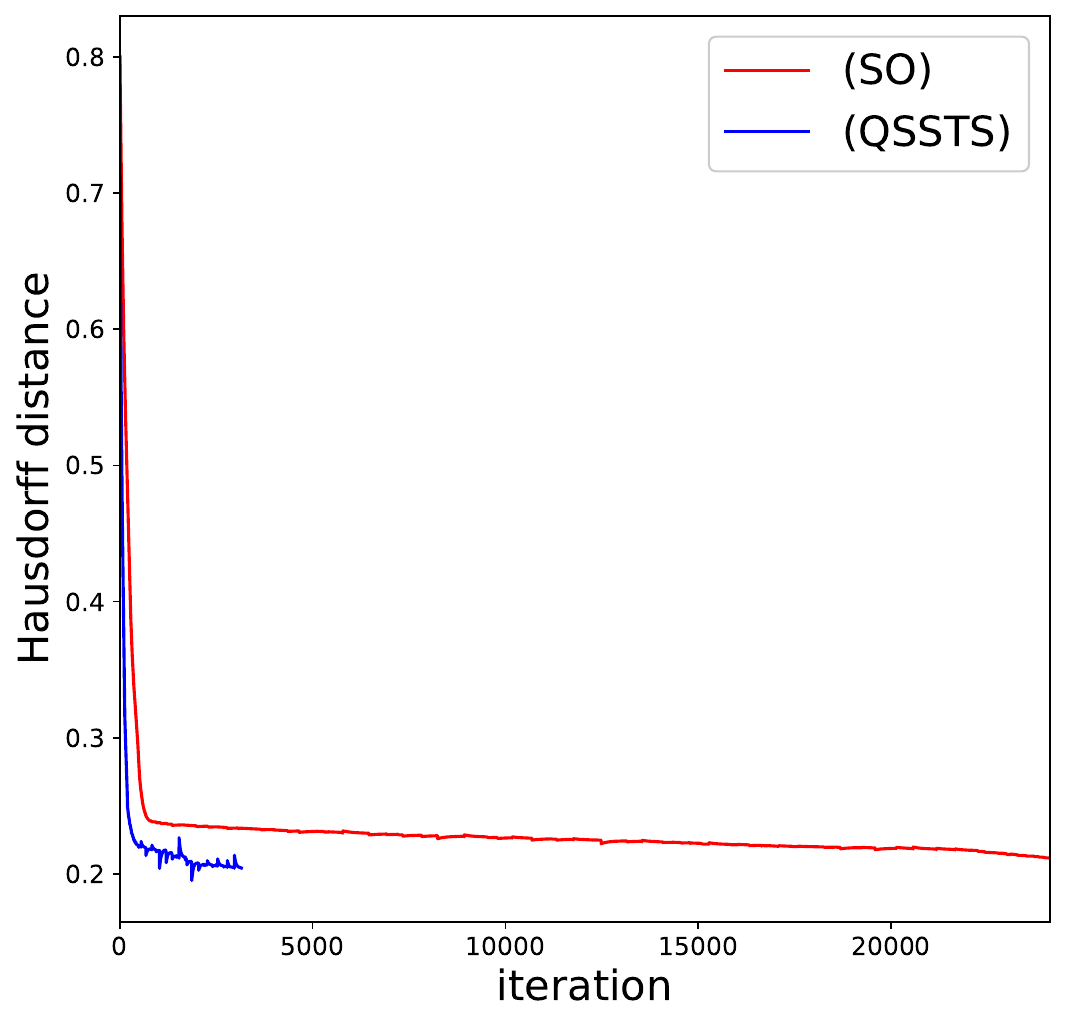}}\\[2em]
		%%%%%
		\scalebox{0.135}{\includegraphics{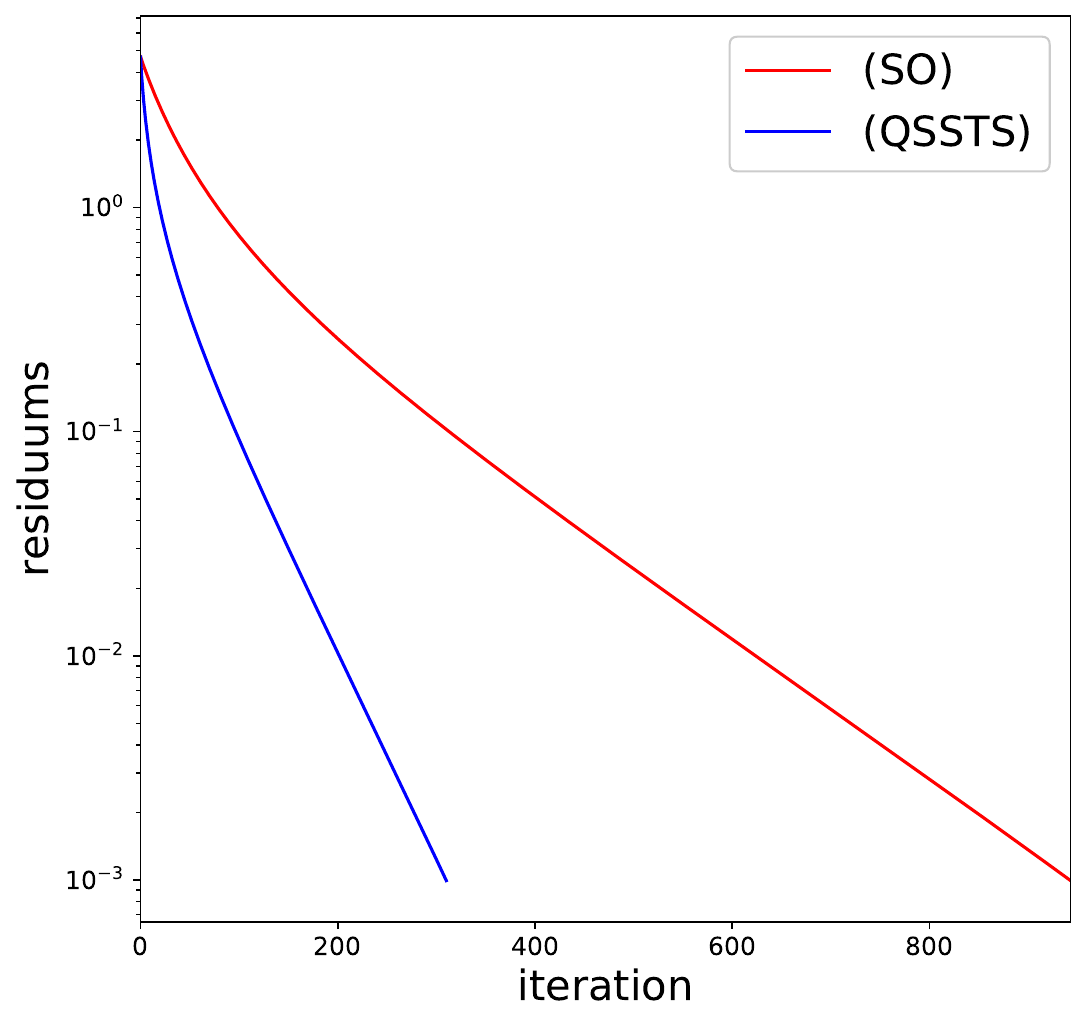}}\hfill
		\scalebox{0.135}{\includegraphics{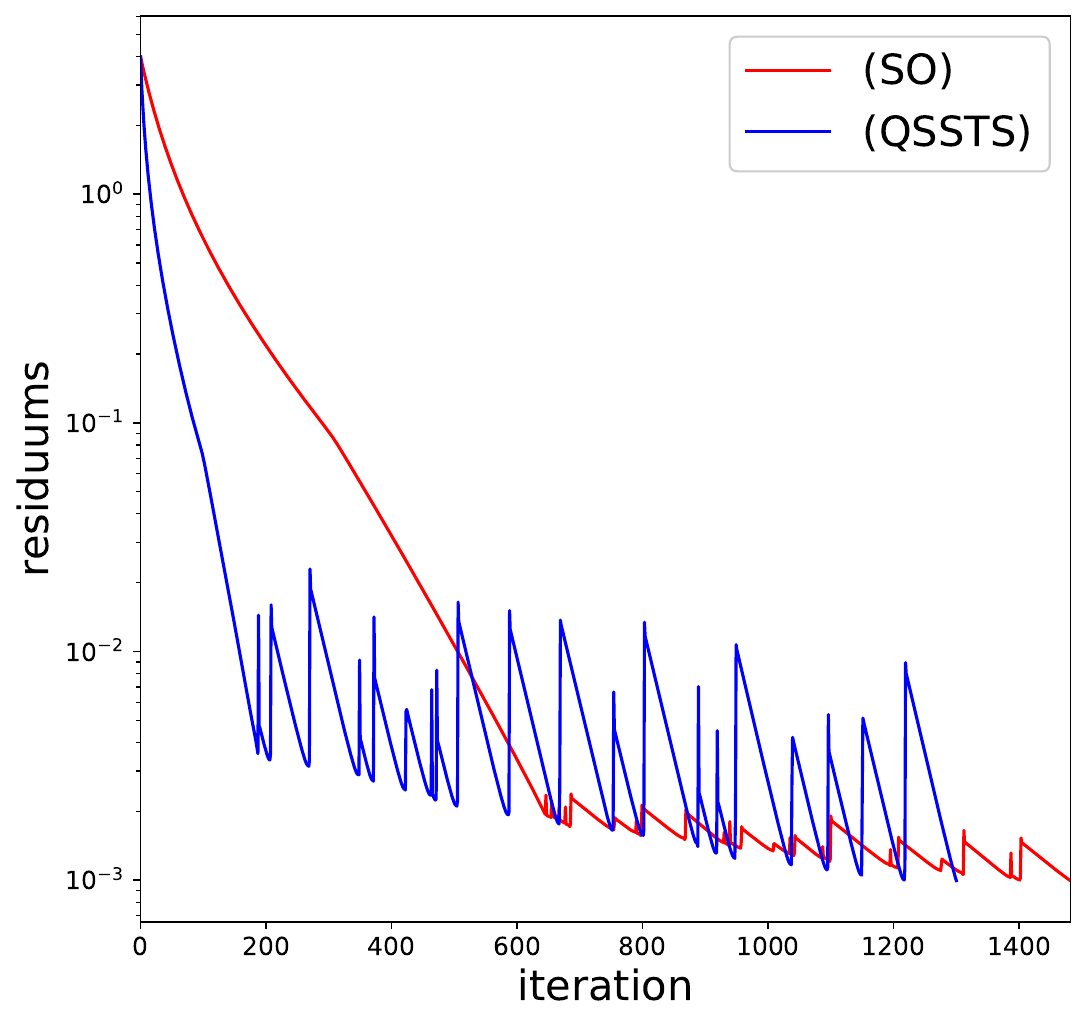}}\hfill
		\scalebox{0.135}{\includegraphics{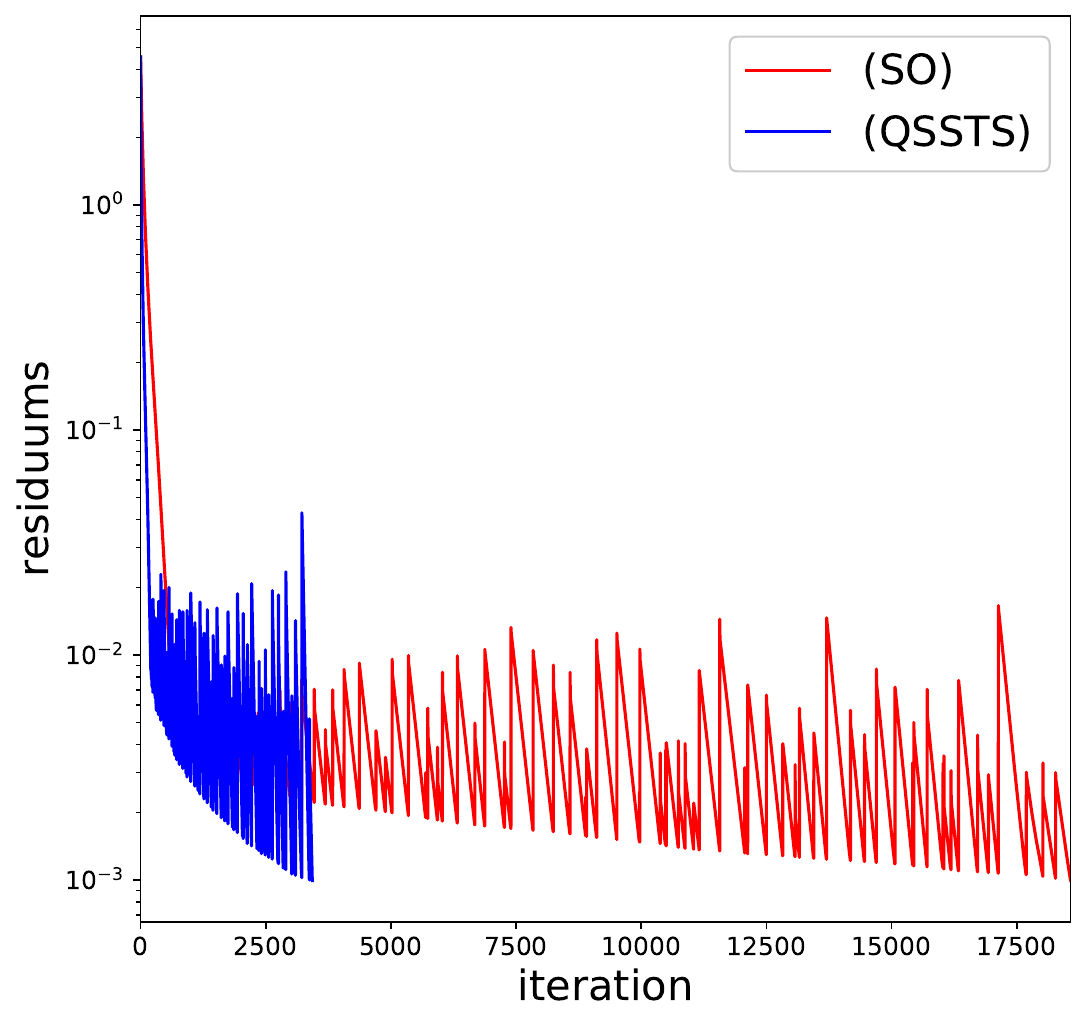}}\hfill
		\scalebox{0.135}{\includegraphics{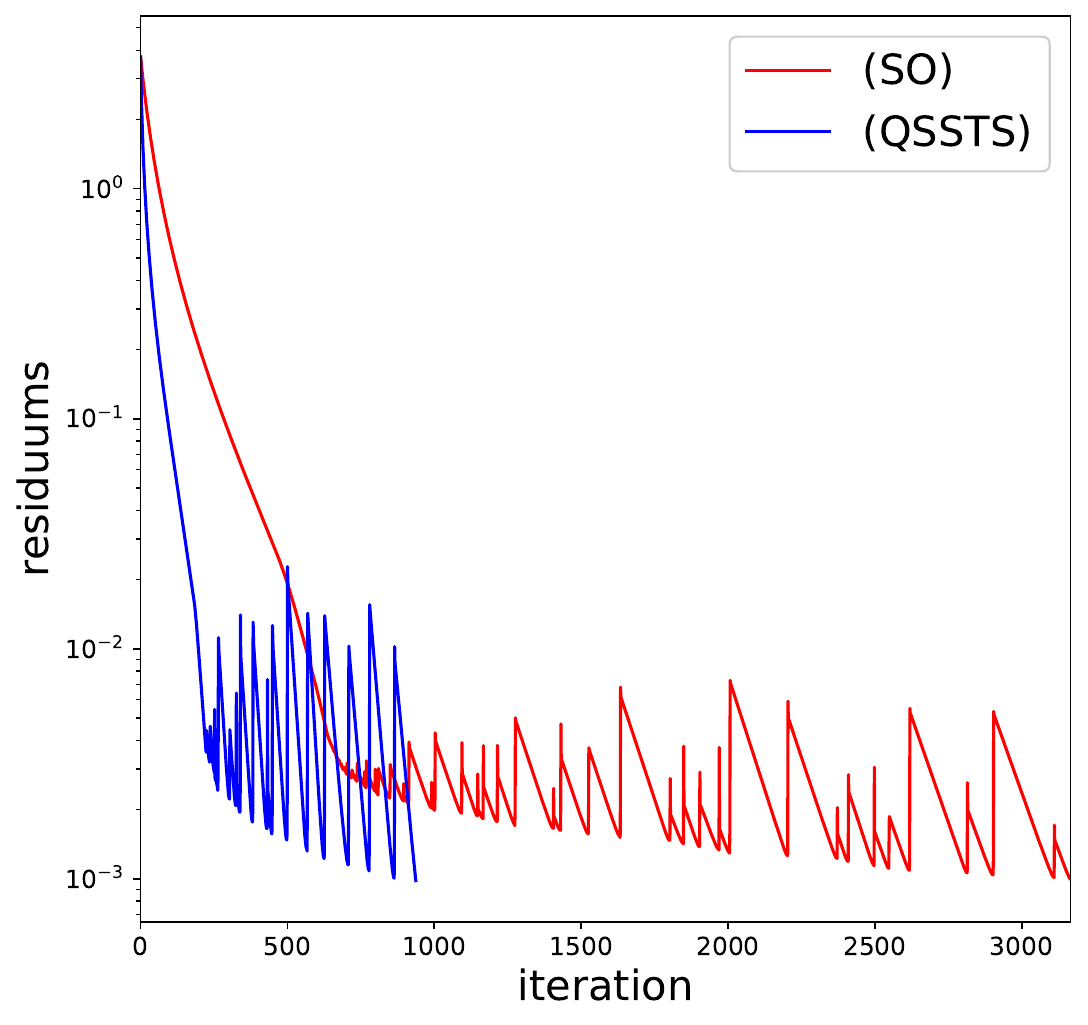}}\hfill
		\scalebox{0.135}{\includegraphics{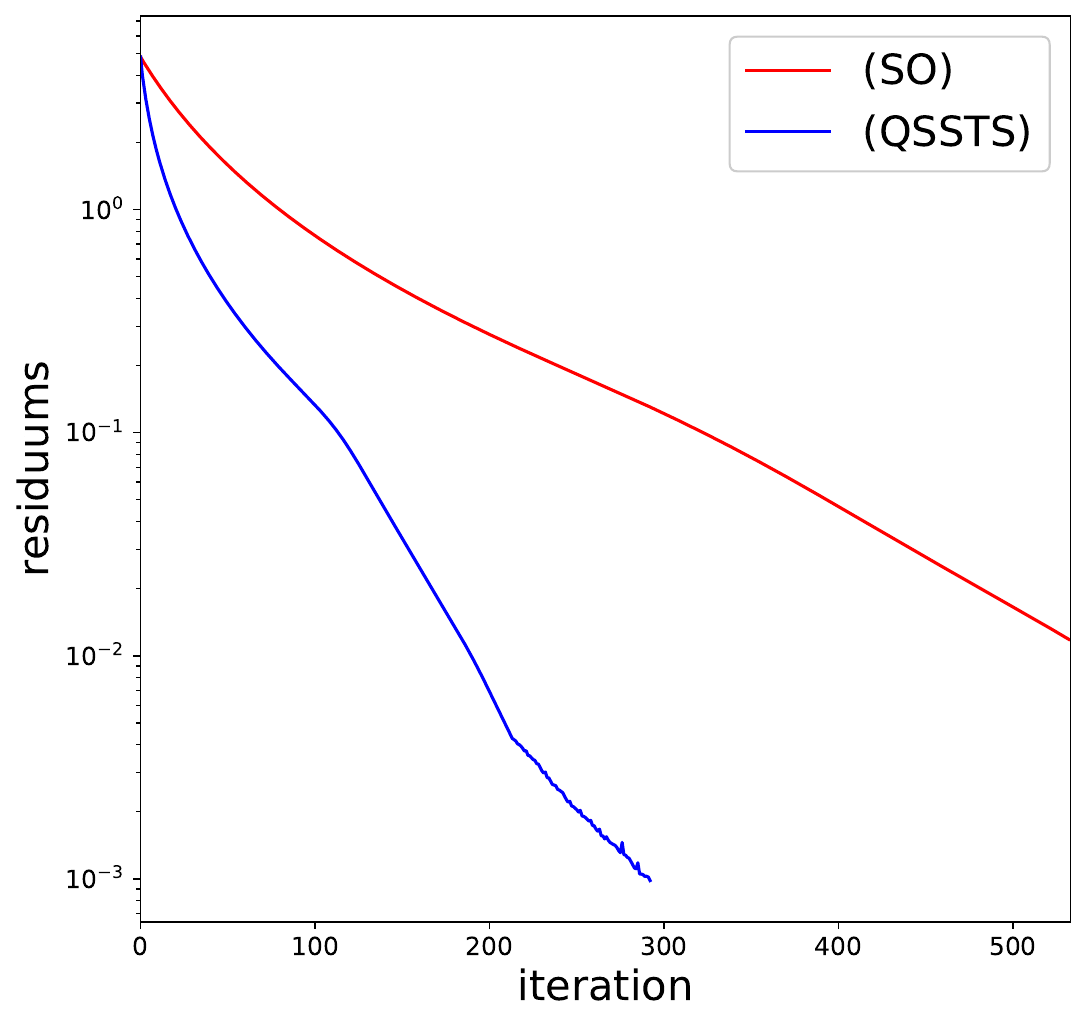}}\hfill
		\scalebox{0.135}{\includegraphics{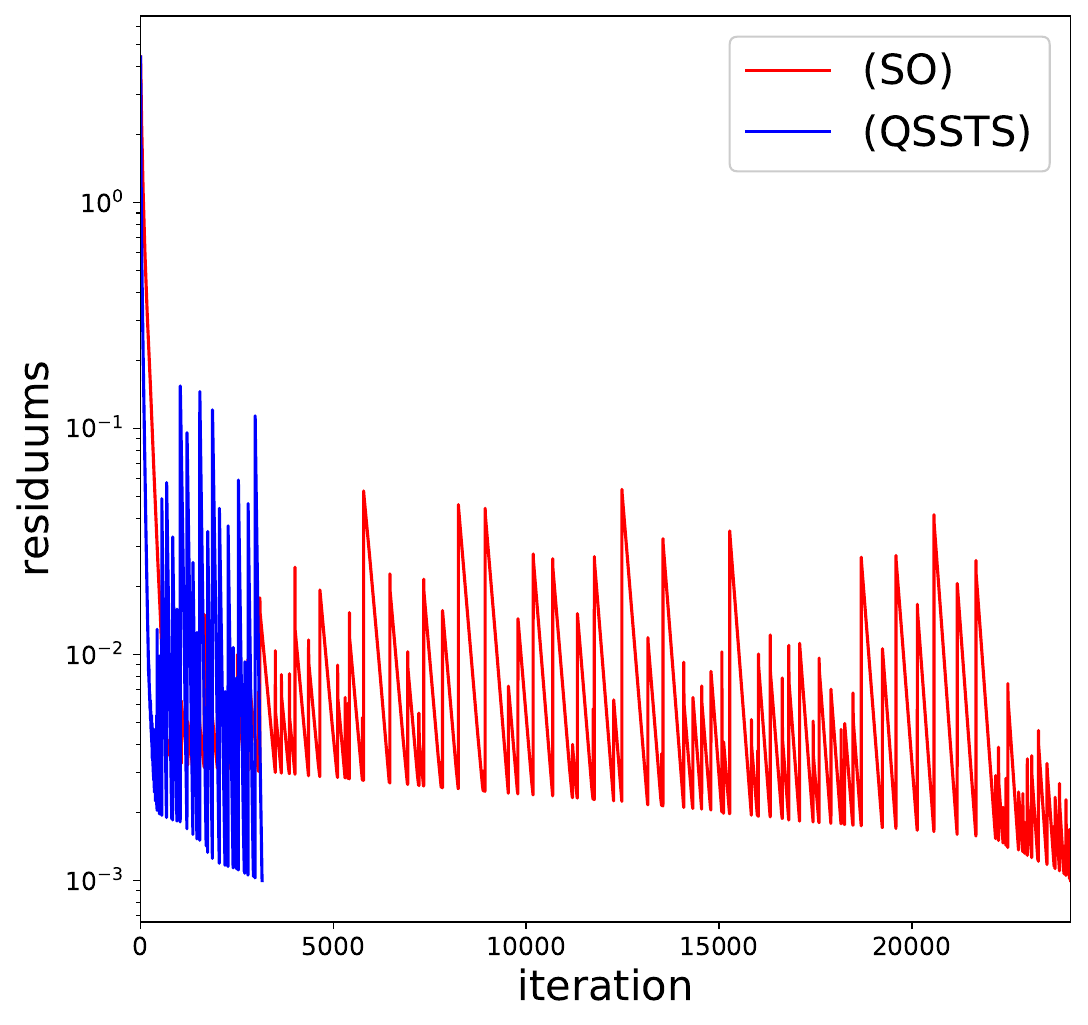}}	
	\caption{Identification results for various cavity shapes (top row), histories of Hausdorff distances (middle row), and residuums (bottom row). In the history plots, the red lines represent the conventional shape optimization approach (SO), while the blue lines indicate the values obtained using the QSSTS approach.}
	}
        \label{fig:numerics}
        \end{figure}%
%-----------------------------------------------------------	
\end{example}%%% END EXAMPLE HERE
\begin{example}%%% BEGIN EXAMPLE HERE
\fergy{We also evaluate the performance of the proposed QSSTS approach compared to the classical SO method in three dimensions. The results, shown in Figure~\ref{fig:numerics_3d}, illustrate the identification of cubic and dumbbell-shaped cavities inside a unit ball using these methods, where in all cases we choose a ball of radius $0.8$ as the initial guess for the reconstruction procedure. The main observation is that both methods produce almost identical reconstructions of the exact cavities. However, a clear difference is seen in their convergence speeds: the QSSTS method converges significantly faster than the SO method, as expected and consistent with the results observed in the two-dimensional case.
It should be noted, however, that the reconstruction in the concave regions of the cavities is less accurate, likely because these regions are farther from the measurement surface. In relation to this, a recent numerical study has proposed improving the identification of such concavities by coupling the standard SO method with the alternating direction method of multipliers (ADMM); see~\cite{RabagoHadriAfraitesHendyZaky2024}. We expect that combining QSSTS with ADMM could lead to a more robust and accurate reconstruction scheme, which we leave as a subject for future work.
%-----------------------------------------------------------
% FIGURE
%----------------------------------------------------------- 
        \begin{figure}[htp!]
        \centering
		\scalebox{0.04}{\includegraphics{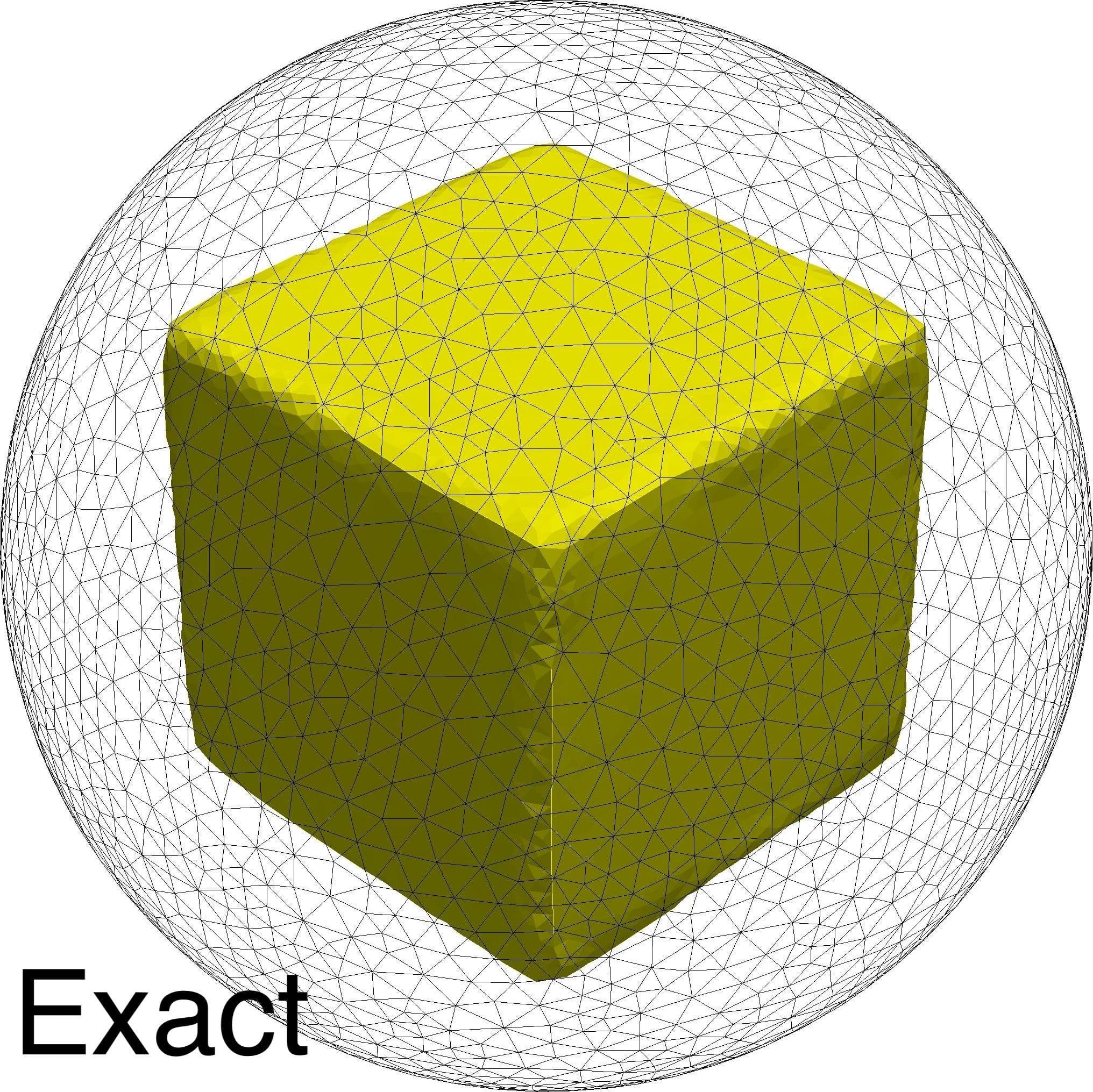}}\
		\scalebox{0.04}{\includegraphics{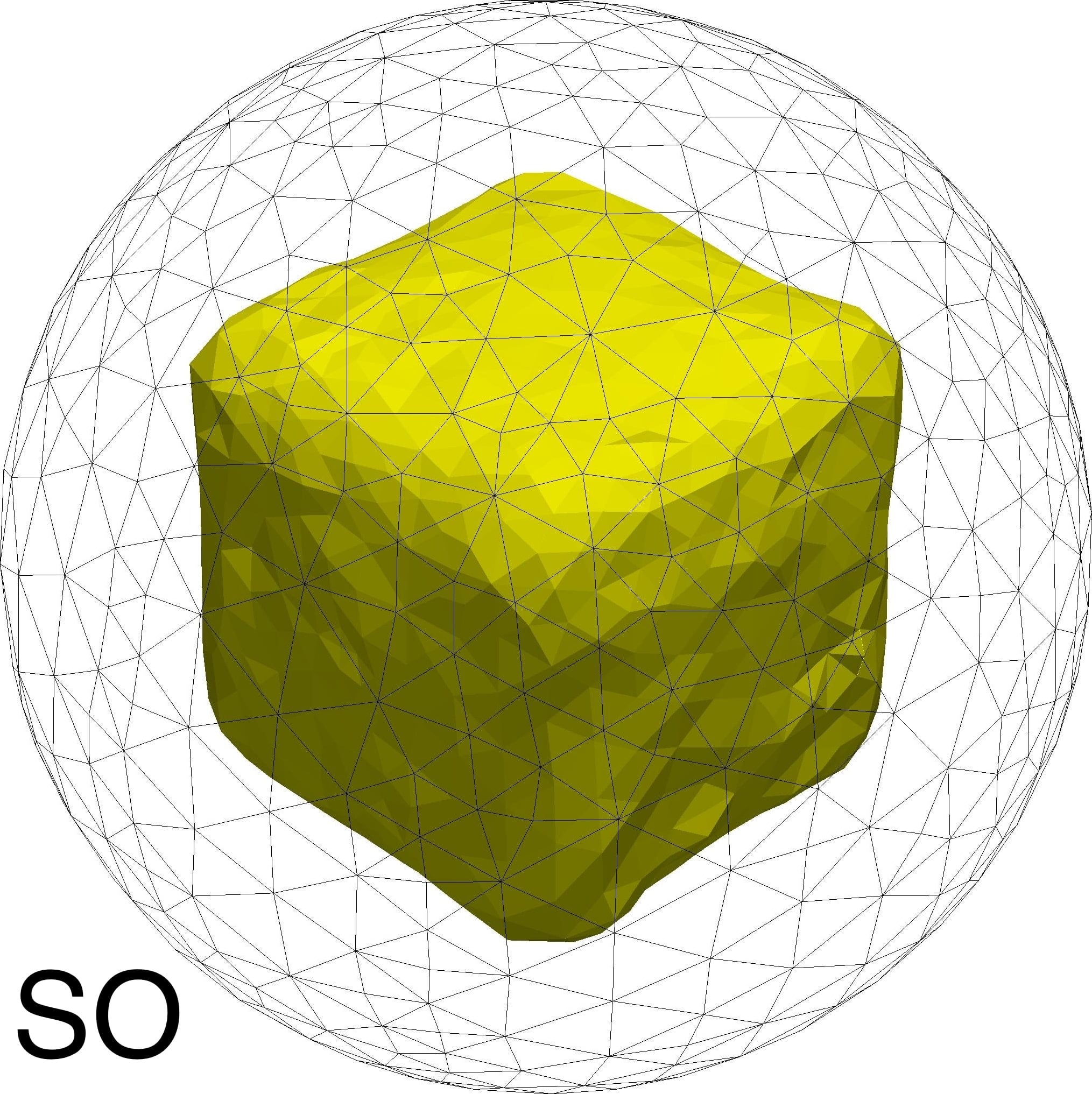}}\
		\scalebox{0.04}{\includegraphics{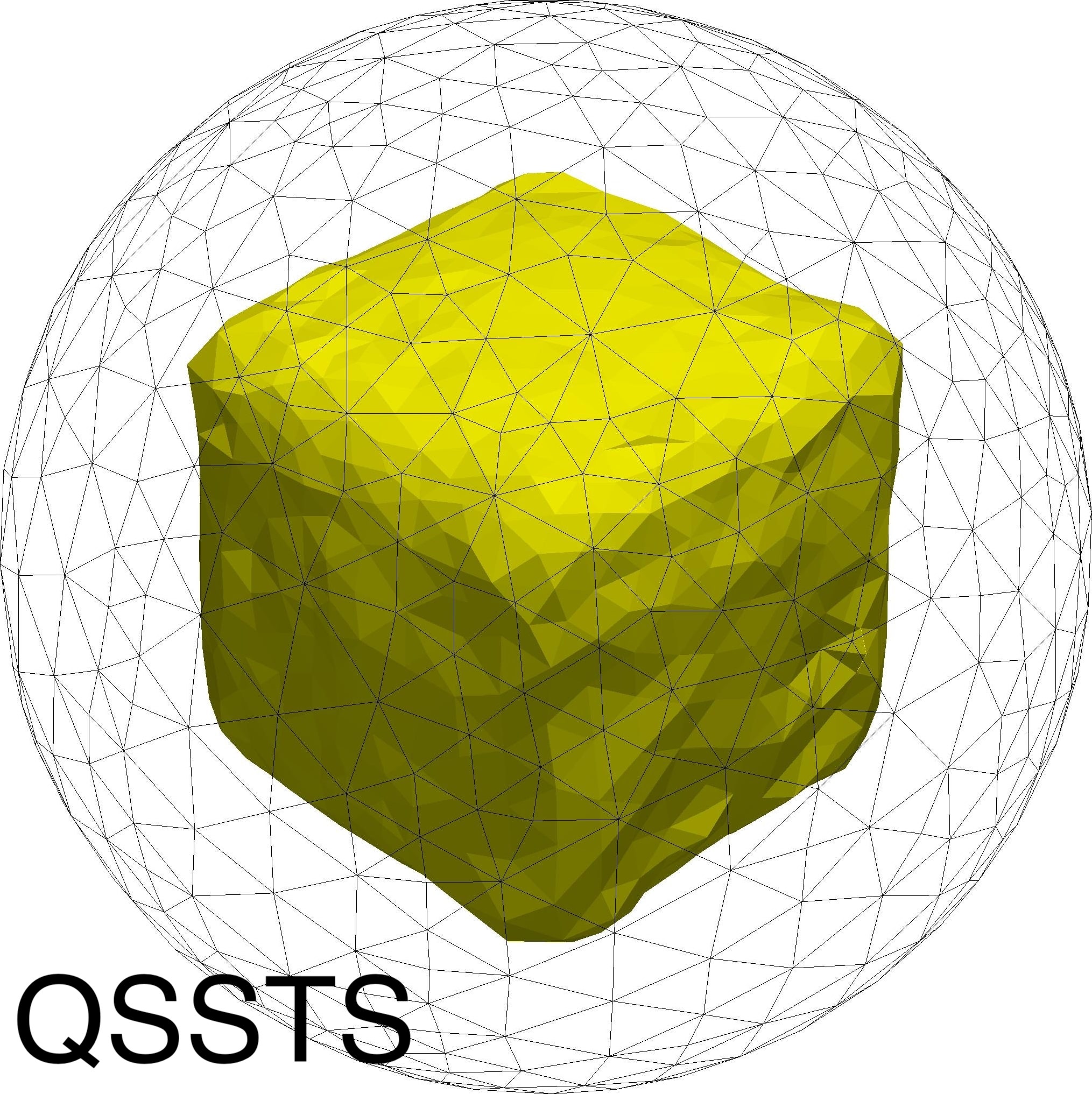}}\
		\scalebox{0.16}{\includegraphics{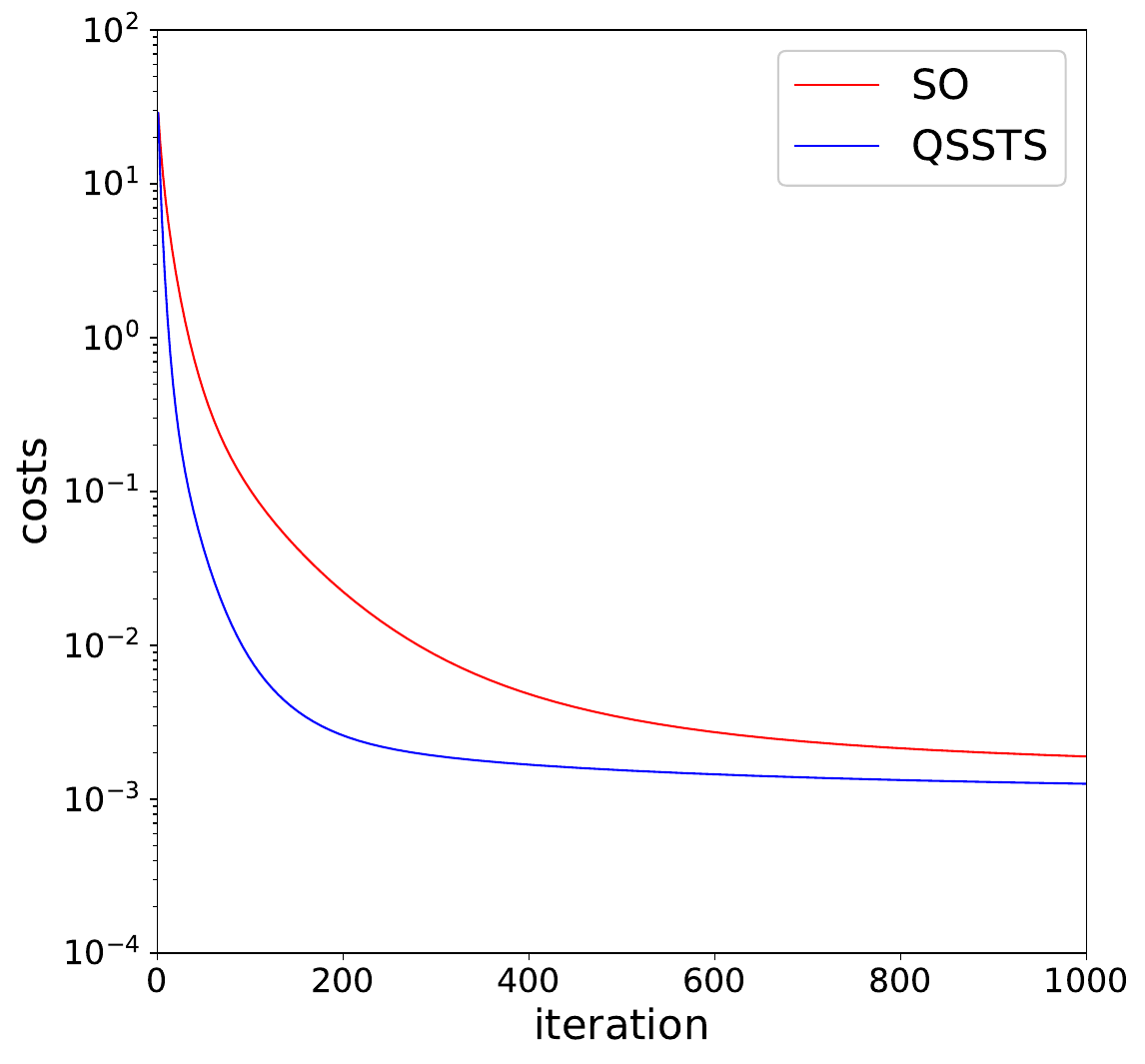}}\
		\scalebox{0.16}{\includegraphics{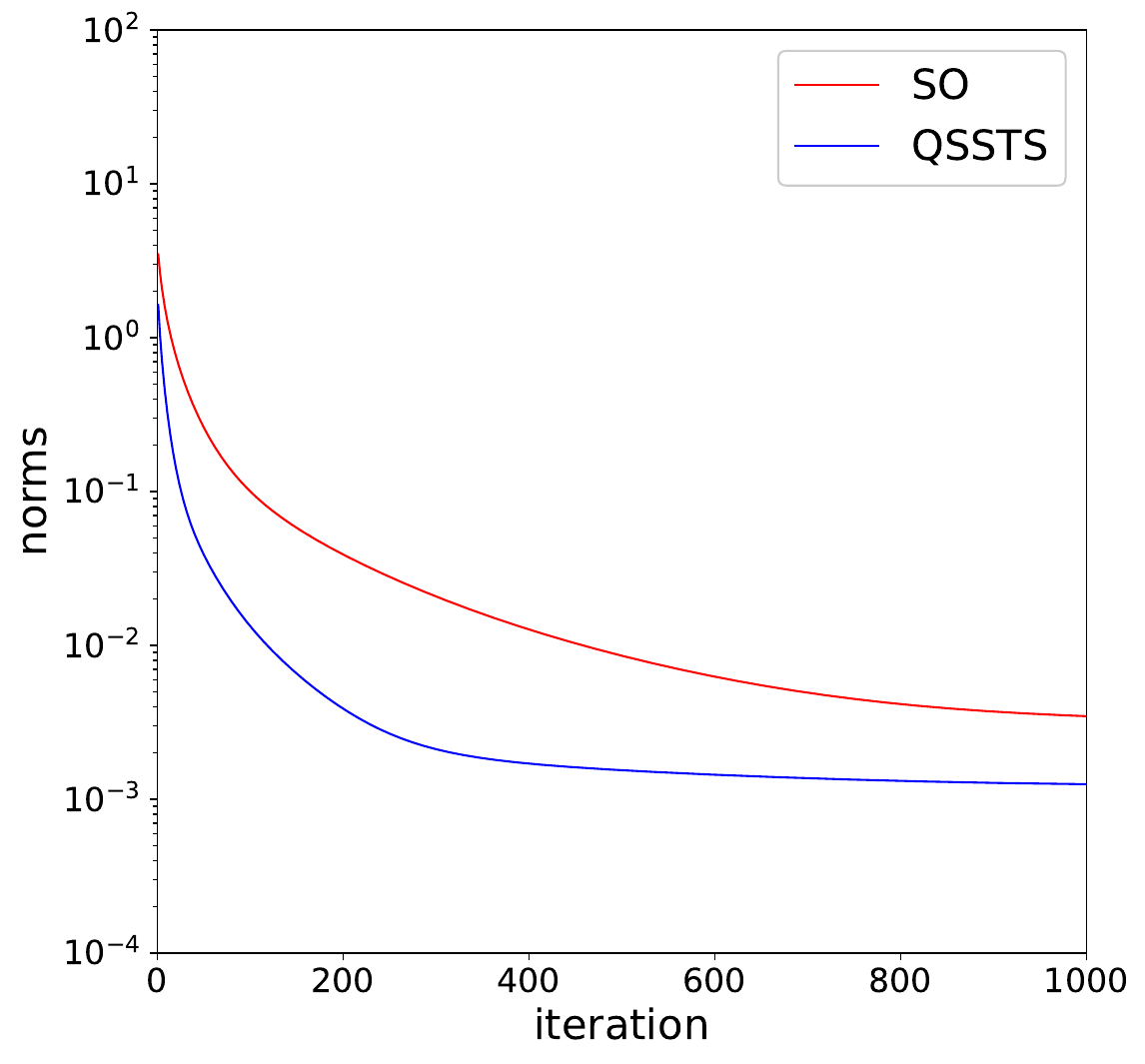}}
		\\[2em]
		\scalebox{0.04}{\includegraphics{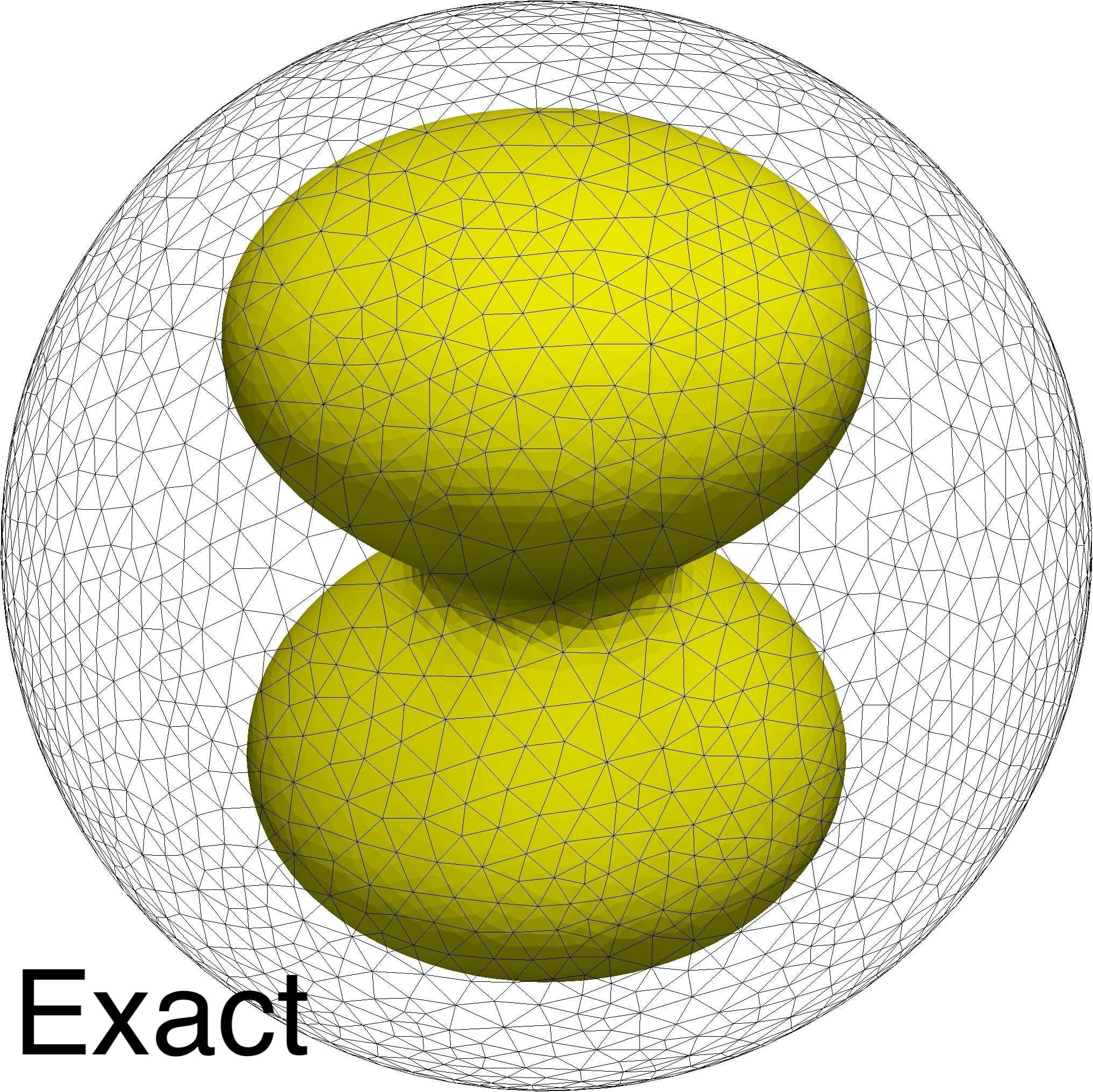}}\
		\scalebox{0.04}{\includegraphics{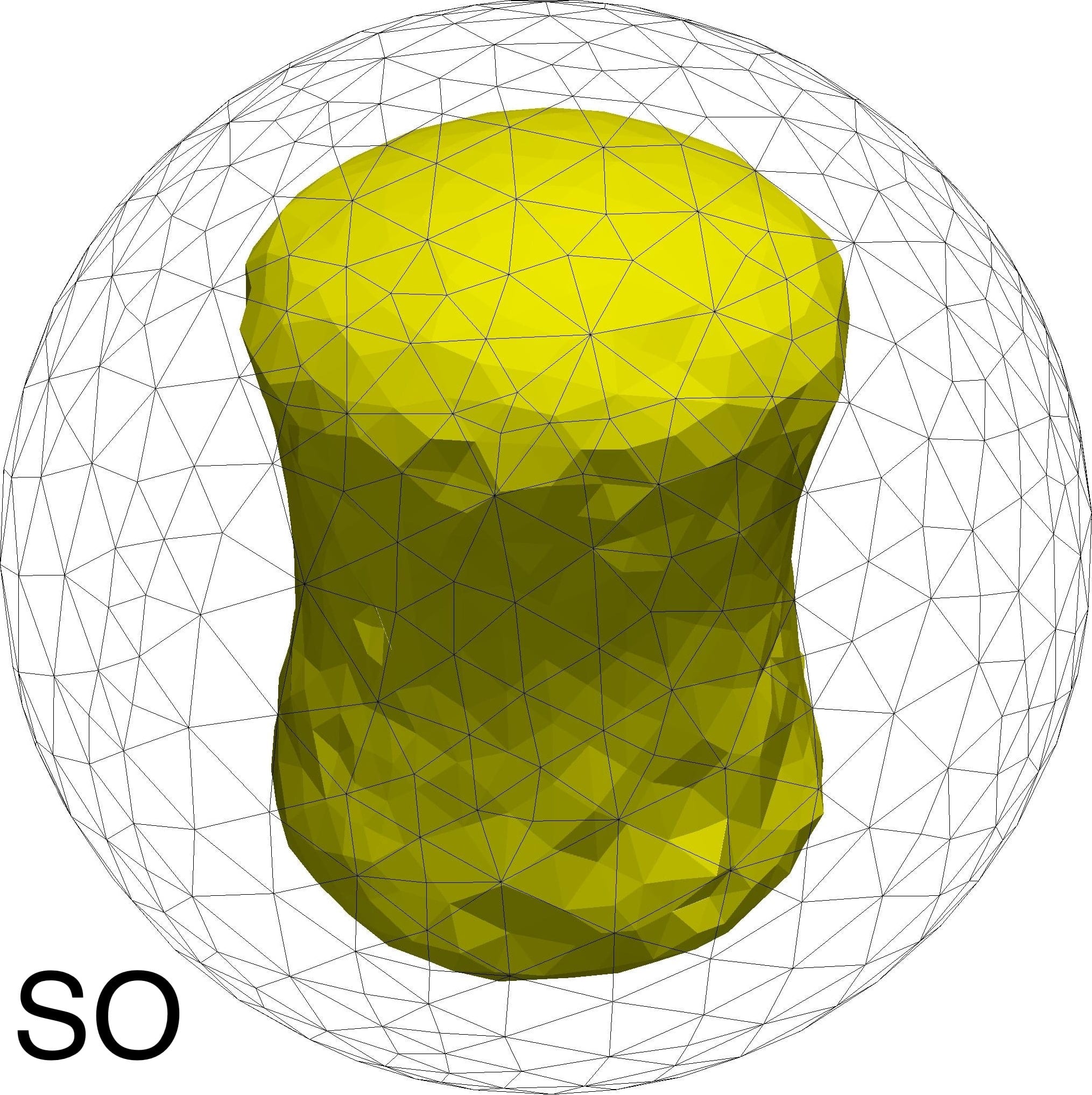}}\
		\scalebox{0.04}{\includegraphics{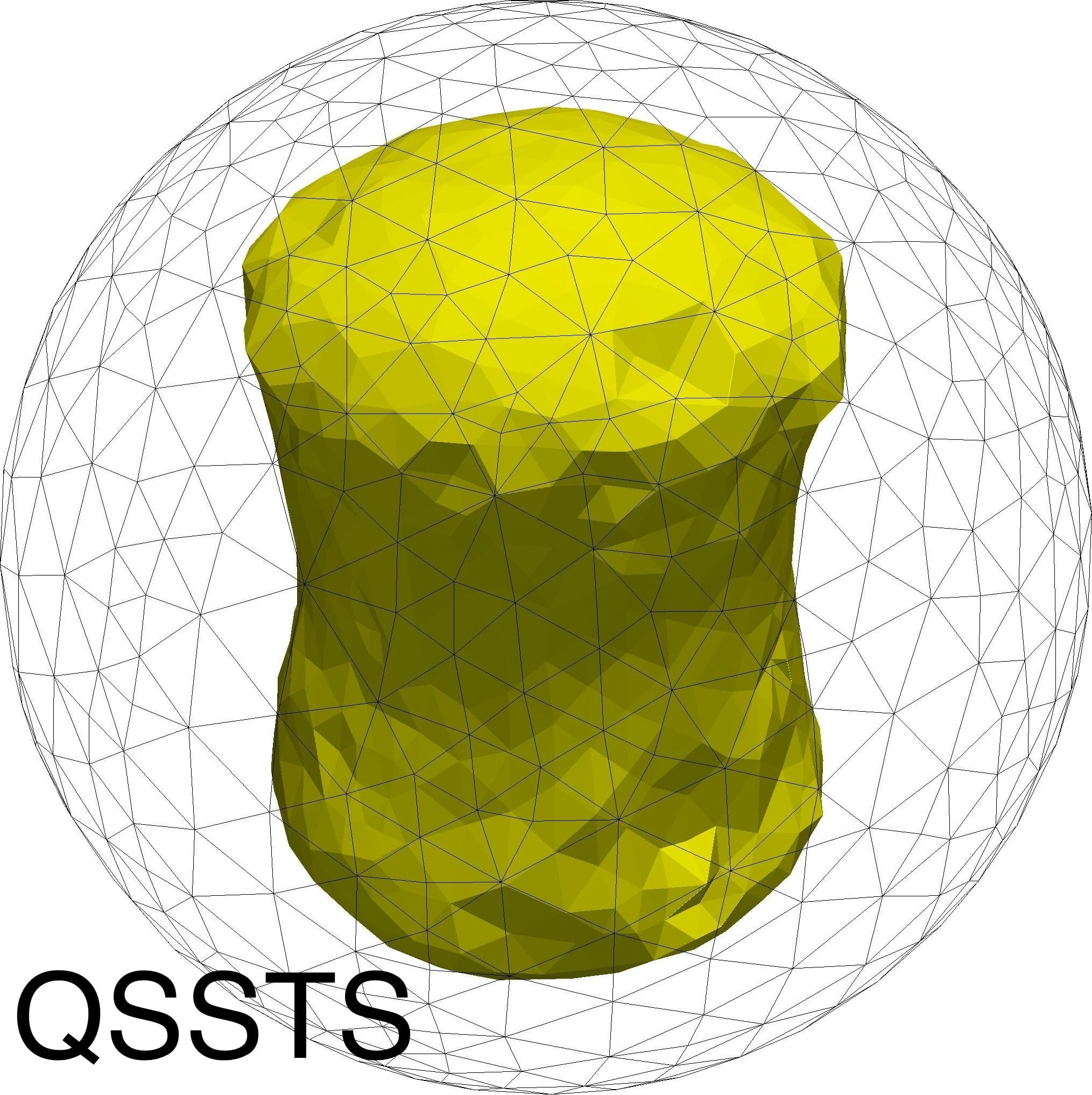}}\
		\scalebox{0.16}{\includegraphics{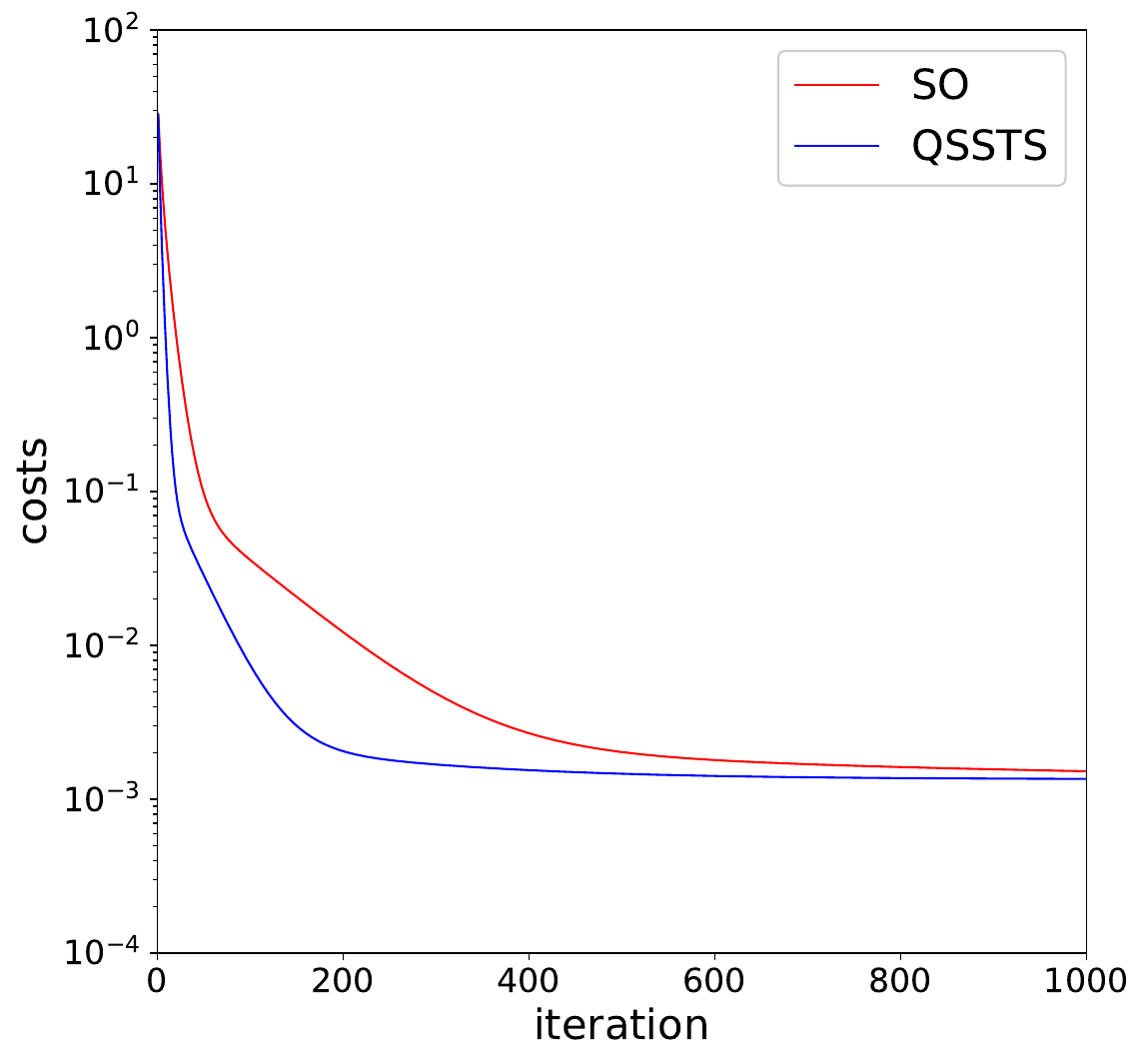}}\
		\scalebox{0.16}{\includegraphics{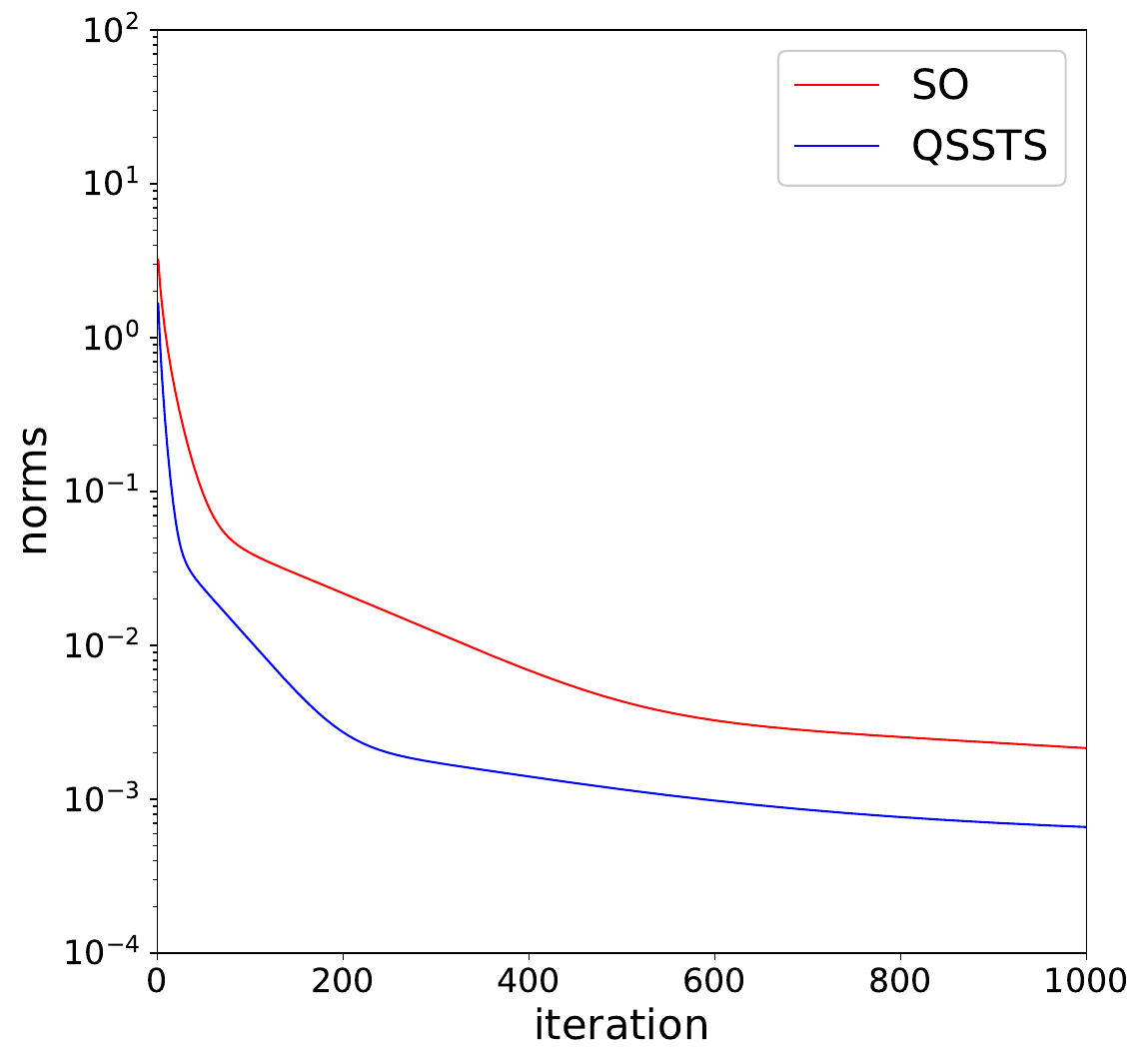}}
	\caption{\fergy{Identifications of cube and a dumbbell shape cavity and histories of cost values and gradient norms.}}
        \label{fig:numerics_3d}
        \end{figure}%
 }
%-----------------------------------------------------------	
\end{example}%%% END EXAMPLE HERE
%
%
%
%
%-----------------------------------------------------------
% ORGANIZATION OF THE PAPER
%-----------------------------------------------------------
The rest of the paper is organized as follows. 
In Section~\ref{sec:auxiliary_results}, we establish several auxiliary results related to a diffeomorphic mapping between a reference domain and its perturbation. 
In Section~\ref{sec:main_results}, we present the main result of the paper (see Theorem~\ref{thm:main_result}), which addresses the local-in-time solvability of system \eqref{eq:main_system} by employing techniques developed by Solonnikov in \cite{Solonnikov2003}. 
The remaining sections sequentially outline the key steps in proving the main result. 
Section~\ref{sec:transformation_onto_a_fixed_domain} details the transformation of the original problem onto a fixed domain. 
In Section~\ref{sec:regularity_of_solutions}, we establish the regularity of solutions on the fixed domain. 
Section~\ref{sec:nonlinear_problem} addresses the nonlinear problem with respect to a parameter. 
This is followed by an analysis of the linearized problem in Section~\ref{sec:the_linear_problem}. 
The proof of the main result, as stated in Theorem~\ref{thm:main_result}, is completed in Section~\ref{sec:proof_of_the_main_result}.
Finally, a summary of the work and some final remarks are given in Section~\ref{sec:summary_and_final_remark}.

This work also includes several appendices that provide preliminary analyses of the well-posedness of \eqref{eq:main_system} in the case of axisymmetric domains (see Appendices~\ref{appx:comparison_of_normal_derivatives} and \ref{appx:Mullins-Sekera_analysis}), demonstrations of transformations and computations of key equations and identities (Appendices~\ref{appx:change_of_variables} and \ref{appendix:computations_of_the_variations}) utilized in the paper, and detailed proofs of auxiliary lemmas used in the proof of the main result (Appendix~\ref{appx:lemma_proofs}).
%---------------------------------------------------------------------------------------------------
% 	PRELIMINARIES	
%---------------------------------------------------------------------------------------------------
\section{Auxiliary Results}	\label{sec:auxiliary_results}
To state clearly our main result, we start by fixing some notations and prove some preliminary results in this section.
Throughout the paper, for $k \in \mathbb{N} \cup \{0\}$, ${{C}}^{k}(\baromega)$ denotes the usual space of all functions having continuous and bounded derivatives in $\baromega$ up to $k$th order while, with $\alpha \in (0,1)$, ${{C}}^{k+\alpha}(\Omega)$ is the Banach space of all functions $u \in {{C}}^{k}(\Omega)$ for which the norm\footnote{See also the norms issued at the beginning of Section~\ref{sec:main_results} (cf. notations in \cite{Krylov1997,LadyzenskajaUralceva1968}\fergy{)}.}
\[
	\abs{u}^{(k+\alpha)}_{\Omega} = \abs{u}^{(k)}_{\Omega} + [u]^{(k+\alpha)}_{\Omega},
\]
is finite, where
\[
	\abs{u}^{(k)}_{\Omega} = \sum_{j=0}^{k}[u]^{(j)}_{\Omega}, 
	\qquad [u]^{(j)}_{\Omega} = \max_{\abs{\beta}=j} \abs{D^{\beta}{u}}^{(0)}_{\Omega},
	\qquad \abs{u}^{(0)}_{\Omega} = [u]^{(0)}_{\Omega} = \sup_{\Omega}\abs{u},
\]
and
\[
	[u]^{(k+\alpha)}_{\Omega} = \max_{\abs{\beta} = k} [D^{\beta}{u}]^{(\alpha)}_{\Omega},
	\qquad [u]^{(\alpha)}_{\Omega} \fergy{=} \sup_{ \substack{ x, x^{\prime} \in \Omega \\ x\neq x^{\prime} }  } \frac{\abs{u(x) - u(y)}}{\abs{x-y}^{\alpha}}.
\]

Given a bounded simply connected domain $D \subset \mathbb{R}^{d}$ with boundary $\partial{D} = \Sigma$, we define the \textit{set of admissible geometries} (the boundary of \textit{inclusions}) as follows:
\[
	\mathcal{A}^{2+\alpha}
		:= \left\{ \Gamma = \partial{\omega} \mid \baromega \subset D, \ \text{$\omega$ is a simply connected bounded domain and $\partial{\omega} \in {{C}}^{2+\alpha}$}  \right\}. 
\]
For $\Gamma \in \mathcal{A}^{2+\alpha}$, we introduce the notation $\Omega(\Gamma)$ to denote an annular domain in $\mathbb{R}^{d}$ with boundary $\partial\Omega(\Gamma) = \Gamma \cup \Sigma$.
Accordingly, given $f \in {{C}}^{2+\alpha}(\Sigma)$ and $\Gamma \in \mathcal{A}^{2+\alpha}$, we denote by $\ud(\Gamma)$ the unique solution of 
 	\begin{equation}
	\label{eq:ud_gamma}
	\left\{\arraycolsep=1.4pt\def\arraystretch{1.2}
	\begin{array}{rcll}
		\ud &\in& {{C}}^{2 + \alpha}(\overline{\Omega(\Gamma)}),\\
		-\Delta \ud		&=&0 		&\quad \text{in $\Omega(\Gamma)$},\\
		\ud			&=&f 		&\quad \text{on $\Sigma$},\\
		\ud			&=&0		&\quad \text{on $\Gamma$}.
	\end{array}
	\right.
	\end{equation}
Similarly, given $g \in {{C}}^{1+\alpha}(\Sigma)$ and $\Gamma \in \mathcal{A}^{2+\alpha}$, we denote by $\un(\Gamma)$ the unique solution of
 	\begin{equation}
	\label{eq:un_gamma}
	\left\{\arraycolsep=1.4pt\def\arraystretch{1.2}
	\begin{array}{rcll}
 		\un &\in& {{C}}^{2 + \alpha}(\overline{\Omega(\Gamma)}),\\
		-\Delta \un		&=&0 		&\quad \text{in $\Omega(\Gamma)$},\\
		\ddn{\un} &=&g 		&\quad \text{on $\Sigma$},\\
		\un			&=&0		&\quad \text{on $\Gamma$}.
	\end{array}
	\right.
	\end{equation}
Hereinafter, unless otherwise stated, we assume $\Gamma \in \mathcal{A}^{2+\alpha}$.

For technical purposes and for economy of space, we need to introduce a notion of \textit{quasi-normal} vector on $\Gamma$.
\begin{definition}\label{defn:quasi_normal}
	We say that a vector field $\mathbf{N}$ is \textit{quasi-normal} on $\Gamma \in \mathcal{A}^{2+\alpha}$, inheriting the regularity of $\Gamma$, if
	\begin{equation}\label{eq:quasi_normal}
	\left\{
	\
	\begin{aligned}
	&\text{$\NN \in {{C}}^{2+\alpha}(\Gamma; \mathbb{R}^{d})$, \fergy{$\NN = 0$ near $\Sigma$}, and} \\
	&\text{it is such that $\abs{\NN(\xi)} = 1$ and $\NN(\xi) \cdot \nu (\xi; \Gamma) > 0$, for all $\xi \in \Gamma$.}
	\end{aligned}
	\right.
\end{equation}
\end{definition}
\fergy{The assumption that $\NN$ vanishes near $\Sigma$ is necessary to keep the exterior boundary $\Sigma$ fixed.}

We let ${I}_{\epso}:=[-\epso,\epso]$ and fix a constant $\epso = \epso(\Gamma,\NN) > 0$ such that the map 
\[
	X:\Gamma \times {I}_{\epso} \ \longrightarrow \ \Gamma^{\epso} \subset \mathbb{R}^{d},\qquad
	X(\xi, \rho)  \ \longmapsto \ \xi + \rho \NN(\xi) \subset {D},
\]
is a ${{C}}^{2+\alpha}$-diffeomorphism, where
		\[
			\Gamma^{{\varepsilon}} := \{ \XX(\xi,r):=\xi + r \NN(\xi) \mid (\xi,r) \in \Gamma \times {I}_{\varepsilon}\},
		\]
for ${\varepsilon} > 0$.
In fact, we have the following proposition.
\begin{proposition}
	There exists a constant $\epso > 0$ such that 
	\[
		X \in \operatorname{Diffeo}^{2+\alpha}(\Gamma \times \overline{I}_{\epso} ; { \Gamma^{\epso} }), \qquad { \Gamma^{\epso} } := X(\Gamma \times \overline{I}_{\epso}).
	\]
\end{proposition}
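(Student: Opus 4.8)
The statement asserts that for a suitably small $\epso>0$ the map $X(\xi,\rho) = \xi + \rho\NN(\xi)$ restricted to $\Gamma\times\overline{I}_{\epso}$ is a $C^{2+\alpha}$-diffeomorphism onto its image $\Gamma^{\epso}$. The natural route is the inverse function theorem, carried out uniformly along the compact boundary $\Gamma$, combined with a compactness/contradiction argument to upgrade local injectivity to global injectivity on the tubular neighborhood. First I would record that $X$ is of class $C^{2+\alpha}$ on $\Gamma\times\mathbb{R}$: this is immediate since $\NN\in C^{2+\alpha}(\Gamma;\mathbb{R}^d)$ and $X$ is affine in $\rho$, and the composition with local $C^{2+\alpha}$-charts of $\Gamma$ preserves that regularity. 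Next I would compute the Jacobian of $X$ in local coordinates $(\xi^1,\dots,\xi^{d-1},\rho)$ on $\Gamma\times\mathbb{R}$: the columns are $\partial_{\xi^i}\xi + \rho\,\partial_{\xi^i}\NN(\xi)$ for $i=1,\dots,d-1$ together with $\NN(\xi)$. At $\rho=0$ these are a basis $\{\partial_{\xi^1}\xi,\dots,\partial_{\xi^{d-1}}\xi\}$ of the tangent space $T_\xi\Gamma$ augmented by $\NN(\xi)$, which is not tangent to $\Gamma$ because $\NN(\xi)\cdot\nu(\xi;\Gamma)>0$ by the quasi-normality hypothesis \eqref{eq:quasi_{n}ormal}; hence the Jacobian determinant is nonzero at every $(\xi,0)$. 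By continuity of the Jacobian in $\rho$ and compactness of $\Gamma$, there is $\epsone>0$ so that the Jacobian stays invertible, with a uniform lower bound on $|\det|$, for all $(\xi,\rho)\in\Gamma\times\overline{I}_{\epsone}$; thus $X$ is a local $C^{2+\alpha}$-diffeomorphism there.

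The remaining and more delicate point is global injectivity of $X$ on $\Gamma\times\overline{I}_{\epso}$ for $\epso$ possibly smaller than $\epsone$. I would argue by contradiction: if no such $\epso$ works, there exist sequences $\rho_n,\rho_n'\to 0$ and points $\xi_n\neq\xi_n'$ (or $\xi_n=\xi_n'$, $\rho_n\neq\rho_n'$, which the local diffeomorphism property already rules out for $n$ large) with $\xi_n+\rho_n\NN(\xi_n) = \xi_n'+\rho_n'\NN(\xi_n')$. By compactness of $\Gamma$, pass to subsequences so that $\xi_n\to\xi_\ast$ and $\xi_n'\to\xi_\ast'$; taking $n\to\infty$ in the identity gives $\xi_\ast=\xi_\ast'$. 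Then for large $n$ both $\xi_n,\xi_n'$ lie in a neighborhood of $\xi_\ast$ on which $X$ is injective by the local inverse function theorem (with a uniform radius coming from the uniform Jacobian bound, e.g.\ via a quantitative inverse function theorem or a Lipschitz-perturbation-of-identity estimate), so $(\xi_n,\rho_n)=(\xi_n',\rho_n')$, a contradiction. This yields an $\epso>0$ with $X$ injective on $\Gamma\times\overline{I}_{\epso}$; shrinking further if needed so that $\epso\le\epsone$, $X$ is also a local diffeomorphism there.

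Finally I would assemble the conclusion: $X:\Gamma\times\overline{I}_{\epso}\to\Gamma^{\epso}:=X(\Gamma\times\overline{I}_{\epso})$ is a continuous bijection onto its image; since $\Gamma\times\overline{I}_{\epso}$ is compact and $\mathbb{R}^d$ is Hausdorff, $X$ is a homeomorphism onto $\Gamma^{\epso}$; and because $X$ is a local $C^{2+\alpha}$-diffeomorphism at each point, the inverse map $X^{-1}$ is also $C^{2+\alpha}$. Hence $X\in\operatorname{Diffeo}^{2+\alpha}(\Gamma\times\overline{I}_{\epso};\Gamma^{\epso})$, as claimed. It is worth noting that $\epso$ depends on $\Gamma$ and $\NN$ only through the $C^{2+\alpha}$-norm of $\NN$, the lower bound on $\NN\cdot\nu$, and the geometry of $\Gamma$ (a reach-type quantity), consistent with the notation $\epso=\epso(\Gamma,\NN)$ introduced above.

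\textbf{Main obstacle.} The one step requiring genuine care is the passage from local injectivity to global injectivity on the tube: one must ensure the neighborhoods on which $X$ is injective can be taken of a size bounded below uniformly in $\xi\in\Gamma$, which is exactly what the uniform invertibility of the Jacobian (via compactness of $\Gamma$) and a quantitative inverse function theorem provide; the contradiction argument then closes the gap. Everything else — the $C^{2+\alpha}$ regularity of $X$ and of $X^{-1}$, and the computation showing the Jacobian is nonsingular at $\rho=0$ — is routine given the quasi-normality condition \eqref{eq:quasi_{n}ormal}.
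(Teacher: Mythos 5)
Your proposal is correct and reaches the same conclusion, but it diverges from the paper's method at exactly the two places where the argument has content. For invertibility of the Jacobian you invoke continuity in $\rho$ together with compactness of $\Gamma$ to get a uniform lower bound on $\abs{\det}$; the paper instead factors the Jacobian through the matrix $\textsf{N}(\varrho,r) = \matI + r\nabla_\Gamma^\top \widetilde{\NN}$ and derives an explicit admissible range $\epso \leqslant \varepsilon_1 = c\min\{1, 1/(2\norm{\textsf{N}^{-1}\nabla_\Gamma^\top\NN})\}$ from the bound $[\textsf{N}^{-1}\NN]\cdot\nu \geqslant c/2$, where $c$ is the lower bound for $\NN\cdot\nu$. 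For global injectivity you use a sequential-compactness/contradiction scheme (take $\epso = 1/n$, extract convergent subsequences of colliding points, appeal to a uniform-radius local inverse function theorem to close); the paper instead runs an explicit finite-cover argument, introduces $r_0$ as the infimum of distances between points not sharing a chart, uses the Lipschitz constant $L_0$ of $\NN$ on $\Gamma$ to get $(1 - \epso L_0)\abs{\xi - \tilde\xi} \leqslant 2\epso$, and then forces a contradiction by choosing $\epso < r_0/(2 + r_0 L_0)$. Both routes are valid; what the paper's version buys is an explicit, effective expression for $\epso(\Gamma,\NN)$ in terms of $c$, $r_0$, $L_0$, and the matrix norm, which is then actually used downstream (e.g.\ in Proposition~\ref{prop:diffeomorphic_map} and in defining ${R}_{[a,b]}(\Gamma,\NN)$), whereas your compactness argument proves existence of $\epso$ without a formula. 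Conversely, your version is shorter and sidesteps the bookkeeping with charts. The one place to be careful in your argument — which you flag correctly — is that the local inverse function theorem must be applied with a radius of injectivity bounded below uniformly in $\xi$, and that uniformity does require a quantitative estimate (in effect, the same Lipschitz-perturbation computation the paper writes out), so the two proofs are closer under the hood than the surface presentations suggest.
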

\begin{proof}
Let $\varrho \in B_{1}:= B_{1}{(\mathbb{R}^{d-1})}$ (i.e., $B_{1}$ denotes the unit ball in $\mathbb{R}^{d-1}$).
For all $\xi_{0} \in \Gamma$, there exists an open set $B(\xi_{0}) \subset \mathbb{R}^{d}$ containing $\xi_{0}$ and a mapping $\varphi \in \operatorname{Diffeo}^{2+\alpha}(\overline{B_{1}}, \overline{\Gamma \cap B(\xi_{0})})$ such that
\[
	\Upsilon : \overline{B}_{1} \times {I}_{\varepsilon} \to \mathbb{R}^{d}, \qquad 
	\Upsilon(\varrho,r) := X(\varphi(\varrho), r) = \varphi(\varrho) + r\NN(\varphi(\varrho)).
\]
We shall first prove that the determinant of the Jacobian of the map $\Upsilon$ is non-zero for all $(\varrho, r) \in \overline{B}_{1} \times {I}_{\varepsilon}$.

We let $\widetilde{\NN}$ be a smooth extension of $\NN$ in a tubular neighborhood of $\Gamma$ and define $\textsf{N}(\varrho, r) =  \matI+ r  \nabla_{\Gamma}^{\top} \widetilde{\NN}(\varphi(\varrho))$.
By straightforward computations, we get
\begin{align*}
	\mathbb{R}^{d \times d} \ni \nabla_{(\varrho, r)}^{\top} \Upsilon{(\varrho, r)} 
	&= \begin{pmatrix} \nabla_{\varrho}^{\top} \Upsilon{(\varrho, r)} & \partial_{r} \Upsilon{(\varrho, r)} \end{pmatrix}
	\\
	&= \begin{pmatrix} \nabla_{\varrho}^{\top} \varphi(\varrho) + r  \left( \nabla_{x}^{\top} \widetilde{\NN}(x) \big|_{x = \varphi(\varrho)} \right) \nabla_{\varrho}^{\top}\varphi(\varrho) & \NN(\varphi(\varrho)) \end{pmatrix}
	\\
	&= \begin{pmatrix} \nabla_{\varrho}^{\top} \varphi(\varrho) + r  \nabla_{\Gamma}^{\top} \widetilde{\NN}(\varphi(\varrho)) \nabla_{\varrho}^{\top}\varphi(\varrho) & \NN(\varphi(\varrho)) \end{pmatrix}
	\\
	&= \begin{pmatrix} \left( \matI + r  \nabla_{\Gamma}^{\top} \widetilde{\NN}(\varphi(\varrho)) \right) \nabla_{\varrho}^{\top}\varphi(\varrho) & \NN(\varphi(\varrho)) \end{pmatrix}
	\\
	&= \begin{pmatrix} \nabla_{\varrho}^{\top}\varphi(\varrho) & \NN(\varphi(\varrho)) \end{pmatrix}
	\\
	&= \textsf{N}(\varrho, r) \begin{pmatrix} \textsf{N}^{-1}(\varrho, r) \nabla_{\varrho}^{\top}\varphi(\varrho) & \textsf{N}^{-1}(\varrho, r) \NN(\varphi(\varrho)) \end{pmatrix}.
\end{align*}
For $\xi:= \varphi(\varrho)$, we claim that 
\[
	\left[ \textsf{N}^{-1}(\varrho, r) \NN(\xi) \right] \cdot \nn(\xi) > 0.
\]
Indeed, since $\NN$ is a quasi-normal vector on $\Gamma$, there exists a constant $c > 0$ such that $\NN(\xi) \cdot \nn(\xi) \geqslant {c} > 0$, for all $\xi \in \Gamma$.
Noting that
\[
	\textsf{N}^{-1} - \matI = \textsf{N}^{-1} (\matI - \textsf{N}) = r \textsf{N}^{-1} \nabla_{\Gamma}^{\top} \NN,
\]
we can choose $\epso \leqslant \varepsilon_{1}$ where
\[
	\varepsilon_{1} = {c} \min\left\{1, \frac{1}{2 \norm{ \textsf{N}^{-1} \nabla_{\Gamma}^{\top} \NN }} \right\}
\]
so that
\begin{align*}
	\left[ \textsf{N}^{-1}(\varrho, r) \NN(\xi) \right] \cdot \nn(\xi) 
		&= \left[ \textsf{N}^{-1} (\matI - \textsf{N})(\varrho, r) \NN(\xi) \right] \cdot \nn(\xi)  + \NN(\xi) \cdot \nn(\xi)\\
		&\geqslant {c} - \norm{ \textsf{N}^{-1} - \matI } %%% ,\qquad \norm{\textsf{N}} = \sqrt{\sum_{i,j=1}^{d}{N_{ij}^{2}}}
		\geqslant \frac{c}{2},
\end{align*}
where $\norm{\cdot}$ is the usual entry-wise matrix norm.

We next prove that the map $X$ is injective.
For this purpose, we show that 
\begin{center}
if ${\xi}, {\tilde{\xi}} \in \Gamma$ and $X({\xi}, {\rho}) = X({\tilde{\xi}}, {\tilde{\rho}})$, then $({\xi}, {\rho}) = ({\tilde{\xi}}, {\tilde{\rho}})$.
\end{center}
Again, we note that for each $\xi \in \Gamma$, there exists an open set $B({\xi}) \subset \mathbb{R}^{d}$ containing $\xi$.
Meanwhile, since $\Gamma$ is compact, there exists a finite collection of open covers $\{O_{j}\}_{j=1}^{M}$, $M \in \mathbb{N}$, such that $\Gamma = \bigcup_{j=1}^{M} O_{j}$.
For each $j=1, \ldots, M$, we let $\xi_{j} \in \Gamma$ and define, for each open set $B(\xi_{j}) \ni \xi_{j}$, the set $O_{j} = \Gamma \cap B(\xi_{j}) \ni \xi_{j}$.
Clearly, for $\xi_{i} \neq \xi_{j}$, there exists $r_{0}> 0$ such that
\begin{equation}\label{eq:infimum_distance}
	\inf_{\substack{\{\xi_{i}, \xi_{j} \} \not\subset O_{j}  \\ \xi_{i}, \xi_{j} \in \Gamma}} \abs{\xi_{i} - \xi_{j}} = r_{0} > 0.
\end{equation}
By the previous argument, it suffices to prove that 
\begin{center}
if ${\xi}, {\tilde{\xi}} \in \Gamma$ and $X({\xi}, {\rho}) = X({\tilde{\xi}}, {\tilde{\rho}})$, then ${\xi} = {\tilde{\xi}}$.
\end{center}

Let us assume that ${\xi} , {\tilde{\xi}} \in \Gamma \times {I}_{\varepsilon}$, $X({\xi}, {\rho}) = X({\tilde{\xi}}, {\tilde{\rho}})$, and ${\xi} \neq {\tilde{\xi}}$.
The latter condition implies that there exist $i, j \in \{1, \dots, M\}$, with $i \neq j$, such that ${\xi} \in {O}_{i} \setminus {O}_{j}$ and $\tilde{\xi} \in O_{j} \setminus O_{i}$.
Now, we let
\[
	{r}_{0} := \inf\{ {{\xi} - {\tilde{\xi}}} \mid {\xi}, {\tilde{\xi}} \in \Gamma \ \text{and} \ \{\xi_{i}, \xi_{j} \} \not\subset O_{j}, \ \text{for all $j=1, \ldots, M$}\}.
\]
Meanwhile, we have
\[
	0\neq{\xi} - {\tilde{\xi}} = {\rho} \NN({\xi}) - {\tilde{\rho}} \NN({\tilde{\xi}})
	= ({\rho} - {\tilde{\rho}}) \NN({\xi}) + {\tilde{\rho}} ({\NN}({\xi}) - {\NN}({\tilde{\xi}})). 
\]
Hence,
\[
	{{\xi} - {\tilde{\xi}}} 
	\leqslant \abs{ {\rho} - {\tilde{\rho}} } + L_{0} \abs{{\tilde{\rho}}}  {{\xi} - {\tilde{\xi}}},
	\qquad
	\text{where}\
	L_{0} := \sup_{\substack{x, y \in \Gamma \\ x \neq y}}\frac{\abs{\NN(x) - \NN(y)}}{\abs{x-y}}.
\]
Since $\abs{{\tilde{\rho}}} \leqslant \epso$, we get
\[
	(1 - \epso L_{0}) {{\xi} - {\tilde{\xi}}}  \leqslant \abs{ {\rho} - {\tilde{\rho}} }  \leqslant 2 \epso.
\]
From \eqref{eq:infimum_distance}, we deduce that
\[
	r_{0} = \inf_{\substack{\{\xi_{i}, \xi_{j} \} \not\subset O_{j}  \\ \xi_{i}, \xi_{j} \in \Gamma}} \abs{\xi_{i} - \xi_{j}} 
	\leqslant {{\xi} - {\tilde{\xi}}}
	\leqslant \frac{2 \epso}{1 - \epso L_{0}}.
\]
Taking $\epso < \dfrac{r_{0}}{2 + r_{0} L_{0}}$, we arrive at a contradiction unless $r_{0} = 0$.
Thus, ${\xi} = {\tilde{\xi}}$. 

For the sake of being specific, we choose
\[
	\epso = c \min\left\{ \dfrac{r_{0}}{3 + r_{0} L_{0}}, \varepsilon_{1} \right\},
	\qquad
	\varepsilon_{1} = {c} \min\left\{1, \frac{1}{2 \norm{ \textsf{N}^{-1} \nabla_{\Gamma}^{\top} \NN } } \right\}
\]
to conclude the proof.
\end{proof}
\begin{remark}[Extensions of the normal vector]\label{rem:extension_of_the_normal}
In the previous proof, it suffices to assume that $\widetilde{\NN}$ is ${C}^{1+\alpha}$. 
This is guaranteed because $\Gamma \in \mathcal{A}^{2+\alpha}$ implies that $\nn \in C^{1+\alpha}$, and therefore, we can create a ${C}^{1+\alpha}$ extension of $\nn$ in $D \supset \Gamma$. 
More specifically, there exists an $\varepsilon_{\star}$-neighborhood of $\Gamma$ ($\varepsilon_{\star} \geqslant \epso$), say $\mathcal{N}^{\varepsilon_{\star}}(\Gamma) \supset X(\Gamma \times {I}_{\epso})$, such that $\widetilde{\NN} \in {C}^{1+\alpha}(\mathcal{N}^{\varepsilon_{\star}}(\Gamma);\mathbb{R}^{d})$. 
Additionally, we see that the distance function $\operatorname{d}(\cdot,\Gamma) \in {C}^{2+\alpha}(\mathcal{N}^{\varepsilon_{\star}}(\Gamma))$, where $\operatorname{d}(x,\Gamma) := \pm \operatorname{dist}(x,\Gamma)$. 
Consequently, for any $x \in \mathcal{N}^{\varepsilon_{\star}}(\Gamma)$, we can define $\widetilde{\NN}(x) := \NN(\xi(x))$, where $\xi(x) = x - \operatorname{d}(x,\Gamma) \nabla \operatorname{d}(x,\Gamma)$. 
Furthermore, for later use, we emphasize that we can create another extension of $\NN$ which is ${C}^{2+\alpha}$. That is, for $(\xi,\rho) \in \Gamma \times {I}_{\epso}$, we can define $\doublewidetilde{\NN}(X(\Gamma \times {I}_{\epso})) := \NN(\xi)$ such that $\doublewidetilde{\NN} \big|_{\Gamma} = \NN$. 
These extensions are assumed to vanish near $\Sigma$.
\end{remark}
For fixed real numbers $a$ and $b$, where $b > a$, the scalar-valued function $\rho$ belongs to the Banach space
\begin{equation}\label{eq:Rab}
	{{R}}_{[a,b]}(\Gamma,\NN) 
	: = \left\{  \rho: \Gamma \times [a,b] \to {I}_{\epso(\Gamma,\NN)} \mid \rho \in {{C}}^{0}([a, b]; {{C}}^{2+\alpha}(\Gamma)) \cap {{C}}^{1}([a, b]; {{C}}^{1+\alpha}(\Gamma))\right\}.
\end{equation}
We introduce the set 
\begin{align*}
	{{R}}_{0}(\Gamma,\NN) 
	&:= \left\{ \rho \in {{C}}^{2+\alpha}(\Gamma) \mid \abs{\rho(\xi)} \leqslant \epso(\Gamma,\NN),\ \text{$\forall \xi \in \Gamma$}\right\}.
\end{align*}
For $\rho \in {{R}}_{0}(\Gamma,\NN)$, it can be shown that
\begin{center}
	${\StripS}(\rho) := \{ X(\xi, \rho) \mid \xi \in \Gamma \}$ is a ${{C}}^{2+\alpha}$ boundary.
\end{center}
In fact, ${\StripS}(\rho) \in \mathcal{A}^{2+\alpha}$, as claimed in the following proposition.
\begin{proposition}\label{prop:diffeomorphic_map}
	There exists $\epsone := \epsone(\Gamma, \NN) \in (0, \epso(\Gamma, \NN)]$ such that ${\StripS}(\rho) \in \mathcal{A}^{2+\alpha}$ holds for $\rho \in {{R}}_{1}(\Gamma, \NN)$, where
\[
	{{R}}_{1}(\Gamma, \NN) 
	:= \left\{ \rho \in {{R}}_{0}(\Gamma, \NN) \mid \text{$\abs{\rho(\xi)} \leqslant \epsone$, $\abs{\nabla_{\Gamma} \rho(\xi)} \leqslant \epsone$, for all $\xi \in \Gamma$}\right\}.
\]
\end{proposition}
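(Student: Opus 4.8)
The plan is to show that for $\rho$ in the restricted set $R_1(\Gamma,\NN)$, the map $\xi \mapsto X(\xi,\rho(\xi))$ is a $C^{2+\alpha}$-embedding of $\Gamma$ into $D$, so that its image $\StripS(\rho)$ is a $C^{2+\alpha}$ hypersurface bounding a simply connected domain $\omega(\rho) \Subset D$; this is exactly the condition $\StripS(\rho)\in\mathcal A^{2+\alpha}$. Write $Y_\rho(\xi) := X(\xi,\rho(\xi)) = \xi + \rho(\xi)\NN(\xi)$. Since $\rho\in C^{2+\alpha}(\Gamma)$, $\NN\in C^{2+\alpha}(\Gamma;\mathbb R^d)$, and $\abs{\rho(\xi)}\le\epsone\le\epso$ guarantees the image stays inside $\Gamma^{\epso}\subset D$, the map $Y_\rho$ is automatically $C^{2+\alpha}$ and $D$-valued. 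So the real content is: (i) $Y_\rho$ is an immersion (its differential along $\Gamma$ has full rank $d-1$), and (ii) $Y_\rho$ is injective.

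For (i), I would compute the tangential differential. Using a local chart $\varphi\in\operatorname{Diffeo}^{2+\alpha}(\overline{B}_1,\overline{\Gamma\cap B(\xi_0)})$ as in the previous proof, set $\Psi(\varrho):=Y_\rho(\varphi(\varrho))=\varphi(\varrho)+\rho(\varphi(\varrho))\NN(\varphi(\varrho))$. Then
\[
	\nabla_{\varrho}^{\top}\Psi(\varrho)
	= \bigl(\matI + \rho\,\nabla_{\Gamma}^{\top}\widetilde{\NN}\bigr)\nabla_{\varrho}^{\top}\varphi(\varrho)
	  + \NN(\varphi(\varrho))\,\nabla_{\varrho}^{\top}\bigl(\rho\circ\varphi\bigr)(\varrho),
\]
evaluated at the relevant points. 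The first term is, for $\abs{\rho}\le\epsone$ small, a small perturbation of $\nabla_\varrho^\top\varphi$, which has full rank $d-1$; the matrix $\matI+\rho\,\nabla_\Gamma^\top\widetilde\NN$ is invertible once $\epsone\le (2\norm{\nabla_\Gamma^\top\widetilde\NN})^{-1}$, just as in the previous proof. The second term is a rank-one correction in the direction $\NN$. Since $\NN$ is quasi-normal, $\NN(\xi)\cdot\nu(\xi)\ge c>0$, so $\NN$ is uniformly transverse to the tangent space $T_\xi\Gamma$; hence the columns of the first term (which span a small perturbation of $T_\xi\Gamma$) together with the rank-one $\NN$-direction correction cannot conspire to drop rank, provided $\epsone$ (controlling both $\abs{\rho}$ and $\abs{\nabla_\Gamma\rho}$) is small enough. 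Concretely I would show the composed matrix $\bigl(\matI+\rho\,\nabla_\Gamma^\top\widetilde\NN\bigr)^{-1}\nabla_\varrho^\top\Psi$ is a perturbation of $\nabla_\varrho^\top\varphi$ of size $O(\epsone)$ in operator norm, uniformly in $\xi$, and invoke the uniform lower bound on the singular values of $\nabla_\varrho^\top\varphi$ over the compact $\Gamma$ to conclude full rank.

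For (ii), injectivity, I would argue as in the previous proposition but now exploiting that both the base point and the normal displacement vary. If $Y_\rho(\xi)=Y_\rho(\tilde\xi)$ with $\xi\ne\tilde\xi$, then
\[
	\xi - \tilde\xi = \rho(\tilde\xi)\NN(\tilde\xi) - \rho(\xi)\NN(\xi)
	= \bigl(\rho(\tilde\xi)-\rho(\xi)\bigr)\NN(\xi) + \rho(\tilde\xi)\bigl(\NN(\tilde\xi)-\NN(\xi)\bigr),
\]
so $\abs{\xi-\tilde\xi}\le \bigl(\operatorname{Lip}(\rho) + \epsone L_0\bigr)\abs{\xi-\tilde\xi}$ with $L_0=\operatorname{Lip}(\NN)$ and $\operatorname{Lip}(\rho)\le\abs{\nabla_\Gamma\rho}_{\infty}\le\epsone$ (at least locally, along a geodesic in $\Gamma$; the two finitely-many-charts trick from the previous proof handles the non-local case via the separation constant $r_0$). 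Choosing $\epsone$ small enough that $\operatorname{Lip}(\rho)+\epsone L_0 < 1$ — say $\epsone < \tfrac12\min\{1, L_0^{-1}\}$ combined with the earlier $r_0$-threshold — forces $\xi=\tilde\xi$, and then $\rho(\xi)\NN(\xi)=\rho(\xi)\NN(\xi)$ is automatic while the transversality/immersion from step (i) gives $\xi=\tilde\xi$ implies the displacement parameters agree as well. Finally, $Y_\rho$ being a $C^{2+\alpha}$ injective immersion of the compact $\Gamma$ makes it an embedding; its image is a $C^{2+\alpha}$ closed hypersurface, Jordan–Brouwer gives a bounded component $\omega(\rho)$, simple-connectedness of $\omega(\rho)$ follows because $Y_\rho$ is isotopic to the identity on $\Gamma$ through $Y_{t\rho}$, $t\in[0,1]$ (all embeddings by the same estimates), and $\overline{\omega(\rho)}\subset\Gamma^{\epso}\subset D$. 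Setting
\[
	\epsone := \min\Bigl\{\epso,\ \tfrac12 c\min\{1,L_0^{-1}\},\ \tfrac{c\,r_0}{3+r_0 L_0},\ \tfrac{c}{2\norm{\textsf N^{-1}\nabla_\Gamma^\top\NN}}\Bigr\}
\]
records an explicit admissible threshold.

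The main obstacle I anticipate is step (i), the uniform immersion estimate: one must carefully separate the "tangential" perturbation $\matI+\rho\,\nabla_\Gamma^\top\widetilde\NN$ from the rank-one $\NN\,\nabla_\Gamma^\top\rho$ correction and show neither destroys full rank, with all constants uniform over the compact $\Gamma$ and independent of $\rho\in R_1$. The injectivity step (ii) is essentially a rerun of the contraction argument already carried out for Proposition~(the previous one), so it should go through with only bookkeeping changes; the topological conclusion (embedding $\Rightarrow$ bounds a simply connected domain) is standard and I would state it with a one-line reference to Jordan–Brouwer and the isotopy $Y_{t\rho}$.
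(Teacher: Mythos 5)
Your approach is correct in outline but takes a genuinely different route from the paper. The paper does not work intrinsically on the surface map $\xi\mapsto\xi+\rho(\xi)\NN(\xi)$ at all; instead it constructs a global map $\Phi:\overline D\to\overline D$ by setting $\Phi(x)=x+\eta(\hat r(x))\,\rho(\hat\xi(x))\,\NN(\hat\xi(x))$ in a cutoff tubular neighborhood of $\Gamma$ (using the coordinates $(\hat\xi,\hat r)=X^{-1}$) and $\Phi(x)=x$ outside it, and then verifies via Lemma~\ref{lem:how_to_show_diffeomorphism} that $\maxnorm{\nabla^\top\phi}<1$ once $\abs\rho,\abs{\nabla_\Gamma\rho}\le\epsone$. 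The payoff is that once $\Phi\in\operatorname{Diffeo}^{2+\alpha}(\Omega,\Omega)$ is established, the claim $\StripS(\rho)=\Phi(\Gamma)\in\mathcal A^{2+\alpha}$ is immediate: the ambient diffeomorphism carries the $C^{2+\alpha}$ surface $\Gamma$ bounding the simply connected $\omega\Subset D$ to a $C^{2+\alpha}$ surface bounding the simply connected $\Phi(\omega)\Subset D$. By contrast your route establishes that $Y_\rho$ is a $C^{2+\alpha}$ injective immersion of $\Gamma$ and then must invoke Jordan--Brouwer separation plus the isotopy $Y_{t\rho}$ (together with an isotopy-extension step that you don't state) to recover simple connectedness and $\overline{\omega(\rho)}\subset D$. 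Your immersion step is sound --- the clean way to close it is to note that if $(\matP\matT+\NN\vect g)c=0$ with $\matT=\nabla_\varrho^\top\varphi$, $\matP=\matI+\rho\,\nabla_\Gamma^\top\widetilde\NN$, $\vect g=\nabla_\varrho^\top(\rho\circ\varphi)$, then $\matT c$ would be a multiple of $\matP^{-1}\NN$, which for small $\epsone$ is transverse to $\operatorname{range}\matT$, forcing $\matT c=0$ and hence $c=0$ --- and the injectivity step is a rerun of the previous proposition's chart/contraction argument, so the surface-level claims go through. But the topological endgame is exactly where the paper's global-diffeomorphism construction is more economical: it \emph{is} the explicit ambient isotopy you would otherwise have to build or import, and it avoids any appeal to Jordan--Brouwer or isotopy extension. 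In short, your decomposition into immersion $+$ injectivity $+$ topology is a legitimate alternative, but it pays in machinery at the last step what the paper's cutoff-and-extend construction buys up front.
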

To prove the proposition, we need the next lemma.
\begin{lemma}\label{lem:how_to_show_diffeomorphism}
	Let $k \in \mathbb{N}$, $\alpha \in [0,1)$, and $\Omega \subset \mathbb{R}^{d}$ be an open bounded set with ${{C}}^{k+\alpha}$ boundary.
	Let $\phi \in {{C}}_{0}^{k+\alpha}(\Omega)$ and consider $\varphi(x) = x + \phi(x)$, $x \in \Omega$.
	Assume that $\max_{x \in \baromega} \norm{\nabla^{\top}\phi(x)} < 1$.
	Then, $\op{det}(\nabla^{\top} \varphi) > 0$ and $\varphi \in \operatorname{Diffeo}^{k+\alpha}(\Omega,\Omega)$; i.e., the map $\varphi:\Omega \to \Omega$ is a ${{C}}^{k+\alpha}$-diffeomorphism.
\end{lemma}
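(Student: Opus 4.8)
The plan is to prove Lemma~\ref{lem:how_to_show_diffeomorphism} in three stages: (i) show $\op{det}(\nabla^{\top}\varphi) > 0$ everywhere on $\baromega$, which gives local invertibility; (ii) upgrade local invertibility to global injectivity on $\Omega$; (iii) identify the image $\varphi(\Omega)$ with $\Omega$ itself, using the hypothesis $\phi \in {{C}}_{0}^{k+\alpha}(\Omega)$ (compact support, so $\varphi$ fixes a neighborhood of $\partial\Omega$), and finally package the conclusion as a ${{C}}^{k+\alpha}$-diffeomorphism via the inverse function theorem and the standard regularity of the inverse.

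For step (i), write $\nabla^{\top}\varphi(x) = \matI + \nabla^{\top}\phi(x)$. Since $\max_{x\in\baromega}\norm{\nabla^{\top}\phi(x)} =: m < 1$, the Neumann series $\sum_{j\geqslant 0}(-\nabla^{\top}\phi)^{j}$ converges and shows $\nabla^{\top}\varphi(x)$ is invertible with $\norm{(\nabla^{\top}\varphi)^{-1}} \leqslant (1-m)^{-1}$; moreover $\op{det}(\nabla^{\top}\varphi)$ is a continuous nonvanishing function on the connected set $\baromega$ that equals $1$ wherever $\nabla^{\top}\phi = 0$ (in particular near $\partial\Omega$, by compact support), hence $\op{det}(\nabla^{\top}\varphi) > 0$ throughout. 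For step (ii), injectivity follows from a contraction-type estimate: if $\varphi(x) = \varphi(y)$ then $x - y = \phi(y) - \phi(x)$, and by the mean value inequality $\abs{x-y} = \abs{\phi(x)-\phi(y)} \leqslant m\,\abs{x-y}$ along the segment $[x,y]$ — but here one must be careful that the segment stays inside $\Omega$, so I would instead extend $\phi$ by zero to all of $\mathbb{R}^{d}$ (legitimate since $\phi \in {{C}}_{0}^{k+\alpha}(\Omega)$, so the zero extension is still ${{C}}^{k+\alpha}$ with the same Lipschitz bound $m$ on its gradient) and run the estimate on $\mathbb{R}^{d}$, concluding $\abs{x-y} \leqslant m\abs{x-y}$ with $m<1$, hence $x=y$. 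This simultaneously shows $\varphi:\mathbb{R}^{d}\to\mathbb{R}^{d}$ is injective.

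For step (iii), let $\hat\varphi := \id + \phi$ on all of $\mathbb{R}^{d}$ (zero extension). Then $\hat\varphi$ is injective and $\op{det}(\nabla^{\top}\hat\varphi) > 0$ everywhere, so by invariance of domain $\hat\varphi(\mathbb{R}^{d})$ is open; since $\hat\varphi = \id$ outside the compact support of $\phi$, a standard degree/properness argument shows $\hat\varphi$ is a homeomorphism of $\mathbb{R}^{d}$ onto itself that is the identity off a compact set, and therefore $\hat\varphi$ maps $\Omega$ onto $\Omega$: indeed $\hat\varphi(\Omega)$ is open, $\hat\varphi$ fixes $\partial\Omega$ pointwise and fixes the exterior, so by connectedness $\hat\varphi(\Omega) = \Omega$ and $\hat\varphi(\partial\Omega)=\partial\Omega$. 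Restricting, $\varphi = \hat\varphi|_{\Omega} : \Omega \to \Omega$ is a continuous bijection with continuous inverse (the inverse of $\hat\varphi$ restricted to $\Omega$), and the classical inverse function theorem applied at each point — using that $\nabla^{\top}\varphi$ is invertible everywhere — gives that $\varphi^{-1}$ is ${{C}}^{k+\alpha}$ locally, hence globally by covering $\Omega$ with charts and using the ${{C}}^{k+\alpha}$ regularity of $\partial\Omega$ near the boundary (where $\varphi$ is the identity). Thus $\varphi \in \operatorname{Diffeo}^{k+\alpha}(\Omega,\Omega)$.

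The main obstacle is step (iii): carefully justifying that the image equals $\Omega$ rather than merely some open subset diffeomorphic to $\Omega$. The clean route is the zero-extension trick, which reduces everything to a map of $\mathbb{R}^{d}$ that is the identity outside a compact set — for such maps, global invertibility (injective + local diffeo $\Rightarrow$ surjective, by a degree argument or by the Hadamard–Cacciopoli global inverse function theorem, since properness is automatic from the identity-at-infinity behavior) is standard and immediately forces $\hat\varphi(\Omega) = \Omega$. One secondary technical point worth stating explicitly is that the zero extension of $\phi \in {{C}}_{0}^{k+\alpha}(\Omega)$ is genuinely ${{C}}^{k+\alpha}(\mathbb{R}^{d})$ with the gradient bound preserved; this is where the compact-support hypothesis is essential and cannot be weakened to mere vanishing on $\partial\Omega$.
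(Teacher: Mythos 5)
Your proposal is correct and the overall scaffolding — extend $\phi$ by zero to $\mathbb{R}^{d}$, prove $\det(\nabla^{\top}\varphi)>0$ via the Neumann series, prove injectivity via the mean-value/contraction estimate $\abs{x-y}\leqslant m\abs{x-y}$, and finally show $\varphi(\Omega)=\Omega$ — matches the paper's. The one genuinely different step is surjectivity: you invoke a degree/properness argument (or Hadamard–Cacciopoli) for a map that is the identity off a compact set, while the paper settles it with a much more elementary device: for fixed $y$, the map $T_{y}(x):=y-\phi(x)$ is a contraction of $\mathbb{R}^{d}$ (Lipschitz constant $m<1$), so it has a fixed point $x$, which is exactly a solution of $\varphi(x)=y$. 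The paper's route avoids any topological machinery and gives global surjectivity directly; yours is also valid but calls on invariance of domain / degree theory, a strictly heavier tool for the same conclusion. A second minor difference: for $\varphi(\Omega)=\Omega$, you argue by connectedness plus the fact that $\varphi$ fixes $\partial\Omega$ and the exterior, whereas the paper observes that if $x\in\Omega$ had $y:=\varphi(x)\in\Omega^{c}$, then $\varphi(y)=y$ (since $\phi=0$ on $\Omega^{c}$), so injectivity forces $x=y\in\Omega^{c}$, a contradiction; this one-liner avoids tracking which component is which. Lastly, for positivity of the determinant you use continuity of $\det(\nabla^{\top}\varphi)$ on $\baromega$ plus the value $1$ near $\partial\Omega$, while the paper works directly in matrix space, noting $F(\matA)=\det(\matI+\matA)$ is continuous, nonvanishing, and equal to $1$ at $\matA=\textsf{O}$ on the convex set $\{\norm{\matA}<1\}$ — a pointwise-in-$x$ argument that does not even require $\Omega$ connected, though as you note your version also works component by component. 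Your flagged subtlety about the zero extension being ${C}^{k+\alpha}(\mathbb{R}^{d})$ is a good observation; the paper simply asserts it.
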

See Appendix~\ref{appx:how_to_show_diffeomorphism} for the proof.
\begin{proof}[Proof of Proposition~\ref{prop:diffeomorphic_map}]\label{proof:prop:diffeomorphic_map}
	Let $\epso:=\epso(\Gamma,\NN)$.
	The map $\XX:\Gamma \times {I}_{\epso} \to \Gamma^{\epso}$ is a ${{C}}^{2+\alpha}$-diffeomorphism.	
	We choose and fix $\varepsilon > \epso$, with $\abs{\varepsilon - \epso}$ sufficiently small, such that the map $\XX : \Gamma \times {I}_{\varepsilon} \to \Gamma^{\varepsilon} \subset {D} \subset \mathbb{R}^{d}$ is a ${{C}}^{2+\alpha}$-diffeomorphism.
	%%% We then choose $\varepsilon^{\prime} \in (\epso, \varepsilon)$ such that $\eta(s) \in {{C}}_{0}^{\infty}(\mathbb{R})$ -- a smooth cutoff function -- satisfies $\operatorname{supp} \eta \subset [-\varepsilon^{\prime},\varepsilon^{\prime}]$.
	Consequently, $\XX^{-1}:  \Gamma^{\varepsilon} \to \Gamma \times {I}_{\varepsilon}$ is also a ${{C}}^{2+\alpha}$-diffeomorphism; 
	i.e., $\XX^{-1} = (\hat{\xi}(x), \hat{r}(x))$ where $\hat{\xi} \in {{C}}^{2+\alpha}(\Gamma^{\varepsilon},\Gamma)$ and $\hat{r} \in {{C}}^{2+\alpha}(\Gamma^{\varepsilon},{I}_{\varepsilon})$.
	We consider $\eta \in {{C}}_{0}^{\infty}(\mathbb{R})$ with $0 \leqslant \eta(s) \leqslant 1$, such that $\eta(s) = 1$ for $\abs{s} \leqslant \varepsilon_0$ and $\operatorname{supp} \eta \subset (-\varepsilon, \varepsilon)$.

	Let us define $\Phi$ as follows:
	\[
	 \Phi(x) =
        \begin{cases}
            x & \text{if } x \in \overline{D}\setminus\Gamma^{\varepsilon}, \\
            x + \eta(\hat{r}(x)) \rho(\hat{\xi}(x)) \NN(\hat{\xi}(x)) & \text{if } x \in \Gamma^{\varepsilon}.
        \end{cases}
        \]
       Clearly, $\Phi \in {{C}}^{2+\alpha}(\overline{D};\mathbb{R}^{d})$.
       We claim that $\Phi \in \operatorname{Diffeo}^{2+\alpha}(\Omega,\Omega)$.
       Indeed, this follows from Lemma~\ref{lem:how_to_show_diffeomorphism}.
       We only need to verify that the norm of the Jacobian of its perturbation -- given that $\abs{\rho(\xi)} \leqslant \epsone$, $\abs{\nabla_{\Gamma}\rho(\xi)} \leqslant \epsone$, for all $\xi \in \Gamma$ -- has magnitude less than one.
       To facilitate the proof, let us write $\GG(x):=\eta(\hat{r}(x)) \NN(\hat{\xi}(x))$ and define $\Phi(x) = x + \phi(x)$, where 
	\[
	 \phi(x) =
        \begin{cases}
            \vect{0} & \text{if } x \in \overline{D}\setminus\Gamma^{\varepsilon}, \\
            \GG(x) \rho(\hat{\xi}(x)) & \text{if } x \in \Gamma^{\varepsilon}.
        \end{cases}
        \]
       From the definition of $ \phi$, we have
	\[
	 \nabla^{\top}\phi(x) =
        \begin{cases}
            \vect{0} & \text{if } x \in \overline{D}\setminus\Gamma^{\varepsilon}, \\
            \nabla^{\top}\GG(x) \rho(\hat{\xi}(x)) + \GG(x)  \nabla^{\top}_{\Gamma}\rho(\hat{\xi}(x))  \nabla^{\top}_{x}\hat{\xi}(x) & \text{if } x \in \Gamma^{\varepsilon}.
        \end{cases}
        \]
        Computing its norm, while noting that $\abs{\rho(\xi)} \leqslant \epsone$ and $\abs{\nabla_{\Gamma}\rho(\xi)} \leqslant \epsone$, for all $\xi \in \Gamma$, we get
        \begin{align*}
        \norm{ \nabla^{\top}\phi(x) }
        &\leqslant \maxnorm{\rho} \norm{ \nabla^{\top}\GG } + \abs{\GG(x)} \abs{\nabla^{\top}_{\Gamma}\rho(\hat{\xi}(x))}  \norm{ \nabla^{\top}_{x}\hat{\xi}(x) }\\
        &\leqslant \epsone\left[ \max_{x\in\Gamma^{\varepsilon}} \norm{ \nabla^{\top}\GG(x) } + \max_{x \in \Gamma^{\varepsilon}}  \left( \abs{\GG(x)} \norm{ \nabla^{\top}_{x}\hat{\xi}(x) } \right) \right]
        < 1.
        \end{align*}
        This proves the proposition.
\end{proof}
%%%%%%%%%%%%%%%%%%%%%%%%%%%%%%%%%%%%%%%%%%%%%%%%%%%%%%%%%%%%%%%%%%%%%%
%%%%%%%%%%%%%%%%%%%%%%%%%%%%%%%%%%%%%%%%%%%%%%%%%%%%%%%%%%%%%%%%%%%%%%
%%%%%%%%%%%%%%%%%%%%%%%%%%%%%%%%%%%%%%%%%%%%%%%%%%%%%%%%%%%%%%%%%%%%%%
%
%
For $\rho \in {{R}}_{[a,b]}(\Gamma,\NN)$, we define the moving boundary
\begin{equation}\label{eq:M_rho_ab}
	\mathcal{M}(\rho,[a,b]) := \bigcup_{t \in [a, b] } {\StripS}(\rho(t)) \times \{t\},
\end{equation}
with normal velocity $\Vn(t)=\Vn(\cdot, t) \in {{C}}^{0}({\StripS}(\rho(t)))$, where
\[
	\Vn(x,t) := \rho_{t}(\xi,t) \NN(\xi) \cdot \nn(x; {\StripS}(\rho(t)) ),
	\qquad x = \xi + \rho(\xi, t) \NN(\xi) \in {\StripS}(\rho(t)), 
	\quad \xi \in \Gamma.
\]
We define the set of moving boundaries as
\[
	{{M}}_{[a,b]}(\Gamma,\NN) : = 
		\left\{  \mathcal{M}(\rho,[a,b]) \subset \mathbb{R}^{d} \times \mathbb{R} \mid \text{$\exists \rho \in {{R}}_{[a,b]}(\Gamma,\NN)$ such that \eqref{eq:M_rho_ab} is satisfied}\right\}.
\]
Now, we consider the quasi-stationary moving boundary problem stated as follows:
\begin{problem}\label{prob:MBP_restatement}
	For given $\Gammao \in \mathcal{A}^{2+\alpha}$, $f \in {{C}}^{2+\alpha}(\Sigma)$, and $g \in {{C}}^{1+\alpha}(\Sigma)$,
	find $T>0$ and $\mathcal{M} = \bigcup_{0 \leqslant t \leqslant T } \Gamma(t) \times \{t\}$ such that 
	\begin{equation}\label{eq:MBP_restatement}
	\left\{
	\begin{aligned}
		\Vn(t) &= - \ddn{} \left[ \ud(\Gamma(t)) - \un(\Gamma(t)) \right] \quad \text{on $\Gamma(t)$, \quad $(0 \leqslant t \leqslant T)$},\\
		\Gamma(0) &= \Gammao,
	\end{aligned}
	\right.
	\end{equation}
	where $\ud(\Gamma(t))$ and $\un(\Gamma(t))$ are defined by \eqref{eq:ud_gamma} and \eqref{eq:un_gamma}, respectively.
\end{problem}
We formally define the solution of Problem~\ref{prob:MBP_restatement} as follows.
%
%===================================
%	DEFINITION OF SOLUTION
%===================================
\begin{definition}\label{def:definition_of_solution}
	We say $\mathcal{M} = \bigcup_{0 \leqslant t \leqslant T } \Gamma(t) \times \{t\} \subset \mathbb{R}^{d} \times \mathbb{R}$ a solution of Problem~\ref{prob:MBP_restatement}, 
	if for $\Gamma(0) = \Gammao$, there exists a collection of closed intervals $\{I_{k}\}_{k=1}^{n}$ such that $\bigcup_{k=1}^{n}{I_{k}} = [0,T]$, and for each $k$, there exists $t_{k} \in I_{k}$, $\Gamma_{k} \in \mathcal{A}^{2+\alpha}$, and quasi-normal $\NN_{k}$ on $\Gamma_{k}$ such that
	\[
	\mathcal{M} \big|_{I_{k}}  \in {{M}}_{I_{k}}(\Gamma_{k},\NN_{k}), \qquad \text{where $\mathcal{M} \big|_{I_{k}} =  \bigcup_{t \in I_{k}} \Gamma(t) \times \{t\}$},
	\]
	is a solution of
	\[
		\text{$\Vn(t) = - \ddn{} \left[ \ud(\Gamma(t)) - \un(\Gamma(t)) \right]$ on $\Gamma(t)$, \quad for $t \in I_{k}$, for each $k = 1, \ldots, n$},
	\]
	where $\ud(\Gamma(t))$ and $\un(\Gamma(t))$ are defined by \eqref{eq:ud_gamma} and \eqref{eq:un_gamma}, respectively.
\end{definition}
We note that the definition of $\Vn$ does not depends on the choice of $\Gamma_{k}$ and $\NN_{k}$.

Before we proceed further, we ask the following question in view of Definition~\ref{def:definition_of_solution}: \textit{suppose $\mathcal{M}_{[0,T]}$ is a solution to \eqref{eq:main_system}, then is it true that $\mathcal{M}_{[\tast,T]}$ is also a solution to \eqref{eq:main_system} for $\tast \in [0,T]$?} 
The next lemma answers this question affirmatively.
\begin{lemma}\label{lem:solution_definition}
	Let $\Gamma \in \mathcal{A}^{2+\alpha}$, $\NN$ be a quasi-normal vector on $\Gamma$,
	\[
	\mathcal{M} =  \bigcup_{a \leqslant t \leqslant b} \Gamma(t) \times \{t\} \in {{M}}_{[a,b]}(\Gamma, \NN),
	\]
	$\tast \in [a,b]$, and $\NNast$ be quasi-normal on $\Gamma(\tast)$.
	Then, there exists a ${\tau} > 0$ such that, with $I_{\ast} := [a,b] \cap [\tast - {\tau}, \tast + {\tau}]$, we have
	\begin{equation}\label{eq:restriction_of_{m}}
	\mathcal{M}\big|_{I_{\ast}} \in {{M}}_{I_{\ast}}(\Gamma(\tast), \NNast).	
	\end{equation}
\end{lemma}
\begin{proof}
	We start by noting that there exist $\varepsilon_{\ast} > 0$ such that 
	\[
	{\Xast} \in \operatorname{Diffeo}^{2+\alpha}\left( \Gamma(\tast) \times \Iast, \Xast(\Gamma(\tast) \times \Iast) \right),\qquad
	{\Xast}(x,r) := x + r \NNast(x),
	\]
	where
	\[
		(x,r) \in \Gamma(\tast) \times \Iast, \qquad 
		I_{\ast} := {I}_{\varepsilon_{\ast}} \cap [a,b],\qquad
		{I}_{\varepsilon_{\ast}} = [-\varepsilon_{\ast}, \varepsilon_{\ast}].
	\]
	We claim the following:
	\begin{center}
		There exists a ${\tau} > 0$ such that 
		$\Gamma(t) \subset \Xast(\Gamma(\tast) \times \Iastast)$, for $t \in \Iastast := \Iast \cap [\tast - {\tau}, \tast + {\tau}]$.
	\end{center}
	For $(y,t) \in \Gamma \times \Iastast$, we define
	\[
		(\xast(y,t), \rhoast(y,t)) := \Xast^{-1}(X(y,\rho(y,t))),
	\]
	and for $(x,t) \in \Gamma(\tast) \times \Iastast$, we set
	\[
		\widetilde{\rho}(x,t) := \rhoast(\xast(\cdot,t)^{-1}(x), t),
	\] 
	where $\xast(\cdot,t) : \Gamma \to \Gamma(\tast)$.
	Hence, by construction
	\[
		X(y, \rho(y,t)) = \Xast(x, \tilde{\rho}(x, t)) \Big|_{x = \xast(y,t)}.
	\]
	Now, to validate \eqref{eq:restriction_of_{m}}, we need to show that 
	\begin{equation}\label{eq:nts_regularity_of_tilde_rho}
		\tilde{\rho} \in {{C}}^{0}(\Iastast; {C}^{2+\alpha}(\Gamma(\tast)) \cap {C}^{1}(\Iastast;{C}^{1+\alpha}(\Gamma(\tast)).
	\end{equation}
	This follows directly from the following assumptions:
	\begin{align*}
		(y,t) &\in \Gamma \times \Iastast \subset \Gamma \times \Iast\subset \Gamma \times [a,b],\\
		\rho &\in {{C}}^{0}(\Iast; {{C}}^{2+\alpha}(\Gamma)) \cap {{C}}^{1}(\Iast; {{C}}^{1+\alpha}(\Gamma)),\\
		X &\in {C}^{2+\alpha}(\Gamma \times I_{\epso}),\\
		(x,t) &\in \Gamma(\tast) \times \Iastast \subset \Gamma(\tast) \times \Iast,
	\end{align*}
	as well as from the regularities of the following mappings:
	\begin{align*}
	[(y,t) \mapsto X(y, \rho(y,t))] & \in {{C}}^{0}(\Iastast; {C}^{2+\alpha}(\Gamma;\mathbb{R}^{d})) \cap {C}^{1}(\Iastast;{C}^{1+\alpha}(\Gamma;\mathbb{R}^{d})),\\
	\Xast &\in \operatorname{Diffeo}^{2+\alpha}(\Gamma(\tast) \times \Iastast, \Xast(\Gamma(\tast) \times \Iastast)),\\
		\rhoast &\in {{C}}^{0}(\Iastast; {{C}}^{2+\alpha}(\Gamma)) \cap {{C}}^{1}(\Iastast; {{C}}^{1+\alpha}(\Gamma)),\\
		\xast &\in {{C}}^{2+\alpha}(\Gamma; \Gamma(\tast)),\\
		\xast^{-1} &\in {{C}}^{2+\alpha}(\Gamma(\tast); \Gamma).
	\end{align*}
	In view of Definition~\ref{def:definition_of_solution}, the composition of these mappings leads to \eqref{eq:nts_regularity_of_tilde_rho}.
\end{proof}

By Lemma~\ref{lem:solution_definition}, we can state an equivalent definition of the solution to Problem~\ref{prob:MBP_restatement}, given as follows:
%
%
%===================================
%	SECOND DEFINITION OF SOLUTION
%===================================
\begin{definition}\label{def:second_definition_of_solution}
    We say $\mathcal{M} = \bigcup_{0 \leqslant t \leqslant T } \Gamma(t) \times \{t\} \subset \mathbb{R}^{d} \times \mathbb{R}$ is a solution to Problem~\ref{prob:MBP_restatement} if, for all $\tast \in [0,T]$, there exist $a < b$ and ${\tau} > 0$ such that $[\tast - {\tau}, \tast + {\tau}] \cap [0,T] \subset [a,b] \subset [0,T]$, and there exists a quasi-normal vector $\NNast$ on $\Gamma(\tast)$ such that  
    \[
    \mathcal{M}_{[a,b]} \in {{M}}_{[a,b]}(\Gamma(\tast),\NNast), \qquad \text{where $\mathcal{M}_{[a,b]} := \bigcup_{a \leqslant t \leqslant b} \Gamma(t) \times \{t\}$},
    \]
    and $\ud(\Gamma(t))$ and $\un(\Gamma(t))$, defined by \eqref{eq:ud_gamma} and \eqref{eq:un_gamma}, solve \eqref{eq:MBP_restatement}.
\end{definition}
\begin{remark}
The definition of the solution to Problem~\ref{prob:MBP_restatement} given in Definition~\ref{def:second_definition_of_solution} is indeed equivalent to Definition~\ref{def:definition_of_solution}.  
To see this, we note that, for all $t \in [0, T]$, there exists ${\tau}(t) > 0$ such that $[t - {\tau}(t), t + {\tau}(t)] \cap [0, T] \subset [a, b] \subset [0, T]$, and there exists a quasi-normal vector $\NN$ on $\Gamma(t)$ such that  
\[
\mathcal{M}_{[a,b]}  \in {{M}}_{[a,b]}(\Gamma(t),\NN),
\]
and $\ud(\Gamma(t))$ and $\un(\Gamma(t))$, defined by \eqref{eq:ud_gamma} and \eqref{eq:un_gamma}, solve \eqref{eq:MBP_restatement}.  
Now, we observe that  
\[
\emptyset \neq \mathcal{O}(t) :=
\left\{
\begin{aligned}
    &(t - {\tau}(t), t + {\tau}(t)) \cap (0,T) & \text{for } t &\in (0,T),\\
    &[0, {\tau}(0)) & \text{for } t &= 0,\\
    &(t - {\tau}(t), T] & \text{for } t &= T.
\end{aligned}
\right.
\]
Note that $\mathcal{O}$ is open in $[0,T]$ and $\bigcup_{t \in [0,T]} \mathcal{O}(t) = [0,T]$.
\end{remark}
%
%
%
%
%
%---------------------------------------------------------------------------------------------------
% 	MAIN RESULTS	
%---------------------------------------------------------------------------------------------------
\section{Main Results}\label{sec:main_results}
Our main objective is to demonstrate the (local-in-time) solvability of system \eqref{eq:main_system} by applying the techniques outlined by Solonnikov in \cite{Solonnikov2003}. Specifically, we refer to the proofs of \cite[Thm.~1.1 and Thm.~3.1]{Solonnikov2003}, and also to \cite{Frolova2006} for a related application of this method.

\textit{Notations.} Let $l$ be a nonnegative real number.
We denote by ${{C}}^{0}([0,T]; {{C}}^{l}(\baromega))$ the space of continuous functions with respect to
\[
	(x,t) \in \left\{ (x,t) \mid t \in [0,T], x \in \baromega \right\}
\]
with the finite norm
\[
	\max_{0 \leqslant t \leqslant T} | u(\cdot,t)|^{(l)}_{\baromega},
\]
where
\begin{align*}
	\abs{u}^{(l)}_{\baromega} &:= \norm{u}_{{{C}}^{[l],l-[l]}(\baromega)} = \abs{u}_{[l],l-[l]; \, \baromega}= \sum_{\abs{j} < l} {\maxbaromega} |{{D}}^j u(x)| + [u]^{(l)}_{\baromega},\\
	[u]^{(l)}_{\baromega} &:= [u]_{[l],l-[l];\, \baromega} = \sum_{\abs{j} = [l] } \max_{x,{{\hat{x}}} \in \baromega} \frac{| {{D}}^j  u (x) - {{D}}^j u({{\hat{x}}}) |}{|x-{{\hat{x}}}|^{l-[l]}}.
\end{align*}
For example, $\abs{u}_{\baromega}^{(0)}$ is the maximum norm of $u(x)$, i.e., $\abs{u}_{\baromega}^{(0)} = \max_{x \in \baromega} |u(x)|$.
Also, in a more familiar notation, $\abs{u}^{(k+\alpha)}_{\baromega} := \norm{u}_{{{C}}^{k,\alpha}(\baromega)}$ where $l = k+\alpha$, $k=[l] \in \mathbb{N} \cup \{0\}$, $\alpha = [l]-l \in (0,1)$; see \cite[Eqs. (4.5)--(4.6), p.~53]{GilbargTrudinger2001}.
The spaces ${{C}}^{0}([0,T]; {{C}}^{l}(\Sigma))$ and ${{C}}^{0}([0,T]; {{C}}^{l}(\Gamma))$ are introduced in a similar manner.
Here, ${{C}}^{[l],l-[l]}:={{C}}^{k}$ for integer $l=k \in \mathbb{N}$.
Throughout the paper, we use $c$ as a general positive constant, meaning it can vary in value depending on the context.

Let $a$ and $b$ be fixed real numbers such that $b > a$, $k \in \mathbb{N} \cup \{0\}$, and $\alpha \in (0,1)$.
For functions $\varphi = \varphi(\xi,t) \in {{C}}^{0}([a,b]; {{C}}^{k + \alpha}(\baromega))$, the norm
\[
	{\max_{a \leqslant \tau \leqslant b}} \abs{\varphi(\cdot,\tau)}^{(k+\alpha)}_{\baromega},
\]
will appear many times in the paper (especially from Section~\ref{sec:the_linear_problem} onwards and in Appendix~\ref{appx:proof_of_basis_lemma}). So, for economy of space, we introduce the following notation
\[
	\abs{\varphi}^{(k+\alpha)}_{\varset; \, [a, b]}
	:= {\max_{a \leqslant \tau \leqslant b}} \abs{\varphi(\cdot,\tau)}^{(k+\alpha)}_{\varset},
	\qquad \varset \in \{\Omega, \baromega, \Gamma, \Sigma\}.
\]
For nested maximum norms, we write
\[
	\abs{\varphi}^{\infty}_{\varset; \, [a, b]} 
	:={\max_{a \leqslant \tau \leqslant b}} \, {\max_{\varset}} \abs{\varphi(\cdot, \tau)},
	\qquad \varset \in \{\baromega, \Gamma, \Sigma\}.
\]
Additionally, for functions $u = u(\xi,t)$, $v = v(\xi,t) \in {{C}}^{0}([a,b]; {{C}}^{k + \alpha}(\varset))$, $\varset \in \{\baromega, \Gamma, \Sigma\}$, we introduce the following specially defined norms
\[
	\norm{(u,v)}^{(k+\alpha)}_{\varset; \, [a, b]} 
	:= \abs{u}^{(k+\alpha)}_{\varset; \, [a, b]} + \abs{v}^{(k+\alpha)}_{\varset; \, [a, b]}
	= {\max_{a \leqslant t \leqslant b}} \abs{u(\cdot,\tau)}^{(k+\alpha)}_{\varset} 
	+ {\max_{a \leqslant t \leqslant b}} \abs{v(\cdot,\tau)}^{(k+\alpha)}_{\varset}.
\]
For a pair of functions with subscripts ${\cdot}_{\text{D}}$ and ${\cdot}_{\text{N}}$, we simply write
\[
 	\norm{\varphi_{\textDN}}^{(k+\alpha)}_{\varset; \, [a, b]} 
	:= \norm{(\varphi_{\text{D}},\varphi_{\text{N}})}^{(k+\alpha)}_{\varset; \, [a, b]},
\]
especially when such a norm appears many times in a sequence of arguments.
Moreover, and specifically for the boundary data $f \in {{C}}^{0}([0,T]; {{C}}^{2+\alpha}(\Sigma))$ and $g \in {{C}}^{0}([0,T]; {{C}}^{1+\alpha}(\Sigma))$, $[a,b] \subseteq [0,T]$, we simply write
\[
	\vertiii{(f,g)}^{(2+\alpha)}_{\Sigma; \, [a, b]} 
	:=  {\max_{a \leqslant \tau \leqslant b}} \abs{f(\cdot,\tau)}^{(2+\alpha)}_{\Sigma} 
		+ {\max_{a \leqslant \tau \leqslant b}} \abs{g(\cdot,\tau)}^{(1+\alpha)}_{\Sigma},
	\qquad 	\vertiii{(f,g)} :=
	\norm{(f,g)}^{(2+\alpha)}_{\Sigma, \, [0,T]}.
\]
Additionally, for $k \in \mathbb{N}$ and $\rho \in {{C}}^{0}([a, b]; {{C}}^{k+\alpha}(\varset)) \cap {{C}}^{1}([a, b]; {{C}}^{k-1+\alpha}(\varset))$ and $\varset \in \{\baromega, \Gamma\}$, we introduce the norm notation
\[
	\vertiii{\rho}^{(k+\alpha)}_{\varset; \, [a, b]}
	:= {\max_{a \leqslant \tau \leqslant b}} \abs{\rho(\cdot,\tau)}^{(k+\alpha)}_{\varset}
			+ {\max_{a \leqslant \tau \leqslant b}} \abs{\frac{d}{d\tau}\rho(\cdot,\tau)}^{(k-1+\alpha)}_{\varset},
\]
and another special notation
\[
	\vertiii{\rho}^{(k+\alpha)}_{\baromegagamma; \, [a, b]}
	:= {\max_{a \leqslant \tau \leqslant b}} \abs{\rho(\cdot,\tau)}^{(k+\alpha)}_{\baromega}
			+ {\max_{a \leqslant \tau \leqslant b}} \abs{\frac{d}{d\tau}\rho(\cdot,\tau)}^{(k-1+\alpha)}_{\Gamma}.
\]
Finally, for any pair of well-defined functions $\varphi_{\text{D}}$ and $\varphi_{\text{N}}$, we will extensively use the following special notations for convenience:
	\[
		\funcdiffdn{\varphi} := \varphi_{\text{D}} - \varphi_{\text{N}}
		\quad\text{and}\quad
		\funcdiffnd{\varphi} := \varphi_{\text{N}} - \varphi_{\text{D}} .
	\]
For example, we write $\funcdiffdn{u} = \ud - \un$ and $\funcdiffdn{u}(\Gamma) = \ud(\Gamma) - \un(\Gamma)$.	
%
%% ADDED NOTATIONS

\fergy{
To prepare for later discussions involving the differentiability of certain linear and bounded operators (see Lemma~\ref{lemma:analytic_functions_LKM}), we introduce the following function spaces. The space $\mathcal{B}(\mathcal{X}, \mathcal{Y})$ denotes the set of linear and continuous (bounded) operators between two normed vector spaces $\mathcal{X}$ and $\mathcal{Y}$. 
We also consider the space $C^{\omega}(\mathcal{U}, \mathcal{V})$, consisting of real analytic functions from a Banach space $\mathcal{U}$ into $\mathcal{V}$, understood in the sense of Fr\'{e}chet or G\^{a}teaux differentiability.
}

Prior to stating the primary result we seek to establish in this study, we observe that, in the original inverse geometry problem setup, $f$ and $g$ are solely space-dependent. 
However, \textit{for the remainder of the discussion, otherwise specified, we shall assume that these functions are also (pseudo-)time-dependent.} 
Specifically, we let $f = f(x,t)$ and $g = g(x,t)$, where $x \in \Omega(t)$ and $t \in [0,T]$, in \eqref{eq:main_system}, and assume that they are both positive-valued.

\fergy{
We consider the following conditions, which are essential for the analytical framework underlying the proof of the well-posedness of \eqref{eq:main_system}:
\begin{assumption}\label{key_assumption}
	\begin{itemize}
	\item For some $\alpha \in (0,1)$, 
	\[
        \Sigma, \, \Gamma = \Gammao \in {{C}}^{2+\alpha},
        \quad
        f \in {{C}}^{0}([0,T]; {{C}}^{2+\alpha}(\Sigma)), \ f >0, 
        \quad
        g \in {{C}}^{0}([0,T]; {{C}}^{1+\alpha}(\Sigma)), \ g > 0,
	\]
	such that
	\begin{equation}\label{eq:positivity_of_the_difference_of_the_normal_derivative}
		\ddn{}\left( \funcdiffdn{u}(\Gammao) \right)  > 0,
		\tag{A1}
	\end{equation}
	where $\ud$ and $\un$ respectively solves \eqref{eq:ud_gamma} and \eqref{eq:un_gamma} in $\Omega(\Gammao)$.
	\item
	There exists a $t^{\star\star} \in (0,T]$ such that
	\[
		\ddn{}\left( \alert{\funcdiffdn{u}}(\Gamma(t)) \right) > 0, \quad \text{for $t \in [0, t^{\star\star}]$}. 
	\]
	\end{itemize}
\end{assumption}
}

\fergy{It is necessary to comment on the key assumptions above, taking into account both its technical and practical implications. 
The positivity of the difference between the normal derivatives, as expressed in \eqref{eq:positivity_of_the_difference_of_the_normal_derivative}, is not only an essential requirement for proving the well-posedness of system \eqref{eq:main_system} (see comment at the bottom of page 584 in \cite{Kimura1999}), but also plays a crucial role in the reconstruction procedure, and can, in fact, be ensured in practical computations. 
This condition is satisfied when the initial guess in the algorithm is chosen sufficiently large to fully enclose the exact cavity -- a choice that is both natural and effective in numerical implementations, at least in simple settings (see, e.g., \cite{RabagoHadriAfraitesHendyZaky2024}).
Appendix~\ref{appx:comparison_of_normal_derivatives} further illustrates this point in the setting of axially symmetric domains through a detailed comparison of normal derivatives.}

Our main theorem is formulated as follows.
%
%
%
%------------------------------------------------------------------------	
% 	MAIN THEOREM	
%------------------------------------------------------------------------	
%
\begin{theorem}\label{thm:main_result} 
Let  Assumption~\ref{key_assumption} be satisfied.
	Then, there exists a unique solution $\Gamma(t)$, $\udxt$, and $\unxt$ to \eqref{eq:main_system} defined on some small time-interval ${\Istar} = [0, \tstar]$, where ${\tstar} < T$.
	The free surface $\Gamma(t)$ is described by the equation 
	\begin{equation}\label{eq:free_boundary_description}
		x  = \xi + \rho(\xi,t)\NN(\xi), \qquad \xi \in \Gamma, 
	\end{equation}
	where $\xi$ is the local coordinate on the surface $\Gamma$ and $\NN$ is a smooth vector field on $\Gamma$ such that $\NN \cdot \nno \geqslant \minnu > 0$, where $\nno$ is the unit normal vector to the surface $\Gamma$ directed \textit{inward} the domain $\Omega(\Gamma)$.
	The function $\rho \in {{C}}^{0}({\Istar}; {{C}}^{2+\alpha}(\Gamma))$ has extra smoothness with respect to the variable $t$; namely, $\rho_{t} \in {{C}}^{0}({\Istar}; {{C}}^{1+\alpha}(\Gamma))$.
	Meanwhile, the functions $\udxt$ and $\unxt$ are defined in $\Omega(t)$ for $t \in {\Istar}$ and both belong to the space $ {{C}}^{0}({\Istar}; {{C}}^{2+\alpha}(\overline{\Omega(t)}))$.
	Moreover, the following estimate hold
	\begin{equation}\label{eq:estimate_for_states}
	\norm{{\uudn}}^{(2+\alpha)}_{\maxtimebaromega}
	+ \vertiii{\rho}^{(2+\alpha)}_{\maxtimegamma}
	\leqslant c \vertiii{(f,g)}^{(2+\alpha)}_{\maxtimesigma}
	\leqslant c\, \vertiii{(f,g)}.
	\end{equation}
	for some constant $c > 0$, for all $t \in {\Istar}$.
\end{theorem}
%
%
%
%------------------------------------------------------------------------	
% 		UNIQUENESS OF SOLUTION
%------------------------------------------------------------------------		
Given the short-time existence of solution to \eqref{eq:main_system}, we can also prove the uniqueness of short-time solution to the system.
%
%
%
%
%%% UNIQUENESS SOLUTION
\begin{theorem}
	A solution of Problem~\ref{prob:MBP_restatement} is unique.
\end{theorem}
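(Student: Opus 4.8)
The plan is to obtain uniqueness from a \emph{local} uniqueness statement — essentially a rereading of the stability estimates that underpin Theorem~\ref{thm:main_result} — followed by a standard connectedness argument in time. Let $\mathcal{M}^{1}=\bigcup_{t}\Gammaa(t)\times\{t\}$ and $\mathcal{M}^{2}=\bigcup_{t}\Gammab(t)\times\{t\}$ be two solutions of Problem~\ref{prob:MBP_restatement} for the same data $(\Gammao,f,g)$; since $\uda=\udb$ and $\una=\unb$ follow from $\Gammaa=\Gammab$ by the unique solvability of \eqref{eq:ud_gamma}--\eqref{eq:un_gamma}, it suffices to prove $\Gammaa(t)=\Gammab(t)$ for every $t$ at which both solutions are defined.

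First I would establish local uniqueness. Fix a time $\tast$ with $\Gammaa(\tast)=\Gammab(\tast)=:\Gamma_{\ast}$ (for $\tast=0$ this is the initial condition $\Gamma(0)=\Gammao$), pick a quasi-normal vector $\NNast$ on $\Gamma_{\ast}$, and use the restriction lemma preceding Definition~\ref{def:second_definition_of_solution} — together with the local representations of $\mathcal{M}^{1},\mathcal{M}^{2}$ supplied by Definition~\ref{def:definition_of_solution}, Proposition~\ref{prop:diffeomorphic_map}, and the continuity in $t$ of both solutions — to write, on some common interval $\Iast:=[\tast,\tast+\tau]$, $\Gamma^{i}(t)=\StripS(\rho^{i}(t))$ with $\rho^{i}\in{{R}}_{[\tast,\tast+\tau]}(\Gamma_{\ast},\NNast)$ and, since $\Gamma^{i}(\tast)=\Gamma_{\ast}=\StripS(0)$ and $\rho\mapsto\StripS(\rho)$ is injective, $\rho^{i}(\cdot,\tast)\equiv 0$. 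Transforming \eqref{eq:ud_gamma}--\eqref{eq:un_gamma} onto the fixed domain $\Omega(\Gamma_{\ast})$ as in Section~\ref{sec:transformation_onto_a_fixed_domain}, the kinematic condition in \eqref{eq:MBP_restatement} becomes the nonlinear, nonlocal evolution equation on $\Gamma_{\ast}$
\[
	\rho^{i}_{t}(\xi,t)\,\beta[\rho^{i}(t)](\xi) = -\big(\partial_{\nn}\funcdiffdn{u}(\StripS(\rho^{i}(t)))\big)\circ X(\xi,\rho^{i}(\xi,t)) =: \Psi[\rho^{i}(t)](\xi),\qquad \rho^{i}(\cdot,\tast)\equiv 0,
\]
with $\beta[\rho]:=\NNast\cdot\nn(\,\cdot\,;\StripS(\rho))\geqslant c_{0}>0$ by quasi-normality, so $\rho^{i}_{t}=\mathcal{G}[\rho^{i}]:=\Psi[\rho^{i}]/\beta[\rho^{i}]$. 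Setting $w:=\rho^{1}-\rho^{2}$ (hence $w(\cdot,\tast)\equiv 0$ and $w_{t}=\mathcal{G}[\rho^{1}]-\mathcal{G}[\rho^{2}]$), the key input is the Lipschitz dependence of $\rho\mapsto\partial_{\nn}\funcdiffdn{u}(\StripS(\rho))$ on the height function in the relevant Hölder classes: combining the Schauder and linearized-problem estimates of Sections~\ref{sec:regularity_of_solutions}--\ref{sec:the_linear_problem} with the bound \eqref{eq:estimate_for_states} (which keeps $\rho^{1},\rho^{2}$ in a fixed ball, possibly after also invoking interpolation to absorb the one-derivative gap against the uniform ${{C}}^{2+\alpha}$-bound), one obtains a differential inequality $\frac{d}{dt}\|w(t)\|_{Y}\leqslant L\,\|w(t)\|_{Y}$ on $\Iast$ for a suitable function space $Y$ on $\Gamma_{\ast}$ and $L=L(\vertiii{(f,g)})$; this is precisely the contraction property of the fixed-point map behind Theorem~\ref{thm:main_result}, now read between the two candidate solutions on a short interval. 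Since $w(\cdot,\tast)\equiv 0$, Gronwall's inequality (equivalently, choosing $\tau$ so small that the contraction constant is $<1$) gives $w\equiv 0$ on $\Iast$, i.e.\ $\Gammaa=\Gammab$ there.

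Finally I would run the continuation argument: let $\mathcal{C}$ be the set of $t$ in the common interval of definition such that $\Gammaa(s)=\Gammab(s)$ for all $s\leqslant t$; then $0\in\mathcal{C}$, $\mathcal{C}$ is closed by continuity of $t\mapsto\rho^{i}(t)$, and if $\tast:=\sup\mathcal{C}$ were strictly below the right endpoint then, since $\Gammaa(\tast)=\Gammab(\tast)$ by closedness, the local uniqueness above would force agreement on $[\tast,\tast+\tau]$, contradicting maximality; hence $\mathcal{C}$ exhausts the common interval and $\mathcal{M}^{1}=\mathcal{M}^{2}$ wherever both are defined, independently — by the remark after Definition~\ref{def:definition_of_solution} — of the local data $(\Gamma_{k},\NN_{k})$ used in the representation. \textbf{The main obstacle} is the stability estimate invoked above: proving that the domain-to-normal-derivative (Dirichlet--to--Neumann-type) map $\rho\mapsto\partial_{\nn}\funcdiffdn{u}(\StripS(\rho))$ is Lipschitz in Hölder norms with no loss of regularity beyond what the Gronwall/contraction scheme can absorb. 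This, however, is essentially already contained in the machinery built for Theorem~\ref{thm:main_result} — transformation to the fixed domain (Section~\ref{sec:transformation_onto_a_fixed_domain}), interior and boundary Schauder theory (Section~\ref{sec:regularity_of_solutions}), and the analysis of the linearized problem (Section~\ref{sec:the_linear_problem}) — so that the proof of uniqueness largely amounts to reorganizing those estimates as a comparison between two solutions.
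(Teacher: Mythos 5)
Your proposal is correct and follows the same essential approach as the paper's proof: local uniqueness derived from the machinery behind Theorem~\ref{thm:main_result}, combined with a continuation-in-time argument. The paper's version is more compact because it argues by contradiction, applying Theorem~\ref{thm:main_result} as a black box at the last instant $\tast$ of agreement (taking $\Gamma(\tast)$ as new initial surface) to produce a unique continuation on $[\tast,\tastast]$ and hence a contradiction, whereas you re-derive the local Lipschitz/Gronwall estimate explicitly — work that, as you correctly note, duplicates the stability and contraction-mapping estimates already established in Sections~\ref{sec:transformation_onto_a_fixed_domain}--\ref{sec:proof_of_the_main_result}.
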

%
%
%%% PROOF OF UNIQUENESS SOLUTION
\begin{proof}
	If there exist two solutions $\mathcal{M}_{i} :=  \bigcup \Gamma^{i}(t) \times \{t\}$, $i=1,2$, to Problem~\ref{prob:MBP_restatement}, then we need to show that $\mathcal{M}_{1} = \mathcal{M}_{2}$.
	We prove the assertion via a contradiction.
	Let us assume that  $\mathcal{M}_{1} \neq \mathcal{M}_{2}$.
	Then, there exists $\tast \in [0, T)$ and a sequence $\{t_{k}\}_{k=1}^{\infty} \in (\tast, T]$ such that 
	\begin{equation}\label{eq:supposed_conditions}
	\left\{
		\begin{aligned}
			&\mathcal{M}_{1} \big|_{[0, \tast]} = \mathcal{M}_{2} \big|_{[0, \tast]}, \\
			&T \geqslant t_{1} > t_{2} > \cdots > \tast, \quad \text{where \ $\lim_{k \to \infty} t_{k} = \tast$,\ and}\\
			&\Gammaa(t_{k}) \neq \Gammab(t_{k}), \quad \text{for $k = 1, 2,\ldots$}.
		\end{aligned}
	\right.
	\end{equation}
	Since $\Gamma(\tast):= \Gammaa(\tast) = \Gammab(\tast)$ satisfies the condition in Theorem~\ref{thm:main_result}, there exists $\tastast \in (\tast, T]$ such that there is a unique $\Gamma(t)$ for $t \in [\tast, \tastast]$.
	However, this contradicts the last two lines in \eqref{eq:supposed_conditions}.
	Thus, $\mathcal{M}_{1}$ and $\mathcal{M}_{2}$ have to be the same solution to Problem~\ref{prob:MBP_restatement}.
\end{proof} 
%
%
%----------------------------------------------------------------------------------------------------------------------------------------------	
%----------------------------------------------------------------------------------------------------------------------------------------------	
%----------------------------------------------------------------------------------------------------------------------------------------------	
%----------------------------------------------------------------------------------------------------------------------------------------------	
Before we proceed, we provide additional comments on condition \eqref{eq:positivity_of_the_difference_of_the_normal_derivative} (cf. \eqref{eq:main_assumption_on_ud_and_un}). To this end, we first consider the following lemma.
%
%-----------------------------------------------------	
\begin{lemma}\label{lem:positivity}
	Let $\Omega \subset \mathbb{R}^{d}$, of class ${{C}}^{2}$, be an open bounded connected set with non-intersecting boundaries $\Gamma$ and $\Sigma$. 
	Assume that $v \in {{C}}^{2}(\Omega) \cap {{C}}^{0}(\baromega) \cap {{C}}^{1}(\Sigma)$ and 
	\[
	-\Delta {v} = 0  \quad \text{in $\Omega$},\qquad
	{v}=0	\quad \text{on $\Gamma$},\qquad
	\ddn{v} > 0 \quad \text{on $\Sigma$}.
	\]
	Then,
	\[
		{v} > 0\quad  \text{in $\Omega$}.
	\]
\end{lemma}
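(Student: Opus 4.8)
The plan is to derive everything from the strong maximum principle for harmonic functions together with Hopf's boundary point lemma, exploiting the sign of the normal derivative on $\Sigma$. First I would record that, since $v \in {{C}}^{2+\alpha}(\Omega) \cap {{C}}^{0+\alpha}(\baromega)$ is harmonic and $\baromega$ is compact, the minimum $m := \min_{\baromega} v$ is attained; because $v = 0$ on $\Gamma \subset \partial\Omega$ we have $m \leqslant 0$, and by the strong minimum principle $m$ cannot be attained at an interior point of the connected set $\Omega$ unless $v \equiv m$ there.

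Next I would rule out $m < 0$. If $m < 0$, the minimum is not attained anywhere on $\Gamma$, and it cannot be attained at an interior point either, since that would force $v \equiv m < 0$, contradicting $v|_{\Gamma} = 0$; hence it is attained at some $x_{0} \in \Sigma$ with $v(x) > m$ for every $x \in \Omega$. Since $\Sigma \in {{C}}^{2+\alpha}$, the domain $\Omega$ satisfies an interior ball condition at $x_{0}$, so Hopf's lemma applies to $-v$ (a harmonic function with a strict boundary maximum at $x_{0}$) and gives $\partial v/\partial\nu(x_{0}) < 0$, where $\partial/\partial\nu$ is differentiation along the normal to $\Sigma$ that points out of $\Omega$. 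By the orientation convention fixed for \eqref{eq:main_system} (on $\Gamma$ the vector $\nu$ is inward to $\Omega$, i.e.\ in the same direction as the outward normal of $\Sigma$), this is exactly the direction in which the hypothesis $\partial v/\partial\nu > 0$ on $\Sigma$ is stated, so we reach a contradiction. Therefore $m \geqslant 0$, and combined with $m \leqslant 0$ we get $m = 0$, i.e.\ $v \geqslant 0$ on $\baromega$.

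Finally I would upgrade nonnegativity to strict positivity. If $v(x_{1}) = 0$ for some $x_{1} \in \Omega$, then $v$ attains its minimum value $0$ at the interior point $x_{1}$, so the strong minimum principle forces $v \equiv 0$ on the connected set $\Omega$, whence $\partial v/\partial\nu \equiv 0$ on $\Sigma$, again contradicting the hypothesis. Hence $v > 0$ in $\Omega$, which is the claim.

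I expect the only delicate point to be the bookkeeping of the sign of the normal derivative on $\Sigma$ versus the orientation convention chosen for $\nu$ on $\Gamma$, and a brief justification that the stated regularity suffices for Hopf's lemma: $\Sigma$ of class ${{C}}^{2+\alpha}$ gives the interior ball condition at every point of $\Sigma$, and the hypothesis $\partial v/\partial\nu > 0$ on $\Sigma$ already presupposes that $v$ admits a one-sided normal derivative there; both checks are routine, and the rest is a standard maximum-principle argument.
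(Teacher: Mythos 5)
Your proof is correct and takes essentially the same route as the paper's: reason by contradiction, locate a negative boundary minimum on $\Sigma$, and use the sign of the normal derivative there. The paper does not name Hopf's lemma, but the step ``there exists $x_0\in\Sigma$ with $v(x_0)=\min_{\baromega}v<0$, hence $\partial v/\partial\nu(x_0)<0$'' is exactly the Hopf boundary point lemma in disguise, and you make that explicit together with the interior-ball justification.

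Where you go beyond the paper is the final upgrade from nonnegativity to strict positivity, and this is worth knowing: the paper's closing sentence (``Therefore, $\inf_\Omega v>0$, and thus $v>0$ in $\Omega$'') is not literally correct --- since $v$ extends continuously to $0$ on $\Gamma$, the infimum over $\Omega$ equals $0$, not a positive number. The contradiction argument only gives $v\geqslant 0$ on $\baromega$; one still needs, as you supply, the strong minimum principle to rule out an interior zero (which would force $v\equiv 0$ and hence $\partial v/\partial\nu\equiv 0$ on $\Sigma$). Your version is the complete argument, so keep that last paragraph. Your observation that the hypothesis $\partial v/\partial\nu>0$ on $\Sigma$ already presupposes the existence of the one-sided normal derivative, so no additional regularity needs to be invoked for the Hopf step, is also a clean way to dispose of the only potential technical worry.
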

%-----------------------------------------------------
\begin{proof}
    The proof proceeds by contradiction and uses the maximum principle \cite[Chap.~6.4, p.~344]{Evans1998}.
    We start by observing that $v \not\equiv \text{constant}$ in $\Omega$; otherwise, $v = 0$ in $\Omega$ because $v = 0$ on $\Gamma$, and so $\ddn{v} = 0$ on $\Sigma$, which contradicts the assumption.
    We suppose that
    \[
        \inf_{\Omega}{v} < 0.
    \]
    Then, by the maximum principle,
    \[
        0 > \inf_{\Omega}{v} = \min_{\bar{\Omega}}{v} = \min_{\Sigma}{v}.
    \]
    That is,
    \[
        \text{there exists } x_{0} \in \Sigma \text{ such that } {v}(x_{0}) = \min_{\Sigma}{v} = \min_{\bar{\Omega}}{v}.
    \]
    Because $v = 0$ on $\Gamma$, we deduce that
    \[
        \ddn{v}(x_{0}) < 0, \qquad x_{0} \in \Sigma.
    \]
    This is a contradiction to our assumption that $\ddn{v} > 0$ on $\Sigma$. Therefore, $\inf_{\Omega}{v} > 0$, and thus $v > 0$ in $\Omega$, proving the lemma.
\end{proof}
Using Lemma~\ref{lem:positivity}, we will prove in the next proposition that there exists a suitable choice of $\Gammao$ such that condition \eqref{eq:positivity_of_the_difference_of_the_normal_derivative} holds.
In fact, we will use the same idea as in Appendices~\ref{subsec:concentric_circles} and \ref{subsec:concentric_spheres}, which compares the normal derivatives on the free boundary of concentric circles and spheres.
%
%
%-----------------------------------------------------	
\begin{proposition}
	Let $\Omega = D \setminus \overline{\omega} \subset \mathbb{R}^{d}$, of class ${{C}}^{2+\alpha}$, be an open bounded connected set with non-intersecting boundaries $\partial{\omega} = \Gamma \in \mathcal{A}^{2+\alpha}$ and $\Sigma = \partial{D}$. 
	Assume that $\partial{\omega^{\star}} = \Gamma^{\star} \in \mathcal{A}^{2+\alpha}$ is the exact interior boundary that satisfies \eqref{eq:overdetermined_problem} and $\omega$ \fergy{strictly contains} $\overline{\omega}^{\star}$ (i.e., $\Gamma$ lies entirely in the interior of $\baromega^{\star} = \overline{D \setminus \overline{\omega}^{\star}}$).
	Let $f \in {{C}}^{2+\alpha}(\Sigma)$ and $g \in {{C}}^{1+\alpha}(\Sigma)$. 
	Then, the functions $\ud(\Gamma)$ and \fergy{$\un(\Gamma)$} satisfying \eqref{eq:ud_gamma} and \eqref{eq:un_gamma}, respectively, satisfy the following condition
	\[
		\ud > \un \quad \text{in $\Omega$}.
	\]
	Consequently,
	\[
		\ddn{}\left( \ud - \un \right) > 0 \quad \text{on $\Gamma$}.
	\]
\end{proposition}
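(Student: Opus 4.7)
The plan is to compare $\ud$ and $\un$ indirectly through the exact-geometry solution $\ustar$ of the overdetermined problem~\eqref{eq:overdetermined_problem} posed on $\Ostar = D \setminus \overline{\omega}^{\star}$. The crucial feature is that $\ustar$ simultaneously realizes the Dirichlet datum $f$ and the Neumann datum $g$ on $\Sigma$, so it can serve as a bridge between the mismatched boundary data carried separately by $\ud$ and $\un$. The hypothesis that $\overline{\omega}^{\star} \subset \omega$ (equivalently, $\Gamma \subset \Ostar$) implies $\Omega \subset \Ostar$, so $\ustar$ is harmonic on $\Omega$ as well.

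First I would apply Lemma~\ref{lem:positivity} directly on $\Ostar$: since $\ustar$ is harmonic there, vanishes on $\Gstar$, and satisfies $\ddn{\ustar} = g > 0$ on $\Sigma$, the lemma yields $\ustar > 0$ in $\Ostar$, and in particular $\ustar > 0$ on $\Gamma$. Next I would introduce $w := \ustar - \un$ in $\Omega$, which is harmonic, satisfies $\partial w/\partial \nu = g - g = 0$ on $\Sigma$, and equals $\ustar > 0$ on $\Gamma$. By the strong minimum principle combined with Hopf's lemma on $\Sigma$ (a boundary minimum there would force a strictly negative outward normal derivative, contradicting the vanishing Neumann condition), $w > 0$ on all of $\overline{\Omega}$; in particular, $\un < \ustar = f$ pointwise on $\Sigma$.

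I would then apply the strong maximum principle to $\funcdiffdn{u} = \ud - \un$, which is harmonic in $\Omega$, equals $f - \un > 0$ on $\Sigma$ by the previous step, and vanishes on $\Gamma$. This yields $\ud - \un > 0$ in $\Omega$. The boundary inequality on $\Gamma$ then follows from Hopf's lemma: since $\funcdiffdn{u}$ attains its minimum value $0$ at each point of $\Gamma$ and is strictly positive just inside $\Omega$, the outward-to-$\Omega$ normal derivative at $\Gamma$ is strictly negative, and the convention that $\nu$ points inward to $\Omega$ on $\Gamma$ converts this into $\ddn{}(\funcdiffdn{u}) > 0$ on $\Gamma$.

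The main care required is the bookkeeping of the direction of $\nu$, which is outward to $\Omega$ on $\Sigma$ but inward to $\Omega$ on $\Gamma$, so that Hopf's lemma is applied with the correct sign at each of the two boundary components; the ${C}^{2+\alpha}$ regularity of $\Sigma$, $\Gamma$, and $\Gstar$ supplies the interior ball condition needed throughout, and the strict containment $\overline{\omega}^{\star} \subset \omega$ is what provides the strict positivity of $\ustar$ on $\Gamma$ that drives the entire comparison.
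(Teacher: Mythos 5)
Your proof is correct, and it takes a genuinely different route from the paper's. The paper introduces two auxiliary functions, $v_{\text{D}} := u^{\star} - \ud$ (zero Dirichlet data on $\Sigma$, equal to $u^{\star} > 0$ on $\Gamma$) and $v_{\text{N}} := u^{\star} - \un$ (zero Neumann data on $\Sigma$), establishes $v_{\text{D}} > 0$ in $\Omega$ and hence $\partial v_{\text{D}}/\partial\nu < 0$ on $\Sigma$, and then applies Lemma~\ref{lem:positivity} to $v := v_{\text{N}} - v_{\text{D}} = \ud - \un$, which satisfies $v = 0$ on $\Gamma$ and $\partial v / \partial\nu > 0$ on $\Sigma$. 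In other words, the paper ultimately compares the Neumann traces of $\ud$ and $\un$ on $\Sigma$ and hands that to Lemma~\ref{lem:positivity}. You work only with $w := u^{\star} - \un$ (which is precisely $v_{\text{N}}$), use the strong minimum principle together with Hopf's lemma on $\Sigma$ to force $w > 0$ on $\overline{\Omega}$ from the homogeneous Neumann data, and then compare the Dirichlet traces: $\un < f = \ud$ pointwise on $\Sigma$, after which a plain maximum principle applied to $\ud - \un$ (positive on $\Sigma$, zero on $\Gamma$) finishes without invoking Lemma~\ref{lem:positivity} at that stage. Both arguments hinge on the same pivotal fact, $u^{\star} > 0$ on $\Gamma$, obtained from the strict-containment hypothesis, but they allocate the Hopf argument differently: the paper applies it to $v_{\text{D}}$ at $\Sigma$ (converting Dirichlet information into the Neumann sign needed by the lemma), while you apply it to $w$ at $\Sigma$ (converting the Neumann condition into a strict pointwise inequality). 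Your route is slightly leaner, using one auxiliary function instead of two, whereas the paper's has the merit of factoring cleanly through its own Lemma~\ref{lem:positivity}, which is reused elsewhere. Your attention to the sign conventions for $\nu$ on the two boundary components is correct and is indeed the step requiring the most care.
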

%-----------------------------------------------------
%
\begin{proof}
	Let the assumptions of the proposition be satisified.
	Let us denote by $(\Omega^{\star}, u^{\star}(\Omega^{\star}))$, $\Omega^{\star} = D \setminus \overline{\omega}^{\star}$, $\Omega^{\star}$ is of class ${{C}}^{2+\alpha}$, $u^{\star} \in {{C}}^{2+\alpha}(\Omega) \cap {{C}}^{0+\alpha}(\baromega)$, the exact solution pair of the free boundary problem \eqref{eq:overdetermined_problem} with the corresponding exact interior boundary $\Gamma^{\star} = \partial{\omega}^{\star} \in \mathcal{A}^{2+\alpha}$.
	We note the following observation
 	\begin{equation}\label{eq:observation_1}
	\left\{\arraycolsep=1.4pt\def\arraystretch{1.2}
	\begin{array}{rcll} 
		\Delta u^{\star} = \Delta \ud = \Delta \un		&=&0 		&\quad \text{in $\Omega$},\\
		u^{\star} = \ud	&>& 0 			&\quad \text{on $\Sigma$},\\
		\ud = \un &=&0					&\quad \text{on $\Gamma$},\\
		u^{\star} &>& 0 					&\quad \text{on $\Gamma$}.
	\end{array}
	\right.
	\end{equation}
	Let us define
	\[
		{v}_{\text{D}} := -\ud + u^{\star}
		\qquad \text{and} \qquad
		{v}_{\text{N}} := -\un + u^{\star}.
	\]
	Then, from \eqref{eq:observation_1}, we have the following
 	\begin{equation}\label{eq:observation_2}
	\left\{\arraycolsep=1.4pt\def\arraystretch{1.2}
	\begin{array}{rcll} 
		\Delta {v}_{\text{D}} = \Delta {v}_{\text{D}} &=&0 		&\quad \text{in $\Omega$},\\
		{v}_{\text{D}} &=& 0 								&\quad \text{on $\Sigma$},\\
		\ddn{{v}_{\text{N}}} &=&0							&\quad \text{on $\Sigma$},\\
		{v}_{\text{D}} = {v}_{\text{N}} = u^{\star} &>& 0 			&\quad \text{on $\Gamma$}.
	\end{array}
	\right.
	\end{equation}
	We deduce from \eqref{eq:observation_2} that
	\begin{equation}\label{eq:observation_3}
		{v}_{\text{D}} > 0 \quad \text{in $\Omega$}
		\qquad \text{and} \qquad
		\ddn{v_{\text{D}}} < 0 \quad \text{on $\Sigma$}.
	\end{equation}
	Now, we define
	\[
		{v} = {v}_{\text{N}} - {v}_{\text{D}} = (-\un + u^{\star}) - ( -\ud + u^{\star} ) = \ud - \un.
	\]
	Hence, we have
	\[
	-\Delta {v} = 0  \quad \text{in $\Omega$},\qquad
	{v}=0	\quad \text{on $\Gamma$},\qquad
	\ddn{v} > 0 \quad \text{on $\Sigma$}.
	\]
	Therefore, by Lemma~\ref{lem:positivity}, we get
	\[
		\ud - \un = {v} > 0 \quad \text{in $\Omega$}.
	\]
	Consequently, when $\Gamma \in \mathcal{A}^{2+\alpha}$ lies entirely in the interior of $\baromega^{\star} = \overline{D \setminus \overline{\omega}^{\star}}$, we get
	\[
		\ddn{}\left(\ud - \un\right) > 0 \quad \text{on $\Gamma$},
	\]
	which concludes the proof.
\end{proof}
%
%----------------------------------------------------------------------------------------------------------------------------------------------	
%
%
%
Theorem~\ref{thm:main_result} follows from an auxiliary result (see Theorem~\ref{thm:main_result_transformed}) which we issue in the next section.
%------------------------------------------------------------------------	
% 		SECTION 4: PROBLEM TRANSFORMATION	
%------------------------------------------------------------------------		
\section{Problem Transformation onto a Fixed Domain}\label{sec:transformation_onto_a_fixed_domain}
	To carry out our analysis, we first need to transform system \eqref{eq:main_system} into a problem over a fixed domain via the change of variables to be described below.
	Such technique has been used in many studies (see, e.g., \cite{Solonnikov2003}) and can be achieved by a special mapping -- a modification of the well-known Hanzawa transform -- of the pseudo-time dependent domain $\Omega(t)$ onto the fixed domain $\Omega$.
	The target equation is given by \eqref{eq:transformed_problem}, and the main result we want to state here is emphasized in Theorem~\ref{thm:main_result_transformed}.

From this point forward, we assume that Assumption~\ref{key_assumption} holds and that $\NN$ is a quasi-normal vector on $\Gamma$ (see Definition~\ref{defn:quasi_normal}).
Also, for some technical purposes, we assume there is a constant $\minnu > 0$ such that
\begin{equation}\label{eq:assumption_on_quasi_normal_vector}
	\NN(\xi) \cdot \nno(\xi) \geqslant \minnu >0, \qquad \xi \in \Gamma,
	\tag{A2}
\end{equation}
where $\nno$ denotes the unit inward normal vector to $\Gamma$.
This requirement will be made clear in Section~\ref{sec:nonlinear_problem}.

Given a constant $\lambdao > 0$, it was shown in \cite[Sec.~3, p.134]{Solonnikov2003} that the tubular strip 
\[
	{\StripS}_{\lambdao}=\{\xi + \NN(\xi)\lambda \mid \abs{\lambda} < \lambdao, \xi \in \Gamma\}
\]
contains some neighborhood of $\Gamma$ where the equation $x = \xi + \NN(\xi)\lambda$ determines some functions $\xi(x)$ and $\lambda(x)$ of class ${{C}}^{2+\alpha}$ (see the proof of Proposition~\ref{prop:diffeomorphic_map}).
For \textit{sufficiently small} $t>0$, the free boundary $\Gamma(t)$ is contained in ${\StripS}_{\lambdao}$ (cf. Proposition~\ref{prop:diffeomorphic_map}) and can be described by the equation $\lambda = \rho(\xi,t)$.
Accordingly, for 
\[
	\rho \in R_{\circ}(\Gamma,\NN) := \{ \rho \in {{C}}^{2+\alpha}(\Gamma; \mathbb{R}) \mid \maxnorm{\rho} < \lambdao \}
\]
we define
%------------------------------------------------------------------------------------------------------
%%% THE STRIP S
\begin{equation}\label{eq:set_S_rho}
	{\StripS}(\rho) := \{x \in \mathbb{R}^{d} \mid x = \xi + \NN(\xi)\rho(\xi),\ \xi \in \Gamma\}.
\end{equation}
%------------------------------------------------------------------------------------------------------
%
We introduce the \textit{admissible set of hypersurfaces} consisting of $d-1$ dimensional ${{C}}^{2+\alpha}$ manifold embedded in $\mathbb{R}^{d}$ as follows
%
%------------------------------------------------------------------------------------------------------
%%% ADMISSIBLE SET OF HYPER-SURFACES
\[
	{{H}}_{\lambdao}(\Gamma,\NN) := \{{\StripS}(\rho) \mid \rho \in R_{\circ}(\Gamma,\NN) \}.
\]
%------------------------------------------------------------------------------------------------------
%
%
The following definition introduces a family of admissible moving surfaces (or moving boundaries).
\begin{definition}\label{def:admissible_moving_surfaces}
	We say that a moving surface (or moving boundary) $\mathcal{M}$ is \textit{admissible}, and we write $\mathcal{M} \in {{M}}_{[0, T]}(\Gamma,\NN)$, if and only if
	\[
	\left\{
		\begin{aligned}
			\ \mathcal{M} &= \bigcup_{0 \leqslant t \leqslant T } \Gamma(t) \times \{t\} \subset \mathbb{R}^{d} \times \mathbb{R},\\
			\ \Gamma(t) &= {\StripS}(\rho(t)) \in {{H}}_{\lambdao}(\Gamma,\NN), \ \text{for all } t \in [0,T],\\
			\ \rho &\in {{R}}_{[0, T]}(\Gamma,\NN),
		\end{aligned}
	\right.
	\]
	where ${{R}}_{[0, T]}(\Gamma,\NN)$ is the set given in \eqref{eq:Rab} with $\epso = \lambdao$.
\end{definition}
For every function $\rho \in {{R}}_{[0, T]}(\Gamma,\NN)$, we put into correspondence its extension
\begin{equation}\label{eq:extension_of_rho}
	\erho(y, t) := \eop\rho(y, t),
\end{equation}
where $E$ is a linear and bounded map
\begin{equation}
	\label{eq:extension_operator_E}
	\eop: {{R}}_{[0, T]}(\Gamma,\NN) \to {{C}}^{0}([0, T]; {{C}}^{2+\alpha}(\baromega)) \cap {{C}}^{1}([0, T]; {{C}}^{1+\alpha}(\baromega))
\end{equation}
satisfying 
\[
	\norm{\eop{\rho}}^{(2+\alpha)}_{\maxtimebaromega} \leqslant c \vertiii{\rho}^{(2+\alpha)}_{\maxtimegamma},
\]
for some constant $c>0$.
Such an extension of $\rho$ from $\Gamma \times [0,T]$ to $\baromega \times [0, T]$ can be constructed in different ways.
In this investigation, we assume that $\erho={\erho}(y,t):=E\rho(y,t)$ satisfies the following boundary value problem:
%------------------------------------------------------------------------------------------------------
\begin{equation}\label{eq:harmonic_extension_of_rho}
	-\Delta{\erho}  = 0, \ y \in \Omega, \ t > 0,
	\qquad
	\overGamma{\erho} = \rho,
	\qquad
	\overSigma{\erho} = 0,
\end{equation}
%------------------------------------------------------------------------------------------------------
where the latter condition is essential because $\Sigma$ is fixed.
Using this extension allows for a more straightforward development of later arguments.
Throughout the remainder of our discussion, we assume without further mention that ${\erho}$ satisfies \eqref{eq:harmonic_extension_of_rho}.

To meet certain technical requirements, we extend the vector field $\NN$ (retaining the same notation) to the domain \alert{${\StripS}_{\lambdao}$} by defining $\NN(x) = \NN(\xi(x))$.
We then further extend it to the domain $\Omega \cup \alert{{\StripS}_{\lambdao}}$, where $\Omega = \Omega(0)$, ensuring that the regularity $\NN \in {{C}}^{2+\alpha}(\Omega \cup \alert{{\StripS}_{\lambdao}})$ is preserved (cf. Remark~\ref{rem:extension_of_the_normal}).
Let us introduce the change of variables $y = Y (x, t)$ under which $\Omega(t)$, $t > 0$, is transformed to $\Omega$ and define the inverse transform $Y^{-1}:\Omega \to \Omega(t)$ as follows:
		\begin{equation}\label{eq:inverse_transform}
			Y^{-1}(y,t) = x \in \Omega(t), \quad x = y+\NN(y){\erho}(y,t),
		\end{equation}
for $y \in \Omega$ and $t>0$.
The given map is bijective, as stated in the following lemma.
\begin{lemma}\label{lem:map_{i}s_bijective}
	Let $Z(y) := y + \NN(y) \erho(y)$, $y \in \baromega$, such that $Z\overSigma = y$ and $Z\big|_{y\in\Gamma} = y + \NN(y) \rho(y)$.
	Then, \alert{for sufficiently small $\abs{\erho} > 0$}, the map $Z:\baromega \to \baromega(\rho)$, where $\baromega(\rho)$ is the annular domain bounded by $\Sigma$ and ${\StripS}(\rho)$ is bijective.
\end{lemma}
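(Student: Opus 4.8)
The plan is to verify injectivity and surjectivity of $Z$ separately, reducing both to properties already established for the diffeomorphism $X(\xi,\rho)=\xi+\rho\NN(\xi)$ in Proposition~\ref{prop:diffeomorphic_map} together with the harmonic extension bound. First I would fix $\rho\in{{R}}_{[0,T]}(\Gamma,\NN)$ (for a fixed $t$, suppressing $t$ as in the statement) and recall that, by \eqref{eq:harmonic_extension_of_rho}, $\erho=E\rho$ is the harmonic extension of $\rho$ with $\erho\big|_{\Sigma}=0$, so by the maximum principle $\maxnorm{\erho}\leqslant\maxnorm{\rho}<\epsone$ (shrinking $\epsone$ further if necessary), and by Schauder estimates $\norm{\erho}^{(2+\alpha)}_{\baromega}\leqslant c\,\norm{\rho}^{(2+\alpha)}_{\Gamma}$. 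This places $Z$ in the setting of Lemma~\ref{lem:how_to_show_diffeomorphism}: writing $Z(y)=y+\phi(y)$ with $\phi(y):=\NN(y)\erho(y)$, one has $\nabla^{\top}\phi=\NN\,\nabla^{\top}\erho+\erho\,\nabla^{\top}\NN$, hence
\[
	\norm{\nabla^{\top}\phi}\leqslant\maxnorm{\erho}\,\norm{\nabla^{\top}\NN}+\abs{\NN}\,\maxnorm{\nabla\erho}\leqslant c\,\epsone<1
\]
for $\epsone$ small enough, where we used that $\norm{\nabla\erho}^{(0)}_{\baromega}\leqslant c\,\norm{\rho}^{(1+\alpha)}_{\Gamma}\leqslant c\,\epsone$ by the Schauder bound and the fact that $\rho\in{{R}}_{1}(\Gamma,\NN)$ controls $\maxnorm{\nabla_{\Gamma}\rho}$. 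Since $\phi$ vanishes on $\Sigma$ (but not on $\Gamma$, so Lemma~\ref{lem:how_to_show_diffeomorphism} as stated for $\phi\in{{C}}_0^{k+\alpha}$ is not literally applicable on all of $\baromega$), I would instead apply the local inversion argument: $\op{det}\nabla^{\top}Z>0$ everywhere because $\norm{\nabla^{\top}\phi}<1$, so $Z$ is a local $C^{2+\alpha}$-diffeomorphism and an open map.

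For injectivity, suppose $Z(y_1)=Z(y_2)$, i.e.\ $y_1-y_2=\NN(y_2)\erho(y_2)-\NN(y_1)\erho(y_1)$. Splitting as
\[
	y_1-y_2=(\erho(y_2)-\erho(y_1))\NN(y_2)+\erho(y_1)(\NN(y_2)-\NN(y_1))
\]
and using the Lipschitz bounds $\abs{\erho(y_2)-\erho(y_1)}\leqslant\maxnorm{\nabla\erho}\,\abs{y_1-y_2}\leqslant c\epsone\abs{y_1-y_2}$ and $\abs{\NN(y_2)-\NN(y_1)}\leqslant L_0\abs{y_1-y_2}$ with $\abs{\erho(y_1)}\leqslant\epsone$, one obtains $\abs{y_1-y_2}\leqslant(c\epsone+\epsone L_0)\abs{y_1-y_2}$, forcing $y_1=y_2$ once $\epsone<(c+L_0)^{-1}$. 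This mirrors exactly the injectivity computation in the proof of Proposition~\ref{prop:diffeomorphic_map}.

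For surjectivity onto $\baromega(\rho)$, the annular domain bounded by $\Sigma$ and ${\StripS}(\rho)$, I would argue by a connectedness/boundary-matching argument. Since $Z$ is continuous on the compact $\baromega$, $Z(\baromega)$ is compact; since $Z$ is open on the interior, $Z(\Omega)$ is open; and by construction $Z$ maps $\Sigma$ identically to $\Sigma$ and maps $\Gamma$ onto ${\StripS}(\rho)$ (this last using that $\xi\mapsto\xi+\rho(\xi)\NN(\xi)$ is a homeomorphism $\Gamma\to{\StripS}(\rho)$, which follows from Proposition~\ref{prop:diffeomorphic_map}). Hence $Z(\partial\baromega)=\partial\baromega(\rho)$, so $Z(\baromega)$ is a compact set whose boundary is contained in $\partial\baromega(\rho)$ and whose interior is open; a standard degree-theory or invariance-of-domain argument then shows $Z(\baromega)=\baromega(\rho)$. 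Combined with injectivity, $Z:\baromega\to\baromega(\rho)$ is a bijection, which is the claim.

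The main obstacle I anticipate is the surjectivity step: injectivity and the Jacobian nondegeneracy are routine perturbation estimates, but concluding that the image is exactly the prescribed annular region requires care near $\Gamma$, where $\phi$ does not vanish, so the clean statement of Lemma~\ref{lem:how_to_show_diffeomorphism} does not directly close the argument. The cleanest fix is to combine invariance of domain (giving openness and local injectivity) with the global injectivity already proved and the explicit identification of $Z$ on the two boundary components, so that a connectedness argument pins down the image; alternatively one can use topological degree, noting $\deg(Z,\Omega,p)=1$ for $p$ in the connected open set $\baromega(\rho)\setminus\partial\baromega(\rho)$ since it equals $1$ near a boundary point where $Z$ is a local diffeomorphism.
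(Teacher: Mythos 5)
Your proposal is essentially correct, and on the injectivity step it coincides with the paper's: both arguments decompose $\NN(y_2)\erho(y_2)-\NN(y_1)\erho(y_1)$ and force a contraction $\abs{y_1-y_2}\leqslant\kappa\abs{y_1-y_2}$ with $\kappa<1$ provided $\epsone$ (and hence $\erho$ and $\nabla\erho$) is small. One technical point you should tighten: your bound $\abs{\erho(y_2)-\erho(y_1)}\leqslant\maxnorm{\nabla\erho}\abs{y_1-y_2}$ requires the segment $[y_1,y_2]$ to lie inside $\baromega$, which fails in general since $\baromega$ is annular. The paper fixes this by connecting $y_1,y_2$ along a curve $\mathcal{C}(y_1,y_2)\subset\baromega$ of class ${{C}}^1$ with length $\abs{\mathcal{C}(y_1,y_2)}\leqslant c(\Omega)\abs{y_1-y_2}$, i.e.\ uses a quasi-convexity constant of the annulus, which then enters the contraction factor. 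Your proof will close cleanly once you insert this.

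On the part of the claim beyond injectivity, you take a genuinely different and more standard route. The paper argues by contradiction that $Z(\baromega)\subset\overline{\Omega(\rho)}$: it supposes an interior point escapes, pulls it back along a curve to a point $y_a\in\Omega$ landing on $\partial\Omega(\rho)$, and then invokes the boundary diffeomorphism $\partial\Omega\to\partial\Omega(\rho)$ plus the (subsequently proved) injectivity to produce a boundary preimage $y_b\neq y_a$ with $Z(y_b)=Z(y_a)$, a contradiction. This is an ad hoc connectedness argument and, strictly speaking, it establishes only that the image is \emph{contained} in $\overline{\Omega(\rho)}$, not that it equals it. Your approach — local invertibility from $\op{det}\nabla^{\top}Z>0$, openness of $Z(\Omega)$, identification of $Z$ on $\Sigma$ (identity) and on $\Gamma$ (onto ${\StripS}(\rho)$), and then a degree/invariance-of-domain argument to pin down $Z(\baromega)=\baromega(\rho)$ — is more explicit and in fact the more complete treatment of surjectivity. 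You also correctly note that Lemma~\ref{lem:how_to_show_diffeomorphism} is not literally applicable because $\phi=\NN\erho$ does not vanish on $\Gamma$; the paper sidesteps this rather than addressing it. In short: your injectivity argument is the paper's up to the quasi-convexity fix, and your surjectivity argument is a valid, arguably cleaner alternative to the paper's boundary-reduction contradiction.
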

\begin{proof}
	See Appendix~\ref{appx:proof_map_is_bijective}.
\end{proof}
To transform the Hele-Shaw-like system \eqref{eq:main_system} onto a fixed domain, we make a few additional preparations.
The Jacobi matrix $\Jac =(\Jac_{km})_{km}$ ($k, m = 1, \ldots, d$) of the transform $Y^{-1}$ has entries
	\[
		\Jac_{km} = (D_{y}Y^{-1}(y,t))_{km} = \delta_{km} + N_{k} \frac{\partial {\erho}}{\partial y_{m}} + \frac{\partial N_{k}}{\partial y_{m}} {\erho}.
	\]
Correspondingly, we denote by $\Jac^{km} $ the entries of the inverse matrix $\Jac^{-1}$,
i.e., 
	\[
		\Jac^{-1} = \left( \Jac^{km}\right)_{km} = \left((D_{x}Y(x,t))_{km}\right)_{km} = \left((D_{y}Y^{-1}(y,t))_{km}\right)_{km}^{-1},
	\]	
which is the Jacobi matrix of the transform $Y(x, t)$. 
Denoting $\Jac^{-\top} = (\Jac^{-1})^\top$, the operator $\nabla_{x} := \left(\partial_{x_{1}}, \ldots, \partial_{x_{d}} \right)$, $\partial_{x_{k}} := \partial/\partial{x_{k}}$, $k=1,\ldots,d$, takes the form
%
%------------------------------------------------------------------------------------------------------------------------------------------------------------------------------------------------------
	\begin{equation}\label{eq:gradient_transformation}
		\Jac^{-\top} \nabla_{y}^{\top} := \left( \sum_{k=1}^{d} \Jac^{km}\frac{\partial}{\partial y_{k}} \right), \quad m = 1, \ldots, d,
		\qquad \nabla_{y} := \left(\frac{\partial}{\partial y_{1}}, \ldots, \frac{\partial}{\partial y_{d}} \right).
	\end{equation}
%------------------------------------------------------------------------------------------------------------------------------------------------------------------------------------------------------	
%
%
The following relation between the inward unit normal vectors $\nno$ to $\Gamma$ and $\nn(t)$ to $\Gamma(t)$ (cf. \cite[p.~135]{Solonnikov2003} or see \cite[Thm.~4.4, p.~488]{DelfourZolesio2011}) holds:
\[
	\nn(t) \circ Y^{-1}  = \frac{\Jac^{-\top}\nno}{|\Jac^{-\top}\nno|}
	\qquad \Longleftrightarrow \qquad
	\nn(t) = \frac{\Jac^{-\top}\nno}{|\Jac^{-\top}\nno|} \circ Y. 
\]
Applying the above change of variables and identities, we can pass from the $t$-dependent system \eqref{eq:main_system} to the problem in the given fixed domain $\Omega$ with respect to the three unknown functions $\bigUd(y,t) = \ud(Y^{-1}(y,t),t)$, $\bigUn(y,t) = \un(Y^{-1}(y,t),t)$, and ${\erho}(y,t)$, $y \in \Omega$.  
More precisely, given the harmonic function $\erho$ satisfying \eqref{eq:harmonic_extension_of_rho}, we have the following transformation of \eqref{eq:main_system} using the map $Y : \Omega(t) \to \Omega$, where $Y(x,t) = y \in \Omega$, for $t>0$:
%%%%%%%%%%%%%%%%%%%%%%%%%%%%%%%%%%%%%%%%%%%%%%%%%%%%%%%%%%%%%%%%%%%%%%%%%%%%%%
%------------------------------------------------------------------------------------------------------------------------------------------------------------------------------------------------------
\begin{equation}
	\label{eq:transformed_problem}
	\left\{%\arraycolsep=1.4pt\def\arraystretch{1.5}
	\begin{aligned}
	\displaystyle \sum_{m, p=1}^{d}{{\matA}}_{mp} \frac{\partial^{2} \bigUd}{\partial y_{m} \partial y_{p}} 
	+ \sum_{k,m,p = 1}^{d}\Jac^{mk} \frac{\partial \Jac^{pk}}{\partial y_{m}} \frac{\partial \bigUd}{\partial y_{p}} 
							&=0,  \quad y \in \Omega, \ t > 0,\\
	\overSigma{\bigUd}	= f(y,t), \qquad
	\overGamma{\bigUd} &= 0,\\[0.5em]			
	\displaystyle \sum_{m, p=1}^{d}{{\matA}}_{mp} \frac{\partial^{2} \bigUn}{\partial y_{m} \partial y_{p}} 
	+ \sum_{k,m,p = 1}^{d}\Jac^{mk} \frac{\partial \Jac^{pk}}{\partial y_{m}} \frac{\partial \bigUn}{\partial y_{p}} 
							&=0,  \quad y \in \Omega, \ t > 0,\\
	\overSigma{\ddn{\bigUn}}	= g(y,t), \qquad
	\overGamma{\bigUn} &= 0,\\[0.5em]			
	\overGamma{ \left( \ddt{\erho} + B_{{\erho}} \ddno{{{\funcdiffdn{U}}}} \right) } &= 0, \\
	{\erho}\big|_{y \in \Gamma, \, t = 0} &= 0.
	\end{aligned}
	\right.
\end{equation}	
%------------------------------------------------------------------------------------------------------------------------------------------------------------------------------------------------------
%
%
where
\[
	{{\matA}}_{mp} := \sum_{k=1}^{d} \Jac^{mk} \Jac^{pk}, \qquad m,p = 1, \ldots, d,
\]
are the entries of the matrix ${{\matA}} := \Jac^{-1} \Jac^{-\top}$, and 
\begin{equation}\label{eq:B_sub_rho}
	B_{\erho} := \left( \NN \cdot \Jac^{-\top}\nno \right)^{-1} \left( \nno \cdot {{\matA}} \nno \right).
\end{equation}
We note here that $\ddno{{{\funcdiffdn{U}}}} > 0$ is consistent with Assumption~\ref{key_assumption}.
\fergy{The detailed computation of system~\eqref{eq:transformed_problem} is provided in Appendix~\ref{appx:change_of_variables}.}

For convenience of later use, we will write problem \eqref{eq:transformed_problem} in compact form.
%
%%%On this purpose, for $\rho \in {{R}}_{[0, T]}(\Gamma,\NN)$ with extensions given by \eqref{eq:harmonic_extension_of_rho} satisfying the equations in \eqref{eq:transformed_problem}, 
For this purpose, for $\rho \in {{C}}^{2+\alpha}(\Gamma)$, we define the following linear and bounded operators:
\begin{equation}\label{eq:splitting_operator_for_Laplacian}
\left\{
\begin{aligned}
	\pazo{L}_{\erho}&:{{C}}^{2+\alpha}(\baromega) \to {{C}}^{\alpha}(\baromega),
	\quad\qquad \pazo{L}_{\erho} = \sum_{m,p = 1}^{d}{{\matA}}_{mp} \frac{\partial^{2}}{\partial y_{m} \partial y_{p}},
	%%\quad \pazo{L}_{\erho} \in \BL({{C}}^{2+\alpha}(\baromega), {{C}}^{\alpha}(\baromega)),
	\\
	%%%
	\pazo{K}_{\erho}&:{{C}}^{2+\alpha}(\baromega) \to {{C}}^{1+\alpha}(\baromega),
	\qquad \pazo{K}_{\erho} = -\sum_{j,k=1}^{d}{N}_{k} \Jac^{jk} \frac{\partial}{\partial y_{j}},
	%%\quad \pazo{K}_{\erho} \in \BL({{C}}^{2+\alpha}(\baromega), {{C}}^{1+\alpha}(\baromega)),
	\\
	%%%
	\pazo{M}_{\erho}&:{{C}}^{2+\alpha}(\baromega) \to {{C}}^{1+\alpha}(\baromega),\\
	\quad \pazo{M}_{\erho} &= - \sum_{p=1}^{d}
		 \left[ 
		 \sum_{j,k,m,q = 1}^{d}\Jac^{km} \Jac^{pj} \Jac^{qm}
				\left( \frac{\partial {N}_{j}}{\partial y_q}  \frac{\partial {\erho}}{\partial y_{k}}  
						+ \frac{\partial {N}_{j}}{\partial y_{k}} \frac{\partial {\erho}}{\partial y_q}  
							+ \frac{\partial^{2} {N}_{j}}{\partial y_{k} \partial y_q} {\erho} \right) 
		 \right]
\frac{\partial }{\partial y_{p}}.
	%%\quad \pazo{M}_{\erho} \in \BL({{C}}^{2+\alpha}(\baromega), {{C}}^{1+\alpha}(\baromega)),
\end{aligned}
\right.
\end{equation}
The main equations in system \eqref{eq:transformed_problem} can then be written as follows
\begin{equation}\label{eq:transformed_Laplacian_equations}
	\pazo{L}_{\erho}{\bigUd} + \pazo{K}_{\erho}{\bigUd} \pazo{L}_{\erho}{{\erho}} + \pazo{M}_{\erho}{\bigUd} = 0
	\quad\text{and}\quad
	\pazo{L}_{\erho}{\bigUn} + \pazo{K}_{\erho}{\bigUn} \pazo{L}_{\erho}{{\erho}} + \pazo{M}_{\erho}{\bigUn} = 0.
\end{equation} 
We note here that $\pazo{M}_{\erho}  \equiv 0$ when ${\erho} \equiv 0$.
\begin{lemma}\label{lemma:analytic_functions_LKM}
The following regularities hold:
\begin{equation}\label{eq:analytic_functions_LKM}
\left\{
\begin{aligned}
	[\rho \mapsto \pazo{L}_{\erho}] &\in {{C}}^{\omega}({{C}}^{2+\alpha}(\Gamma), \BL({{C}}^{2+\alpha}(\baromega), {{C}}^{\alpha}(\baromega))),\\
	[\rho \mapsto \pazo{K}_{\erho}], \ [\rho \mapsto \pazo{M}_{\erho}] &\in {{C}}^{\omega}({{C}}^{2+\alpha}(\Gamma), \BL({{C}}^{2+\alpha}(\baromega), {{C}}^{1+\alpha}(\baromega))).
\end{aligned}
\right.
\end{equation}
\end{lemma}
\begin{proof}
	We will only prove our claim that 
	\begin{equation}\label{eq:analyticity_of_map_L}
		[\rho \mapsto \pazo{L}_{\erho}] \in {{C}}^{\omega}({{C}}^{2+\alpha}(\Gamma), \BL({{C}}^{2+\alpha}(\baromega), {{C}}^{\alpha}(\baromega))).
	\end{equation}
	The other two results can be proven in a similar fashion.
	
	We let $\rho \in {{C}}^{2+\alpha}(\Gamma)$. 
	In view of \eqref{eq:extension_of_rho} and \eqref{eq:harmonic_extension_of_rho}, we define 
	\begin{equation}\label{eq:extension_of_rho_in_Omega}
		\erho(y) := \eop\rho(y), \quad y \in \Gamma, \quad \text{where $E \in \BL({{C}}^{2+\alpha}(\Gamma); {{C}}^{2+\alpha}(\baromega))$},
	\end{equation}
and such that
%------------------------------------------------------------------------------------------------------
\begin{equation}\label{eq:system_for_extension_of_rho_in_Omega}
	-\Delta{\erho}  = 0, \ y \in \Omega,
	\qquad
	\overGamma{\erho} = \rho,
	\qquad
	\overSigma{\erho} = 0.
\end{equation}
%------------------------------------------------------------------------------------------------------
We claim that 
\begin{equation}\label{eq:claim_bounded_linearity}
	\pazo{L}_{\erho} \in \BL({{C}}^{2+\alpha}(\baromega), {{C}}^{\alpha}(\baromega)).
\end{equation}
Let us note that the following regularities hold:
\begin{itemize}
	\item $\left[ \erho \mapsto \dfrac{\partial{\erho}}{\partial{y_{m}}}\right] \in \BL({{C}}^{2+\alpha}(\baromega), {{C}}^{1+\alpha}(\baromega))$, for all $m = 1, \ldots, d$;
	\item $D_{mp} : =\dfrac{\partial^{2}}{\partial_{y_{m}}\partial_{y_{p}}} \in \BL({{C}}^{2+\alpha}(\baromega), {{C}}^{\alpha}(\baromega))$, for all $m, p = 1, \ldots, d$;
	%
%%	\item $\left[ \erho \mapsto A_{mp}^{k}(\erho) := \Jac^{mk}(\erho) \Jac^{pk}(\erho) \right] \in {{C}}^{\omega}({{C}}^{2+\alpha}(\baromega)); {{C}}^{1+\alpha}(\baromega)))$, for all $k, m, p = 1, \ldots, d$;
	%
	\item $A_{mp} \in {{C}}^{\omega}({{C}}^{2+\alpha}(\baromega)); {{C}}^{1+\alpha}(\baromega)))$, for all $m, p = 1, \ldots, d$;
	\item for $u \in {{C}}^{2+\alpha}(\baromega))$, we have $\Lambda_{mp} \in {{C}}^{\omega}({{C}}^{\alpha}(\baromega)); {{C}}^{\alpha}(\baromega)))$, for all $m, p = 1, \ldots, d$, where $\Lambda_{mp}^{\rho}(x) : = A_{mp}(\erho)(x) u(x)$, $x \in \baromega$.
\end{itemize}
The latter regularity result follows from the fact that the following map
\[
	\mathscr{F} : {{C}}^{\alpha}(\baromega) \times {{C}}^{\alpha}(\baromega) \longrightarrow {{C}}^{\alpha}(\baromega),
	\qquad (\varphi , \psi) \longmapsto \varphi \psi,
\]
is analytic (cf. \cite[p.~53]{GilbargTrudinger2001}); i.e. $\mathscr{F} \in {{C}}^{\omega}({{C}}^{\alpha}(\baromega) \times {{C}}^{\alpha}(\baromega) ; {{C}}^{\alpha}(\baromega)))$.
Hence, using $\Lambda_{mp}^{\rho}(x) = \mathscr{F}(A_{mp}(\erho), u)$, we establish the final regularity result mentioned above. 
Consequently, by composing the given maps, we validate \eqref{eq:claim_bounded_linearity}.
To complete the proof, we use the composition of the map $\rho \mapsto \erho$ (defined through the extension operator $E$) and the operator $\pazo{L}_{\erho}$, which allows us to confirm our claim \eqref{eq:analyticity_of_map_L}.
\end{proof}
%%%%%%%%%%%%%%%%%%%%%%%%%%%%%%%%%%%%%%%%%%%%%%%%%%%%%%%%%%%%%%%%%%%%%%%%%%%%
%
%
%
Finally, our main result, stated in Theorem~\ref{thm:main_result}, follows from the next result, which asserts the local-in-time solvability of \eqref{eq:transformed_problem}.
%
%
%
%----------------------------------------------------------------------------------------------------------------------------------------------
% MAIN RESULT (TRANSFORMED BACK)	
%----------------------------------------------------------------------------------------------------------------------------------------------
\begin{theorem}\label{thm:main_result_transformed}
	Let Assumption~\ref{key_assumption} be satisfied.
	Specifically,
	\begin{equation}\label{eq:main_assumption_on_ud_and_un}
		\ddno{}\left( {{\funcdiffdn{U}(y,0)}} \right) > 0, \qquad y \in \Gamma.
		\tag{{A3}}
	\end{equation}
	Then, problem \eqref{eq:transformed_problem} has a unique solution $({\erho}(y,t), \bigUd(y,t), \bigUn(y,t))$, $y \in \Omega$, that is defined for $t \in {\Istar}$ with $ {\tstar} < T$, and such that the following regularities hold:
	\[
		{\erho} \in {{C}}^{0}({\Istar}; {{C}}^{2 + \alpha}(\baromega)),\quad
		{\erho}_t\big|_{\Gamma} \in {{C}}^{0}({\Istar}; {{C}}^{1 + \alpha}(\Gamma)),\quad
		\bigUd(y,t),\ \bigUn(y,t) \in {{C}}^{0}({\Istar}; {{C}}^{2 + \alpha}(\baromega)).
	\]
	Moreover, the following estimate holds
	\begin{equation}\label{eq:estimate_for_states_transformed}
	\norm{{\bigUUdn}}^{(2+\alpha)}_{\maxtimebaromega}  
	+ \vertiii{\erho}^{(2+\alpha)}_{\baromegagamma; \, [0, t]} 
	\leqslant c \vertiii{(f,g)}^{(2+\alpha)}_{\maxtimesigma}
	\leqslant c\, \vertiii{(f,g)},
	\end{equation}
	for some constant $c > 0$, for all $t \in {\Istar}$.
\end{theorem}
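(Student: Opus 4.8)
The plan is to solve the transformed system \eqref{eq:transformed_problem} by a fixed-point / contraction-mapping argument, treating $\erho$ as the driving parameter. First I would fix a small time $t^\star < T$ and a closed ball
\[
	\mathcal{B}^{\star} := \left\{ \rho \in {{C}}^{0}(I^\star; {{C}}^{2+\alpha}(\Gamma)) \cap {{C}}^{1}(I^\star; {{C}}^{1+\alpha}(\Gamma)) \ \Big|\ \rho(\cdot,0) = 0, \ \vertiii{\rho}^{(2+\alpha)}_{\Gamma; \, [0, t^\star]} \leqslant r^\star \right\},
\]
with $r^\star \leqslant \epsone$ so that Proposition~\ref{prop:diffeomorphic_map} applies and $\mathcal{S}(\rho(t)) \in \mathcal{A}^{2+\alpha}$ throughout. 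For each $\rho \in \mathcal{B}^\star$, extend it harmonically via \eqref{eq:harmonic_extension_of_rho} to get $\erho$, and then solve the two \emph{elliptic} problems for $\bigUd$, $\bigUn$ in \eqref{eq:transformed_Laplacian_equations}; for $\rho$ small these are small perturbations of the Laplace equation on the fixed domain $\Omega$, so by Schauder theory (together with the analyticity statements \eqref{eq:analytic_functions_LKM}) they are uniquely solvable with the Schauder bound
\[
	\norm{\bigUUdn}^{(2+\alpha)}_{\baromega; \, [0,t]} \leqslant c\, \vertiii{(f,g)}^{(2+\alpha)}_{\Sigma; \, [0,t]},
\]
uniformly in $\rho \in \mathcal{B}^\star$, after possibly shrinking $r^\star$. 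This defines a map $\rho \mapsto (\bigUd[\rho], \bigUn[\rho])$, and then $\rho \mapsto \bigUUdn[\rho] := \bigUd[\rho] - \bigUn[\rho]$, with $\partial(\bigUUdn[\rho])/\partial \nno\big|_\Gamma > 0$ on $[0,t^\star]$ (by continuity, from \eqref{eq:main_assumption_on_ud_and_un} and the uniform Schauder bound, shrinking $t^\star$ if needed).

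Next, I would close the loop through the kinematic equation. Having $\bigUUdn[\rho]$, the fifth line of \eqref{eq:transformed_problem} becomes an ODE in $t$ for the trace $\erho\big|_\Gamma = \rho$:
\[
	\rho_t(\xi,t) = - \, \overline{B_{\erho}}\big|_{\Gamma} \cdot \ddno{}\bigUUdn[\rho](\xi,t), \qquad \rho(\xi,0) = 0,
\]
where $B_{\erho}$ is given by \eqref{eq:B_sub_rho}; equivalently, integrating in time,
\[
	(\Phi \rho)(\xi,t) := - \int_{0}^{t} \overline{B_{\erho(\cdot,s)}}\big|_{\Gamma}(\xi) \, \ddno{}\bigUUdn[\rho](\xi,s) \, ds.
\]
I would show that $\Phi$ maps $\mathcal{B}^\star$ into itself for $t^\star$ small: the integrand is bounded in ${{C}}^{1+\alpha}(\Gamma)$ by $c\,\vertiii{(f,g)}$ uniformly (using $B_{\erho} \to \nno\cdot\matI\nno = 1$ as $\rho \to 0$ by \eqref{eq:analytic_functions_LKM}-type dependence, and Assumption (A2) to keep $\NN\cdot\Jac^{-\top}\nno$ bounded away from zero), so $\vertiii{\Phi\rho}^{(2+\alpha)}_{\Gamma;\,[0,t^\star]} \leqslant c\, t^\star \vertiii{(f,g)} \leqslant r^\star$ once $t^\star$ is small relative to $r^\star$; note $(\Phi\rho)(\cdot,0)=0$ automatically. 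Then I would prove $\Phi$ is a contraction on $\mathcal{B}^\star$ in a (possibly weaker, e.g. ${{C}}^{0}(I^\star;{{C}}^{1+\alpha}(\Gamma))$) norm: the key is Lipschitz dependence $\rho \mapsto \bigUUdn[\rho]$, which follows from the analytic dependence of the coefficients $\matA_{mp}, \Jac^{mk}$ on $\erho$ (hence on $\rho$ via the bounded harmonic extension $E$) and the Schauder estimate applied to the difference equation, giving $\norm{\bigUUdn[\rho_1] - \bigUUdn[\rho_2]} \leqslant c\,\vertiii{(f,g)}\, \vertiii{\rho_1 - \rho_2}$; multiplying by $t^\star$ from the time integral produces the contraction factor. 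Banach's fixed point theorem then yields a unique $\rho \in \mathcal{B}^\star$, hence a unique solution triple, and a standard bootstrap recovers the full ${{C}}^{2+\alpha}$-in-space, ${{C}}^{1+\alpha}$-in-time regularity of $\rho$ claimed; the estimate \eqref{eq:estimate_for_states_transformed} is then assembled from the Schauder bound for $\bigUUdn$ and the integrated bound for $\rho$ and $\rho_t$, together with the bound $\norm{E\rho}^{(2+\alpha)}_{\baromega;[0,t]} \leqslant c\,\vertiii{\rho}^{(2+\alpha)}_{\Gamma;[0,t]}$ from \eqref{eq:extension_operator_E}.

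The main obstacle I anticipate is the interplay between the two different time-regularities in play: $\rho$ lives in ${{C}}^{0}(I^\star;{{C}}^{2+\alpha})$ while $\rho_t$ only in ${{C}}^{0}(I^\star;{{C}}^{1+\alpha})$, and the elliptic solutions $\bigUUdn[\rho]$ inherit their time-modulus of continuity from $\rho$, so one must verify carefully that $\partial(\bigUUdn[\rho])/\partial\nno$ is genuinely ${{C}}^{0}$ in $t$ with values in ${{C}}^{1+\alpha}(\Gamma)$ (trace of a ${{C}}^{2+\alpha}$ function) in order for $\Phi\rho$ to land back in the right space; this forces a somewhat delicate choice of the contraction norm (weaker than the space in which self-mapping holds) and a two-norm fixed-point argument à la the Bizhanova--Solonnikov scheme cited. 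A secondary technical point is ensuring all the smallness thresholds ($r^\star$ for the diffeomorphism and the Schauder perturbation, $t^\star$ for self-mapping and contraction, plus the threshold keeping $\partial\bigUUdn/\partial\nno>0$) can be chosen consistently and depend only on the data through $\vertiii{(f,g)}$, $\Gamma$, $\Sigma$, $\NN$, and $\minnu$; once that bookkeeping is done, Theorem~\ref{thm:main_result} follows from Theorem~\ref{thm:main_result_transformed} by transporting back via $Y^{-1}$.
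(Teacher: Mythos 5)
Your high-level strategy (short-time existence via a contraction in a small ball, estimates proportional to $t^\eta$, Schauder bounds for the two elliptic problems, using Assumption~(A2) to keep $B_{\erho}$ well-defined) is in the same spirit as the paper's Sections~\ref{sec:nonlinear_problem}--\ref{sec:proof_of_the_main_result}. But there is a genuine gap in how you close the loop, and it is precisely the obstacle you flag at the end without resolving.

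The map $\Phi$ you propose \emph{integrates} the kinematic equation in time: $(\Phi\rho)(\xi,t) = -\int_0^t \overline{B_{\erho}}\big|_{\Gamma}\,\ddno{}\bigUUdn[\rho]\,ds$. For $\rho\in C^0(I^\star;C^{2+\alpha}(\Gamma))$, Schauder gives $\bigUd[\rho],\bigUn[\rho]\in C^0(I^\star;C^{2+\alpha}(\baromega))$, so $\ddno{}\bigUUdn[\rho]\big|_\Gamma$ is only $C^{1+\alpha}(\Gamma)$-valued in $t$; the Dirichlet-to-Neumann map is order one and \emph{loses} a spatial derivative, and integrating in $t$ cannot recover it. Hence $\Phi\rho(\cdot,t)\in C^{1+\alpha}(\Gamma)$ but in general $\notin C^{2+\alpha}(\Gamma)$, so the claim $\vertiii{\Phi\rho}^{(2+\alpha)}_{\Gamma;\,[0,t^\star]}\leqslant c\,t^\star\vertiii{(f,g)}$ does not follow from the $C^{1+\alpha}$ bound on the integrand, and $\Phi$ does \emph{not} map $\mathcal{B}^\star$ into itself in the $C^{2+\alpha}$-topology. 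No ``delicate choice of contraction norm'' or post hoc bootstrap fixes this, because the deficiency is in the self-mapping, not the Lipschitz estimate.

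The device the paper uses to recover the lost derivative is structural, not cosmetic. After computing the variations $\delta\pazo{L}_0$, $\delta B_0$ (Section~\ref{sec:nonlinear_problem}), the linearized dynamic boundary condition is $\rho_t + \bnormal\ddno{\erho} + \btangential\cdot\nabla_\Gamma\rho + h\rho + B_0\ddno{\funcdiffdn{V}} = (\text{nonlinear remainder})$ with $\bnormal<0$ (a consequence of (A2)--(A3)). The term $\bnormal\ddno{\erho}$ makes the boundary operator degenerate parabolic, and Lemma~\ref{lem:basis_lemma} (based on \cite{Antontsevetal2003} and Schauder estimates for the model half-space problem) shows that the resulting linear problem \eqref{eq:new_linear_problem} returns $\theta\in C^0(I^\star;C^{2+\alpha}(\baromega))$ from data $\psi\in C^0(I^\star;C^{1+\alpha}(\Gamma))$: the solution is one spatial derivative smoother than the right-hand side. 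The successive approximations in Section~\ref{sec:proof_of_the_main_result} \emph{invert this linear operator} at each step and push only the quadratically small remainder $\pazo{N}$, $\pazo{B}$ (with the $t^\eta$ factor from \eqref{eq:estimate_for_nonlinear_terms}) to the right-hand side. Your $\Phi$ bypasses this inversion entirely -- it merely evaluates a formula -- which is why it loses a derivative. To repair your argument you would need to replace the direct integration by solving, at each iteration, the linear boundary-value problem of Lemma~\ref{lem:basis_lemma} type with frozen coefficients, at which point you have essentially reconstructed the paper's proof.

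Two secondary remarks. First, the contraction factor in the paper comes from the interpolation inequalities of Lemma~\ref{lem:key_interpolation_inequalities} applied to the \emph{nonlinear remainder} (which has quadratic structure in $\erho$ and its first derivatives), not from $t^\star$ multiplying the full right-hand side as in your sketch; the linear terms $\bnormal\ddno{\erho}$, $B_0\ddno{\funcdiffdn{V}}$ carry no smallness and must be absorbed into the linear operator. Second, the sign condition $\bnormal<0$ (which you did not invoke) is what makes Lemma~\ref{lem:basis_lemma} true -- it is the transformed-domain expression of (A3) and the analogue of the outgoing-flux condition that makes the Hele-Shaw evolution well-posed forward in time. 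Any correct proof must make essential use of it.
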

%
%
%------------------------------------	
% 		SECTION 5	
%------------------------------------		
\section{Regularity of Solutions on a Fixed Domain}\label{sec:regularity_of_solutions}
\fergy{
Our immediate aim is to distinguish between the linear components of the main conditions and the dynamic boundary conditions in the nonlinear problem arising from \eqref{eq:transformed_problem}. The complete formulation of this nonlinear problem will be formally presented in Section~\ref{sec:nonlinear_problem}, specifically in equation~\eqref{eq:new_transformed_problem}. This formulation is derived by introducing two new variables that represent the difference between the solutions of two states: one corresponding to the transformed problem on a varied domain (cf. \eqref{eq:transformed_problem}) and the other corresponding to the problem on a fixed domain, which will be introduced in this section.} Part of the analysis requires understanding the unique solvability of a pure Dirichlet problem and a mixed Dirichlet-Neumann problem, as well as obtaining estimates for the solutions of such equations. 
In this section, we lay the groundwork for these preparations.

Let $\udo :=\ud(x,0)$, $\fo := f(\cdot,0) \in {{C}}^{2+\alpha}(\Sigma)$, and consider the pure Dirichlet problem 
%------------------------------------------------------------------------------------------------------------------------------------------------------------------------------------------------------
\begin{equation}\label{eq:pure_Dirichlet_problem_on_fixed_domain}
	\Delta \udo = 0 \quad \text{in $\Omega$},\qquad  \fergy{\udo = \fo\quad \text{on $\Sigma$}},\qquad \udo = 0 \quad \text{on} \ \Gamma. 
\end{equation}
%------------------------------------------------------------------------------------------------------------------------------------------------------------------------------------------------------
%
The solvability of \eqref{eq:pure_Dirichlet_problem_on_fixed_domain} in the space ${{C}}^{k+\alpha}(\baromega)$ ($\Omega$ a bounded region) is well-known and is given, for instance, in \cite[Thm.~1.1, p.~107]{LadyzenskajaUralceva1968} or \cite[Chap.~V.36, I, p.~166]{Miranda1970} (see also Kellogg's Theorem~\cite{Kellogg1931,Schauder1934}). 
In particular, we have the following result.
\begin{lemma}
For any $\fo \in {{C}}^{2+\alpha}(\Sigma)$, there exists a unique solution $\udo \in {{C}}^{2+\alpha}(\baromega)$ to \eqref{eq:pure_Dirichlet_problem_on_fixed_domain}.
\end{lemma}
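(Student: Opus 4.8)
The plan is to reduce \eqref{eq:pure_Dirichlet_problem_on_fixed_domain} to a Dirichlet problem with homogeneous boundary data and a Hölder-continuous right-hand side, and then to invoke the classical Schauder theory cited above. First I would use that $\Sigma$ and $\Gamma$ are disjoint compact hypersurfaces of class ${{C}}^{2+\alpha}$ to produce a function $\Phi \in {{C}}^{2+\alpha}(\baromega)$ with $\Phi = \fo$ on $\Sigma$ and $\Phi = 0$ on $\Gamma$; for instance one may take $\Phi = \chi \, \widetilde{\fo}$, where $\widetilde{\fo} \in {{C}}^{2+\alpha}(\mathbb{R}^{d})$ is an extension of $\fo$ off $\Sigma$ and $\chi \in {{C}}^{\infty}(\mathbb{R}^{d})$ is a cut-off equal to $1$ in a neighborhood of $\Sigma$ and to $0$ in a neighborhood of $\Gamma$ (such a $\chi$ exists precisely because $\Sigma \cap \Gamma = \emptyset$). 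Setting $w := \udo - \Phi$, the problem \eqref{eq:pure_Dirichlet_problem_on_fixed_domain} becomes the search for $w \in {{C}}^{2+\alpha}(\baromega)$ with $\Delta w = -\Delta \Phi =: h \in {{C}}^{\alpha}(\baromega)$ in $\Omega$ and $w = 0$ on $\partial\Omega = \Gamma \cup \Sigma$.

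Next I would invoke the existence part of the interior-and-boundary Schauder theory on a ${{C}}^{2+\alpha}$ domain — obtained via the continuity method together with the Schauder a priori estimates and Perron's method, or equivalently via the potential-theoretic approach of Kellogg and Schauder — which yields a (unique) $w \in {{C}}^{2+\alpha}(\baromega)$ solving the homogeneous-boundary-data problem above; this is exactly \cite[Thm. 1.1, p. 107]{LadyzenskajaUralceva1968} and \cite[Chap. V.36, I, p. 166]{Miranda1970}, and also follows from \cite[Thm. 6.14]{GilbargTrudinger2001} and \cite{Kellogg1931,Schauder1934}. The fact that $\Omega = \Omega(\Gamma)$ is annular, hence not simply connected, plays no role here: the Dirichlet problem is well posed on any bounded domain of class ${{C}}^{2+\alpha}$ irrespective of its topology. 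Transforming back, $\udo := w + \Phi \in {{C}}^{2+\alpha}(\baromega)$ is the desired solution. Uniqueness is immediate from the maximum principle \cite[Chap. 6.4]{Evans1998}: the difference of two solutions is harmonic in $\Omega$, continuous on $\baromega$, and vanishes on all of $\partial\Omega$, hence vanishes identically.

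I do not expect a genuine obstacle here, as the statement is a direct citation of standard elliptic theory; the only points that warrant a line of care are the construction of the compatible extension $\Phi$ — trivial once one records that $\Sigma$ and $\Gamma$ are disjoint smooth hypersurfaces — and selecting the precise form of the Schauder existence theorem applicable to a ${{C}}^{2+\alpha}$ (as opposed to $C^\infty$) domain with boundary data prescribed separately on the two connected components of $\partial\Omega$.
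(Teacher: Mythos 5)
Your argument is correct and ultimately rests on the same classical Schauder existence theorem that the paper cites directly (\cite[Thm.~6.14]{GilbargTrudinger2001}); the paper's proof is simply that one-line citation. The reduction you carry out — building a $C^{2+\alpha}(\baromega)$ extension $\Phi$ of the two-component boundary data via a cut-off and then solving a homogeneous Dirichlet problem for $w = \udo - \Phi$ — is just the standard unpacking of what that theorem requires (boundary data realized as the trace of a $C^{2+\alpha}(\baromega)$ function), so it is the same route with the implicit step made explicit.
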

\begin{proof}
	See, for example, \cite[Thm.~6.14, p.~107]{GilbargTrudinger2001}.
\end{proof}
%
%
%
%%% REMARK ABOUT HOLDER CONTINUITY UP TO THE BOUNDARY
%------------------------------------------------------------------------------------------------------------------------------------------------------------------------------------------------------
\begin{remark}
	It is worth to emphasize here that, for $k = 0, 1, \ldots$ and $\alpha \in (0,1)$, the space ${{C}}^{k+\alpha}(\Omega)$ essentially coincides with ${{C}}^{k+\alpha}(\baromega)$ for any open (bounded) set $\Omega \subset \mathbb{R}^{d}$ \cite[Prop.~1.1.7, p.~6]{Fiorenza2017}.
\end{remark}
%------------------------------------------------------------------------------------------------------------------------------------------------------------------------------------------------------
%
%
Now, let $\uno :=\un(x,0)$, $\go := g(\cdot,0) \in {{C}}^{1+\alpha}(\Sigma)$, and consider the following mixed Dirichlet-Neumann boundary value problem 
%
%
%------------------------------------------------------------------------------------------------------------------------------------------------------------------------------------------------------
\begin{equation}\label{eq:problem_smallv0}
	\Delta \uno = 0 \quad \text{in $\Omega$},\qquad \ddno{\uno} = \go\quad \text{on $\Sigma$},\qquad \uno = 0 \quad \text{on} \ \Gamma. 
\end{equation}
%------------------------------------------------------------------------------------------------------------------------------------------------------------------------------------------------------
%
In the next proposition, we aim to prove that $\uno \in {{C}}^{2 + \alpha}(\baromega)$, provided that $\go \in {{C}}^{1+\alpha}(\Sigma)$.
%
%------------------------------------------------------------------------------------------------------------------------------------------------------------------------------------------------------
\begin{lemma}\label{lem:classical_regularity}
	Let $\Omega \subset \mathbb{R}^{d}$ be an annular open bounded set with ${{C}}^{2+\alpha}$ boundary $\partial \Omega = \Sigma \cup \Gamma$ and $\go \in {{C}}^{1+\alpha}(\Sigma)$.
	Then Equation~\eqref{eq:problem_smallv0} has the unique solution $\uno  \in {{C}}^{2+\alpha}(\baromega)$.
\end{lemma}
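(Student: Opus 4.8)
The plan is to obtain $\uno$ first as a weak (variational) solution and then to bootstrap its regularity up to $C^{2+\alpha}(\baromega)$ via interior and boundary Schauder estimates, the decisive structural feature being that the Dirichlet condition sits on $\Gamma$ and the Neumann condition on $\Sigma$, which are \emph{disjoint} components of $\partial\Omega$; consequently no mixed-boundary-condition edge arises, and locally near $\partial\Omega$ the problem is of pure Dirichlet or pure oblique-derivative type. For existence I would set $V := \{v \in H^{1}(\Omega) \mid v = 0 \text{ on } \Gamma\}$ and work with $a(u,v) := \intO{\nabla u \cdot \nabla v}$. Since every $v \in V$ vanishes on $\Gamma$, a subset of $\partial\Omega$ of positive surface measure, the Poincar\'e--Friedrichs inequality makes $a$ coercive (and it is evidently bounded) on $V$, while $v \mapsto \intS{\go v}$ is a bounded linear functional on $V$ by the trace theorem together with $\go \in C^{1+\alpha}(\Sigma) \subset L^{2}(\Sigma)$; because the Dirichlet part $\Gamma$ is nonempty, \emph{no} compatibility condition on $\go$ is required. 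Lax--Milgram then produces a unique $\uno \in V$ satisfying the weak form of \eqref{eq:problem_smallv0}, and testing against $C_{c}^{\infty}(\Omega)$ shows $\uno$ is harmonic in $\Omega$. Classical uniqueness follows from the energy identity $\intO{\abs{\nabla w}^{2}} = 0$ for the difference $w$ of two solutions (Green's formula, using $w = 0$ on $\Gamma$ and $\ddno{w} = 0$ on $\Sigma$), whence $w$ is constant and then $w \equiv 0$.

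For the regularity I would argue locally. Interior smoothness is automatic since $\uno$ is harmonic (Weyl's lemma). Covering $\partial\Omega$ by finitely many coordinate patches, each meeting exactly one of $\Gamma$ and $\Sigma$: near $\Gamma$, $\uno$ solves the homogeneous Dirichlet problem with $C^{2+\alpha}$ boundary and $C^{2+\alpha}$ (vanishing) data, so local boundary Schauder estimates for the Dirichlet problem (e.g.\ \cite[Chap.~6]{GilbargTrudinger2001}, \cite[Thm.~1.1, p.~107]{LadyzenskajaUralceva1968}, or \cite[Chap.~V]{Miranda1970}) give $\uno \in C^{2+\alpha}$ up to $\Gamma$; near $\Sigma$, after flattening the boundary by a $C^{2+\alpha}$ diffeomorphism, $\uno$ solves in a half-ball a uniformly elliptic equation with $C^{1+\alpha}$ coefficients under a regular oblique-derivative (conormal) condition with $C^{1+\alpha}$ right-hand side, and the Schauder theory for the oblique-derivative problem (\cite[Sec.~6.7]{GilbargTrudinger2001}) gives $\uno \in C^{2+\alpha}$ up to $\Sigma$. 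Patching these estimates over the finite cover yields $\uno \in C^{2+\alpha}(\baromega)$, together with an estimate of the form $\abs{\uno}^{(2+\alpha)}_{\baromega} \leqslant c\,\abs{\go}^{(1+\alpha)}_{\Sigma}$.

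The step I expect to be the main obstacle is the passage from the $H^{1}$ weak solution to a H\"older class in which the boundary Schauder estimates can be closed --- most delicately for the oblique-derivative condition on $\Sigma$, where the Schauder scheme cannot be started until $\uno$ is already known to be, say, $C^{1+\beta}$ up to $\Sigma$. I would bridge this by the routine two-step bootstrap: local $L^{q}$-type elliptic estimates up to the boundary for the Neumann problem (any finite $q$) followed by Sobolev embedding give $\uno \in C^{1+\beta}$ in a one-sided neighbourhood of $\Sigma$ for some $\beta \in (0,1)$, after which the Schauder estimates apply and return the full $C^{2+\alpha}$ regularity. The whole argument is standard precisely because $\Gamma \cap \Sigma = \emptyset$; had the Dirichlet and Neumann data met along a lower-dimensional set, one would be in the genuinely subtle Zaremba situation, which does not occur here.
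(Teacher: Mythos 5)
Your proof is correct and rests on the same structural insight the paper exploits---that $\Gamma$ and $\Sigma$ are \emph{disjoint} components of $\partial\Omega$, so no Zaremba-type mixed corner arises---but the execution is genuinely different. The paper works globally: it introduces a smooth intermediate surface $S$ splitting the annulus into $\Omega_{1}$ (between $\Gamma$ and $S$) and $\Omega_{2}$ (between $S$ and $\Sigma$), solves a pure Dirichlet problem on $\Omega_{1}$ with boundary data inherited from interior regularity of $\uno$, and on $\Omega_{2}$ solves a \emph{modified} Neumann--Neumann problem $\Delta v - v = -\uno$ (the zeroth-order term being inserted to kill the constant kernel of the pure double-Neumann problem and make it uniquely solvable, with Nardi's $C^{2+\alpha}$ Schauder theorem supplying the regularity), and then identifies each auxiliary solution with the restriction of $\uno$ by uniqueness. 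You instead start from Lax--Milgram in $V = \{v\in H^{1}(\Omega): v|_{\Gamma}=0\}$, localize in coordinate patches meeting only one boundary component, and close the Schauder estimates after an $L^{q}$-plus-Sobolev bootstrap up to $C^{1+\beta}$ near $\Sigma$. What your route buys is an explicit existence argument and an explicit bridge from $H^{1}$ to H\"older class---both of which the paper leaves implicit (it simply invokes interior smoothness and then boundary Schauder, never stating what weak formulation produces $\uno$ in the first place). What the paper's route buys is that it sidesteps $L^{q}$ machinery entirely: by choosing the auxiliary problem on $\Omega_{2}$ to have a coercive zeroth-order term and $C^{1+\alpha}$ data on both boundary components, it can quote a single $C^{2+\alpha}$ well-posedness theorem directly, with the necessary H\"older regularity of the datum on $S$ already available from interior harmonicity. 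Both are valid; yours is more self-contained, the paper's is shorter once the right auxiliary problem is written down.
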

%------------------------------------------------------------------------------------------------------------------------------------------------------------------------------------------------------
%%%
\begin{proof}
Let us choose an annular connected open bounded set $\Omega_{1}$ with exterior boundary $\Gamma$ and interior boundary $S$ of class ${{C}}^{0,\alpha} \cap {{C}}^\infty$. 
We denote by $\Omega_{2}$ the annular open bounded set contained in $\Omega$ whose boundary are $S$ and $\Sigma$.
Obviously, $\Omega_{1} \cap \Omega_{2} = \emptyset$ and $\Omega_{1} \cup \Omega_{2} = \Omega$.
We then consider the following pure Dirichlet boundary value problem   
%------------------------------------------------------------------------------------------------------------------------------------------------------------------------------------------------------
\begin{equation}
\label{eq:equation_for_w}
	\Delta {{V}} = 0 \quad \text{in $\Omega$}_{1},\qquad {{V}} = \uno \quad \text{on}\ S,\qquad {{V}} = 0 \quad \text{on $\Gamma$}. 
\end{equation}
%------------------------------------------------------------------------------------------------------------------------------------------------------------------------------------------------------
By \cite[Thm.~3, p.~316]{Evans1998}, we know that $\uno  \in {{C}}^{\infty}(\Omega)$, in particular, $\uno  \in {{C}}^{2}(\Omega)$.
Hence, $\uno  \in {{C}}^{2+\alpha}(\baromega_{1})$ by \cite[Thm.~4.6, p.~60]{GilbargTrudinger2001}.
This implies that the above problem admits a unique solution $w \in {{C}}^{2+\alpha}(\baromega_{1})$ because of \cite[Thm.~6.8, p.~100]{GilbargTrudinger2001} (see also \cite[Thm.~1.3, p.~107]{LadyzenskajaUralceva1968}).
Since $\uno  \big|_{\baromega_{1}}$ also solves \eqref{eq:equation_for_w}, then by uniqueness, we have $w \equiv \uno\big|_{\baromega_{1}}$.

Let us next consider the following boundary value problem:
%------------------------------------------------------------------------------------------------------------------------------------------------------------------------------------------------------
\begin{equation}
\label{eq:equation_for_v}
	\Delta v - v = -\uno \quad \text{in $\Omega$}_{2},\qquad \ddno{v} = \go \quad \text{on $\Sigma$},\qquad \ddno{v} = \ddno{\uno} \quad \text{on}\ S. 
\end{equation}
%------------------------------------------------------------------------------------------------------------------------------------------------------------------------------------------------------
Since $\uno  \in {{C}}^{2+\alpha}(\baromega_{2})$, $\nabla \uno \cdot {\nn} \in {{C}}^{1+\alpha}(S)$, and $\go \in {{C}}^{1+\alpha}(\Sigma)$, then by \cite[Thm.~5.2]{Nardi2014}, we see that \eqref{eq:equation_for_v} admits a unique solution $v \in {{C}}^{2+\alpha}(\baromega_{2})$.
Because $\uno \big|_{\baromega_{2}}$ also solves \eqref{eq:equation_for_v}, uniqueness then implies that $v \equiv \uno\big|_{\baromega_{2}}$.
This concludes that Equation~\eqref{eq:problem_smallv0} admits a unique solution in ${{C}}^{2+\alpha}(\baromega)$.
\end{proof}
Let $\bigUdo:=\bigUdo(y,t)$, $y \in \Omega$, $t>0$, and $f(\cdot,t) \in {{C}}^{2+\alpha}(\Sigma)$, for all $t\geqslant0$.
Then, it can be checked that the boundary value problem
\begin{equation}
\label{eq:problem_bigUd0}
	\Delta \bigUdo = 0, \quad y \in \Omega,
	\qquad \overSigma{\bigUdo} = f(y,t),
	\qquad \overGamma{\bigUdo} = 0,
\end{equation}
admits a unique solution $\bigUdo \in {{C}}^{0}([0,T]; {{C}}^{2+\alpha}(\baromega))$, for all $t \geqslant 0$, where $t$ is a parameter appearing in the boundary condition on $\Sigma$.
The solution to \eqref{eq:problem_bigUd0} satisfies the inequality condition	
%------------------------------------------------------------------------------------------------------------------------------------------------------------------------------------------------------
\begin{equation}\label{eq:bound_for_U0}
	{\maxt} \abs{\bigUdo(\cdot ,\tau)}_{\baromega}^{(2+\alpha)} \leqslant c(d,\Omega,\alpha){\maxt}  \abs{f(\cdot,\tau)}_{\Sigma}^{(2+\alpha)},
\end{equation}
%------------------------------------------------------------------------------------------------------------------------------------------------------------------------------------------------------
%
Similarly, for $\bigUno:=\bigUno(y,t)$,  $y \in \Omega$, $t>0$, and $g(\cdot,t) \in {{C}}^{1+\alpha}(\Sigma)$ for all $t\geqslant0$, the boundary value problem
%------------------------------------------------------------------------------------------------------------------------------------------------------------------------------------------------------
\begin{equation}\label{eq:problem_bigUn0}
	\Delta \bigUno = 0, \quad y \in \Omega,
	\qquad \overSigma{\ddno{\bigUno}} = g(y,t),
	\qquad \overGamma{\bigUno} = 0,
\end{equation}
%------------------------------------------------------------------------------------------------------------------------------------------------------------------------------------------------------
admits a unique solution $\bigUno \in {{C}}^{0}([0,T]; {{C}}^{2+\alpha}(\baromega))$.
Again, $t$ is a parameter appearing in the boundary condition on $\Sigma$.

Next, we want to prove an estimate for $\abs{\bigUno}{\baromega}^{(2+\alpha)}$ in terms of $\abs{g}{\Sigma}^{(1+\alpha)}$.
More precisely, we aim to establish the validity of the following proposition.
%------------------------------------------------------------------------------------------------------------------------------------------------------------------------------------------------------	
\begin{proposition}
	Let $\alpha \in (0,1)$ and $\Omega \subset \mathbb{R}^{d}$ be an open, bounded, and connected set with ${{C}}^{2+\alpha}$ regularity and $g \in {{C}}^{0}([0,T];{{C}}^{1+\alpha}(\Sigma))$ be given.
	If $\bigUno \in {{C}}^{0}([0,T]; {{C}}^{2+\alpha}(\baromega))$ is a solution to \eqref{eq:problem_bigUn0}, then we have the following estimate
	\begin{equation}\label{eq:bound_for_V0}
		{\maxt} \abs{\bigUno(\cdot ,\tau)}_{\baromega}^{(2+\alpha)} \leqslant c(d,\Omega,\alpha) {\maxt}  \abs{g(\cdot,\tau)}_{\Sigma}^{(1+\alpha)},
	\end{equation}
	where $c(d,\Omega,\alpha) > 0$ is a constant that depends only on the dimension $d$, the set $\Omega$, and the number $\alpha \in (0,1)$.
\end{proposition}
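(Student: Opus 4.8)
The plan is to reduce \eqref{eq:bound_for_V0} to a stationary Schauder estimate with a $\tau$-independent constant and then take the maximum over $\tau$. Since $t$ enters \eqref{eq:problem_bigUn0} only as a parameter in the Neumann datum on the \emph{fixed} surface $\Sigma$, and since for each $\tau\in[0,t]$ we have $g(\cdot,\tau)\in {{C}}^{1+\alpha}(\Sigma)$, it suffices to prove
\[
	\abs{\uno}^{(2+\alpha)}_{\baromega}\leqslant c(d,\Omega,\alpha)\,\abs{g}^{(1+\alpha)}_{\Sigma}
\]
for the solution $\uno\in {{C}}^{2+\alpha}(\baromega)$ of the stationary problem \eqref{eq:problem_smallv0} (existence, uniqueness and ${{C}}^{2+\alpha}$ regularity are already supplied by Lemma~\ref{lem:classical_regularity}), with a constant independent of the datum $g$; applying this inequality to $\uno=\bigUno(\cdot,\tau)$ and $g=g(\cdot,\tau)$ and taking $\maxt$ then yields \eqref{eq:bound_for_V0}. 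The argument proceeds in the classical two steps: first an a~priori Schauder bound with a lower-order remainder, then absorption of that remainder by a compactness--uniqueness argument, which is also what makes the constant manifestly geometric.

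First I would establish the a~priori estimate
\[
	\abs{\uno}^{(2+\alpha)}_{\baromega}\leqslant c\bigl(\abs{g}^{(1+\alpha)}_{\Sigma}+\abs{\uno}^{(0)}_{\baromega}\bigr).
\]
This follows by covering $\baromega$ with finitely many balls and half-balls and patching local Schauder estimates via a partition of unity: interior estimates for $\Delta\uno=0$ on balls compactly contained in $\Omega$; boundary Schauder estimates for the homogeneous Dirichlet condition on half-balls centred on $\Gamma$; and boundary Schauder estimates for the oblique-derivative (here, normal-derivative) condition on half-balls centred on $\Sigma$, where the datum $g\in {{C}}^{1+\alpha}$ produces the gain of one full Hölder derivative. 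Because $\Gamma$ and $\Sigma$ are disjoint and both of class ${{C}}^{2+\alpha}$, no boundary point carries a change of boundary condition, so the standard theory applies without corner difficulties (see, e.g., \cite[Chap.~6]{GilbargTrudinger2001}, \cite{Nardi2014}, and the references in the proof of Lemma~\ref{lem:classical_regularity}).

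The decisive step is removing the term $\abs{\uno}^{(0)}_{\baromega}$, which I would do by contradiction. Suppose no constant $c(d,\Omega,\alpha)$ works; then there are data $g_{n}\in {{C}}^{1+\alpha}(\Sigma)$ and solutions $u_{n}$ of \eqref{eq:problem_smallv0} with $\abs{u_{n}}^{(2+\alpha)}_{\baromega}=1$ and $\abs{g_{n}}^{(1+\alpha)}_{\Sigma}\to 0$. By the compact embedding of ${{C}}^{2+\alpha}(\baromega)$ into ${{C}}^{2}(\baromega)$, a subsequence converges in ${{C}}^{2}(\baromega)$ to some $u$, and passing to the limit in the equation and in both boundary conditions (legitimate since the convergence is up to $\partial\Omega$) shows that $u$ solves $\Delta u=0$ in $\Omega$, $u=0$ on $\Gamma$, and $\partial u/\partial\nu=0$ on $\Sigma$. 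Integrating by parts gives $\int_{\Omega}\abs{\nabla u}^{2}\,dx=0$, so $u$ is constant, and $u=0$ on $\Gamma$ forces $u\equiv 0$; in particular $\abs{u_{n}}^{(0)}_{\baromega}\to 0$. Feeding this together with $\abs{g_{n}}^{(1+\alpha)}_{\Sigma}\to 0$ into the a~priori estimate gives $1=\abs{u_{n}}^{(2+\alpha)}_{\baromega}\leqslant c(\abs{g_{n}}^{(1+\alpha)}_{\Sigma}+\abs{u_{n}}^{(0)}_{\baromega})\to 0$, a contradiction. Hence the stationary estimate holds with a constant depending only on $d$, $\Omega$, $\alpha$, and \eqref{eq:bound_for_V0} follows.

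The main obstacle is not any single inequality but the bookkeeping: the local oblique-derivative Schauder estimate near $\Sigma$ must be quoted in precisely the form that yields the ${{C}}^{1+\alpha}(\Sigma)\to {{C}}^{2+\alpha}$ gain, and the compactness--uniqueness step must be arranged so that the limit function inherits \emph{both} boundary conditions simultaneously; once these are in place the rest is routine. As an alternative to the contradiction step, one could bound $\abs{\uno}^{(0)}_{\baromega}$ directly: since $\uno$ is harmonic and vanishes on $\Gamma$, testing \eqref{eq:problem_smallv0} with $\uno$ gives $\int_{\Omega}\abs{\nabla\uno}^{2}\,dx\leqslant c\,\abs{g}^{(1+\alpha)}_{\Sigma}\norm{\uno}_{H^{1/2}(\Sigma)}$, and Poincaré together with the trace inequality then yields $\norm{\uno}_{H^{1}(\Omega)}\leqslant c\,\abs{g}^{(1+\alpha)}_{\Sigma}$, which can be upgraded by elliptic regularity; I prefer the compactness route since it keeps all constants geometric and avoids the bootstrap.
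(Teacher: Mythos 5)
Your proof is correct and follows essentially the same two-step strategy as the paper: a Schauder a priori estimate with a zeroth-order remainder, followed by the compactness--uniqueness contradiction argument (normalize $\abs{u_n}^{(2+\alpha)}_{\baromega}=1$, extract a ${{C}}^{2}$-convergent subsequence, pass to the limit, show the limit is $\equiv 0$) to absorb that remainder. The only cosmetic difference is in the a priori step: the paper splits $\bigUno$ with a global cutoff into a Neumann piece near $\Sigma$ and a Dirichlet piece near $\Gamma$ and applies global Schauder theorems to each, then interpolates from $\abs{\bigUno}^{(1+\alpha)}$ down to $\abs{\bigUno}^{(0)}$, whereas you localize directly via a covering and partition of unity; both routes are standard and yield the same intermediate inequality.
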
 
%------------------------------------------------------------------------------------------------------------------------------------------------------------------------------------------------------
%%% PROOF OF THE INEQUALITY
\begin{proof}
Let $g \in {{C}}^{0}([0,T];{{C}}^{1+\alpha}(\Sigma))$, for some $\alpha \in (0,1)$, $t \in [0,T]$, and $\tau < t$ be fixed. 
Note that it is enough to prove that $\abs{\bigUno}_{\baromega}^{(2+\alpha)} \leqslant c \abs{g}_{\Sigma}^{(1+\alpha)}$, for some constant $c > 0$.
In the proof, we abuse some notations (i.e., we write $\varphi$ instead of $\varphi(\xi, \tau)=:\varphi(\xi)$ as $\tau$ is fixed for the functions involved here) and assume that $\bigUno \in {{C}}^{2+\alpha}(\baromega)$ is a solution to \eqref{eq:problem_bigUn0}.
We consider a cutoff function $\phi \in {{C}}^{\infty}(\mathbb{R}^{d})$ such that $\phi = 1$ near $\Sigma$ and $\phi = 0$ near $\Gamma$. 
Next, we define the functions $\smallvn := \phi \bigUno$ and $\smallvd := (1-\phi) \bigUno$ such that
\begin{align*}
\Delta \smallvn = {h}_{\phi} \quad \text{in $\Omega$}, && \ddno{\smallvn} = g \quad \text{on} \in \Sigma, && \ddno{\smallvn} = 0 \quad \text{on $\Gamma$};\\\
\Delta \smallvd = -{h}_{\phi} \quad \text{in $\Omega$}, &&  \smallvd = 0 \quad \text{on} \in \Sigma, && \smallvd = 0 \quad \text{on $\Gamma$},
\end{align*}
where ${h}_{\phi} := (\Delta \phi) \bigUno + \nabla \phi \cdot \nabla \bigUno$.
By these constructions, notice that $ \bigUno = \smallvn + \smallvd$.
Now, by \cite[Thm.~6.30, equ. (6.77), p.~127]{GilbargTrudinger2001}, we immediately get the following estimates
\begin{align*}
	\abs{\smallvn}_{\baromega}^{(2+\alpha)}
		&\leqslant c_{\text{0N}}(d,\Omega,\alpha)\left( \abs{\smallvn}_{\baromega}^{(0)} + \abs{g}_{\Sigma}^{(1+\alpha)} + \abs{{h}_{\phi}}_{\baromega}^{(0+\alpha)} \right)
		\leqslant c_{\text{1N}}(d,\Omega,\alpha)\left( \abs{g}_{\Sigma}^{(1+\alpha)} + \abs{\bigUno}_{\baromega}^{(1+\alpha)}  \right), \\
	\abs{\smallvd}_{\baromega}^{(2+\alpha)}
		&\leqslant c_{\text{0D}}(d,\Omega,\alpha) \abs{{h}_{\phi}}_{\baromega}^{(0+\alpha)}
		\leqslant c_{\text{1D}}(d,\Omega,\alpha) \abs{\bigUno}_{\baromega}^{(0+\alpha)},
\end{align*}
where $c_{\text{0N}}, c_{\text{1N}}, c_{\text{0D}}, c_{\text{1D}} > 0$.
These inequalities clearly implies that
\[
	\abs{\bigUno}_{\baromega}^{(2+\alpha)}
	\leqslant 
	c_{0}(d,\Omega,\alpha) \left( \abs{\bigUno}_{\baromega}^{(1+\alpha)} + \abs{g}_{\Sigma}^{(1+\alpha)} \right).
\]
The quantity $\abs{\bigUno}_{\baromega}^{(1+\alpha)}$ can be estimated as follows (see, e.g., \cite[Lem.~6.35, p.~135]{GilbargTrudinger2001}):
\[
	\abs{\bigUno}_{\baromega}^{(1+\alpha)} \leqslant c_{1}(\varepsilon, \Omega) \abs{\bigUno}_{\baromega}^{(0)} + \varepsilon \abs{\bigUno}_{\baromega}^{(2+\alpha)},
\]
for some constant $c_{1}(\varepsilon, \Omega) > 0$.
Using the above interpolation inequality, we get 
%------------------------------------------------------------------------------------------------------------------------------------------------------------------------------------------------------
\begin{equation}
\label{eq:first_estimate}
		\abs{\bigUno}_{\baromega}^{(2+\alpha)}
		\leqslant c_{2}(n,\Omega,\alpha, \varepsilon) \left( \abs{\bigUno}_{\baromega}^{(0)} + \abs{g}_{\Sigma}^{(1+\alpha)} \right),
\end{equation}
%------------------------------------------------------------------------------------------------------------------------------------------------------------------------------------------------------
for some constant $c_{2}(n,\Omega,\alpha, \varepsilon)>0$, and we want to prove that we actually have \eqref{eq:bound_for_V0}.
To do this, we mimic an argument in showing inequalities between two equivalent norms which is also similar to a proof of Poincar\'{e}'s inequality via a contradiction (see, e.g., \cite[Proof of Theorem~1, pp.~275--276]{Evans1998} or \cite[Lem.~49.30, p.~1037]{Driver2003part2}). 
Analogous proof can also be found in \cite[Proof of Theorem~3.28, pp.~194--195]{Troianiello1987}.

The argument to prove \eqref{eq:bound_for_V0} will proceed by contradiction.
So, let us suppose that inequality \eqref{eq:bound_for_V0} does not hold.
This means that we can find a sequence $\{\bigUnok\}_{k\in \mathbb{N}} \subset {{C}}^{2+\alpha}(\Omega)$ and $\{g_{k}\}  \subset {{C}}^{1+\alpha}(\Sigma)$ such that
\[
\Delta \bigUnok = 0 \quad \text{in $\Omega$},\qquad \ddno{\bigUnok} = {g_{k}} \quad \text{on} \in \Sigma,\qquad \bigUnok = 0 \quad \text{on $\Gamma$},
\]
and, for each $k\in \mathbb{N}$, $\bigUnok$ satisfies
\[
	\abs{\bigUnok}_{\baromega}^{(2+\alpha)} = 1
	\qquad \text{and}\qquad  
	\abs{\bigUnok}_{\baromega}^{(2+\alpha)} \geqslant k  \abs{{g_{k}}}_{\Sigma}^{(1+\alpha)}.
\]
%%%
Letting $k \to \infty$, we get ${g_{k}} \to 0$ in ${{C}}^{1+\alpha}(\Sigma)$ because $\frac{1}{k}\abs{\bigUnok}_{\baromega}^{(2+\alpha)} \to 0$.
By the first condition above, we see that for every multi-index $\beta$, with $\abs{\beta} = 0, 1, 2$, the sequence $\{D^{\beta}\bigUnok\}$ is uniformly bounded in ${{C}}^0(\baromega)$.
Consequently, the sequence is equicontinuous because the inequality
\[
	\abs{D^{\beta}\bigUnok(\xi_{1}) -  D^{\beta}\bigUnok(\xi_{2})} 
	\leqslant \abs{\xi_{1} - \xi_{2}}^\alpha, \qquad \text{for all } \xi_{1}, \xi_{2} \in \Omega,\ \text{and all } \abs{\beta}=2,
\]
actually implies that
\[
	\abs{D^{\beta}\bigUnok(\xi_{1}) -  D^{\beta}\bigUnok(\xi_{2})} 
	\leqslant m_{0} \abs{\xi_{1} - \xi_{2}}^\alpha, \qquad \text{for all } \xi_{1}, \xi_{2} \in \Omega,\ \text{and all } \abs{\beta}=0,\ 1,
\]
for some constant $m_{0} = m_{0}(\Omega) > 0$, according to \cite[Prop.~1.5.2, p.~34; Thm.~4.4.1, p.~109; or Thm.~4.4.2, p.~120]{Fiorenza2017}.
By iteratively applying the Arzel\`{a}-Ascoli Theorem~(see, e.g., \cite[Sec.~C.7, pp.~634--635]{Evans1998} or \cite[Thm.~2.86, p.~48]{Driver2003part1}), we obtain a subsequence (which we denote with the same notation) such that
\[
	\bigUnok \to \bigUno \quad \text{in ${{C}}^0(\baromega)$}, \qquad \text{and}\qquad D^{\beta} \bigUnok\to  D^{\beta} \bigUno  \quad \text{in ${{C}}^0(\baromega)$}, \quad \text{for all}\ \abs{\beta} = 1,\ 2,
\]
which implies that $\bigUnok \to \bigUno$ in ${{C}}^{2}(\baromega)$. 
Consequently, we arrive at the following limits 
\[
	\Delta \bigUno = \lim_{k \to \infty} \Delta \bigUnok = 0, \qquad
	 %%%
	 \ddno{}\bigUno = \lim_{k \to \infty} \ddno{}\bigUnok = \lim_{k \to \infty} {g_{k}} = 0,
\]
from which we obtain
\[
\Delta \bigUno = 0 \quad \text{in $\Omega$},\qquad \ddno{\bigUno} = 0\quad \text{on} \in \Sigma,\qquad \bigUno = 0 \quad \text{on $\Gamma$}.
\]
This implies that $\bigUno \equiv 0$.
Comparing this with the first estimate \eqref{eq:first_estimate} yields a contradiction because the subsequence $\{\bigUnok\}_{k \in \mathbb{N}}$ satisfies 
\[
	1 = \abs{\bigUnok}_{\baromega}^{(2+\alpha)} 
	\leqslant c_{0}(d,\Omega,\alpha)\left[ \abs{\bigUnok}_{\baromega}^{(0)} +  \abs{{g_{k}}}_{\Sigma}^{(1+\alpha)} \right]
	\longrightarrow 0.
\]
Thus, inequality \eqref{eq:bound_for_V0} holds true.
Taking the supremum of $\tau < t \in [0,T]$ yields the desired estimate.
\end{proof}
%------------------------------------------------------------------------------------------------------------------------------------------------------------------------------------------------------
%
%
%
%------------------------------------	
% 		SECTION 6	
%------------------------------------		
\section{The Nonlinear Problem}\label{sec:nonlinear_problem}
In this section, we distinguish the linear part of the main and dynamic boundary conditions of the nonlinear problem \eqref{eq:new_transformed_problem}.
To do this, we compute the variations of the operators given in \eqref{eq:splitting_operator_for_Laplacian} with respect to $\erho$.
We introduce the new unknown functions
\begin{center}
	$\diffUd=\bigUd-\bigUdo$ \quad and \quad $\diffUn=\bigUn-\bigUno$. 
\end{center}
Hence, using the equations in \eqref{eq:transformed_Laplacian_equations}, we may write \eqref{eq:transformed_problem} in the following form
%
%
%
%------------------------------------------------------------------------------------------------------------------------------------------------------------------------------------------------------
\begin{equation}\label{eq:new_transformed_problem}
\resizebox{0.9 \textwidth}{!}
{$
\left\{
\begin{aligned}
\pazo{L}_{\erho}{\diffUd} + (\pazo{L}_{\erho} - \Delta)\bigUdo 
	+ (\pazo{K}_{\erho}{\diffUd} + \pazo{K}_{\erho}{\bigUdo}) \pazo{L}_{\erho}{{\erho}} 
	+ (\pazo{M}_{\erho}{\diffUd} + \pazo{M}_{\erho}{\bigUdo})
						&= 0, \quad y \in \Omega, \ t > 0,\\
%%%
	\overSigma{\diffUd} =  0, 	\qquad
	\overGamma{\diffUd} &= 0,\\[0.5em]
\pazo{L}_{\erho}{\diffUn} + (\pazo{L}_{\erho} - \Delta)\bigUno 
	+ (\pazo{K}_{\erho}{\diffUn} + \pazo{K}_{\erho}{\bigUno}) \pazo{L}_{\erho}{{\erho}} 
	+ (\pazo{M}_{\erho}{\diffUn} + \pazo{M}_{\erho}{\bigUno})
						&= 0, \quad y \in \Omega, \ t > 0,\\
%%%
	\overSigma{\ddno{\diffUn}}  =  0, \qquad
	\overGamma{\diffUn}  &= 0,\\[0.5em]
\overGamma{ \left( \ddt{\erho} + B_{{\erho}} \ddno{{\funcdiffdn{V}}} 
	+ B_{{\erho}} \ddno{{\funcdiffdn{U}^{0}}} \right) }&= 0, \\		
	 {\erho} \big|_{y \in \Gamma, \, t=0}				&= 0,
%\end{array}
%
\end{aligned}
\right.
$}
\end{equation}	
where $\erho=\erho(y,t) = E\rho(y,t)$, $E$ being the operator \eqref{eq:extension_operator_E}, satisfies \eqref{eq:harmonic_extension_of_rho}.
%------------------------------------------------------------------------------------------------------------------------------------------------------------------------------------------------------
%
%
%	

In order to identify the linear part of the main equations and of the dynamic boundary condition in \eqref{eq:new_transformed_problem}, we introduce the variation
\[
	\delta \pazo{F}_{0} = \left. 	\frac{d}{d \lambda} \pazo{F}_{\lambda{\erho}}	\right|_{\lambda = 0},
\]
of the operator $\pazo{F} \in\{\pazo{L},\pazo{K},\pazo{M}\}$ which depends on ${\erho}$.
Note that the map $[ \erho \mapsto \pazo{F}_{\erho}]$, where $\pazo{F}_{\erho} \in\{\pazo{L}_{\erho},\pazo{K}_{\erho},\pazo{M}_{\erho}\}$, are analytic; see \eqref{eq:analytic_functions_LKM}.
Also, we note of the following identities:
\begin{align*} 
	\pazo{L}_{0} \vardiffU &= \Delta \vardiffU, \qquad
	\pazo{M}_{0} \vardiffU = 0, \qquad
	\pazo{K}_{0} \vardiffU = \NN \cdot \nabla \vardiffU,
	\qquad
	B_{0} = \frac{1}{\NN \cdot \nno},
	\qquad i = \text{D}, \text{N}.
\end{align*}
%%%
Because $\pazo{M}_{0} \equiv 0$ and $\pazo{L}_{0}{{\erho}}  \equiv 0$, then for $i = \text{D}, \text{N}$, we can write the main equations in \eqref{eq:new_transformed_problem} posed over $\Omega$ in the following way:
%%%%------------------------------------------------------------------------------------------------------------------------------------------------------------------------------------------------------
\begin{align*}
	&\pazo{L}_{0}{\vardiffU} + \delta\pazo{L}_{0}{\varbigU} + \delta \pazo{M}_{0}{\varbigU} + \pazo{K}_{0}{\varbigU} \pazo{L}_{0}{{\erho}}\\
	&\qquad\qquad = -(\pazo{L}_{{\erho}} - \pazo{L}_{0})\vardiffU 
				- (\pazo{L}_{{\erho}} - \pazo{L}_{0} - \delta \pazo{L}_{0})\varbigU\\
	&\qquad\qquad \qquad - (\pazo{M}_{{\erho}} - \pazo{M}_{0})\vardiffU  - (\pazo{M}_{{\erho}} - \pazo{M}_{0} - \delta \pazo{M}_{0})\varbigU,\\
	%%%
	&\qquad\qquad\qquad - ( \pazo{K}_{\erho}{\vardiffU} + \pazo{K}_{{\erho}}{\varbigU} ) ( \pazo{L}_{{\erho}}{{\erho}}  - \pazo{L}_{0}{{\erho}}  )\\
	%%%
	&\qquad\qquad =: \nonlinpartU[\vardiffU,{\erho}].
\end{align*} 
On the other hand, the dynamic boundary condition on $\Gamma$ in \eqref{eq:new_transformed_problem} can be written as follows:
%
%
%
%------------------------------------------------------------------------------------------------------------------------------------------------------------------------------------------------------
\begin{equation}\label{eq:dynamic_boundary_condtion}
\begin{aligned}
	&\ddt{\erho} + B_{0} \ddno{{{\funcdiffdn{V}}}} + \left. \delta B_{0} \ddno{{\funcdiffdn{U}^{0}}} \right|_{\Gamma} \\
	&\qquad = - \left[ (B_{{\erho}} - B_{0} - \delta B_{0}) \ddno{{\funcdiffdn{U}^{0}}}
			+ (B_{{\erho}} - B_{0}) \ddno{{{\funcdiffdn{V}}}}   
			+ B_{0} \ddno{{\funcdiffdn{U}^{0}}} 
			\right] 
	\\
	&\qquad=: \pazo{B}[\diffUd, \diffUn, {\erho}] - B_{0} \ddno{{\funcdiffdn{U}^{0}}}. 
\end{aligned}
\end{equation}
%------------------------------------------------------------------------------------------------------------------------------------------------------------------------------------------------------
%
%
%
%------------------------------------------------------------------------------------------------------------------------------------------------------------------------------------------------------
The computations of the variations $\delta \pazo{L}_{0}$, $\delta \pazo{K}_{0}$, $\delta \pazo{M}_{0}$, and $\delta \pazo{B}_{0}$, which consists of the variations of $\delta B_{0}$ and/or $\delta \Anaught$, can be done without difficulty (see Appendix~\ref{appendix:computations_of_the_variations}).	
In fact, it can be checked, for instance, that
\begin{align*}
	\delta \pazo{L}_{0} \varbigU &= \sum_{m,p = 1}^{d} \delta {{\matA}}_{mp}^{(0)} \frac{\partial^{2} \varbigU}{\partial y_{m} \partial y_{p}},\qquad (i = \text{D}, \text{N}),
	\qquad
	\delta {{\matA}}_{mp}^{(0)} = -\sum_{m=1}^{d} \left( \frac{\partial (N_{m} {\erho}) }{\partial y_{p}} + \frac{\partial (N_{p} {\erho})}{\partial y_{m}} \right),\\
	\delta B_{0} &= - 2 \nno \cdot \nabla {\erho} 
			-  \frac{\NN \cdot \nabla {\erho}}{\NN \cdot \nno}
				+ h(\NN, \partial_y \NN) {\erho},
\end{align*}
where
\[
	h(\NN, \partial_y \NN) := - \frac{ 2 \nno \cdot (\nabla \otimes \NN)^\top \nno }{\NN \cdot \nno}
					+ \frac{1}{ \left( \NN \cdot \nno  \right)^{2}} \left( \nno \cdot (\nabla \otimes \NN)^\top \NN \right).
\]
Note that $\delta B_{0}$ is well-defined because of Assumption~\eqref{eq:assumption_on_quasi_normal_vector}.
%------------------------------------------------------------------------------------------------------------------------------------------------------------------------------------------------------
%
%
%

Observe from the above expressions that the variations $\delta \pazo{L}_{0}$ and $\delta B_{0}$ only consist of first-order derivatives of ${\erho}$.
Because ${\erho}$ is satisfies the Laplace equation \eqref{eq:harmonic_extension_of_rho}, the linear part of the first and fourth equation in problem \eqref{eq:new_transformed_problem} do not contain the second-order partial derivatives of the function ${\erho}$.
Therefore, we are able to reduce the problem as follows:
\begin{equation}
\label{eq:linear_problem} 
{
\left\{%%%\arraycolsep=1.4pt\def\arraystretch{1.5}
\begin{aligned}
\Delta \diffUd + \qmdp \cdot \nabla {\erho} + \qmd {\erho}
		= \nonlinpartU[\diffUd,{\erho}], &\qquad y \in \Omega, \ t > 0,\\
	\overSigma{\diffUd} = 0,  \qquad
	\overGamma{\diffUd} = 0,& \\[0.5em]
\Delta \diffUn + \qmnp \cdot \nabla {\erho} + \qmn {\erho}
		= \nonlinpartU[\diffUn,{\erho}], &\qquad y \in \Omega, \ t > 0,\\
	\overSigma{\ddno{\diffUn}} = 0,  \qquad
	\overGamma{\diffUn} 	= 0,& \\[0.5em]
\overGamma{ \left( \ddt{\erho} + \bnormal \ddno{{\erho}} + \btangential \cdot \nabla_{\Gamma} {\erho} + h {\erho} + B_{0} \ddno{\funcdiffdn{V}} \right) }
		&= \pazo{B}[\diffUd, \diffUn, {\erho}] - B_{0} \ddno{{{\funcdiffdn{U}^{0}}}},\\		
	 {\erho}\big|_{y \in \Gamma, \, t = 0} = 0,&
\end{aligned}
\right.
}
\end{equation}	
where $h:= h(\NN,\partial_y\NN)$.
Here, $\pazo{B}$ and $\nonlinpartU$ are the nonlinear terms. 
The functions $\qmdp$ and $\qmd$ depend on $\bigUdo$, $\NN$, and their derivatives,  
while the functions $\qmnp$ and $\qmn$ depend on $\bigUno$, $\NN$, and their derivatives.  
The functions $\bnormal$ and $\btangential$ depend on $\NN$ and its derivatives.

At this point, it is important to know the sign of the coefficient $\bnormal$ of $\ddno{{\erho}}$ appearing in the boundary equation on $\Gamma$ (cf. \eqref{eq:dynamic_boundary_condtion}).
By examining the explicit form of $\delta B_{0}$, \eqref{eq:assumption_on_quasi_normal_vector}, and \eqref{eq:main_assumption_on_ud_and_un}, we deduce that $\bnormal < 0$.
					
%
%
% 
%%%%This happens, for instance, in axisymmetric situations when $\Gamma$ contains the exact cavity. %%% see Figure~\ref{fig:illustration}.
%%%%In this case, noting that the computed expression for $\delta B_{0}$ is negative (note that $\nno$ is the unit normal vector pointing in inward direction), we will get that $\bnormal < 0$.  
%%%			cavity region contains entirely inside $\Omega_{\delta} \subset \mathbb{R}^{d}$, $n\in\{2,3\}$, where the closure of the open bounded set $\Omega_{\delta}$ is contained inside the whole domain bounded by the fixed/accessible surface.

%--------------------------------------------------------------------------------------------------------------------------------------------------------------------------------------------------------------
% ILLUSTRATION: SIGN OF \bnormal
%--------------------------------------------------------------------------------------------------------------------------------------------------------------------------------------------------------------
%\begin{figure}[htp!]
%\centering
%	\scalebox{0.6}{\includegraphics{images/fig2.pdf}}
%	\caption{Illustration}
%\label{fig:illustration}
%\end{figure}%
%%%
%
%
%
%------------------------------------	
% 		SECTION 7
%------------------------------------
%
%
%
\section{The Linear Problem}\label{sec:the_linear_problem} 
Our goal here is to prove the existence of classical solution to a linear problem corresponding to \eqref{eq:linear_problem}.
That is, we study the following system of partial differential equations:
	\begin{equation}
	\label{eq:new_linear_problem} 
	\left\{
	\begin{aligned}
	\Delta \smwd + \qmdp \cdot \nabla \theta + \qmd \theta
		&= \Fd(y,t), \qquad y \in \Omega, \ t > 0,\\
	\overSigma{\smwd} = 0, \qquad
	\overGamma{\smwd} &= 0,\\[0.5em]
	\Delta \smwn + \qmnp \cdot \nabla \theta + \qmn \theta
		&= \Fn(y,t), \qquad y \in \Omega, \ t > 0,\\
	\overSigma{\ddno{\smwn}} = 0, \qquad
	\overGamma{\smwn}	 	&= 0,\\[0.5em]	
	\overGamma{ \left( \ddt{\theta} + \bnormal \ddno{\theta} 
	+ \btangential \cdot \nabla_{\Gamma} \theta 
	+ h \theta + B_{0} \ddno{{\funcdiffdn{\smw}}} \right) }
			&= \psi(y,t),\\
	{\theta} \big|_{y \in \Gamma, \, t=0}	&= 0,
	\end{aligned}
	\right.
	\end{equation}		
where $\theta=\theta(y,t)$, for $t>0$, is harmonic in $y \in \Omega$ and vanishes on $\Sigma$ (i.e., $\theta \big|_{\Sigma} = 0$).

In connection with the above problem, we will prove the following result \fergy{under the essential condition $\bnormal < 0$ (cf. \cite[equ.~(3.4)]{Antontsevetal2003}).
If this condition is violated, the moving boundary problem \eqref{eq:main_system} may become ill-posed; that is, it may fail to admit a classical solution; cf. \cite[Thm.~2.1]{Kimura1999} for a related issue in Hele-Shaw flows (see also \cite{Gustafsson1985} and \cite[Rem.~5.3]{EscherSimonett1997}).
}
%------------------------------------------------------------------------------------------------------------------------------------------------------------------------
% 	MAIN RESULT ON NEW LINEAR PROBLEM
%------------------------------------------------------------------------------------------------------------------------------------------------------------------------
\begin{theorem}\label{thm:main_result_new_linear_problem}
	Let $\Sigma, \Gamma \in {{C}}^{2+\alpha}$ for some $\alpha \in (0,1)$, and
	suppose that the coefficients in \eqref{eq:new_linear_problem} satisfy the following conditions
	\[ 
	\left\{
	\begin{aligned}
		\bnormal, \ h &\in {{C}}^{0}([0,T]; {{C}}^{1+ \alpha}(\Gamma)),\\
		\btangential &\in {{C}}^{0}([0,T]; {{C}}^{1+ \alpha}(\Gamma)^{d}),\\
		\qmd,\ \qmn &\in {{C}}^{0}([0,T]; {{C}}^{0 + \alpha}(\baromega)),\\
		\qmdp,\ \qmnp &\in {{C}}^{0}([0,T]; {{C}}^{0 + \alpha}(\baromega)^{d}),\\
		\bnormal &< 0. %%%\footnote{See \cite[equ. (3.4)]{Antontsevetal2003}.}
	\end{aligned}
	\right.
	\]
	For any $\Fd, \Fn \in {{C}}^{0}([0,T]; {{C}}^{0 + \alpha}(\baromega))$ and $\psi \in {{C}}^{0}([0,T]; {{C}}^{1+ \alpha}(\Gamma))$,
	system \eqref{eq:new_linear_problem} has a unique solution
	\[
		\smwd,\ \smwn,\ \theta \in {{C}}^{0}([0,T]; {{C}}^{2 + \alpha}(\baromega))
	\]
	with $\theta$ having additional smoothness with respect to $t$ on the surface $\Gamma$.
	That is, $\theta_t \in {{C}}^{0}([0,T]; {{C}}^{1+ \alpha}(\Gamma))$.
	In addition, the following estimate hold:
	\begin{equation}\label{eq:estimate_for_new_linear_problem}	
%%%	\begin{aligned}
%%%		&\abs{\smwd}^{(2+\alpha)}_{\maxtimebaromega}
%%%			+\abs{\smwn}^{(2+\alpha)}_{\maxtimebaromega}
%%%				+ \abs{\theta}^{(2+\alpha)}_{\maxtimegamma}
%%%%%%					+ \left| \frac{d}{d\tau}\theta \right|^{(1+\alpha)}_{\maxtimegamma}\\
%%%				+ \left| \theta_{\tau} \right|^{(1+\alpha)}_{\maxtimegamma}\\
%%%		&\qquad \qquad  \leqslant
%%%			c \left( \abs{\Fd}^{(\alpha)}_{\maxtimebaromega} 
%%%								+ \abs{\Fn}^{(\alpha)}_{\maxtimebaromega}
%%%									+ \abs{\psi}^{(1+\alpha)}_{\maxtimegamma}
%%%						 \right),
%%%	\end{aligned}
	\norm{{\smwsmwdn}}^{(2+\alpha)}_{\maxtimebaromega}
	+ \norm{\theta}^{(2+\alpha)}_{\maxtimebaromega}
	\leqslant c \left( \norm{{\FFdn}}^{(\alpha)}_{\maxtimebaromega}+ \abs{\psi}^{(1+\alpha)}_{\Gamma; \, [0,t]} \right),
%%%		&\abs{\smwd}^{(2+\alpha)}_{\maxtimebaromega}
%%%			+\abs{\smwn}^{(2+\alpha)}_{\maxtimebaromega}
%%%				+ \abs{\theta}^{(2+\alpha)}_{\maxtimegamma}
%%%%%%					+ \left| \frac{d}{d\tau}\theta \right|^{(1+\alpha)}_{\maxtimegamma}\\
%%%				+ \left| \theta_{\tau} \right|^{(1+\alpha)}_{\maxtimegamma}\\
%%%		&\qquad \qquad  \leqslant
%%%			c \left( \abs{\Fd}^{(\alpha)}_{\maxtimebaromega} 
%%%								+ \abs{\Fn}^{(\alpha)}_{\maxtimebaromega}
%%%									+ \abs{\psi}^{(1+\alpha)}_{\maxtimegamma}
%%%						 \right),
	\end{equation}
	for some constant $c > 0$.
\end{theorem}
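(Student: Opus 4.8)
The plan is to decouple the system \eqref{eq:new_linear_problem} and solve it by a fixed-point argument on the single unknown $\theta$, exploiting that the two elliptic equations for $\smwd$ and $\smwn$ are driven by $\theta$ (and $\Fd,\Fn$) only through lower-order terms, while $\theta$ itself is harmonic in $\Omega$, vanishes on $\Sigma$, and evolves by a parabolic equation on $\Gamma$. First I would fix $t\in[0,T]$ as a parameter and, given $\theta(\cdot,t)\in {{C}}^{2+\alpha}(\baromega)$ with $\theta|_\Sigma=0$, solve the pure Dirichlet problem for $\smwd$ and the mixed Dirichlet–Neumann problem for $\smwn$: the right-hand sides $\Fd-\qmdp\cdot\nabla\theta-\qmd\theta$ and $\Fn-\qmnp\cdot\nabla\theta-\qmn\theta$ lie in ${{C}}^{0}([0,T];{{C}}^{\alpha}(\baromega))$ under the stated hypotheses, so by the Schauder theory recalled in Section~\ref{sec:regularity_of_solutions} (the lemma after \eqref{eq:pure_Dirichlet_problem_on_fixed_domain} and Lemma~\ref{lem:classical_regularity}, together with the estimates \eqref{eq:bound_for_U0}, \eqref{eq:bound_for_V0}) there are unique $\smwd,\smwn\in {{C}}^{0}([0,T];{{C}}^{2+\alpha}(\baromega))$ with
\[
	\norm{{\smwsmwdn}}^{(2+\alpha)}_{\maxtimebaromega}
	\leqslant c\left(\norm{{\FFdn}}^{(\alpha)}_{\maxtimebaromega} + \norm{\theta}^{(1+\alpha)}_{\maxtimebaromega}\right).
\]
This defines bounded linear solution operators $\theta\mapsto \smwd[\theta],\ \theta\mapsto \smwn[\theta]$, and hence a bounded linear operator $\theta\mapsto \ddno{\funcdiffdn{\smw}[\theta]}\big|_\Gamma\in {{C}}^{0}([0,T];{{C}}^{1+\alpha}(\Gamma))$ which loses one derivative relative to $\theta$ (it depends on $\theta$ through $\nabla\theta$ and $\theta$ only).

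Next I would substitute these into the dynamic boundary equation, obtaining a closed scalar parabolic problem on $\Gamma$:
\[
	\ddt{\theta} + \bnormal\,\ddno{\theta} + \btangential\cdot\nabla_\Gamma\theta + h\,\theta
	= \psi - B_{0}\,\ddno{\funcdiffdn{\smw}[\theta]}, \qquad \theta|_{t=0}=0,
\]
coupled to the constraint that $\theta(\cdot,t)$ is the harmonic extension into $\Omega$ of its boundary values with $\theta|_\Sigma=0$. The crucial structural point — already flagged at the end of Section~\ref{sec:nonlinear_problem} — is that $\bnormal<0$, so that $\ddno{\theta}$ is (up to sign and the positive factor $-\bnormal$) the outward normal derivative of a harmonic function, i.e. this is precisely a Dirichlet-to-Neumann operator acting on $\theta|_\Gamma$. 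That operator is a first-order, elliptic, positive pseudodifferential operator on $\Gamma$; consequently the displayed evolution equation for $\varphi:=\theta|_\Gamma$ is a genuine parabolic equation of the form $\varphi_t + a\,\Lambda\varphi + (\text{lower order}) = (\text{data})$ with $a=-\bnormal>0$ and $\Lambda$ the Dirichlet-to-Neumann map. I would invoke the linear parabolic Schauder theory (e.g. the results of Solonnikov in \cite{Solonnikov2003} to which the theorem statement already refers, or \cite{Antontsevetal2003}, cf. the footnote) for such equations on a compact manifold to obtain a unique solution $\varphi\in {{C}}^{0}([0,T];{{C}}^{2+\alpha}(\Gamma))$ with $\varphi_t\in {{C}}^{0}([0,T];{{C}}^{1+\alpha}(\Gamma))$ and the a priori bound $\norm{\varphi}^{(2+\alpha)}_{\Gamma;[0,t]} + \norm{\varphi_t}^{(1+\alpha)}_{\Gamma;[0,t]}\leqslant c\,\norm{\psi - B_0\,\ddno{\funcdiffdn{\smw}[\theta]}}^{(1+\alpha)}_{\Gamma;[0,t]}$. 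Harmonic extension (using the Schauder estimate for the Dirichlet problem with zero data on $\Sigma$) then upgrades $\varphi$ to $\theta\in {{C}}^{0}([0,T];{{C}}^{2+\alpha}(\baromega))$ with $\theta|_\Sigma=0$ and $\theta|_\Gamma=\varphi$, and the elliptic estimate gives $\norm{\theta}^{(2+\alpha)}_{\maxtimebaromega}\leqslant c\,\norm{\varphi}^{(2+\alpha)}_{\Gamma;[0,t]}$.

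For existence I would close the loop by a contraction argument on a short time interval: because $\theta\mapsto \ddno{\funcdiffdn{\smw}[\theta]}$ only involves first derivatives of $\theta$, the composed map $\theta\mapsto\theta$ (solve for $\smwd,\smwn$, then solve the parabolic problem, then extend) is, on the Banach space ${{C}}^{0}([0,\tau];{{C}}^{2+\alpha}(\baromega))$ with $\tau$ small, a contraction — the parabolic solution operator applied to a source in ${{C}}^{0}([0,\tau];{{C}}^{1+\alpha}(\Gamma))$ gains a power of $\tau$ (via $\theta(\cdot,0)=0$ and an interpolation/smallness-in-time estimate, as in \cite{Solonnikov2003}), which dominates the fixed loss of one derivative in the coupling term; the linearity of all operators then in fact gives a contraction factor $\leqslant c\tau^{\sigma}<1$ for some $\sigma>0$. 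A continuation argument over $[0,T]$ (the equation being linear, solutions do not blow up and the a priori bound is uniform) then yields existence and the estimate \eqref{eq:estimate_for_new_linear_problem} on all of $[0,T]$; uniqueness is immediate from linearity plus the same a priori estimate applied to the difference of two solutions with zero data. The main obstacle, and the step deserving the most care, is verifying that the boundary operator in the $\theta$-equation is genuinely parabolic — i.e. correctly identifying $-\bnormal\,\ddno{\theta}$ as $(-\bnormal)$ times a positive Dirichlet-to-Neumann operator, checking the sign and nondegeneracy ($\bnormal<0$ bounded away from $0$), and citing the right form of the parabolic Schauder theory for this nonlocal principal part; once that is in place, the remaining pieces are standard elliptic regularity and a routine fixed-point/continuation argument.
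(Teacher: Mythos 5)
Your proposal is essentially correct and shares the same structural skeleton as the paper's proof: decouple the two elliptic problems from the dynamic boundary condition, exploit the sign $\bnormal<0$ to get a well-posed evolution for $\theta|_\Gamma$, and close by iteration using smallness in time forced by the zero initial condition. The paper's version of your ``parabolic solvability'' black box is Lemma~\ref{lem:basis_lemma}: a Schauder-type estimate (via \cite{Antontsevetal2003} for existence, with the a~priori bound assembled from half-space model estimates, interpolation, the maximum principle on the harmonic $\Theta$, and Gr\"{o}nwall) for the boundary-value problem \eqref{eq:dynamic_problem} with the time derivative in the boundary condition and $\vect{b}\cdot\nn<-b_0<0$. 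Your Dirichlet-to-Neumann reformulation ($\ddno{\theta}|_\Gamma$ as a positive first-order elliptic pseudodifferential operator on $\Gamma$) makes the parabolic structure transparent, which is a cleaner conceptual viewpoint, but it invokes heavier machinery than the paper actually uses; Lemma~\ref{lem:basis_lemma} gets the same conclusion without ever naming the DtN operator. Likewise, your ``contraction on $[0,\tau]$ plus continuation'' is operationally equivalent to the paper's successive approximations controlled by Gr\"{o}nwall.

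Two points deserve more care than your sketch gives them. First, the contraction needs to be set up in a norm that also controls $\theta_t$ on $\Gamma$ (the paper uses $\vertiii{\theta}^{(2+\alpha)}_{\baromegagamma;\,[0,t]}$ precisely for this): the smallness-in-time factor is extracted by writing $\maxgamma\abs{\theta(\cdot,t)}\leqslant\int_0^t\maxgamma\abs{\theta_s}\,ds$ and then interpolating, and without a bound on $\theta_t$ in the iteration space this estimate has nothing to feed on. The bare $C^0([0,\tau];C^{2+\alpha}(\baromega))$ norm you propose to contract in is not enough by itself. Second, for the DtN route you would also need to verify that the tangential drift $\btangential\cdot\nabla_\Gamma\theta$, which is of the same (first) order as the principal part $|\bnormal|\Lambda$, does not destroy parabolicity; the relevant point is that $\operatorname{Re}(|\bnormal|\,|\xi|+i\,\btangential\cdot\xi)=|\bnormal|\,|\xi|$ stays positive, but this belongs in the argument rather than being left tacit. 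Neither issue is fatal — both are handled implicitly once you use the paper's norm $\vertiii{\cdot}$ and Lemma~\ref{lem:basis_lemma} in place of an abstract nonlocal parabolic Schauder theorem — but as written the contraction step has a gap.
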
		
%------------------------------------------------------------------------------------------------------------------------------------------------------------------------
%
%
%
To solve system \eqref{eq:new_linear_problem}, we will apply the method of successive approximations (see, e.g., \cite[p.~74]{GilbargTrudinger2001} or \cite[Sec.~1.1.1, p.~124]{Volpert2014}).
At each step, three problems are solved: the first two are elliptic equations whose coefficients and unknown functions depend on time like a parameter (this corresponds to the pure Dirichlet problem and mixed Dirichlet-Neumann problem in \eqref{eq:new_linear_problem}), and the problem for an elliptic equation with the time-derivative in the boundary condition (this corresponds to the last two equations in \eqref{eq:new_linear_problem}).
In the next several lines, we focus on the existence of solution to the last mentioned problem.

Let $\Omega \subset \mathbb{R}^{d}$ be an open, bounded, connected set with boundary $\partial \Omega = \Gamma \cup \Sigma$, where $\Gamma, \Sigma \in {{C}}^{2+\alpha}$ are disjoint surfaces, and $\Gamma$ is interior to $\Sigma$.
Let us consider and examine the following system:
\begin{equation}
	\label{eq:dynamic_problem}
	\left\{%%%\arraycolsep=1.4pt\def\arraystretch{1.5}
	\begin{aligned}
		-\Delta \Theta			= 0, 	\quad y \in \Omega, \ t > 0,
		\qquad \overSigma{\Theta} = \psi_{2},
		\qquad {\Theta}\big|_{y \in \Gamma, \, t=0} &= 0,\\
		\overGamma{ \left( \dfrac{\partial \Theta}{\partial t} + \vect{b} \cdot \nabla \Theta + {\kappa} \Theta \right) }
							&= \psi_{1}.	
	\end{aligned}
	\right.
\end{equation}		
%
%
%
%------------------------------------	
% 	MAIN PROPOSITION
%------------------------------------	
%
%
%
%%%%%%%%%%%%%%%%%%%%%%%%%%%%%%%%%%%%%%%%%%%%%%%%%%%%%%%%%%%%%%%%%%%%%%%%%%%%
%------------------------------------------------------------------------------------------------------------------------------------------------------------------------
% 	BASIS LEMMA
%------------------------------------------------------------------------------------------------------------------------------------------------------------------------
\begin{lemma}\label{lem:basis_lemma}
	Let the coefficients in \eqref{eq:dynamic_problem} satisfy the following conditions
	\[
		b_{i}\ (i=1,\ldots,d),\ {\kappa} \in {{C}}^{0}([0,T]; {{C}}^{1+ \alpha}(\Gamma)),
		\qquad  
		\vect{b} \cdot \nn < -b_{0} < 0,
		\quad (\vect{b} = (b_{1}, \ldots, b_{d})^{\top}),
	\]
	for some constant $b_{0} > 0$.
	For any given boundary data
	\begin{center}
		$\psi_{1} \in {{C}}^{0}([0,T]; {{C}}^{1+\alpha}(\Gamma))$ \quad and \quad $\psi_{2} \in {{C}}^{0}([0,T]; {{C}}^{2+\alpha}(\Sigma))$,
	\end{center}
	there exists a unique solution $\Theta \in {{C}}^{0}([0,T]; {{C}}^{2+\alpha}(\baromega))$ to problem \eqref{eq:dynamic_problem}, and such that the following estimate hold
	\begin{equation}\label{eq:basis_estimate} 
			\vertiii{\Theta}^{(2+\alpha)}_{\baromegagamma; \, [0, t]}
		\leqslant c \left( \abs{\psi_{1}}^{(1+\alpha)}_{\maxtimegamma} 
									+  \abs{\psi_{2}}^{(2+\alpha)}_{\maxtimesigma}\right),
		\qquad t \leqslant T,
	\end{equation}
	where the constant $c > 0$ depends on the coefficients in the boundary condition.
\end{lemma}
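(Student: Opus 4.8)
The plan is to use the quasi-stationary (purely elliptic) structure in the interior: for each $t$ the function $\Theta(\cdot,t)$ is harmonic in $\Omega$ and is therefore determined by its trace on $\Gamma$ together with $\psi_2$, so \eqref{eq:dynamic_problem} reduces to a single first-order parabolic-type evolution equation on the fixed surface $\Gamma$. First I would dispose of $\psi_2$: let $\Theta_0(\cdot,t)$ solve $-\Delta\Theta_0=0$ in $\Omega$, $\Theta_0|_\Sigma=\psi_2(\cdot,t)$, $\Theta_0|_\Gamma=0$, which by classical Schauder theory lies in ${{C}}^0([0,T];{{C}}^{2+\alpha}(\baromega))$ with $\abs{\Theta_0(\cdot,t)}^{(2+\alpha)}_{\baromega}\le c\,\abs{\psi_2(\cdot,t)}^{(2+\alpha)}_{\Sigma}$. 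Then $\Theta-\Theta_0$ is harmonic, vanishes on $\Sigma$ and at $t=0$ on $\Gamma$, and satisfies the dynamic condition with $\psi_1$ replaced by $\widetilde\psi_1:=\psi_1-(\vect{b}\cdot\nabla\Theta_0)|_\Gamma\in{{C}}^0([0,T];{{C}}^{1+\alpha}(\Gamma))$ (using $\Theta_0|_\Gamma=0$, $\partial_t\Theta_0|_\Gamma=0$, $\Gamma\in{{C}}^{2+\alpha}$ and $\vect b,\kappa\in{{C}}^{1+\alpha}(\Gamma)$). So, after renaming, it suffices to treat $\psi_2\equiv 0$.

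\textbf{Reduction to the boundary.} For each $t$, a function harmonic in $\Omega$ that vanishes on $\Sigma$ is the harmonic extension of its trace $\vartheta(\cdot,t):=\Theta(\cdot,t)|_\Gamma$; write $\Theta=\mathcal H\vartheta$, with $\mathcal H\in\BL({{C}}^{2+\alpha}(\Gamma),{{C}}^{2+\alpha}(\baromega))$. Splitting $\nabla\Theta|_\Gamma=(\ddno{\Theta})\,\nno+\nabla_\Gamma\vartheta$ and setting $\mathcal R\vartheta:=(\ddno{\Theta})|_\Gamma$ (the Dirichlet–to–Neumann operator of $\mathcal H$, a first-order elliptic pseudodifferential operator on $\Gamma$), the dynamic boundary condition becomes
\[
\partial_t\vartheta+(\vect b\cdot\nno)\,\mathcal R\vartheta+\btangential\cdot\nabla_\Gamma\vartheta+\kappa\vartheta=\widetilde\psi_1\ \text{ on }\Gamma,\quad 0<t\le T,\qquad \vartheta|_{t=0}=0,
\]
with $\btangential:=\vect b-(\vect b\cdot\nno)\nno$. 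The decisive point is the sign: with the inward-normal convention Green's identity gives $\int_\Gamma\vartheta\,\mathcal R\vartheta\,ds=-\int_\Omega|\nabla\Theta|^2\le 0$, so the principal symbol of $\mathcal R$ is $-|\xi|$; since $\vect b\cdot\nno<-b_0<0$, the operator $(\vect b\cdot\nno)\mathcal R$ has positive principal symbol and the evolution equation is genuinely parabolic of order one.

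\textbf{The scalar evolution problem (main obstacle).} The heart of the matter is the linear theory for $\partial_t\vartheta+\mathcal A\vartheta=F$ on $\Gamma$, $\vartheta|_{t=0}=0$, with $\mathcal A:=(\vect b\cdot\nno)\mathcal R+\btangential\cdot\nabla_\Gamma+\kappa$: namely, for $F\in{{C}}^0([0,T];{{C}}^{1+\alpha}(\Gamma))$ there is a unique $\vartheta\in{{C}}^0([0,T];{{C}}^{2+\alpha}(\Gamma))$ with $\abs{\vartheta}^{(2+\alpha)}_{\Gamma;\,[0,t]}\le c\,\abs{F}^{(1+\alpha)}_{\Gamma;\,[0,t]}$. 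I would obtain this by the Solonnikov/Bizhanova–Solonnikov scheme: freeze coefficients and pass to local coordinates; in each chart the model problem is $-\Delta\Theta=0$ in $\{x_d>0\}$ with $\partial_t\Theta+\beta\,\partial_{x_d}\Theta=\phi$ on $\{x_d=0\}$ and $\beta<0$, which after Laplace transform in $t$ and Fourier transform in $x'$ is solved explicitly, the solution kernel on $\{x_d=0\}$ being (up to rescaling by $|\beta|$) the $(d{-}1)$-dimensional Poisson kernel at height $|\beta|(t-\tau)$. Since gaining one derivative, ${{C}}^{1+\alpha}\to{{C}}^{2+\alpha}$, from this Poisson semigroup costs only the weight $(t-\tau)^{\alpha-1}$ — weakly singular and integrable — one reads off $\abs{\vartheta(\cdot,t)}^{(2+\alpha)}_{\Gamma}\lesssim t^{\alpha}\sup_\tau\abs{\phi(\cdot,\tau)}^{(1+\alpha)}_{\Gamma}$ together with continuity in $t$; this is precisely why mere continuity in time of the data (and not Hölder continuity) is enough. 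Patching the charts with a partition of unity and a regularizer, and absorbing the commutator and the lower-order terms $\btangential\cdot\nabla_\Gamma\vartheta+\kappa\vartheta$ (again using the smoothing of $\mathcal R$ on a short interval, then a Gronwall argument to reach $[0,T]$ by linearity) yields the a priori estimate; existence then follows by the method of successive approximations already announced, and uniqueness from the estimate with $F\equiv 0$. Alternatively one may represent $\Theta$ by a single-layer potential over $\Gamma$, turning \eqref{eq:dynamic_problem} into a Volterra-type integral equation for the density, as in \cite{BizhanovaSolonnikov2000}.

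\textbf{Conclusion.} Setting $\Theta:=\mathcal H\vartheta+\Theta_0$ gives $\Theta\in{{C}}^0([0,T];{{C}}^{2+\alpha}(\baromega))$ by the elliptic Schauder estimates, while $\partial_t\Theta|_\Gamma=\vartheta_t\in{{C}}^0([0,T];{{C}}^{1+\alpha}(\Gamma))$ is read off from the evolution equation, since $\mathcal R\vartheta,\nabla_\Gamma\vartheta,\kappa\vartheta,\widetilde\psi_1$ all lie in ${{C}}^0([0,T];{{C}}^{1+\alpha}(\Gamma))$. Combining the bounds of the three reductions and recalling $\vertiii{\Theta}^{(2+\alpha)}_{\baromegagamma;\,[0,t]}=\max_\tau\abs{\Theta(\cdot,\tau)}^{(2+\alpha)}_{\baromega}+\max_\tau\abs{\partial_\tau\Theta(\cdot,\tau)}^{(1+\alpha)}_{\Gamma}$ delivers \eqref{eq:basis_estimate}. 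I expect the genuine difficulty to be concentrated in the previous step: the sharp Schauder-type maximal-regularity estimate for the order-one parabolic operator $\partial_t+(\vect b\cdot\nno)\mathcal R$ on the curved surface $\Gamma$ — that is, the half-space model estimate and its transfer to variable coefficients via localization; everything else is bookkeeping on known elliptic and interpolation inequalities.
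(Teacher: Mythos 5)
Your architecture is sound and genuinely different from the paper's. You reduce the problem to a scalar parabolic pseudodifferential equation on $\Gamma$ via the Dirichlet-to-Neumann operator $\mathcal R$, identify its principal symbol, solve the half-space model by Fourier--Laplace transform, and transfer back by localization; the paper instead cites existence from Antontsev, Gon\c{c}alves and Meirmanov directly, and for the estimate uses the half-space Schauder inequality of Ladyzhenskaya--Ural'tseva without ever introducing $\mathcal R$, reducing the residual lower-order terms to $\max_{\baromega}\abs{\Theta}$ by interpolation, then to $\max_\Gamma\abs{\Theta}$ by the maximum principle (since $\Theta$ is harmonic and vanishes on $\Sigma$), and finally to $\int_0^t\max_\Gamma\abs{\Theta_s}\,ds$ by the fundamental theorem of calculus and the zero initial datum, closing with Gr\"onwall. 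Your DtN reduction makes the parabolic structure and the role of the sign condition $\vect b\cdot\nno<-b_0<0$ more transparent; the paper's route avoids any pseudodifferential machinery and is closer to a pure Schauder argument.

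There is, however, a quantitative gap in your treatment of the model problem that you would need to repair. You assert that the Poisson semigroup gain $C^{1+\alpha}\to C^{2+\alpha}$ costs only the weight $(t-\tau)^{\alpha-1}$, and hence that the Volterra solution map satisfies $\abs{\vartheta(\cdot,t)}^{(2+\alpha)}_\Gamma\lesssim t^{\alpha}\sup_{\tau}\abs{\phi(\cdot,\tau)}^{(1+\alpha)}_\Gamma$. Testing on a single Fourier mode $\phi(x',\tau)=e^{ix'\cdot\xi}$ gives $\hat\vartheta(\xi,t)=\hat\phi(\xi)\,\frac{1-e^{-\abs{\beta}\abs{\xi}t}}{\abs{\beta}\abs{\xi}}$, so the ratio of $C^{2+\alpha}$ to $C^{1+\alpha}$ norms behaves like $1-e^{-\abs{\beta}\abs{\xi}t}$, whose supremum over $\xi$ is $1$ for every $t>0$. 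Thus the correct model estimate is $\abs{\vartheta(\cdot,t)}^{(2+\alpha)}_\Gamma\lesssim\sup_{\tau}\abs{\phi(\cdot,\tau)}^{(1+\alpha)}_\Gamma$ with no short-time factor; the gain of exactly one derivative is uniform but does not shrink as $t\to 0$ (and the naive kernel weight for this gain is $(t-\tau)^{-1}$, not $(t-\tau)^{\alpha-1}$; the divergence is absorbed by the spatial H\"older structure, not by an integrable weight). Consequently, the short-time absorption of $\btangential\cdot\nabla_\Gamma\vartheta+\kappa\vartheta$ that you propose does not close by itself. To repair it, convert the lower-order contributions into $\max_\Gamma\abs{\vartheta}$ by the interpolation inequality $\abs{u}^{(k-1+\alpha)}\leqslant\varepsilon\abs{u}^{(k+\alpha)}+c_\varepsilon\max\abs{u}$, then use $\vartheta(\cdot,0)=0$ and the fundamental theorem of calculus to express $\max_\Gamma\abs{\vartheta(\cdot,t)}\leqslant\int_0^t\max_\Gamma\abs{\vartheta_s(\cdot,s)}\,ds$, and finish with Gr\"onwall; this is precisely the paper's closing argument and is what you would need to substitute for the $t^\alpha$ step.
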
		
%------------------------------------------------------------------------------------------------------------------------------------------------------------------------	
%
%
%
%%%%%%%%%%%%%%%%%%%%%%%%%%%%%%%%%%%%%%%%%%%%%%%%%%%%%%%%%%%%%%%%%%%%%%%%%%%%
\begin{proof}
	The proof is given in Appendix~\ref{appx:proof_of_basis_lemma}. %%%{\color{red} (The existence of classical solution to \eqref{eq:dynamic_problem} has to be checked rigorously!)}
\end{proof}
%%%%%%%%%%%%%%%%%%%%%%%%%%%%%%%%%%%%%%%%%%%%%%%%%%%%%%%%%%%%%%%%%%%%%%%%%%%%
%
We are now in the position to prove Theorem~\ref{thm:main_result_new_linear_problem}.
\begin{proof}[Proof of Theorem~\ref{thm:main_result_new_linear_problem}]
	We first confirm estimate \eqref{eq:estimate_for_new_linear_problem}.
	To this end, we write the main equations in \eqref{eq:new_linear_problem} in the form
	\begin{equation}\label{eq:Poisson_equations}
	\left\{
	\begin{aligned}
	\Delta \smwd &= \scrFd \equiv \Fd - \qmdp \cdot \nabla \theta - \qmd \theta,\\
	\Delta \smwn &= \scrFn \equiv \Fn - \qmnp \cdot \nabla \theta - \qmn \theta.
	\end{aligned}
	\right.
	\end{equation}
	We consider the Poisson equation above with the homogeneous Dirichlet condition on the whole boundary $\partial \Omega$.
	According to a classical result concerning the pure Dirichlet problem (cf. \cite[Eq.~(1.11), p.~110]{LadyzenskajaUralceva1968}), the following estimate holds:
	$\abs{\smwd}^{(2+\alpha)}_{\maxtimebaromega} 
			\leqslant c \left( \abs{\scrFd}^{(\alpha)}_{\maxtimebaromega} + {\maxbaromega}\abs{\smwd} \right)$, 
	for some constant $c>0$.
	We can disregard the term ${\maxbaromega} \abs{\smwd}$ because, as we will verify later in the proof, the Poisson problem has a unique solution (cf. \cite[p.~110]{LadyzenskajaUralceva1968}).
	Hence, we get the bound
	\begin{equation}\label{eq:estimate_for_wd}
		\abs{\smwd}^{(2+\alpha)}_{\maxtimebaromega}  
			\leqslant c_{\text{D}} \left( \abs{\theta}^{(1 + \alpha)}_{\maxtimebaromega} + \abs{\theta}^{(\alpha)}_{\maxtimebaromega} + \abs{\Fd}^{(\alpha)}_{\maxtimebaromega} \right),
	\end{equation}
	where $c_{\text{D}} > 0$ is a constant that depends only on $\qmd$ and $\qmdp$.
	\\

	For the second equation in \eqref{eq:Poisson_equations}, we consider the boundary problem for the Poisson equation with homogenous Neumann condition on $\Sigma$ and a homogenous Dirichlet condition on $\Gamma$.
	Using the result from \cite[Thm.~3.28(ii), p.~194]{Troianiello1987}, we have the estimate 
	\begin{equation}\label{eq:estimate_for_wn}
	\abs{\smwn}^{(2+\alpha)}_{\maxtimebaromega} 
			\leqslant c \abs{\scrFn}^{(\alpha)}_{\maxtimebaromega}\\
			\leqslant c_{\text{N}} \left( \abs{\theta}^{(1 + \alpha)}_{\maxtimebaromega} + \abs{\theta}^{(\alpha)}_{\maxtimebaromega} + \abs{\Fn}^{(\alpha)}_{\maxtimebaromega} \right),
	\end{equation}
	for some constant $c, c_{\text{N}} > 0$ that depends only on $\qmn$ and $\qmnp$.

	Next, we consider the last two equations in system \eqref{eq:new_linear_problem} and apply estimate \eqref{eq:basis_estimate} with
	\[
		\psi_{1} = \psi - B_{0} \ddno{{\funcdiffdn{\smw}}}
	\]
	to obtain
	\[ 
		\vertiii{\theta}^{(2+\alpha)}_{\baromegagamma; \, [0, t]}  
		%%%
		\leqslant c \abs{\psi_{1}}^{(1+\alpha)}_{\maxtimegamma} \\
		%%%
		\leqslant c \left( \abs{\psi}^{(1+\alpha)}_{\maxtimegamma}  
				+ \norm{{\smwsmwdn}}^{(2+\alpha)}_{\maxtimegamma}  
		\right),
	\]
	for some constant $c>0$.
	Combining this estimate with \eqref{eq:estimate_for_wd} and \eqref{eq:estimate_for_wn}, we get a new estimate
	\[ 
		\vertiii{\theta}^{(2+\alpha)}_{\baromegagamma; \, [0, t]}  
		%%%
		\leqslant c \left( \abs{\psi}^{(1+\alpha)}_{\maxtimegamma} 
				+ \abs{\theta}^{(1 + \alpha)}_{\maxtimebaromega}
				+ \abs{\theta}^{(\alpha)}_{\maxtimebaromega} 
				+ \norm{{\FFdn}}^{(\alpha)}_{\maxtimebaromega}
		\right).
	\]	
	It only remains to estimate the sum $\abs{\theta}^{(1 + \alpha)}_{\maxtimebaromega} + \abs{\theta}^{(\alpha)}_{\maxtimebaromega}$.
	To get rid of these terms on the right hand side of the above inequality, we simply apply the same argument used in the latter part of the proof of Lemma~\ref{lem:basis_lemma}.
	That is, we utilize the interpolation inequalities \eqref{eq:interpolation_set2}, apply the maximum principle (since $\theta$ is harmonic), and then use Gr\"{o}nwall lemma, noting that $\theta(y,t) = 0$ for $y \in \Gamma$ at $t = 0$, to eventually get the desired estimate \eqref{eq:estimate_for_new_linear_problem}.
	This ends the verification of estimate \eqref{eq:estimate_for_new_linear_problem}.

We next establish the solvability of system \eqref{eq:new_linear_problem}.
Our approach is to apply the method of successive approximation (see, e.g., \cite{EbertReissig2018}).
To this end, the initial approximation $(\smwd^{(0)}, \smwn^{(0)}, \theta^{(0)})$ is found by solving the problem
\begin{equation}\label{eq:initialization}
	\problemQQQ(\smwd^{(0)},\smwn^{(0)},\theta^{(0)})(y,t) = (\Fd, \Fn)(y,t), \qquad y \in \baromega, \quad t>0, 
\end{equation}
where, for $n = 0, 1, 2, \ldots$, 
\begin{equation}\label{eq:approximation_problem_1}
	%%% \resizebox{0.9 \textwidth}{!}
	\begin{gathered}
	\begin{aligned}
	&\problemQQQ(\smwd^{(n)},\smwn^{(n)},\theta^{(n)})(y,t) = (\Psi_{\text{D}}, \Psi_{\text{N}})(y,t), \qquad y \in \baromega, \quad t>0, \\[1em]
	&\qquad \Updownarrow\\[1em]
	&\left\{%%%\arraycolsep=1.4pt\def\arraystretch{1.5}
	\begin{aligned}
	&\Delta \smwd^{(n)} 	= \Psi_{\text{D}}(y,t), \quad y \in \Omega, \ t > 0,\qquad \ \
	%%%
	\overSigma{\smwd^{(n)}} = 0, \qquad
	\overGamma{\smwd^{(n)}} = 0,\\[0.5em]
	&\Delta \smwn^{(n)} = \Psi_{\fergy{\text{N}}}(y,t), \quad y \in \Omega, \ t > 0,\qquad
	%%%
	\overSigma{\ddno{\smwn^{(n)}}} = 0, \qquad
	\overGamma{\smwn^{(n)}} = 0,\\[0.5em]
	&\Delta \theta^{(n)} = 0, \quad y \in \Omega, \ t > 0, \qquad \
	\overSigma{\theta^{(n)}} = 0, \qquad
	\theta^{(n)} \big|_{y \in \Gamma, \, t=0} = 0,\\
	& \overGamma{ \left( \ddt{{\theta}^{(n)}} + \bnormal \ddno{\theta^{(n)}} + \btangential \cdot \nabla_{\Gamma} \theta^{(n)} + h \theta^{(n)} \right) }
		= \psi(y,t) - B_{0} \displaystyle \ddno{{\smw}^{(n)}_{\text{D}\text{N}}},				
	\end{aligned}
	\right.
	\end{aligned}
	\end{gathered}
\end{equation}		
for $n=0,1,\ldots$, where $(\smwd^{(n)},\smwn^{(n)},\theta^{(n)})$ are the unknown triplet, and $(\Psi_{\text{D}}, \Psi_{\text{N}})$ are a pair of given functions.

Then, we define the approximants $(\smwd^{(m+1)}, \smwn^{(m+1)}, \theta^{(m+1)})$, for $m=0, 1, \ldots$, as solution to \eqref{eq:approximation_problem_1} with $n=m+1$; that is, $(\smwd^{(m+1)}, \smwn^{(m+1)}, \theta^{(m+1)})$ solves the equation

\begin{equation}\label{eq:approximation}
	\problemQQQ(\smwd^{(m+1)},\smwn^{(m+1)},\theta^{(m+1)})(y,t) = (\scrFd^{(m)}, \scrFn^{(m)})(y,t), \qquad y \in \baromega, \quad t>0, 
\end{equation}
where
\[
	 {H}_{i}^{(m+1)} \equiv  {F}_{i} - \vect{q}_{i} \cdot \nabla \theta^{(m)} - {Q}_{i} \theta^{(m)}, \qquad i = \text{D}, \text{N}.
\]
%%%%%% ENDED HERE

	The solvability of the Poisson problem with pure Dirichlet boundary condition is well-known (see, e.g., \cite[Thm.~4.3, p.~56]{GilbargTrudinger2001}).\footnote{Because $\Omega$ is bounded, its closure is compact.}
	Meanwhile, because $\smwn^{(m+1)}(y,t)$ vanishes for $y \in \Gamma$, $t>0$, for all $m = 0, 1, \ldots$, then the Poisson problem with mixed Dirichlet-Neumann boundary condition is also guaranteed to be solvable (see \cite[Chap.~3]{Troianiello1987}). 
	Together with Theorem~\ref{thm:main_result_new_linear_problem} and Lemma~\ref{lem:basis_lemma}, these allow us to infer the unique solvability of \eqref{eq:approximation} for all $t \in [0,T]$ and for all $m = 0, 1, \ldots$.
	In addition, the solutions satisfy the following estimates\footnote{Note here that $
		\abs{\theta^{(m)}}^{(\alpha)}_{\maxtimebaromega}  
		= {\maxbaromega} \abs{\theta^{(m)}}
			+  [\theta^{(m)} ]^{(\alpha)}_{{{{\baromega}}}}$.}
	\begin{align*}
	\abs{\smw_{i}^{(m+1)}}^{(2+\alpha)}_{\maxtimebaromega}
		&\leqslant c_{i}^{m} \left( \abs{F_{i}}^{(\alpha)}_{\maxtimebaromega} + \abs{\theta^{(m)}}^{(1 + \alpha)}_{\maxtimebaromega} + \abs{\theta^{(m)}}^{(\alpha)}_{\maxtimebaromega}\right), \qquad i = \text{D}, \text{N},\\
	%%%		 
	%%%		  
	\vertiii{\theta^{(m+1)}}^{(2+\alpha)}_{\baromegagamma; \, [0, t]}  
	& \leqslant c \left( \abs{\psi}^{(1+\alpha)}_{\maxtimegamma}  
			+ \norm{{\smwsmwdn^{(m+1)}}}^{(2+\alpha)}_{\maxtimegamma} 
				\right),			
	\end{align*}
	for some constants $c_{i}^{m} := c_{\text{D}}^{m}(Q_{i}, \vect{q}_{i})>0$, $i = \text{D}, \text{N}$, and $c>0$.
	%
	%
	%
	%
%%%	Note however that
%%%	\begin{align*}
%%%	\abs{\Theta}^{(\alpha)}_{\maxtimebaromega} 
%%%		&= \sum_{\abs{j} < \alpha} {\maxbaromega} |{{D}}^j \Theta | + [\Theta ]^{(\alpha)}_{{{{\baromega}}}}
%%%		= {\maxbaromega} \abs{\Theta} + [\Theta ]^{(\alpha)}_{{{{\baromega}}}},\\
%%%	[\Theta ]^{(\alpha)}_{\maxtimebaromega} 
%%%		&= \sum_{\abs{j} = 0 } \max_{x,y \in \Omega(\tau)} \frac{| {{D}}^j  \Theta(x,\tau) - {{D}}^j \Theta(y,\tau) |}{\abs{x-y}^{\alpha}}
%%%		= \max_{x,y \in \Omega(\tau)} \frac{| \Theta(x,\tau) - \Theta(y,\tau) |}{\abs{x-y}^{\alpha}}.
%%%	\end{align*}
	
	We first estimate the term $\abs{\alert{\theta}^{(m)}}^{(\alpha)}_{{{{\baromega}}}}$ via the interpolation inequality 
	\[
		[{\alert{\theta}^{(m)} }]_{{{{\baromega}}}}^{(\alpha)} \leqslant \varepsilon_{4,m} \abs{\alert{\theta}^{(m)}}_{{{{\baromega}}}}^{(1)} + c_{\varepsilon_{4,m}} \abs{\alert{\theta}^{(m)}}_{{{{\baromega}}}}^{(0)},
	\]
	and choose $\varepsilon_{4,m} > 0$ small enough so that we can bound the terms above only by ${\maxbaromega} \abs{\alert{\theta}^{(m)}}$.
	Then, we argue as in the latter part of the proof of Lemma~\ref{lem:basis_lemma}, to write the estimates above as follows:
	\begin{align*}			
	\abs{\smw_{i}^{(m+1)}}^{(2+\alpha)}_{\maxtimebaromega}
			&\leqslant \tilde{c}_{i}^{m} \left( \abs{{F}_{i}}^{(\alpha)}_{\maxtimebaromega} + \abs{\theta^{(m)}}^{(1 + \alpha)}_{\maxtimebaromega} \right), \qquad i = \text{D}, \text{N},\\
	%%%		  
	\vertiii{\theta^{(m+1)}}^{(2+\alpha)}_{\baromegagamma; \, [0, t]}  
	&\leqslant \tilde{c} \left( \normKt + \abs{\theta^{(m)}}^{(1 + \alpha)}_{\maxtimebaromega}  \right),			
	\end{align*}
	for some constants $\tilde{c}_{i}^{m} := \tilde{c}_{i}^{m}(Q_{i}, \vect{q}_{i})>0$, $i = \text{D}, \text{N}$, and $\tilde{c}>0$, where
	\[
	\normKt := 
		\norm{{\FFdn}}^{(\alpha)}_{\maxtimebaromega}
		+ \abs{\psi}^{(1+\alpha)}_{\maxtimegamma}.
	\]
	Moreover, the initial approximant $(\smwd^{(0)}, \smwn^{(0)}, \theta^{(0)})$ satisfies the estimate
	\begin{equation}\label{eq:initial_estimate} 
	\norm{\smwsmwdn^{(0)}}^{(2+\alpha)}_{\maxtimebaromega} 
	+ \norm{\theta^{(0)}}^{(2+\alpha)}_{\Gamma; \, [0,t]}  
	\leqslant c_{0} \normKt,
	\end{equation}
	for some constant $c_{0} > 0$.

	Let us consider the differences
	\[ 
		\varpi_{i}^{(m+1)} = \smw_{i}^{(m+1)} - \smw_{i}^{(m)},\quad  i = \text{D}, \text{N}, 
		\qquad \text{and}\qquad
		\vartheta^{(m+1)} = \theta^{(m+1)} - \theta^{(m)},
	\]
	where $m = 0, 1, \ldots$.
	Clearly, for $m = 0, 1, \ldots$, $(\varpid^{(m+1)}, \varpin^{(m+1)}, \vartheta^{(m+1)})$ satisfies
	\begin{equation}
	\label{eq:approximation_difference}
	\resizebox{0.9 \textwidth}{!}
	{$
	\left\{%%%\arraycolsep=1.4pt\def\arraystretch{1.5}
	\begin{aligned}
	&\Delta \varpid^{(m+1)} = - \qmdp \cdot \nabla \vartheta^{(m)} - \qmd \vartheta^{(m)}, \quad y \in \Omega, \ t > 0,
	\quad
	%%%
	\overSigma{\varpid^{(m+1)}} = 0, \quad
	\overGamma{\varpid^{(m+1)}} = 0, \\[0.5em]
	&\Delta \varpin^{(m+1)} = - \qmnp \cdot \nabla \vartheta^{(m)} - \qmn \vartheta^{(m)}, \quad y \in \Omega, \ t > 0,
	\quad
	%%%
	\overSigma{\ddno{\varpin^{(m+1)}}} 	= 0, \quad
	\overGamma{\varpin^{(m+1)}}		= 0, \\[0.5em]
	%%%
	& \Delta \vartheta^{(m+1)} = 0,  \quad y \in \Omega, \ t > 0, 
		 \qquad \overSigma{\vartheta^{(m+1)}} = 0,
		 \qquad {\vartheta^{(m+1)}	}\Big|_{y \in \Gamma, \, t=0} = 0,\\
	& \qquad \qquad \qquad 
	\overGamma{ \left( \ddt{\vartheta}^{(m+1)} + \bnormal \ddno{\vartheta^{(m+1)}} + \btangential \cdot \nabla_{\Gamma} \vartheta^{(m+1)} + h \vartheta^{(m+1)} \right)}
							= - B_{0} \ddno{\alert{\funcdiffdn{\varpi}^{(m+1)}}}.
	\end{aligned}
	\right.
	$}
	\end{equation}	
	Following the previous estimations, it can be deduced that $(\varpid^{(m+1)}, \varpin^{(m+1)}, \vartheta^{(m+1)})$ satisfy the estimates
%%%%	\begin{equation}\label{eq:estimate_5} 
%%%%		|\varpid^{(m+1)} |^{(2+\alpha)}_{\maxtimebaromega}
%%%%			+|\varpin^{(m+1)} |^{(2+\alpha)}_{\maxtimebaromega}
%%%%				+ |\vartheta^{(m+1)} |^{(2+\alpha)}_{\maxtimegamma}
%%%%					+ \left| \vartheta^{(m+1)}_{\tau} \right|^{(1+\alpha)}_{\maxtimegamma}
%%%%		\leqslant c \abs{\vartheta^{(m)}}^{(1 + \alpha)}_{\maxtimebaromega},
%%%%	\end{equation}
	\begin{equation}\label{eq:estimate_5} 
		\norm{\varpi_{\text{D},\text{N}}^{(m+1)}}^{(2+\alpha)}_{\maxtimebaromega} 
		+ \norm{\vartheta^{(m+1)}}^{(2+\alpha)}_{\maxtimegamma} 
		\leqslant c \abs{\vartheta^{(m)}}^{(1 + \alpha)}_{\maxtimebaromega},
	\end{equation}
	for some constants $c > 0$.

	We estimate in the next few lines the right hand side of \eqref{eq:estimate_5}.
	For this purpose, we use the following property of norms on ${{C}}^{k+\alpha}$ (see, e.g., \cite[Equ.~(5.7), p.~403]{LadyzenskajaUralceva1968}):
	\[
		\abs{u}_{\maxtimebaromega}^{(k-1+\alpha)} \leqslant \varepsilon_6 \abs{u}_{\maxtimebaromega}^{(k+\alpha)} + c_{\varepsilon_6} {\maxbaromega} \abs{u}, \quad (k \geqslant 2).
	\]
	Here, ${\varepsilon_{6}}>0$ is an arbitrary small number and $c_{\varepsilon_{6}} \to \infty$ as $\varepsilon_6 \to 0$.
	This, together with the fact that $ \vartheta^{(m)}(y,0) = 0$, for $y \in \Gamma$, for each $m = 0,1, \ldots$, leads us to the following estimate
	\begin{equation}\label{eq:estimate_6}
	\begin{aligned}	
		\abs{\vartheta^{(m)}}_{\maxtimebaromega}^{(1+\alpha)} 
			&\leqslant \left( \varepsilon_6 \abs{\vartheta^{(m)}}^{(2+\alpha)}_{\maxtimebaromega} + c_{\varepsilon_6} {\maxbaromega} \abs{\vartheta^{(m)}} \right)\\
			&\leqslant \varepsilon_6 \abs{\vartheta^{(m)}}^{(2+\alpha)}_{\maxtimebaromega} 
			+ c_{\varepsilon_6} \int_{0}^{t} {\maxgamma} \left| { \frac{\partial }{\partial s}} \vartheta^{(m)}(\cdot, s) \right|  {{{d}}s}.	
	\end{aligned}	
	\end{equation}
	For $m=0$, it is easy to see that 
	\[
	\Delta \varpid^{(1)} = - \qmdp \cdot \nabla \alert{\theta}^{(0)} - \qmd \alert{\theta}^{(0)}	
	\qquad \text{and} \qquad
	\Delta \varpin^{(1)} = - \qmnp \cdot \nabla \alert{\theta}^{(0)} - \qmn \alert{\theta}^{(0)}.
	\]
	Hence, based on \eqref{eq:estimate_6} and \eqref{eq:initial_estimate}, we can get an estimate for $(\varpid^{(1)}, \varpin^{(1)}, \vartheta^{(1)})$ given by
	\[
		\norm{\varpi_{\text{D},\text{N}}^{(1)}}^{(2+\alpha)}_{\maxtimebaromega}  
		+ \norm{\vartheta^{(1)}}^{(2+\alpha)}_{\maxtimegamma} 
		\leqslant \utilde{c} \abs{\alert{\theta}^{(0)}}^{(1 + \alpha)}_{\maxtimebaromega}
		\leqslant \utilde{c}_{1}  \normKt,
	\]
	for some constants $\utilde{c}_{1} > 0$.

	Let us write $\sigma_{m}(t) = \sum_{j=1}^m a_{j}(t)$ where $a_{m}(t)$ is given by
	\[
	a_{m}(t) := 
		\norm{\varpi_{\text{D},\text{N}}^{(m)}}^{(2+\alpha)}_{\maxtimebaromega} 
		+ \norm{\vartheta^{(m)}}^{(2+\alpha)}_{\maxtimegamma}.
	\]
	As shown previously, we have $\sigma_{1}(t) = a_{1}(t) \leqslant \utilde{c}_{1}  \normKt$.
	So, from \eqref{eq:estimate_5} and \eqref{eq:estimate_6}, with $\varepsilon_6 > 0$ taken sufficiently small, we can get a bound for the sum $\sigma_{m}(t) = \sum_{j=1}^m a_{j}(t)$ given as follows
	\[
		\sigma_{m}(t) \leqslant c \left( \normKt + \sum_{j=1}^m  \int_{0}^{t} {\maxgamma} \left| { \frac{\partial }{\partial s}} \vartheta^{(j)}(\cdot, s) \right|  {{{d}}s} \right),
	\]
	for some constant $c>0$ (independent of $m$), for all $m = 2, 3, \ldots$.
	Applying Gr\"{o}nwall's lemma, we deduce that the sums $\sigma_{m}(t)$, for $m = 2, 3, \ldots$, are also uniformly bounded above by $\normKt$.
	Thus, the series sums $\sum_{j=1}^m a_{j}(t)$ actually converges.
	As a result, the sequence $\{(\varpid^{(m)}, \varpin^{(m)}, \vartheta^{(m)})\}_{m}$ converges in the corresponding norm.
	Passing to the limit as $m \to \infty$, we finally get the solution to system \eqref{eq:new_linear_problem}.

The Schauder method used above only proves the existence of a solution to system \eqref{eq:new_linear_problem}, which at the same time satisfies the estimate \eqref{eq:estimate_for_new_linear_problem}. 
Thus, it remains only to address the issue of uniqueness of the solution.
To do this, we assume that two solutions, $(\varpid^{1}, \varpin^{1}, \vartheta^{1})$ and $(\varpid^{2}, \varpin^{2}, \vartheta^{2})$, to system \eqref{eq:new_linear_problem} exist. Clearly, their difference is also a solution to \eqref{eq:new_linear_problem} with $\Fd \equiv 0$, $\Fn \equiv 0$, and $\psi \equiv 0$. Moreover, the estimate \eqref{eq:estimate_for_new_linear_problem} remains valid, from which we see that $\varpid^{1} - \varpid^{2} \equiv 0$, $\varpin^{1} - \varpin^{2} \equiv 0$, and $\vartheta^{1} - \vartheta^{2} \equiv 0$. This completes the proof of Lemma~\ref{lem:basis_lemma}.
\end{proof}
%
%
%
%------------------------------------	
% 		SECTION 8
%------------------------------------	
%
%
%	
\section{Proof of the Main Result}\label{sec:proof_of_the_main_result}
We are now in the position to prove Theorem~\ref{thm:main_result_transformed} by adapting a technique used in \cite[Section 5]{BizhanovaSolonnikov2000}.
Also, we will verify at the end of the section our claims in Theorem~\ref{thm:main_result} by using Theorem~\ref{thm:main_result_transformed} and through the change of variables.
Regarding the latter objective, the interpolation inequalities stated in the lemma below will be central to our proof.
%%% 
\begin{lemma}\label{lem:key_interpolation_inequalities}
	For some $\alpha \in (0,1)$, let $\Omega \subset \mathbb{R}^{d}$ be an open, connected, bounded set of class ${{C}}^{2+\alpha}$ and $u \in {{C}}^{2+\alpha}(\baromega)$.
	Then, there exist a constant $\varepsilon_{5} > 0$ such that for any $\varepsilon \in (0, \varepsilon_{5})$ the following inequalities hold
	%%%
	\begin{align}
		{\maxbaromega} \abs{\nabla u} &\leqslant \varepsilon^{1+\alpha} \abs{u}_{\baromega}^{(2+\alpha)} + \frac{c_{7}}{\varepsilon} {\maxbaromega} \abs{u},
		\label{eq:interp_ineq1}\\
		\abs{u}_{\baromega}^{(2)} &\leqslant \varepsilon^{\alpha} \abs{u}_{\baromega}^{(2+\alpha)} + \frac{c_{8}}{\varepsilon^{2}} {\maxbaromega} \abs{u},
		\label{eq:interp_ineq2}
	\end{align}
	where $c_{7}:=c_{7}(d,\Omega,\varepsilon)$ and $c_{8}:=c_{8}(d,\Omega,\varepsilon)$ are positive constants that depend only on the dimension $d$, the number $\varepsilon$, and $\Omega$.
\end{lemma}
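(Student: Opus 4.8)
\emph{Strategy.} Estimates \eqref{eq:interp_ineq1} and \eqref{eq:interp_ineq2} are the classical convexity (Landau--Kolmogorov / Gagliardo--Nirenberg) interpolation inequalities for Hölder norms on a domain of class ${{C}}^{2+\alpha}$ — the same circle of facts already used in the paper from \cite[Lemmata 6.32 and 6.35]{GilbargTrudinger2001} and \cite[eq. (5.7), p. 403]{LadyzenskajaUralceva1968} — and the only thing to be done here is to pin down the precise powers of $\varepsilon$. I would proceed in four steps: (i) reduce to a one-dimensional Taylor estimate along short segments; (ii) arrange that such segments may be drawn through every point of $\baromega$, in spite of the boundary; (iii) optimise the free segment length so as to land exactly on $\varepsilon^{1+\alpha},\varepsilon^{-1}$ and $\varepsilon^{\alpha},\varepsilon^{-2}$; (iv) sum over derivatives of order $\le 2$ to get \eqref{eq:interp_ineq2}.

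\emph{Step (i).} Fix $x_{0}\in\baromega$, a unit vector $e$, a small length $h>0$ with $[x_{0}-he,x_{0}+he]\subset\baromega$, and put $g(s):=u(x_{0}+se)$. Taylor's formula with integral remainder, together with $[\partial^{2}_{e}u]^{(\alpha)}\le C(d)\abs{u}^{(2+\alpha)}_{\baromega}$, gives $g(h)-g(-h)=2h\,g'(0)+O(h^{2+\alpha}\abs{u}^{(2+\alpha)}_{\baromega})$ and $g(h)+g(-h)-2g(0)=h^{2}g''(0)+O(h^{2+\alpha}\abs{u}^{(2+\alpha)}_{\baromega})$. Taking suprema over $x_{0}$ and $e$ yields, with $C=C(d)$,
\[
	{\maxbaromega}\abs{\nabla u}\le \tfrac{C}{h}\,{\maxbaromega}\abs{u}+C h^{1+\alpha}\abs{u}^{(2+\alpha)}_{\baromega},
	\qquad
	{\maxbaromega}\abs{D^{2}u}\le \tfrac{C}{h^{2}}\,{\maxbaromega}\abs{u}+C h^{\alpha}\abs{u}^{(2+\alpha)}_{\baromega}.
\]

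\emph{Step (ii).} Because $\Sigma,\Gamma\in{{C}}^{2+\alpha}$, I would use a bounded linear extension operator furnishing $\tilde u\in{{C}}^{2+\alpha}(\mathbb{R}^{d})$ with $\abs{\tilde u}^{(2+\alpha)}_{\mathbb{R}^{d}}\le C\abs{u}^{(2+\alpha)}_{\baromega}$ \emph{and} $\max_{\mathbb{R}^{d}}\abs{\tilde u}\le C\,{\maxbaromega}\abs{u}$ (local straightening plus higher-order reflection, or Stein's extension, both respect the $C^{0}$ bound). Applying Step (i) to $\tilde u$ at points of $\baromega$ removes the constraint on the segment and only costs a factor $C$. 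An extension-free alternative: the ${{C}}^{2+\alpha}$ boundary supplies a uniform interior cone condition, and a one-sided Taylor expansion along $d+\binom{d}{2}$ generic directions inside the cone recovers all of $\nabla u$ and $D^{2}u$, giving the same two bounds.

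\emph{Steps (iii)--(iv) and the hard part.} In the first bound of Step (i) I would set $h=C^{-1/(1+\alpha)}\varepsilon$ (admissible once $\varepsilon<\varepsilon_{5}$, with $\varepsilon_{5}$ governed by the extension/cone radius), which turns it into exactly \eqref{eq:interp_ineq1} with $c_{7}=C^{(2+\alpha)/(1+\alpha)}$; in the second, setting $h=C^{-1/\alpha}\varepsilon$ gives ${\maxbaromega}\abs{D^{2}u}\le\varepsilon^{\alpha}\abs{u}^{(2+\alpha)}_{\baromega}+C\varepsilon^{-2}{\maxbaromega}\abs{u}$, and combining it with \eqref{eq:interp_ineq1} (using $\varepsilon^{1+\alpha}\le\varepsilon^{\alpha}$, $\varepsilon^{-1}\le\varepsilon^{-2}$ for $\varepsilon<1$) and the trivial estimate of $\abs{u}^{(0)}_{\baromega}$, then adding the three pieces of $\abs{u}^{(2)}_{\baromega}$, produces \eqref{eq:interp_ineq2} after a harmless rescaling $\varepsilon\mapsto c\varepsilon$ to clear the numerical factor on the leading term. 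The statement is not deep; the only thing requiring genuine care is the interplay of Steps (ii) and (iii) — making the one-dimensional slicing legitimate up to $\partial\Omega$ while keeping the extension/cone constants from spoiling the exact $\varepsilon$-exponents — and that is where essentially all the (modest) work lies.
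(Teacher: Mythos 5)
Your proposal is correct, and it takes a genuinely different route from the paper's, so a brief comparison is in order. The paper proceeds in two stages: first it proves interpolation inequalities on balls $B_{r}$ (Lemma~\ref{lem:appendix_interpolation_inequalities}, proved via Green's formula on $B_{\varepsilon/2}$ plus the mean value theorem, then iterated); it then transports these to cones $\unitepscone(\hat{\theta},1)$ of fixed unit height and finally rescales via $\tilde{x}=x/\varepsilon$ and the homogeneity $[\tilde u]^{(l)}=\tilde\mu^{l}[u]^{(l)}$ to obtain the $\varepsilon$-dependent cones $\epscone$, taking the supremum over vertices $x\in\baromega$ via the interior cone property. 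You instead derive the scale-$h$ estimates \emph{directly} by a one-dimensional, second-order Taylor expansion along short segments (central differences for $\nabla u$ and $D^{2}u$, with the $C^{\alpha}$ modulus of $D^{2}u$ controlling the remainder) and then optimise the segment length $h$ in terms of $\varepsilon$. This is more elementary in that it dispenses with the Green's-formula/mean-value step, and it lands on the exact exponents $\varepsilon^{1+\alpha},\varepsilon^{-1}$ and $\varepsilon^{\alpha},\varepsilon^{-2}$ without the extra layer of scaling from a fixed reference cone. The price is that you must make the slicing legitimate up to $\partial\Omega$; you correctly identify this as the one delicate point and offer two standard fixes (a $C^{2+\alpha}$-extension operator with a compatible $C^{0}$ bound, or one-sided finite differences along cone directions). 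The paper faces the analogous issue — it applies the ball lemma to cones and leaves the change of geometry implicit — so neither treatment is more rigorous there. Your final bookkeeping (combining the gradient and second-derivative bounds, using $\varepsilon^{1+\alpha}\le\varepsilon^{\alpha}$ and $\varepsilon^{-1}\le\varepsilon^{-2}$, and reparametrising $\varepsilon$ to absorb the numerical constant on the leading term) is sound and produces constants $c_{7},c_{8}$ independent of $\varepsilon$, consistent with the form of the estimate being proved.
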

See Appendix~\ref{appx:proof_of_interpolation_inequalities} for the proof.
%
%
%%%%%%%%%%%%%%%%%%%%%%%%%%%%%%%%%%
%%%%%%%%%%%%%%%%%%%%%%%%%%%%%%%%%%
%%%%%%%%%%%%%%%%%%%%%%%%%%%%%%%%%%
%%% PROOF OF THE MAIN RESULT
\begin{proof}[Proof of Theorem~\ref{thm:main_result_transformed}]
Let us now recall the nonlinear problem \eqref{eq:linear_problem}. 
We will prove its solvability by giving an estimate for the nonlinear terms
\[ 
\begin{aligned}
	\nonlinpartU[\vardiffU,{\erho}] 
	&= -(\pazo{L}_{{\erho}} - \pazo{L}_{0})\vardiffU 
				- (\pazo{L}_{{\erho}} - \pazo{L}_{0} - \delta \pazo{L}_{0})\varbigU\\
	&\qquad - (\pazo{M}_{{\erho}} - \pazo{M}_{0})\vardiffU  - (\pazo{M}_{{\erho}} - \pazo{M}_{0} - \delta \pazo{M}_{0})\varbigU,\\
	%%%
	&\qquad - ( \pazo{K}_{\erho}{\vardiffU} + \pazo{K}_{{\erho}}{\varbigU} ) ( \pazo{L}_{{\erho}}{{\erho}}  - \pazo{L}_{0}{{\erho}}  )
	&\text{in $\Omega$}, &\qquad i = \text{D}, \text{N}, \\
	\pazo{B}[\diffUd, \diffUn, {\erho}] 
		&=  - \left[ (B_{{\erho}} - B_{0} - \delta B_{0}) \ddno{\alert{\funcdiffdn{U}^{0}}}
			+ (B_{{\erho}} - B_{0}) \ddno{{\alert{\funcdiffdn{V}}}}   
			\right]
	&\text{on $\Gamma$}.&		
\end{aligned}
\]

	We start by noting the following formulae:	
	\begin{align*}
		( \pazo{F}_{{\erho}} - \pazo{F}_{0} ) {{V}} 
		&= \int_{0}^{1} \frac{d}{d\lambda} (\pazo{F}_{\lambda{\erho}} {{V}}) {{{d}}\lambda},\\
		%%%
		(\pazo{F}_{{\erho}} - \pazo{F}_{0} - \delta \pazo{F}_{0}) {{V}} 
		&= \int_{0}^{1} \left\{ \frac{d}{d\lambda} (\pazo{F}_{\lambda{\erho}} {{V}}) 
						-  \left. \frac{d}{d\mu} (\pazo{F}_{\mu{\erho}} {{V}}) \right|_{\mu = 0} 
				\right\}  {{{d}}\lambda}
		= \int_{0}^{1} (1-\mu) \frac{{{d}}^{2}}{{{d}}\mu^{2}} (\pazo{F}_{\mu{\erho}} {{V}})   {{{d}}\lambda},		
	\end{align*}
	where the operator $\pazo{F} \in \{\pazo{L}, \pazo{M}, \pazo{B}\}$.
	Using these formulae, and by expanding the derivative $ \frac{d}{d\lambda} ({{F}}(\lambda{\erho}, \lambda\nabla{\erho})) $, 
	it can be verified that $( \pazo{F}_{{\erho}} - \pazo{F}_{0} ) {{V}} $, $\pazo{F} \equiv \pazo{L}$, ${{V}} \in \{ \diffUd, \diffUn\}$, can be expressed as a linear combination of terms containing the products ${\erho} {{V}}_{y_{j} y_{i}}$ and ${\erho}_{y_{k}} {{V}}_{y_{j} y_{i}}$, for $i,j,k = 1, 2 \ldots, d$.
	Here, for notational convenience, ${\erho}_{y_{k}}$ is used to denote the partial derivative of ${\erho}$ with respect to the variable $y_{k}$ while ${{V}}_{y_{j} y_{i}}$ stands for the second-order partial derivative of $W$ with respect to the variables $y_{i}$ and $y_{j}$.	
	Meanwhile, $(\pazo{F}_{{\erho}} - \pazo{F}_{0} - \delta \pazo{F}_{0}) {U}_{0} $, $\pazo{F} \in \{\pazo{L}, \pazo{M}, \pazo{B}\}$, ${U}_{0} \in \{ \bigUdo, \bigUno\}$, can be written as a linear combination of terms containing the products ${\erho}^{2}$, ${\erho}{\erho}_{y_{j}}$, and ${\erho}_{y_{k}}{\erho}_{y_{j}}$ (cf. \cite[p.~131]{BizhanovaSolonnikov2000}).
	Note also that the terms involving $\pazo{F}_{{\erho}} - \pazo{F}_{0}$, $\pazo{F} \in \{\pazo{L}, \pazo{M}, \pazo{B}\}$, share similar structures.
	Indeed, the following expansions hold:
	\begin{align*}
		 \frac{d}{d\lambda} ({{F}}(\lambda{\erho}, \lambda\nabla{\erho}))
		 &= {{F}}^{\prime}_{{\erho}}(\lambda{\erho}, \nabla\lambda{\erho}) {\erho} + \sum_{j=1}^{d} {{F}}^{\prime}_{{\erho}_{y_{j}}}(\lambda{\erho}, \nabla\lambda{\erho}) {\erho}_{y_{j}},\\
		  \frac{{{d}}^{2}}{{{d}}\mu^{2}} ({{F}}(\mu{\erho}, \mu\nabla{\erho}))
		  &= {{F}}^{\prime\prime}_{{\erho}{\erho}}(\mu{\erho}, \nabla\mu{\erho}) {\erho}^{2} 
		  		+ 2 \sum_{j=1}^{d} {{F}}^{\prime\prime}_{{\erho}{\erho}_{y_{j}}}(\mu{\erho}, \nabla\mu{\erho}) {\erho} {\erho}_{y_{j}}
				+ \sum_{k,j=1}^{d} {{F}}^{\prime\prime}_{{\erho}_{y_{k}} {\erho}_{y_{j}}}(\mu{\erho}, \nabla\mu{\erho}) {\erho}_{y_{k}} {\erho}_{y_{j}}.
	\end{align*}
	Hence, all the nonlinear terms of $\nonlinpartU[\diffUn,{\erho}] $ and $\nonlinpartU[\diffUd,{\erho}] $ consist of a multiplier term ${\erho}$ or a first-order partial derivative ${\erho}_{y_{j}}$.
	Moreover, they are linear with respect to the second-order derivatives of the unknowns $\diffUn$ and $\diffUd$.
	Thus, we only need to estimate the product of two functions, one of which contains ${\erho}$ or ${\erho}_{y_{j}}$. 
	
	Let us estimate, for instance, the product ${\erho}_{y_{k}} {{V}}_{y_{j} y_{i}}$ over the domain $\Omega$, for $i,j,k = 1, \ldots, d$, ${{V}} \in \{ \diffUd, \diffUn\}$. 
	\fergy{First, let us note that $\erho$ (see \eqref{eq:extension_of_rho_in_Omega}) and ${{V}}$ are $C^{2+\alpha}$ smooth on $\baromega$.}
	By using the interpolation inequality \eqref{eq:interp_ineq1} in Lemma~\ref{appx:proof_of_basis_lemma} we can get the estimate
	\[
		\abs{{\erho}_{y_{k}} {{V}}_{y_{j} y_{i}}}_{\baromega}^{(\alpha)} 
			\leqslant {\maxbaromega} \abs{{\erho}_{y_{k}}} \abs{{{V}}}_{\baromega}^{(2+\alpha)}
			\leqslant \left( \varepsilon^{1+ \alpha} \abs{{\erho}}_{\baromega}^{(2+\alpha)} + \frac{c_{7}}{\varepsilon} {\maxbaromega} \abs{{\erho}} \right) \abs{{{V}}}_{\baromega}^{(2+\alpha)},
	\]
	for some constant $c_{7}:=c_{7}(d) > 0$.
	\fergy{Note that, in the above, we performed the estimate by first taking the max norm of ${\erho}_{y_{k}}$ on $\baromega$ in order to apply \eqref{eq:interp_ineq1}. Alternatively, one could obtain the same estimate by first writing
        \[
        	\abs{{\erho}_{y_{k}} {{V}}_{y_{j} y_{i}}}_{\baromega}^{(\alpha)} 
        	\leqslant \abs{ {\erho}_{y_{k}} }_{\baromega}^{(0)} \abs{ {{V}}_{y_{j} y_{i}} }_{\baromega}^{(\alpha)} 
        	+ \abs{{\erho}_{y_{k}}}_{\baromega}^{(\alpha)}  \abs{{{V}}_{y_{j} y_{i}}}_{\baromega}^{(0)},
        \]
        and then applying the equalities and interpolation inequalities given by Equations~\eqref{eq:estimate_1}, \eqref{eq:interpolation_1_plusalpha}, \eqref{eq:interp_ineq1v2}, and \eqref{eq:interp_ineq2v2} in Appendices~\ref{appx:proof_of_basis_lemma} and \ref{appx:proof_of_interpolation_inequalities}.
	}

	Because ${\erho}$ vanishes on the exterior boundary $\Sigma$ \fergy{(see \eqref{eq:system_for_extension_of_rho_in_Omega}}), the maximum principle implies that ${\maxbaromega} \abs{{\erho}} \leqslant {\maxgamma} \abs{{\erho}}$.
	Then, from the previous estimate, and in view of \eqref{eq:estimate_from_max_principle}, we get
	\begin{align*}
		\abs{{\erho}_{y_{k}} {{V}}_{y_{j} y_{i}}}^{(\alpha)}_{\maxtimebaromega}  
			&\leqslant \left( \varepsilon^{1+ \alpha} \abs{{\erho}}^{(2+\alpha)}_{\maxtimebaromega} 
			+ \frac{c_{7}}{\varepsilon} \int_{0}^{t} {\maxgamma} \abs{{\erho}_{s}(\cdot,s)} {{{d}}s} \right) \abs{{{V}}}^{(2+\alpha)}_{\maxtimebaromega} \\		
			&\leqslant\abs{{{V}}}^{(2+\alpha)}_{\maxtimebaromega} 
					\left( \varepsilon^{1+ \alpha}\abs{{\erho}}^{(2+\alpha)}_{\maxtimebaromega} 
					+ \frac{c_{7}}{\varepsilon} t \maxmaxtimegamma{{\erho}_{\tau}} \right).				
	\end{align*}
	Letting $\varepsilon = t^{\frac{1}{2+\alpha}}$, we get the estimate
	\[
		\abs{ {\erho}_{y_{k}} {{V}}_{y_{j} y_{i}} }^{(\alpha)}_{\maxtimebaromega}  
		\leqslant \tilde{c}_{0} t^{\eta}\abs{{{V}}}^{(2+\alpha)}_{\maxtimebaromega} 
			\left(\abs{{\erho}}^{(2+\alpha)}_{\maxtimebaromega} + \maxmaxtimegamma{{\erho}_{\tau}} \right),
	\]
	for some constant $\tilde{c}_{0} > 0$, where $\eta = \dfrac{1+\alpha}{2 + \alpha} \in (0,1)$, for each function ${{V}} \in \{ \diffUd, \diffUn\}$.
	
	We next estimate the nonlinear terms in the boundary condition.
	Because these terms do not contain the second-order derivatives of the unknown functions -- in fact, on the boundary $\Gamma$, we only have linear combinations of the products of ${\erho}$ and the unknown functions $\diffUd$ and $\diffUn$, up to their first-order derivatives -- we can estimate them in the ${{C}}^{0}([0,T];{{C}}^{1+\alpha}(\Gamma))$-norm via the interpolation inequality (cf. \fergy{\eqref{eq:interp_ineq2v2}})
	\begin{equation}\label{eq:interp_on_boundary}
		\abs{{\erho}}_{\Gamma}^{(2)} 
		\leqslant \varepsilon^{\alpha} \abs{{\erho}}_{\Gamma}^{(2+\alpha)} 
		+ \frac{c_{8}}{\varepsilon^{2}} {\maxgamma} \abs{{\erho}},
	\end{equation}
	for some constant $c_{8}:=c_{8}(d)>0$.  
	The above estimates follows from \eqref{eq:interp_ineq1}, the maximum principle, and the fact that ${\erho}$ vanishes on the fixed boundary $\Sigma$.
	The end estimate in the ${{C}}^{0}([0,T];{{C}}^{1+\alpha}(\Gamma))$-norm is achieved through \eqref{eq:interp_on_boundary} in combination with the identities in \eqref{eq:interpolation_1_plusalpha} and of the first estimate in \eqref{eq:interpolation_set2}.
	Now, while keeping these informations in mind, we employ the aforementioned estimates to obtain
	\begin{equation}\label{eq:estimate_for_nonlinear_terms}
	\begin{aligned}
		&\abs{ \nonlinpartU[\diffUd ,{\erho} ] }^{(\alpha)}_{\maxtimebaromega}
		+ \abs{ \nonlinpartU[\diffUn ,{\erho} ] }^{(\alpha)}_{\maxtimebaromega}
		+ \abs{ \pazo{B}[\diffUd , \diffUn , {\erho} ] }_{\Gamma}^{(1+\alpha)}\\
		&\qquad \leqslant \tilde{c}_{1} t^{\eta} 
		\left(
			\abs{\diffUd}^{(2+\alpha)}_{\maxtimebaromega} 
			+ \abs{\diffUn}^{(2+\alpha)}_{\maxtimebaromega}
			+ \abs{{\erho}}^{(2+\alpha)}_{\maxtimebaromega}  
		\right)
			\left(\abs{{\erho}}^{(2+\alpha)}_{\maxtimebaromega} + \maxmaxtimegamma{{\erho}_{\tau}} \right)\\
		&\qquad \leqslant \tilde{c}_{1} t^{\eta} \left( {\normWW}[\diffUd,\diffUn,{\erho}](t) \right)^{2},		
	\end{aligned}
	\end{equation}
	where
	\[
	{\normWW}[\diffUd,\diffUn,{\erho}](t):=	
		\norm{\bdiffUUdn}^{(2+\alpha)}_{\maxtimebaromega}
		+ \vertiii{{\erho}}^{(2+\alpha)}_{\baromegagamma; \, [0, t]}, 
	\]
	for some constant $\tilde{c}_{1} > 0$ and $\eta > 0$.
	Here, the notation $ {\normWW}[\diffUd,\diffUn,{\erho}](t) $ is introduced for convenience of later use.
	
	The rest of the proof applies the method of successive approximation.
	To this end, we will need the following estimate (cf. \cite[Eq. 5.3, p.~131]{{BizhanovaSolonnikov2000}}) in our argumentation further below.
        Let us consider the functions  
        \[
        \adiffUd, \bdiffUd, \adiffUn, \bdiffUn, {\erho}_{1}, {\erho}_{2} \in {{C}}^{0}([0,T];{{C}}^{2+\alpha}(\baromega))  
        \quad \text{such that} \quad {\erho}_{1t}, {\erho}_{2t} \in {{C}}^{0}([0,T];{{C}}^{1+\alpha}(\Gamma)).
        \]
	Then, by estimate \eqref{eq:estimate_for_nonlinear_terms}, we have
	\begin{equation}
	\label{eq:estimate_difference}
%%%	\resizebox{0.9 \textwidth}{!}{$
	\begin{aligned}
	& \abs{ \nonlinpartU[\bdiffUd,{\erho}_{2}] - \nonlinpartU[\adiffUd,{\erho}_{1}] }^{(\alpha)}_{\maxtimebaromega}
	+ \abs{ \nonlinpartU[\bdiffUn,{\erho}_{2}] - \nonlinpartU[\adiffUn,{\erho}_{1}] }^{(\alpha)}_{\maxtimebaromega}\\
	&\quad +\abs{\pazo{B}[\bdiffUd, \bdiffUn, {\erho}_{2}] - \pazo{B}[\adiffUd, \adiffUn, {\erho}_{1}]}_{\Gamma}^{(1+\alpha)}\\
	&\quad\quad \leqslant \tilde{c}_{2} t^{\eta} \left\{  \left(|\bdiffUd - \adiffUd|^{(2+\alpha)}_{\maxtimebaromega} +|\bdiffUn - \adiffUn|^{(2+\alpha)}_{\maxtimebaromega}
											+ \abs{{\erho}_{2} - {\erho}_{1}}^{(2+\alpha)}_{\maxtimebaromega}  \right) \right. 
	\times \sum_{i=1,2} \left(|{\erho}_{i}|^{(2+\alpha)}_{\maxtimebaromega} 
	+ \maxmaxtimegamma{{\erho}_{i\tau}} \right)\\
	&\quad\qquad\qquad + \left(\abs{{\erho}_{2} - {\erho}_{1}}^{(2+\alpha)}_{\maxtimebaromega} 
	+ \maxmaxtimegamma{{\erho}_{2\tau} - {\erho}_{1\tau}} \right)
	\times \left. \sum_{i=1,2} \left( \abs{{V}_{\text{D}i}}^{(2+\alpha)}_{\maxtimebaromega} + \abs{{V}_{\text{N}i}}^{(2+\alpha)}_{\maxtimebaromega} + \abs{{\erho}_{i}}^{(2+\alpha)}_{\maxtimebaromega} \right) \right\}\\
	&\quad\quad \leqslant \tilde{c}_{2} t^{\eta} \left( {\normWW}[\WWd,\WWn,{\diffvarrho}](t) \right) 
	\left\{ {\normWW}[\adiffUd,\adiffUn,{\erho}_{1}](t)  + {\normWW}[\bdiffUd,\bdiffUn,{\erho}_{2}](t)  \right\},
	\end{aligned}
%%%	$}
	\end{equation}
	where $\WWd = \bdiffUd - \adiffUd$, $\WWn = \bdiffUn - \adiffUn$, and ${\diffvarrho} = {\erho}_{2} - {\erho}_{1}$,
	for some constant $ \tilde{c}_{2} > 0$.

	Now, we consider the initial approximation $(\diffUd^{(0)}, \diffUn^{(0)}, {\erho}^{(0)})$, which we assume satisfies the linear system \eqref{eq:new_linear_problem} with $\Fd(y,t) = 0$ and $\Fn(y,t) = 0$ for $y \in \Omega$, $t > 0$, and $\psi(y,t) = -B_{0} \ddno{{\alert{\funcdiffdn{U}^{0}}}}$ for $y \in \Gamma$, $t > 0$.
	Meanwhile, the approximants $\{(\diffUd^{(m+1)},\diffUn^{(m+1)},{\erho}^{(m+1)})\}_{m}$, for $m=0, 1, \ldots$, are defined as solutions to the problem
	\begin{equation}
	\label{eq:approximant_linear_problem}
	\left\{%%%\arraycolsep=1.4pt\def\arraystretch{1.5}
	\begin{aligned}
	&\Delta \diffUd^{(m+1)} + \qmdp \cdot \nabla {\erho}^{(m+1)} + \qmd {\erho}^{(m+1)}
		= \nonlinpartU[\diffUd^{(m)},{\erho}^{(m)}], \quad y \in \Omega, \ t > 0,\\
	%%%
	&\qquad \quad \overSigma{\diffUd^{(m+1)}} = 0, \qquad
	\overGamma{\diffUd^{(m+1)}} = 0,\\[0.5em]
	&\Delta \diffUn^{(m+1)} + \qmnp \cdot \nabla {\erho}^{(m+1)} + \qmn {\erho}^{(m+1)}
		= \nonlinpartU[\diffUn^{(m)},{\erho}^{(m)}], \quad y \in \Omega, \ t > 0,\\
	%%%
	&\qquad \quad \overSigma{\ddno{}\diffUn^{(m+1)}} = 0, \qquad
	\overGamma{\diffUn^{(m+1)}} = 0,\\[0.5em]
	&\Delta {\erho}^{(m+1)} =  0, \quad y \in \Omega, \ t > 0
	\qquad \overSigma{{\erho}^{(m+1)}} = 0,\\
	&\overGamma{ \left( \ddt{{\erho}^{(m+1)}} + \bnormal \ddno{{\erho}^{(m+1)}} + \btangential \cdot \nabla_{\Gamma} {\erho}^{(m+1)} + h {\erho}^{(m+1)}	+ B_{0} \ddno{\alert{\funcdiffdn{V}^{(m+1)}}}\right) }\\
	 &\qquad \qquad \qquad \qquad \  = \pazo{B}[\diffUd^{(m)}, \diffUn^{(m)}, {\erho}^{(m)}] - B_{0} \ddno{\alert{\funcdiffdn{U}^{0}}},\\
	&\qquad {{\erho}^{(m+1)}}\big|_{y \in \Gamma, \, t=0} = 0.
	\end{aligned}
	\right. 
	\end{equation}		

	Our goal now is to show that all approximants are defined on some interval ${\Istar}$ and that the sequence of sub-approximants $\{\diffUd^{(m)}\}$, $\{\diffUn^{(m)}\}$, and $\{ {\erho}^{(m)} \}$ converge.

	By virtue of Theorem~\ref{thm:main_result_new_linear_problem}, the initial approximation $(\diffUd^{(0)},\diffUn^{(0)},{\erho}^{(0)})$ is defined for all $t \in [0,T]$ and satisfies the estimate
	\begin{equation}\label{eq:initial_approximant_estimate}
	{\normWW}[\diffUd^{(0)},\diffUn^{(0)},{\erho}^{(0)}](t)
		\leqslant \tilde{c}_3 
		\norm{(\bigUdo,\bigUno)}^{(2+\alpha)}_{\maxtimebaromega}
		=: \tilde{c}_3 
		\norm{{{U}_{\textDN}^{0}}}^{(2+\alpha)}_{\maxtimebaromega}, 
		\quad \text{for all $t \leqslant T$},
	\end{equation}
	for some constant $\tilde{c}_3 > 0$.
	Meanwhile, to get an estimate for the first approximant $(\diffUd^{(1)},\diffUn^{(1)},{\erho}^{(1)})$, we apply Theorem~\ref{thm:main_result_new_linear_problem} to \eqref{eq:approximant_linear_problem}, with $m=0$, to obtain
	\begin{equation}\label{eq:bound_for_first_approximant}
	\begin{aligned}
	{\normWW}[\diffUd^{(1)},\diffUn^{(1)},{\erho}^{(1)}](t)
	&\leqslant \tilde{c}_3 \left\{ \abs{ \nonlinpartU[\diffUn^{(0)},{\erho}^{(0)}] }^{(\alpha)}_{\maxtimebaromega}
		+ \abs{ \nonlinpartU[\diffUn^{(0)},{\erho}^{(0)}] }^{(\alpha)}_{\maxtimebaromega} \right. \\
	&\qquad \quad 
		+ \left. \abs{\pazo{B}[\diffUd^{(0)}, \diffUn^{(0)}, {\erho}^{(0)}]}_{\Gamma}^{(1+\alpha)}
		+ \norm{{{U}_{\textDN}^{0}}}^{(2+\alpha)}_{\maxtimebaromega}
		\right\}\\
	&\leqslant \tilde{c}_3 \left\{ \tilde{c}_{1} t^{\eta} \left( {\normWW}[\diffUd^{(0)},\diffUn^{(0)},{\erho}^{(0)}](t) \right)^{2}
		+ \norm{{{U}_{\textDN}^{0}}}^{(2+\alpha)}_{\maxtimebaromega}
		\right\}	\\	
	&\leqslant \tilde{c}_3^3 \tilde{c}_{1} t^{\eta} \left( \norm{{{U}_{\textDN}^{0}}}^{(2+\alpha)}_{\maxtimebaromega}\right)^{2} 
						+ \tilde{c}_3 \norm{{{U}_{\textDN}^{0}}}^{(2+\alpha)}_{\maxtimebaromega}\\		
	&=: {{L}}(t),						
	\end{aligned}
	\end{equation}
	where the second inequality follows from \eqref{eq:estimate_for_nonlinear_terms}, while the third one is due to estimate \eqref{eq:initial_approximant_estimate}.

	We introduce the following notations:
	\[
	\WWd^{(m+1)} = \diffUd^{(m+1)} -  \diffUd^{(m)},\qquad
	\WWn^{(m+1)} = \diffUn^{(m+1)} -  \diffUn^{(m)}, \qquad \text{and} \quad
	{\diffvarrho}^{(m+1)} = {\erho}^{(m+1)} -  {\erho}^{(m)}.
	\]
	Then, by telescoping sums, we see that
	\[
	\diffUd^{(m+1)} = \diffUd^{(1)} + \sum_{j=1}^m \WWd^{(j+1)},\qquad
	\diffUd^{(m+1)} = \diffUd^{(1)} + \sum_{j=1}^m \WWd^{(j+1)}, \qquad \text{and} \quad
	{\erho}^{(m+1)} = {\erho}^{(1)} + \sum_{j=1}^m {\diffvarrho}^{(j+1)}.	
	\]
	Moreover, by \eqref{eq:approximant_linear_problem}, $\WWd^{(m+1)}$, $\WWn^{(m+1)}$, and ${\diffvarrho}^{(m+1)}$ satisfy
	\begin{equation}
	\label{eq:difference_linear_problem}
	\left\{%%%\arraycolsep=1.4pt\def\arraystretch{1.5}
	\begin{aligned}
	&\Delta \WWd^{(m+1)} + \qmdp \cdot \nabla {\alert{\diffvarrho}}^{(m+1)} + \qmd {\alert{\diffvarrho}}^{(m+1)}\\
	 &\qquad \qquad \qquad \qquad \ \ = \nonlinpartU[\diffUd^{(m)},{\erho}^{(m)}] - \nonlinpartU[\diffUd^{(m-1)},{\erho}^{(m-1)}], \qquad y \in \Omega, \ t > 0,\\
	%%%
	&\qquad \quad \overSigma{\WWd^{(m+1)}} = 0, \qquad
	\overGamma{\WWd^{(m+1)}} = 0,\\[1em]
	&\Delta \WWn^{(m+1)} + \qmnp \cdot \nabla {\alert{\diffvarrho}}^{(m+1)} + \qmn {\alert{\diffvarrho}}^{(m+1)}\\
	 &\qquad \qquad \qquad \qquad \ \ = \nonlinpartU[\diffUn^{(m)},{\erho}^{(m)}] - \nonlinpartU[\diffUn^{(m-1)},{\erho}^{(m-1)}], \qquad y \in \Omega, \ t > 0,\\
	%%%
	&\qquad \quad \overSigma{\ddno{}\WWn^{(m+1)}} = 0, \qquad
	\overGamma{\WWn^{(m+1)}} = 0,\\[1em]
	&\Delta {\alert{\diffvarrho}}^{(m+1)} =  0, \quad y \in \Omega, \ t > 0
	\qquad \overSigma{{\alert{\diffvarrho}}^{(m+1)}} = 0,\\
	&\overGamma{ \left( \ddt{{\alert{\diffvarrho}}^{(m+1)}} + \bnormal \ddno{{\alert{\diffvarrho}}^{(m+1)}} + \btangential \cdot \nabla_{\Gamma} {\alert{\diffvarrho}}^{(m+1)} + h {\alert{\diffvarrho}}^{(m+1)} + B_{0} \ddno{\alert{\funcdiffdn{W}^{(m+1)}}}\right) }\\
	 &\qquad \qquad \qquad \qquad \  
	 = \pazo{B}[\alert{\diffUd^{(m)}}, \alert{\diffUn^{(m)}}, {\erho}^{(m)}] - \pazo{B}[\alert{\diffUd^{(m-1)}}, \alert{\diffUn^{(m-1)}}, {\erho}^{(m-1)}],\\
	&\qquad {{\alert{\diffvarrho}^{(m+1)}}}\big|_{y \in \Gamma, \, t=0} = 0.
	\end{aligned}
	\right. 
	\end{equation}	
	We again utilize Theorem~\ref{thm:main_result_new_linear_problem} and make use of the estimate \eqref{eq:estimate_difference}, to obtain
	\begin{equation}\label{eq:first_estimate}
	 {\normWW}[\WWd^{(m+1)},\WWn^{(m+1)},{\diffvarrho}^{(m+1)}](t)\\
	 \leqslant \tilde{c}_{2} \tilde{c}_3  t^{\eta} \left( \sum_{j=m,m-1} {\normWW}[\diffUd^{(j)},\diffUn^{(j)},{\erho}^{(j)}](t) \right)
		{\normWW}[\WWd^{(m)},\WWn^{(m)},{\diffvarrho}^{(m)}](t).
	\end{equation}

	We are now in the position to determine the maximum time ${\tstar}$ so that the claims we state in Theorem~\ref{thm:main_result_transformed} are valid.
	First, we choose ${\tstar}$ so that our first approximant satisfies
	\[
		({\tstar})^{\eta}{\normWW}[\diffUd^{(1)},\diffUn^{(1)},{\erho}^{(1)}](t) \leqslant ({\tstar})^{\eta} {{L}}(t) \leqslant {{M}_{0}}
	\]
	and such that ${{M}^{\star}} = 2 \tilde{c}_{2} \tilde{c}_3 {{M}_{0}} < M_{1} < 1$, for some fixed constant $M_{1} > 0$.
	Clearly, the initial approximant ${\normWW}[\diffUd^{(0)},\diffUn^{(0)},{\erho}^{(0)}](t)$ also satisfy the estimate $({\tstar})^{\eta} {\normWW}[\diffUd^{(0)},\diffUn^{(0)},{\erho}^{(0)}](t) \leqslant {{M}_{0}}$.
	We then assume that $(\diffUd^{(j)},\diffUn^{(j)},{\erho}^{(j)})$, $j = 0, 1, \ldots, m$, are defined for $t \in {\Istar}$ and satisfy the estimate
	\begin{equation}\label{eq:second_estimate}
		({\tstar})^{\eta} {\normWW}[\diffUd^{(j)},\diffUn^{(j)},{\erho}^{(j)}](t) \leqslant {{M}_{0}},\quad \text{for all $j = 0, 1, \ldots, m$}.
	\end{equation}
	\sloppy By Theorem~\ref{thm:main_result_new_linear_problem}, it follows that the $(m+1)$th approximant given by the triplet $(\diffUd^{(m+1)},\diffUn^{(m+1)},{\erho}^{(m+1)})$ is also defined in ${\Istar}$.
	Combining estimates \eqref{eq:first_estimate} and \eqref{eq:second_estimate}, we get
	\begin{align*}
	{\normWW}[\WWd^{(m+1)},\WWn^{(m+1)},{\diffvarrho}^{(m+1)}]({\tstar})
		&\leqslant 2 \tilde{c}_{2} \tilde{c}_{3}  {{M}_{0}} {\normWW}[\WWd^{(m)},\WWn^{(m)},{\diffvarrho}^{(m)}]({\tstar})\\
		&\leqslant {{M}^{\star}} {\normWW}[\WWd^{(m)},\WWn^{(m)},{\diffvarrho}^{(m)}]({\tstar}).
	\end{align*}
	Taking the sum of above inequality with respect to $j$ from $0$ to $m$ yields
	\begin{align*}
	{{S}}_{m+1}&:=\sum_{j=0}^m {\normWW}[\WWd^{(j+1)},\WWn^{(j+1)},{\diffvarrho}^{(j+1)}]({{{t}}}) \\
		&\quad\leqslant {{M}^{\star}} \sum_{j=0}^m {\normWW}[\WWd^{(j)},\WWn^{(j)},{\diffvarrho}^{(j)}]({{{t}}})\\
%		&\qquad= {{M}^{\star}} \sum_{j=1}^m {\normWW}[\WWd^{(j)},\WWn^{(j)},{\diffvarrho}^{(j)}]({{{t}}}) + {{M}^{\star}} {\normWW}[\WWd^{(0)},\WWn^{(0)},{\diffvarrho}^{(0)}]({{{t}}})\\
%		&\qquad= {{M}^{\star}} \sum_{j=1}^{m+1} {\normWW}[\WWd^{(j)},\WWn^{(j)},{\diffvarrho}^{(j)}]({{{t}}}) \\
%		&\qquad\qquad	+ {{M}^{\star}} \left( {\normWW}[\WWd^{(0)},\WWn^{(0)},{\diffvarrho}^{(0)}]({{{t}}}) -  {\normWW}[\WWd^{(m+1)},\WWn^{(m+1)},{\diffvarrho}^{(m+1)}]({{{t}}}) \right)
		&\quad= {{M}^{\star}} {{S}}_{m+1} + {{M}^{\star}} \left( {\normWW}[\WWd^{(0)},\WWn^{(0)},{\diffvarrho}^{(0)}]({{{t}}}) -  {\normWW}[\WWd^{(m+1)},\WWn^{(m+1)},{\diffvarrho}^{(m+1)}]({{{t}}}) \right)\\
		&\quad\leqslant {{M}^{\star}} {{S}}_{m+1} + {{M}^{\star}} {\normWW}[\WWd^{(0)},\WWn^{(0)},{\diffvarrho}^{(0)}]({{{t}}})\\
		&\quad\leqslant {{M}^{\star}} {{S}}_{m+1} + {{L}}(t),
	\end{align*}
	where the last inequality follows from \eqref{eq:bound_for_first_approximant} and the fact that ${{M}^{\star}} < 1$.
	The above estimate clearly shows that the sums ${{S}}_{m+1}$, $m=0,1, \ldots$, are uniformly bounded by ${{L}}(t)$.
	\sloppy This information implies that the series sums $\sum_{j=1}^{\infty} {\normWW}[\WWd^{(j+1)},\WWn^{(j+1)},{\diffvarrho}^{(j+1)}](t)$ converges for all $ t \in {\Istar}$ and that $\{(\diffUd^{(m+1)},\diffUn^{(m+1)},{\erho}^{(m+1)})\}_{m}$, $m = 0,1, \ldots$, also satisfy inequality condition \eqref{eq:second_estimate}.

	Finally, passing to the limit $m \to \infty$, we conclude that
	\[
	(\diffUd,\diffUn,{\erho}) = \lim_{m\to\infty} (\diffUd^{(m)},\diffUn^{(m)},{\erho}^{(m)})
	\]
	is a solution to \eqref{eq:linear_problem} satisfying the estimate
	\begin{equation}\label{eq:last_bound}
	{\normWW}[\diffUd,\diffUn,{\erho}](t)  \leqslant c{{L}}(t) 
	%%% \leqslant c  \left( \abs{\bigUdo}^{(2+\alpha)}_{\maxtimebaromega}+ \abs{\bigUno}^{(2+\alpha)}_{\maxtimebaromega}\right), 
	\leqslant c \norm{{{U}_{\textDN}^{0}}}^{(2+\alpha)}_{\maxtimebaromega},
	\quad \text{for all $t \in {\Istar}$},
	\end{equation}
	for some constant $c>0$.

    To complete the proof of Theorem~\ref{thm:main_result_transformed}, we need to revert to the original unknown functions that resolve the transformed problem \eqref{eq:transformed_problem}.
    As stated in Section~\ref{sec:nonlinear_problem}, we have the relations $\bigUd = \diffUd + \bigUdo$ and $\bigUn = \diffUn + \bigUno$. 
    These functions, along with ${\erho}$, solve \eqref{eq:transformed_problem}.
    Additionally, estimate \eqref{eq:estimate_for_states_transformed} is derived from \eqref{eq:last_bound}, in conjunction with estimates \eqref{eq:bound_for_U0} and \eqref{eq:bound_for_V0}. 
    Furthermore, since the difference between two solutions must also satisfy estimate \eqref{eq:estimate_for_states_transformed} with a zero right-hand side, the solution $(\bigUd, \bigUn, {\erho})$ is unique.
    Finally, to obtain $\rho$, we set $\rho = {\erho}\big|_{\Gamma}$ and then determine the free boundary using the description given in \eqref{eq:free_boundary_description}.
	\[
		\Gamma(t) := \left\{ x \in \mathbb{R}^{d} \mid x  = \xi + \rho(\xi,t)\NN(\xi), \ \xi \in \Gamma \right\}, \qquad \text{for $t \in {\Istar}$}.
	\]
\end{proof}
%%% END OF PROOF OF THE MAIN RESULT
%%%%%%%%%%%%%%%%%%%%%%%%%%%%%%%%%%
%%%%%%%%%%%%%%%%%%%%%%%%%%%%%%%%%%
%%%%%%%%%%%%%%%%%%%%%%%%%%%%%%%%%%
	To conclude this section, we remark -- as alluded at an earlier part of this note -- that the main result given by Theorem~\ref{thm:main_result} follows from Theorem~\ref{thm:main_result_transformed} which asserts the local-in-time solvability of problem \eqref{eq:transformed_problem}.
	Indeed, recalling the change of variables $Y^{-1}(y,t)$ from \eqref{eq:inverse_transform}, we can transform the fixed domain $\Omega$ to the moving domain $\Omega(t)$, for $t \in {\Istar}$, and retrieve the functions $\udxt = \bigUd(y,t) = \ud(Y^{-1}(y,t),t)$ and $\unxt = \bigUn(y,t) = \un(Y^{-1}(y,t),t)$ (cf. \eqref{eq:transformed_variables}) as the pair of solutions to \eqref{eq:main_system}.
	In addition, estimate \eqref{eq:estimate_for_states} immediately follows from \eqref{eq:estimate_for_states_transformed}, confirming our assertion.
	This verifies Theorem~\ref{thm:main_result}.
	
\fergy{
\section{Summary and Final Remarks} \label{sec:summary_and_final_remark}
In this study, we have rigorously established the well-posedness of the moving boundary problem~\eqref{eq:main_system}, proving the local-in-time existence of classical solutions under key assumptions, including the positivity of the data and Assumption~\ref{key_assumption}. These conditions form the foundation of our theoretical framework and are essential for ensuring the mathematical consistency of the analysis. Furthermore, the results obtained here implicitly support the stability of numerical schemes designed for related problems in shape optimization.

The problem addressed is closely connected to the classical Hele-Shaw problem in the expanding case, whose well-posedness has been widely studied through various analytical approaches, such as variational inequality formulations and other advanced techniques, as detailed in~\cite{Gustafsson1985,Friedman1982,Kinderlehrer1978,KinderlehrerStampacchia1980}.
Despite these developments, several important directions remain open for the problem considered here. Extending the analysis to describe the long-time behavior of solutions and to identify possible steady states presents a natural continuation of this work. Additionally, relaxing the assumptions adopted here---for instance, by reducing the regularity of the moving boundary or by reformulating the problem in a generalized setting---would offer valuable insights, although these extensions are expected to involve significant analytical challenges. The established mathematical tools cited above are anticipated to remain central in addressing these issues.
Lastly, translating this theoretical framework to more physically relevant or applied contexts offers both promising opportunities and technical difficulties, providing a rich avenue for future investigation.
}
%
%
%
%
%
%
%
%
%
%
%
%
%
%
%--------------------------------------------------------------------------------------------------------------------------	
\appendix  % label section numbers alphabetically: "A", "B", etc
\counterwithin*{equation}{section} % reset 'equation' counter whenever '\section' is executed
\renewcommand{\theequation}{\thesection.\arabic{equation}} 

%%%%%%%%%%%%%%%%%%%%%%%%%%%%%%%%%%%%%%%%%%%%%%%%%%%%%%%%%%%%%%%%%%%%%%%%%%%%%%
\section{Comparison of normal derivatives}\label{appx:comparison_of_normal_derivatives}
In this appendix, we examine the normal derivatives of two functions that fulfill a pure Dirichlet problem and a mixed Dirichlet-Neumann problem on the boundary of axisymmetric domains. 
Throughout the section, $\rho > 0$ denotes the radius of either a circle or a sphere.
In this section, $f$ and $g$ are two fixed positive-valued scalar functions defined on $\mathbb{R}^{2}$.
%--------------------------------------------------------------------------------------------------------------------------	
%--------------------------------------------------------------------------------------------------------------------------	
\subsection{The case of concentric circles}\label{subsec:concentric_circles}
For axisymmetric case in two spatial dimensions, the Laplace equation becomes
\[
	\Delta u(\rho) = \frac{1}{\rho} \frac{d}{d\rho} \left( \rho \frac{d{u}}{d\rho} \right) = 0
	\qquad \text{or} \qquad
	\Delta u(\rho) = \frac{d}{d\rho} \left( \rho \frac{d{u}}{d\rho} \right) = 0.
\]
Integrating once, we get $d{u(\rho)}/{d\rho} = {a}/{\rho}$, and then again, $u = a \log\rho + b$, for some unknowns $a$ and $b$.

Let $B_{r} := B(\vect{0}, r)$ be a circle with radius $r > 0$.
Let $r^{\star}\in(0,R)$ and consider the PDE system
\begin{equation} \label{eq:u_star_equation}
	\Delta{\ustar} = 0 \quad \text{in}\ \Ostar:= B_{R}\setminus \overline{B}_{\rstar}, \qquad {\ustar}=f > 0 \quad \text{on $\Sigma$}, \qquad {\ustar}= 0 \quad \text{on}\ \Gstar:=\partial{B_{\rstar}}. 
\end{equation}
Then, we can compute the exact solution as
\begin{equation}\label{eq:u_star_solution}
	\ustar(\rho) = \frac{f\log{(\rho/\rstar)}}{\log{(R/\rstar)}}, \qquad \rho \in (\rstar, R).
\end{equation}
Note that for any $\rho \in (\rstar, R]$, $u(\rho)$ is positive.
Now, differentiating this function with respect to $\rho$, we get
\begin{equation}
\label{eq:exact_flux_2D}
	\frac{\partial}{\partial \nn_{R}} \ustar(\rho)
	= \frac{\partial}{\partial \abs{x}} \ustar(\abs{x})
	= \frac{\partial}{\partial \rho} \ustar(\rho)
	= \frac{f}{ \rho \log{(R/\rstar)} },
\end{equation}
where $\nn_{R}$ is outward unit normal vector to $\partial B_{R}$.
At $\rho = R$, we have 
\[
	\frac{\partial}{\partial \nn_{R}} \ustar(R) = \frac{f}{ R \log{(R/\rstar)} }.
\]
We define
\[
	g:= \frac{f}{ R \log{(R/\rstar)} } > 0, \qquad 0 < \rstar < R.
\]	

Let us consider Equations \eqref{eq:ud_gamma} and \eqref{eq:un_gamma} with $\Omega = B_{R}\setminus \overline{B}_{r}$, $\Gamma = \partial{B_{R}}$, and $\Sigma = \partial{B_{R}}$, where $0<r<R$.
In view of \eqref{eq:u_star_solution}, it can easily be verified that the exact solutions to these systems of PDEs are respectively given by
\[
	\ud(\rho) = \frac{f\log{(\rho/r)}}{\log{(R/r)}}
	\qquad \text{and}\qquad
	\un(\rho) = \frac{f \log{(\rho/r)}}{\log{(R/\rstar)} }, 
	\qquad \rho \in [r, R].
\]
Now, on $\Gamma$, straightforward computation of $\Vn$ gives us
\begin{align*}
	\Vn &= - \left( \ddn{\ud} - \ddn{\un} \right)\\
	&= - \left. \left[ \frac{f}{\log{(R/r)}} \left( \frac{1}{\rho} \right) - \frac{f}{\log{(R/\rstar)}} \left( \frac{1}{\rho} \right) \right] \right|_{\rho = r}
	\\
	&= - \frac{f}{r{\log{(R/r)\log{(R/\rstar)}}}} \left[ \log{(R/\rstar)} - \log{(R/r)} \right]\\
	&= - \underbrace{\frac{f}{r{\log{(R/r)\log{(R/\rstar)}}}}}_{>0} \underbrace{\left[ \log{\left( \dfrac{r}{\rstar} \right)} \right]}_{:=L_{1}(r)}.
\end{align*}
Observe that the sign of $\Vn$ only depends on the relations between $r$ and $\rstar$.
If we want $\Vn$ to be negative, we need $L_{1}(r) > 0$ to be positive, and this happens when $r > \rstar$.

Now, motivated by the previous discussion, we examine the existence and uniqueness of solution to the following initial value problem: for a given $T>0$,
\begin{equation}\label{eq:ivp_2d}
	\left\{
	\begin{aligned}
		r^{\prime}(t) 
%%%		&= - \frac{f}{r(t){\log{(R/r(t))\log{(R/\rstar)}}}} \left[ \log{\left( \dfrac{r(t)}{\rstar} \right)} \right], && \text{for } t >0,\\
		&= - \dfrac{f \left( \log{r(t)} - \log{\rstar} \right)}{r(t)\log{(R/\rstar)}  \left( \log{R} - \log{r(t)} \right) }
		=: F_{1}(t, r(t)), && \text{for } 0 < t < T,\\
		r(0) &= r_{0}, && r_{0} > \rstar,
	\end{aligned}
	\right.
\end{equation}
for some given $r_{0} \in (\rstar, R)$.

Given the techical assumption stated in Remark~\ref{rem:remarks_about_existence}, we show -- although it is trivial -- that the initial value problem \eqref{eq:ivp_2d} admits a unique solution.
We prove the existence using Peano's Theorem~\cite[Thm.~2.1, p.~10]{Hartman2002} and its uniqueness via Picard-Lindel\"{o}f Theorem~\cite[Thm.~1.1, p.~8]{Hartman2002}.
First, we observe that for $t>0$, $0 < \rstar \leqslant r(t) < R$ (see the proof of  Proposition~\ref{prop:existence_and_uniqueness}), and for $T > 0$, $F_{1}$ is continuous, for all $t \in [0,T]$ and $\abs{r(t) - r_{0}} \leqslant R - \rstar$.
Moreover, $r$ is uniformly bounded in $[0,T]$, and therefore, $F_{1}(t, r(t))$ is bounded in $[0,T] \times [\rstar, R_{0}]$ for some $\rstar < r_{0} \leqslant R_{0} < R$.
In fact, we see that
\[
%%	\abs{F_{1}(t, r(t))} \leqslant \dfrac{\|{f}\|_{{{C}}^{2+\alpha}(\Sigma)}}{\rstar\log{(R/R_{0})}} =: M,
	\abs{F_{1}(t, r(t))} \leqslant \dfrac{\|{f}\|_{L^{\infty}(\Sigma)}}{\rstar\log{(R/R_{0})}} =: K_{0},	
	\qquad \text{for all $(t,r(t)) \in [0,T] \times [\rstar, R_{0}]$}.
\]
By Peano's Theorem~\cite[Thm.~2.1, p.~10]{Hartman2002}, \eqref{eq:ivp_2d} possesses at least one solution $r = r(t)$ on $[0, T_{0}]$ where $T_{0} = \min\{T, (R - \rstar)/K_{0}\}$.
%
%%%=====================================
\begin{remark}
	The choice $T_{0} = \min\{T, (R - \rstar)/K_{0}\}$ for the existence of solution to \eqref{eq:ivp_2d} is natural.
	On the one hand, the requirement $T_{0} \leqslant T$ is necessary.
	On the other hand, the requirement $T_{0} < (R - \rstar)/K_{0}$ is due to the fact that if $r=r(t)$ is a solution of \eqref{eq:ivp_2d} on $t \in [0, T_{0}]$, then $\abs{r^{\prime}(t)} \leqslant K_{0}$ implies that $\abs{r(t) - r_{0}} \leqslant R - \rstar \leqslant K_{0}(t - 0) = K_{0}t$. 
	However, we note that $\abs{r(t) - r_{0}} \leqslant R - \rstar$.
	Thus, we require $t \leqslant (R - \rstar)/K_{0}$ so that the previous inequality holds.
\end{remark}
%%%=====================================
%
%
To prove that the solution to \eqref{eq:ivp_2d} is unique, we will prove that $F_{1}$ is uniformly Lipschitz continuous with respect to $r$. 
Let us consider
\[
	F_{1}(r_{j}):=F_{1}(t,r_{j}(t)) = - \frac{f \left( \log{r_{j}(t)} - \log{\rstar} \right)}{r_{j}(t)\log{(R/\rstar)}  \left( \log{R} - \log{r_{j}(t)} \right) }, 
	\quad \text{for $j = 1, 2$, and $t > 0$},
\]
where $\rstar < r_{j} \leqslant R_{0}$ for $j = 1, 2$, for some $0< R_{0} < R$.
Hence,we have
\begin{align*}
	\abs{F_{1}(r_{1}) - F_{1}(r_{2})}
	&= \abs{\frac{f \left( \log{r_{1}} - \log{\rstar} \right)}{r_{1}\log{(R/\rstar)}  \left( \log{R} - \log{r_{1}} \right) } 
			- \frac{f \left( \log{r_{2}} - \log{\rstar} \right)}{r_{2}\log{(R/\rstar)}  \left( \log{R} - \log{r_{2}} \right) }}\\
%%%	&\leqslant
%%%	\dfrac{\|{f}\|_{L^{\infty}(\Sigma)}}{\log{(R/\rstar)}} \abs{\frac{\left( \log{r_{1}} - \log{\rstar} \right)}{r_{1} \left( \log{R} - \log{r_{1}} \right) } 
%%%			- \frac{\left( \log{r_{2}} - \log{\rstar} \right)}{r_{2} \left( \log{R} - \log{r_{2}} \right) }}\\
	&\leqslant
	\dfrac{\|{f}\|_{L^{\infty}(\Sigma)}}{\log{(R/\rstar)}} 
	\abs{\frac{
		r_{2} \left( \log{R} - \log{r_{2}} \right)  \left( \log{r_{1}} - \log{\rstar} \right)
		- r_{1} \left( \log{R} - \log{r_{1}} \right) \left( \log{r_{2}} - \log{\rstar} \right)
	}{r_{1} r_{2} \left( \log{R} - \log{r_{1}} \right) \left( \log{R} - \log{r_{2}} \right)  } }\\
	&=:
	\dfrac{\|{f}\|_{L^{\infty}(\Sigma)}}{\log{(R/\rstar)}} 
	\abs{\frac{\textsf{n}(r_{1}, r_{2})}{\textsf{d}(r_{1}, r_{2})} },
\end{align*}
where
\begin{align*}
	\textsf{n}(r_{1}, r_{2})& = 
		(\log{R}) r_{2} \log{r_{1}} - r_{2} \log{r_{1}}\log{r_{2}} - \log{\rstar} (\log{R}) r_{2}  + (\log{\rstar}) r_{2} \log{r_{2}} \\
              &\qquad - (\log{R}) r_{1} \log{r_{2}} + r_{1} \log{r_{2}} \log{r_{1}} + \log{\rstar} (\log{R}) r_{1} - (\log{\rstar}) r_{1} \log{r_{1}}\\
              &=  \log{R}(r_{2} \log{r_{1}} - r_{1} \log{r_{2}}) - \log{r_{1}} \log{r_{2}} (r_{2} - r_{1}) \\
              &\qquad + \log{\rstar} (r_{2} \log{r_{2}} - r_{1} \log{r_{1}}) + \log{\rstar} \log{R} (r_{1} - r_{2}),\\
          \textsf{d}(r_{1}, r_{2}) & = r_{1} r_{2} \left( \log{R} - \log{r_{1}} \right) \left( \log{R} - \log{r_{2}} \right).
\end{align*}
For $r_{j} > 0$ such that $\rstar < r_{j} \leqslant R_{0} < R$ for $j = 1, 2$, the denominator $\textsf{d}$ is bounded below away from zero; i.e.,
\[
	\abs{\textsf{d}(r_{1}, r_{2})} \geqslant  \left[ \rstar \left( \log{R} -  \log{R_{0}} \right) \right]^{2} =: K_{1}.
\]

Now, on the other hand, we claim that $\textsf{n}(r_{1}, r_{2})$ can be bounded by $\abs{r_{1} - r_{2}}$; that is, there exists a constant $K_{2} > 0$ such that
\[
	\abs{\textsf{n}(r_{1}, r_{2})} \leqslant K_{2} \abs{r_{1} - r_{2}},
\]
where $K_{2}$ does not depends on $r_{1}$ and $r_{2}$.

Let us note that
\begin{align*}
	\abs{\textsf{n}(r_{1}, r_{2})}
	& = \abs{ \log{R} } \abs{ r_{2} \log{r_{1}} - r_{1} \log{r_{2}} } 
		+ \abs{ \log{r_{1}} } \abs{ \log{r_{2}} } \abs{ r_{2} - r_{1} } \\
        & \qquad + \abs{\log{\rstar}} \abs{ r_{2} \log{r_{2}} - r_{1} \log{r_{1}} }  
        		+ \abs{ \log{\rstar} } \abs{ \log{R} } \abs{ r_{1} - r_{2} }.
\end{align*}
We focus on proving that $\abs{ r_{2} \log{r_{1}} - r_{1} \log{r_{2}} } $ can be bounded by $\abs{r_{1} - r_{2}}$.
The rest can be shown in a similar manner.
So, let us consider the function $\textsf{f}(r) = r \log{r}$.
We note that $\textsf{f}$ is continuous on $[r_{1}, r_{2}]$ and differentiable on $(r_{1}, r_{2})$.
Hence, by the mean value theorem, there exists $\bar{r} \in (r_{1}, r_{2})$ such that
\[
	\log{\bar{r}}+1 = \textsf{f}'(\bar{r}) 
	= \dfrac{ \textsf{f}(r_{2}) - \textsf{f}(r_{1})}{r_{2} - r_{1}}
	= \dfrac{ r_{2} \log{r_{2}} -  r_{1} \log{r_{1}} }{r_{2} - r_{1}}, 
	\qquad (0 < \rstar < r_{j} \leqslant R_{0} < R, \ j=1,2).
\]
Because $r_{1}, r_{2} > 0$, $\abs{\log{\bar{r}}+1}$ is clearly bounded and does not depend on $r_{1}$ and $r_{2}$.
Therefore, $\abs{r_{2} \log{r_{2}} -  r_{1} \log{r_{1}}} = c \abs{r_{2} - r_{1}} $, for some constant of $c>0$.

Combining the bounds on the numerator and denominator, we get
\[ 
\abs{\textsf{f}(r_{2}) - \textsf{f}(r_{1})} \leqslant \frac{K_{2}}{K_{1}} \abs{u_1 - u_2}, 
\]
which shows that $\textsf{f}(r)$ is uniformly Lipschitz continuous. 
Finally, applying Picard-Lindel\"{o}f Theorem~\cite[Thm.~1.1, p.~8]{Hartman2002}, we deduce that the solution to \eqref{eq:ivp_2d} is unique.
%--------------------------------------------------------------------------------------------------------------------------	
%--------------------------------------------------------------------------------------------------------------------------	
\subsection{The case of concentric spheres}\label{subsec:concentric_spheres}
For axisymmetric case in three spatial dimensions, the Laplace equation becomes
\[
	\Delta u(\rho) = \frac{1}{\rho^2} \frac{d}{d\rho} \left( \rho^2 \frac{d u }{d\rho} \right) = 0
	\qquad \text{or} \qquad
	\Delta u(\rho) = \frac{d}{d\rho} \left( \rho^2 \frac{d u }{d\rho} \right) = 0.
\]
Integrating once, we get ${d u(\rho) }/{d\rho} = {a}/{\rho^2}$, and then again, $u(\rho) = -{a}/{\rho} + b$, for some unknowns $a$ and $b$.

Let $B_{r} := B(\vect{0}, r)$ be a sphere with radius $r > 0$.
Let $r^{\star}\in(0,R)$ and consider again the PDE system \eqref{eq:u_star_equation} but with $\Ostar:= B_{R}\setminus \overline{B}_{\rstar}$ and $\Gstar:=\partial{B_{\rstar}}$ described by the given spheres.
Then, we can compute the exact solution as
\begin{equation}\label{eq:u_star_solution_in_3D}
	\ustar(\rho) = \frac{f R}{R - \rstar} \left( 1 - \rstar \rho^{-1}\right), \qquad \rho \in (\rstar, R).
\end{equation}
Observe that for any $\rho \in (\rstar, R]$, $u(\rho)$ is positive.

Now, we differentiate $\ustar$ with respect to $\rho$, to obtain
\begin{equation}
\label{eq:exact_flux_3D}
	\frac{\partial}{\partial \nn_{R}} \ustar(\rho)
	= \frac{\partial}{\partial \abs{x}} \ustar(\abs{x})
	= \frac{\partial}{\partial \rho} \ustar(\rho)
	= \frac{f}{ \rho \log{(R/\rstar)} },
\end{equation}
where $\nn_{R}$ is outward unit normal vector to $\partial B_{R}$.
At $\rho = R$, we have 
\[
	\frac{\partial}{\partial \nn_{R}} \ustar(R) 
	= \frac{f R \rstar}{ R - \rstar} \rho^{-2} \Big|_{\rho = R} 
	= \frac{f \rstar}{ R(R - \rstar) }.
\]

We define
\[
	g:= \frac{f \rstar}{ R(R - \rstar) } > 0, \qquad 0 < \rstar < R.
\]	

Let us again consider Equations \eqref{eq:ud_gamma} and \eqref{eq:un_gamma}, but now in three dimensions, with $\Omega = B_{R}\setminus \overline{B}_{r}$, $\Gamma = \partial{B_{R}}$, and $\Sigma = \partial{B_{R}}$, where $0<r<R$.
In view of \eqref{eq:u_star_solution_in_3D}, it can easily be shown that the exact solutions to these systems of PDEs are respectively given by
\[
	\ud(\rho) = \frac{f R}{R - r} \left( 1 - r \rho^{-1}\right)
	\qquad \text{and}\qquad
	\un(\rho) = \frac{f R \rstar}{R - \rstar} \left( r^{-1} - \rho^{-1} \right), 
	\qquad \rho \in [r, R].
\]
Now, computing $\Vn$ on $\Gamma$ yields the following
\begin{align*}
	\Vn &= - \left( \ddn{\ud} - \ddn{\un} \right)\\
	&= - \left. \left[ \frac{f R}{R - r} \left( \frac{r}{\rho^{2}} \right) -  \frac{f R \rstar}{R - \rstar} \left( \frac{1}{\rho^{2}} \right) \right] \right|_{\rho = r}
	\\
	&= - \frac{f R}{r}  \left[ \frac{1}{R - r} -  \frac{\rstar}{(R - \rstar) r} \right] \\
	&= - \frac{f R}{r}  \left[ \frac{(R - \rstar) r - (R - r) \rstar}{r(R - r)(R - \rstar)} \right] \\
	&= - \underbrace{\frac{f R^{2}}{r^{2}(R - r)(R - \rstar)}}_{>0} \underbrace{(r - \rstar)}_{=:L_{2}(r)}.
\end{align*}
Notice that, as expected, the sign of $\Vn$ only depends on the relations between $r$ and $\rstar$.
If we want $\Vn$ to be negative, we obviously need $L_{2}(r) > 0$ to be positive, and this occurs when $r > \rstar$.

Similar to the previous subsection, one can prove the existence and uniqueness of solution to the following initial value problem
\begin{equation}\label{eq:ivp_3d}
	\left\{
	\begin{aligned}
		r^{\prime}(t)  
		&= - \frac{f R^{2} (r(t) - \rstar)}{r(t)^{2}(R - r(t))(R - \rstar)} =: F_{2}(t,r(t)), && \text{for } 0 < t < T,\\
		r(0) &= r_{0}, && r_{0} > \rstar,
	\end{aligned}
	\right.
\end{equation}
for some given $r_{0} \in (\rstar, R)$.
To prove that the right-hand side of \eqref{eq:ivp_3d} is uniformly Lipshitz continuous with respect to $r$, we can again apply the mean value theorem.
For this purpose, we can utilize the derivative of $G$ with respect to $r$ given by
\[
	F_{2}^{\prime}(\cdot, r) = - \dfrac{f R^{2} (2r^{2} + \rstar(2R - 3r) - R r) }{r^{3} (R - \rstar) (R- r)^{2}},
\]
and show that this expression is bounded within the interval $[\rstar, R)$.

In view of the existence of unique solution to \eqref{eq:ivp_2d} and \eqref{eq:ivp_3d}, we formally have the following proposition.
\begin{proposition}\label{prop:existence_and_uniqueness}
	The unique solutions to \eqref{eq:ivp_2d} and \eqref{eq:ivp_3d} satisfy $r^{\prime}(t) < 0$, for all $t>0$. 
\end{proposition}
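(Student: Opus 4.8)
The plan is to combine a direct sign inspection of the right-hand sides $F_{1}$ and $F_{2}$ with the forward invariance of the open interval $(\rstar, R)$ under the flow of \eqref{eq:ivp_2d} (respectively \eqref{eq:ivp_3d}). The starting point is the elementary observation that, for $r \in (\rstar, R)$, every factor occurring in $F_{1}(t,r)$ is strictly positive: $f > 0$, $r > 0$, $\log(R/\rstar) > 0$, $\log r - \log\rstar > 0$, and $\log R - \log r > 0$; hence $F_{1}(t,r) < 0$. The same applies verbatim to $F_{2}(t,r) = -f R^{2}(r-\rstar)\big/\big(r^{2}(R-r)(R-\rstar)\big)$, where $r - \rstar > 0$, $R - r > 0$, $R - \rstar > 0$ and $f R^{2} > 0$ for $r \in (\rstar, R)$. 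Therefore it suffices to prove that the unique solution $r = r(t)$ produced by Peano's and Picard--Lindel\"{o}f's theorems stays in $(\rstar, R)$ throughout its interval of existence, since then $r'(t) = F_{i}(t,r(t)) < 0$ is immediate.

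For the upper bound I would run the usual maximality argument: as long as $r(t) \in (\rstar, R)$ one has $r'(t) < 0$ by the sign computation above, so $r$ is strictly decreasing on that set and can never attain the value $R$; consequently $r(t) \leqslant r_{0} < R$ wherever the solution is defined. For the lower bound I would use the uniqueness statement \cite[Thm.~1.1, p.~8]{Hartman2002} together with the fact that the constant function $r \equiv \rstar$ is an equilibrium solution, because $F_{i}(t,\rstar) = 0$; note that $F_{i}$ and its $r$-derivative are continuous (hence locally Lipschitz) on the closed interval $[\rstar, R_{0}]$ for any $\rstar < R_{0} < R$, so uniqueness is genuinely available up to $r = \rstar$. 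If $r(t_{1}) = \rstar$ for some $t_{1}$ in the interval of existence, then $r(\cdot)$ and the equilibrium solution would agree at $t_{1}$, forcing $r(t) \equiv \rstar$ on a neighbourhood of $t_{1}$ and, by continuation, $r(0) = \rstar$, contradicting $r_{0} > \rstar$. Thus $r(t) > \rstar$ for all $t$, and combining the two bounds gives $r(t) \in (\rstar, R)$, whence $r'(t) = F_{i}(t,r(t)) < 0$ for all $t > 0$, which is exactly the assertion of Proposition~\ref{prop:existence_and_uniqueness}. As a by-product, since $r$ then remains inside a compact subinterval of $(\rstar, R)$, away from the zeros $r = 0$ and $r = R$ of the denominators, the a priori bound $\abs{F_{i}(t,r(t))} \leqslant K_{0}$ holds on all of $[0,T]$, so the solution actually extends to the full time interval $[0,T]$.

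The step I expect to be the main obstacle is the non-crossing (lower-bound) argument: one must justify that the barrier reasoning is applied in a region where the Lipschitz hypothesis of Picard--Lindel\"{o}f is valid and that backward uniqueness from the contact time $t_{1}$ propagates all the way back to $t = 0$; once the invariance of $(\rstar, R)$ is secured, the remainder of the proof is the routine sign check already displayed above.
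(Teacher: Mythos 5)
Your proof is correct, but it takes a genuinely different route to the key step (forward invariance of $(\rstar,R)$) than the paper does. Both arguments agree on the preliminary sign check — for $r\in(\rstar,R)$ every factor of $F_{1}$ and $F_{2}$ has a definite sign, so $F_{i}(t,r)<0$ there, and the upper bound $r(t)\leqslant r_{0}<R$ follows from monotonicity — so the substantive divergence is in how one rules out $r(t)$ reaching $\rstar$. You observe that $F_{i}(t,\rstar)=0$, so $r\equiv\rstar$ is an equilibrium; since $F_{i}$ is $C^{1}$ (hence locally Lipschitz) in $r$ on $[\rstar,R_{0}]$ for any $\rstar<R_{0}<R$, Picard--Lindel\"{o}f gives uniqueness up to and including the barrier, and a trajectory that touches $\rstar$ at some $t_{1}$ must coincide with the equilibrium by backward uniqueness, contradicting $r(0)=r_{0}>\rstar$. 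This is the textbook non-crossing argument and it is clean and self-contained. The paper instead argues by a quantitative blow-up: it factors $r'(t)=(\rstar-r(t))\,s_{i}(r(t))$ with $s_{i}(\rstar)>0$, integrates to obtain $\int_{a}^{T-\varepsilon}s_{i}(r(t))\,dt=\log\bigl((r(a)-\rstar)/(r(T-\varepsilon)-\rstar)\bigr)$, and observes that if $r(T)=\rstar$ at a finite $T$ the right-hand side diverges as $\varepsilon\to 0$ while the left-hand side is a proper integral of a bounded continuous function — a contradiction. The paper's route yields the additional qualitative information that $r(t)\to\rstar$ only as $t\to\infty$ (an exponential-rate approach to the equilibrium, since $s_{i}(\rstar)>0$), which your argument does not make explicit; your route has the advantage of not requiring the factorization or the integration, and of making transparent exactly which regularity of $F_{i}$ is being used (local Lipschitz continuity up to $r=\rstar$). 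Your closing by-product remark — that the resulting $a\ priori$ bound on $F_{i}$ lets the solution be continued to all of $[0,T]$ — is also correct and is not spelled out in the paper, though it is implicit in the paper's existence discussion.
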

\begin{proof}
	From previous discussion, we have deduced that $\Vn$ remains negative for $r > \rstar$.
	We want to prove then that $r(t) \geqslant \rstar$, for all $t>0$, and we claim that $r(t) \to \rstar$ as $t \to T$ where $T = \infty$.
	In other words, $r(t)$ will not reach $\rstar$ in finite time.
	We will prove the latter claim via a contradiction.
	That is, we first suppose that there exists a $T < \infty$ such that $r(T) = \rstar$ and show that this will lead to a contradiction. 
			
	To do this, we first comment that $F_{i}(\cdot,r) \in {{C}}^{\infty}(0,R)$, where $F_{i}$, $i=1,2$, are the rational functions given in \eqref{eq:ivp_2d} and \eqref{eq:ivp_3d}, respectively.
	Hence, via Taylor expansion, we can write $r^{\prime}$ (after some transformation normalizing the coefficient) as follows
	\[
		r^{\prime}(t) = (\rstar - r(t)) s_{i}(r(t)), 
	\]	
	where $s_{i}$ are some functions over $r$ such that $s_{i}(\rstar) > 0$ for $i=1,2$.
	The functions $s_{i} $, $i=1,2$, can be expressed in the form
	\[
		s_{i}(r) = \frac{r^{\prime}(t)}{\rstar - r(t)}
		= \log\left( \frac{1}{r(t) - \rstar}\right).
	\]
	Now, if $r \in {{C}}^{1}([0,T)) \cap {{C}}^{0}([0,T])$ and $r(T) = \rstar$ for $0 \leqslant a < T - \varepsilon < T< \infty$ fo some constant $a$ and $\varepsilon > 0$ then
	\[
		\int_{a}^{T - \varepsilon} s_{i}(r(t)) \ {d}{t} 
		= \left. \log\left( \frac{1}{r(t) - \rstar}\right)\right|_{a}^{T - \varepsilon}
		= \log\left( \frac{r(a) - \rstar}{r(T - \varepsilon) - \rstar} \right).
	\]
	Notice that
	\[
		\lim_{\varepsilon \to 0} \int_{a}^{T - \varepsilon} s_{i}(r(t)) \ {d}{t}  
		= \lim_{\varepsilon \to 0}  \log\left( \frac{r(a) - \rstar}{r(T - \varepsilon) - \rstar} \right)= \infty,
	\]
	which is a contradiction.
	%
	%
%%	\begin{enumerate} 
%%		\item Next, we suppose that $T$ is the (right) maximal time of existence for $r(t)$; i.e., $[0,T]$ is a right maximanl interval of existence for $r$.
%%		Then,
%%		\[
%%			r_{\infty} := \lim_{t \to T} r(t) \geqslant \rstar.
%%		\]
%%		\item Now, we claim that $T = \infty$.
%%		If not, $T < \infty$, but we can extend over a maximal time of existence $[0, T_{1}]$ where $T_{1} > T$ (cf. \cite[Thm.~3.1, p.~12]{Hartman2002}),
%%		a contradiction. Thus, $T = \infty$ and we see that
%%		\[
%%			r_{\infty} = \rstar.
%%		\]
%%		%
%%		Thus, $r(t) \geqslant \rstar$ for all $t>0$.
%%		Consequently, $r^{\prime}(t) < 0$ for all $t>0$.
%%		This proves the proposition.
%%	\end{enumerate}
\end{proof}
%%%%%%%%%%%%%%%%%%%%%%%%%%%%%%%%%%%%%%%%%%%%%%%%%%%%%%%%%%%%%%%%%%%%%%%%%%%%%%
\section{Mullins-Sekerka analysis for 2D axisymmetric domains}\label{appx:Mullins-Sekera_analysis}
In this appendix, we examine the main system \eqref{eq:main_system} on axisymmetric case in two dimension through Mullins-Sekerka analysis given that $f$ and $g$ are positive-valued functions.
Let $r_{\Sigma} > 0$ and $\rho^{0} > 0$ be given.
We let the exterior (fixed) and the interior (free) boundary be defined by two concentric circles $\Sigma = \{ x\in \mathbb{R}^{2} \mid \abs{x}= r_{\Sigma}\}$ and $\Gamma = \{ x\in \mathbb{R}^{2} \mid \abs{x}= \rho^{0}\}$.
We define $\Gamma^{0}(t) = \{ x\in \mathbb{R}^{2} \mid \abs{x}= \rho^{0}(t)\}$, and consider, in polar coordinate $(r,\theta)$, its perturbation given by 
\[
	\Gamma^{\varepsilon}(t) = \{(r, \theta) \mid r = \rho^{\varepsilon}(\theta, t)\},
\] 
where we suppose that $\rho^{\varepsilon}$ formally has the expansion
\[
	\rho^{\varepsilon}(\theta,  t) = \rho^{0}(t) + \varepsilon \rho^{1}(\theta, t) + O(\varepsilon^{2}).
\]
%%%%%%%%%%%%%%%%%%%%
\begin{figure}[htp!]\label{fig:illustration}
\centering
\begin{tikzpicture}
	\fill [gray!5,even odd rule] (0,0) circle[radius=2cm] circle[radius=1cm];
	\coordinate (0) at (0,0);
	\draw (0) circle (2);
	\draw (0) circle (1); 
	\node[left] at (-2,0) {$\Sigma$};
	\node[right] at (-0.5,-1.5) {$\Omega^{\varepsilon}(t)$};
	\node[right] at (-1,0) {$\Gamma^{\varepsilon}(t)$};
	
         \draw [-To](1,0) -- (1.5,0);
         \node[below] at (1.5,0) {$\nu^{\varepsilon}$};
         
         \draw [-To](2,0) -- (2.5,0); 
         \node[above] at (2.4,0.1) {$\nu$};
\end{tikzpicture}
\caption{The axisymmetric annular domain}
\label{fig:discussion_{i}llustration}
\end{figure}
%%%%%%%%%%%%%%%%%%%%

Here, in particular, we shall consider perturbation of the type $\rho^{1}(\theta, t) = R(t) \cos k\theta$ or $R(t) \sin k\theta$.
Our aim is to derive a necessary condition of $R(t)$ for $\Gamma^{\varepsilon}(t)$ satisfying our main system \eqref{eq:main_system} with suitable $\ud^{\varepsilon}(x,t)$ and $\un^{\varepsilon}(x,t)$, except for the initial condition.
For simplicity, let us suppose that $f$ and $g$ are positive constants.
We define
\[
	\Omega^{\varepsilon}(t) = \{ x \in \mathbb{R}^{2} \mid \rho^{\varepsilon}(t) < \abs{x}< r_{\Sigma}\}.
\]
Of course, we set $\Omega^{0}(t) = \{ x \in \mathbb{R}^{2} \mid \rho^{0}(t) < \abs{x}< r_{\Sigma}\}$.

We consider the following systems of PDEs:
\begin{align} 
\left\{
\begin{aligned}
	\Delta \ud^{\varepsilon} = 0 \quad \text{in $\Omega$}^{\varepsilon}(t), \qquad \ud^{\varepsilon} = f > 0 \quad \text{on $\Sigma$}, \qquad \ud^{\varepsilon} = 0 \quad \text{on $\Gamma$}^{\varepsilon}(t),\\
	\Delta \un^{\varepsilon} = 0 \quad \text{in $\Omega$}^{\varepsilon}(t), \qquad \ddn{\un^{\varepsilon}} = g > 0 \quad \text{on $\Sigma$}, \qquad \un^{\varepsilon} = 0 \quad \text{on $\Gamma$}^{\varepsilon}(t),\\
\end{aligned}
\right.
\end{align}
where $\nn$ is the outward unit normal vector to $\Sigma$.
Accordingly, the normal velocity of the moving boundary $\Gamma^{\varepsilon}$ is given by 
\[
	v^{\varepsilon}(\theta, t) = - \left( \ddneps{\ud^{\varepsilon}} - \ddneps{\un^{\varepsilon}} \right),\qquad y \in \Gamma^{\varepsilon}(t),
\]
where $\nn^{\varepsilon}$ is the \textit{inward} unit normal to $\Gamma^{\varepsilon}(t)$ (see Figure~\ref{fig:discussion_{i}llustration} for illustration) given by
\[
	\nn^{\varepsilon}(\theta, t) = \begin{pmatrix} \cos \theta \\ \sin \theta \end{pmatrix}
		+ \varepsilon \frac{\rho^{1}_{\theta}(\theta, t)}{\rho^{0}(t)} \begin{pmatrix} \sin \theta \\ - \cos \theta \end{pmatrix}
		+ O(\varepsilon^{2}).
\]
Here, the initial geometric profile of the perturbed boundary is $\Gamma^{\varepsilon}(0) = \{(r,\theta) \mid r = \rho^{\varepsilon}_{0}(\theta)\}$ where $\rho^{\varepsilon}_{0}(\theta) = \rho^{0}_{0} + \varepsilon \rho^{1}_{0}(\theta) + O(\varepsilon^{2})$.
Now, we assume that 
\[
\up^{\varepsilon}(x,t) = u_{\text{p}}^{0}(x,t) + \varepsilon \up^{1}(x,t) + O(\varepsilon^{2}),\qquad \text{p} = \text{D}, \text{N},
\]
in a neighborhood of $\Omega^{\varepsilon}(t)$.
So, for $x = \rho^{0}(t) \begin{pmatrix} \cos \theta \\ \sin \theta \end{pmatrix} \in \Gamma^{0}(t)$ and $y = \rho^{\varepsilon}(\theta, t) \begin{pmatrix} \cos \theta \\ \sin \theta \end{pmatrix} \in \Gamma^{\varepsilon}(t)$ we have, after applying Taylor's expansion and combining terms with respect to $\varepsilon$, the following identities 
\begin{align*}
	\up^{\varepsilon}(y,t) 
		&= \up^{0}(x,t) + \varepsilon \left( \up^{1}(x,t) + \rho^{1}(\theta,t) \frac{\partial \up^{0}}{\partial{r}}(x,t) \right) + O(\varepsilon^{2}),\\
		&= \up^{0}(x,t) + \varepsilon \left( \up^{1}(x,t) + \rho^{1}(\theta,t) \frac{\partial \up^{0}}{\partial{r}}(x,t) \right) + O(\varepsilon^{2}),\\
	\frac{\partial \up^{\varepsilon}}{\partial \nn^{\varepsilon}}(y,t) &= \frac{\partial \up^{0}}{\partial{r}}(x,t) + \varepsilon \left( \frac{\partial \up^{1}}{\partial{r}}(x,t) + \rho^{1}(\theta,t) \frac{\partial^{2} \up^{0}}{\partial{r}^{2}}(x,t)\right) + O(\varepsilon^{2}).
\end{align*}
The equations and terms of order $O(\varepsilon^{0})$ are as follows:
\begin{align} \label{eq:order_zero_equations}
\left\{
\begin{aligned}
	\Delta \ud^{0} = 0 \quad \text{in $\Omega$}^{0}(0), \qquad \ud^{0} = f \quad \text{on $\Sigma$}, \qquad \ud^{0} = 0 \quad \text{on $\Gamma$}^{0}(0),\\
	\Delta \un^{0} = 0 \quad \text{in $\Omega$}^{0}(0), \qquad \ddn{\un^{0}} = g \quad \text{on $\Sigma$}, \qquad \un^{0} = 0 \quad \text{on $\Gamma$}^{0}(0),\\
\end{aligned}
\right.
\end{align}
with 
\begin{equation}\label{eq:time_derivative_rho_zero}
	\rho_{t}^{0}(t) = - \left( \frac{\partial \ud^{0}}{\partial{r}}(x,t) - \frac{\partial \un^{0}}{\partial{r}}(x,t) \right), \quad x \in \Gamma^{0}(t), 
	\qquad\text{and}\qquad \rho^{0}(0) = \rho_{0}^{0}.
\end{equation}
Meanwhile, the equations and terms of order $O(\varepsilon^{1})$ are as follows:
\begin{align} \label{eq:order_one_equations}
\left\{
\begin{aligned}
	\Delta \ud^{1} &= 0 \quad \text{in $\Omega$}^{0}(t),\\
	 	\ud^{1} &= f \quad \text{on $\Sigma$}, \\
		 \ud^{1}(x,t) + \rho^{1}(\theta,t) \frac{\partial \ud^{0}}{\partial{r}}(x,t) &= 0 \quad \text{on $\Gamma$}^{0}(t),\\
	\Delta \un^{1} &= 0 \quad \text{in $\Omega$}^{0}(t), \\
	 \ddn{\un^{1}} &= g \quad \text{on $\Sigma$}, \\
	  \un^{1}(x,t) + \rho^{1}(\theta,t) \frac{\partial \un^{0}}{\partial{r}}(x,t) &= 0 \quad \text{on $\Gamma$}^{0}(t),\\
\end{aligned}
\right.
\end{align}
with  
\begin{equation}\label{eq:time_derivative_rho_one}
	\rho_{t}^{1}(\theta, t) = - \left[ \left( \frac{\partial \ud^{1}}{\partial{r}}(x,t) - \frac{\partial \un^{1}}{\partial{r}}(x,t) \right) 
						+  \left( \frac{\partial^{2} \ud^{0}}{\partial{r}^{2}}(x,t) - \frac{\partial^{2} \un^{0}}{\partial{r}^{2}}(x,t)\right) \rho^{1}(\theta,t) \right]
\end{equation}
and 
\[
	\rho^{1}(\theta, 0) = \rho_{0}^{1}(\theta).
\]

Next, we compute the exact solutions to \eqref{eq:order_zero_equations}.
We let
\[
	\up^{0} = \cp \log \dfrac{\abs{x}}{\rho^{0}(t)}, \quad \text{for \text{p} = \text{D}, \text{N}, \quad where $\abs{x}= r$}.
\]
Then, for $f(t) = \cd \log \dfrac{r_{\Sigma}}{\rho^{0}(t)}$, we get $\cd(t) = \dfrac{f(t)}{\log(r_{\Sigma}/\rho^{0}(t))}$.
Meanwhile, for $\dfrac{\partial \up^{0}}{\partial \nn}\Big|_{\Sigma} = \dfrac{\cp}{r_{\Sigma}} = g(t)$ at $\text{p} = \text{N}$ (note here that $\nn$ is the outward unit normal to $\Sigma$), we get $\cn(t) = g(t) r_{\Sigma}$.
Therefore, we obtain the following exact solutions to \eqref{eq:order_zero_equations}:
\begin{align*}
	\ud^{0} 
	&= \cd(t) \log\left(\frac{\abs{x}}{\rho^{0}(t)}\right)
	= \frac{f(t)}{\log\left(\frac{r_{\Sigma}}{\rho^{0}(t)}\right)} \log\left(\frac{\abs{x}}{\rho^{0}(t)}\right),\\[0.5em]
	\un^{0} 
	&= \cn(t) \log\left(\frac{\abs{x}}{\rho^{0}(t)}\right)
	= g(t) r_{\Sigma} \log\left(\frac{\abs{x}}{\rho^{0}(t)}\right),
\end{align*}
which gives us
\[
	w^{0}:= \un^{0} - \ud^{0} = (\cn(t) - \cd(t)) \log\left(\frac{\abs{x}}{\rho^{0}(t)}\right) 
	= \left( g(t) r_{\Sigma} - \frac{f(t)}{\log\left(\frac{r_{\Sigma}}{\rho^{0}(t)}\right)}\right) \log\left(\frac{\abs{x}}{\rho^{0}(t)}\right).
\]
Using the above notation, and from \eqref{eq:time_derivative_rho_zero} and \eqref{eq:time_derivative_rho_one}, we get
\begin{align}
	&O(\varepsilon^{0}):\ \left\{
	\begin{aligned}
	\rho_{t}^{0}(t) &= \frac{\partial w^{0}}{\partial{r}}(x,t), \\
	\frac{\partial w^{0}}{\partial{r}}(x,t) \Big|_{x \in \Gamma^{0}(t)} &= \frac{\cn(t)-\cd(t)}{\rho^{0}(t)}, \qquad x \in \Gamma^{0}(t),
	\end{aligned}\right.\\
	&O(\varepsilon^{1}):\ \left\{
	\begin{aligned}
	\rho_{t}^{1}(\theta, t) &= \frac{\partial w^{1}}{\partial{r}}(x,t) + \frac{\partial^{2} w^{1}}{\partial{r}^{2}}(x,t) \rho^{1}(\theta, t), \\
	\frac{\partial^{2} w^{0}}{\partial{r}^{2}}(x,t) \Big|_{x \in \Gamma^{0}(t)} &= - \frac{\cn(t)-\cd(t)}{\rho^{0}(t)^{2}},
	\quad x = \rho^{0}(t) \begin{pmatrix} \cos \theta\\ \sin \theta\end{pmatrix}\in \Gamma^{0}(t),
	\end{aligned}\right.\label{eq:first_order_w}
\end{align}
where
\begin{equation}\label{eq:coefficients}
	\cd(t) = \frac{f(t)}{\log\left(\frac{r_{\Sigma}}{\rho^{0}(t)}\right)} > 0
	\qquad\text{and}\qquad
	\cn(t) = g(t) r_{\Sigma} > 0.
\end{equation}
Let us now suppose, as mentioned earlier, that $\rho^{1}(\theta, t) = R(t) \cos k\theta$, $k \in \mathbb{N}$.
Then, we consider the ansatz 
\[
	\up^{1} = \ap(t) r^{k} \cos k\theta + \bp(t) r^{-k} \cos k\theta,\qquad r = \abs{x},
	\qquad \text{p} = \text{D}, \text{N},
\]
which leads to the equivalence
\begin{equation}\label{eq:system_{p}art1}
	\up^{1} + \rho^{1} \frac{\partial \up^{0}}{\partial{r}} = 0
	\qquad \Longleftrightarrow \qquad
	\ap(t) \rho^{0}(t)^{k} + \bp(t) \rho^{0}(t)^{-k} + R(t) \frac{\cp(t)}{\rho^{0}(t)} = 0.
\end{equation}
So, for the first-order term, we have
\begin{align*}
w^{1}&:= \un^{1} - \ud^{1}
	= \left\{ (\an(t) - \ad(t))r^{k} + (\bn(t) - \bd(t))r^{-k} \right\} \cos k\theta,\\[0.75em]
	\frac{\partial w^{1}}{\partial{r}}\Big|_{\Gamma^{0}(t)}
		&= k\left\{ (\an(t) - \ad(t))\rho^{0}(t)^{k-1} - (\bn(t) - \bd(t))\rho^{0}(t)^{-(k+1)} \right\} \cos k\theta.
\end{align*}
From \eqref{eq:first_order_w}$_{1}$ and the fact that $\rho^{1}(\theta, t) = R(t) \cos k\theta$, we get
\[
	R'(t) = \lambda_{k}(t) R(t),
\]
where
\[
	\lambda_{k}(t) = k \left\{ (\tildean(t) - \tildead(t))\rho^{0}(t)^{k-1} - (\tildebn(t) - \tildebd(t))\rho^{0}(t)^{-(k+1)} \right\} - \frac{\cn(t)-\cd(t)}{\rho^{0}(t)^{2}},
\]
where $\tildeap = \ap/R(t)$ and $\tildebp = \bp/R(t)$.

To get the exact form of $\lambda_{k}(t)$, we need to compute for the coefficients $\ad(t)$, $\bd(t)$, $\an(t)$, and $\bn(t)$. 
To this end, we note the following implications of the boundary conditions
\begin{equation}\label{eq:system_{p}art2}
\begin{aligned}
	\ud^{1} = 0\quad \text{on $\Sigma$}
	\qquad&\Longleftrightarrow\qquad
	\ad(t)r_{\Sigma}^{k} + \bd(t)r_{\Sigma}^{-k} = 0,\\
	\frac{\partial \un^{1}}{\partial \nn} = 0\quad \text{on $\Sigma$}
	\qquad&\Longleftrightarrow\qquad
	\ad(t)r_{\Sigma}^{k-1} - \bd(t)r_{\Sigma}^{-(k+1)} = 0.
\end{aligned}
\end{equation}
Equations~\eqref{eq:system_{p}art1} and \eqref{eq:system_{p}art2} respectively lead to a system of equations which, upon solving, provides the forms of $\ad$ and $\bd$ as well as $\an$ and $\bn$.
That is, we have
\[
	\begin{pmatrix}
		\rho^{0}(t)^{k} & \rho^{0}(t)^{-k}\\
		r_{\Sigma}^{k} & r_{\Sigma}^{-k} 
	\end{pmatrix}
	\begin{pmatrix}
		\tildead(t)\\
		\tildebd(t)
	\end{pmatrix}
	=
	\begin{pmatrix}
		-\frac{\cd(t)}{\rho^{0}(t)}\\
		0
	\end{pmatrix}
\]
and
\[
	\begin{pmatrix}
		\rho^{0}(t)^{k} & \rho^{0}(t)^{-k}\\
		r_{\Sigma}^{k-1} & -r_{\Sigma}^{-(k+1)} 
	\end{pmatrix}
	\begin{pmatrix}
		\tildean(t)\\
		\tildebn(t)
	\end{pmatrix}
	=
	\begin{pmatrix}
		-\frac{\cn(t)}{\rho^{0}(t)}\\
		0
	\end{pmatrix},	
\]
which would give us
\begin{align*}
	\begin{pmatrix}
		\tildead(t)\\
		\tildebd(t)
	\end{pmatrix}
	&=
	\frac{1}{\rho^{0}(t)^{k} r_{\Sigma}^{-k}  -  \rho^{0}(t)^{-k} r_{\Sigma}^{k}}
	\begin{pmatrix}
		 r_{\Sigma}^{-k} & -\rho^{0}(t)^{-k}\\
		-r_{\Sigma}^{k} & \rho^{0}(t)^{k}
	\end{pmatrix}	
	\begin{pmatrix}
		-\frac{\cd(t)}{\rho^{0}(t)}\\
		0
	\end{pmatrix}\\
	%%%
%%%	&=
%%%	\frac{1}{\rho^{0}(t)^{k} r_{\Sigma}^{-k}  -  \rho^{0}(t)^{-k} r_{\Sigma}^{k}}
%%%	\begin{pmatrix}
%%%		-\frac{\cd(t) r_{\Sigma}^{-k}}{\rho^{0}(t)}\\[0.5em]
%%%		%
%%%		\frac{\cd(t) r_{\Sigma}^{k}}{\rho^{0}(t)}\\
%%%	\end{pmatrix},\\	
	%%%
	&=
	\frac{1}{\detd} \left( \frac{\cd(t) }{\rho^{0}(t)} \right)
	\begin{pmatrix}
		r_{\Sigma}^{-k}\\[0.5em]
		-r_{\Sigma}^{k}\\
	\end{pmatrix},\\		
	\begin{pmatrix}
		\tildean(t)\\
		\tildebn(t)
	\end{pmatrix}	
	&= 
	\frac{1}{- \rho^{0}(t)^{k} r_{\Sigma}^{-(k+1)} - \rho^{0}(t)^{-k} r_{\Sigma}^{k-1}}
	\begin{pmatrix}
		-r_{\Sigma}^{-(k+1)} & -\rho^{0}(t)^{-k}\\
		-r_{\Sigma}^{k-1} & \rho^{0}(t)^{k}
	\end{pmatrix}	
	\begin{pmatrix}
		-\frac{\cn(t)}{\rho^{0}(t)}\\
		0
	\end{pmatrix}\\
	%
%%%	&= 
%%%	-\frac{1}{\rho^{0}(t)^{k} r_{\Sigma}^{-(k+1)} + \rho^{0}(t)^{-k} r_{\Sigma}^{k-1}}
%%%	\begin{pmatrix}
%%%		\frac{\cn(t) r_{\Sigma}^{-(k+1)} }{\rho^{0}(t)}\\[0.5em]
%%%		%
%%%		\frac{\cn(t) r_{\Sigma}^{k-1}}{\rho^{0}(t)}\\
%%%	\end{pmatrix}\\
%%%	%
	&= 
	\frac{1}{\detn} \left( \frac{g(t)}{\rho^{0}(t)} \right)
	\begin{pmatrix}
		r_{\Sigma}^{-k}\\[0.5em]
		r_{\Sigma}^{k}\\
	\end{pmatrix},		
\end{align*}
where
\begin{align*}
	\detd &= - \rho^{0}(t)^{k} r_{\Sigma}^{-k}  +  \rho^{0}(t)^{-k} r_{\Sigma}^{k} 
	= \left( \frac{r_{\Sigma}}{\rho^{0}(t)} \right)^{k} - \left( \frac{\rho^{0}(t)}{r_{\Sigma}} \right)^{k},\\
	\detn &= \rho^{0}(t)^{k} r_{\Sigma}^{-k}  +  \rho^{0}(t)^{-k} r_{\Sigma}^{k} 
	= \left( \frac{\rho^{0}(t)}{r_{\Sigma}} \right)^{k}  + \left( \frac{r_{\Sigma}}{\rho^{0}(t)} \right)^{k}.
\end{align*}
Note that both of these determinants are positive because $\rho^{0}(t) < r_{\Sigma}$ for all $t>0$.
Moreover, we observe that
\[
	0<\frac{\detn}{\detd} \longrightarrow 1, 
	\qquad
	0<\frac{\detd}{\detn} \longrightarrow 1, 
	\qquad \text{as $k \to \infty$}.
\]
Then, finally, we have
\begin{align*}
	\lambda_{k}(t)
	& = \frac{k}{\rho^{0}(t)} \left\{ \left[ -\tildead(t)\rho^{0}(t)^{k} + \tildebd(t) \rho^{0}(t)^{-k} \right] + \left[ \tildean(t) \rho^{0}(t)^{k} - \tildebn(t)\rho^{0}(t)^{-k} \right] \right\}\\
	& \qquad + \frac{\cd(t)-\cn(t)}{\rho^{0}(t)^{2}}\\
	& = \frac{k}{\rho^{0}(t)^{2}} \left\{ \frac{-\cd(t)}{\detd} \left(r_{\Sigma}^{-k}\rho^{0}(t)^{k} + r_{\Sigma}^{k} \rho^{0}(t)^{-k} \right) 
			+ \frac{g(t)}{\detn} \left(r_{\Sigma}^{-k} \rho^{0}(t)^{k} - r_{\Sigma}^{k} \rho^{0}(t)^{-k} \right) \right\} \\
	& \qquad + \frac{\cd(t)-\cn(t)}{\rho^{0}(t)^{2}}\\	
	& = -\underbrace{\frac{k}{\rho^{0}(t)^{2}} \left( \cd(t) \frac{\detn}{\detd} + g(t) \frac{\detd}{\detn} \right)}_{>0} 
	%%% + \underbrace{\frac{\cd(t)-\cn(t)}{\rho^{0}(t)^{2}}}_{\text{$>0$, see \eqref{eq:coefficients}.}}.
	+ \frac{\cd(t)-\cn(t)}{\rho^{0}(t)^{2}}.
\end{align*}
Evidently, for large enough $k$, $\lambda_{k}(t)$ is negative, for any $t\geqslant0$.
%%%, and such that $\cd(0)-\cn(0) > 0$.
This implies, formally, that the main system \eqref{eq:main_system} is \textit{well-posed} in the case of annular domains formed by concentric radially symmetric shapes.
%%%%%%%%%%%%%%%%%%%%%%%%%%%%%%%%%%%%%%%%%%%%%%%%%%%%%%%%%%%%%%%%%%%%%%%%%%%%%%
\section{Change of variables}\label{appx:change_of_variables}
For the benefit of the reader, we provide here the computation of the transformed domain.
We focus on rewriting the main equations for the variable $\un$ (the fourth to sixth equations in \eqref{eq:main_system}).
For the case $\ud$, the computations are similar. 
So, consider a smooth function $\unxt$, $x \in \Omega(t)$, and $\bigUn(y,t)$, $y \in \Omega$, where $y = Y(x,t)$.
Then, $x = Y^{-1}(y,t)$), and we have
\begin{equation}\label{eq:transformed_variables}
\begin{aligned}
	\unxt = \bigUn(y,t) = \un(Y^{-1}(y,t),t) =: \un \circ Y^{-1}(y,t).
\end{aligned}
\end{equation}
Let us introduce the notation $x := (x_{1}, \ldots, x_{d})^\top \in \mathbb{R}^{d}$, $y := (y_{1}, \ldots, y_{d})^\top \in \mathbb{R}^{d}$, so that $Y^{-1}(y,t):= (x_{1}, \ldots, x_{d})^\top(y,t) \in \mathbb{R}^{d}$.
For clarity, we shortly verify here identity \eqref{eq:gradient_transformation}.

By generalized chain rule, the $m$th ($m = 1, \ldots, n$) column entry of $\nabla_{y}^{\top}$ is computed as follows (dropping $t$)
\begin{align*}
\frac{\partial }{\partial x_{m}} \un(x)
	= \sum_{k=1}^{d} \frac{\partial \bigUn}{\partial y_{k}}(y) \frac{\partial Y_{k}}{\partial x_{m}}(x) 
	= \sum_{k=1}^{d} \frac{\partial \bigUn}{\partial y_{k}}(y) \Jac^{km}
	 = \sum_{k=1}^{d} \Jac^{km} \frac{\partial \bigUn}{\partial y_{k}}(y).
\end{align*}
We next focus on the transformation of the Laplace equation.
We recall that
\[
	\nabla_x \cdot \nabla_x u = \Delta_{x} u
	= \left( \frac{\partial^{2}}{\partial x_{1}^{2}} + \cdots + \frac{\partial^{2}}{\partial x_{d}^{2}} \right) u.
\]
From the relation $\unxt = \bigUn(y,t)$, we get $\Delta_{x} \unxt =  \Delta_y \bigUn(y,t) =  \nabla_{y}^{\top} \cdot \nabla_{y}^{\top} \bigUn(y,t)$.
Each summand $\partial^{2}u/\partial x_{k}^{2}$, $k=1,\ldots,n$, is computed by performing the following calculation of partial derivatives
\begin{align*}
\frac{\partial }{\partial x_{k}} \left( \frac{\partial }{\partial x_{k}} \un(x) \right) 
	&= \frac{\partial }{\partial x_{k}} \left( \sum_{p=1}^{d}\frac{\partial \bigUn}{\partial y_{p}}(y) \frac{\partial Y_{p}}{\partial x_{k}}(x) \right)\\
	&= \sum_{p=1}^{d}\left\{
		\underbrace{\frac{\partial }{\partial x_{k}} \left( \frac{\partial \bigUn}{\partial y_{p}}(y) \right) }_{=:D_{1}} 
						\frac{\partial Y_{p}}{\partial x_{k}}(x) 
					+ \frac{\partial \bigUn}{\partial y_{p}}(y) 
						\underbrace{\frac{\partial }{\partial x_{k}} \left( \frac{\partial Y_{p}}{\partial x_{k}}(x) \right)
		}_{=:D_{2}}  \right\}.
\end{align*}
%%%
%%% COMPUTATION FOR D_{1}
The partial derivative $D_{1}$ is computed as follows:
\[
D_{1} = \frac{\partial }{\partial x_{k}} \left( \frac{\partial \bigUn}{\partial y_{p}}(y) \right)
= \sum_{m=1}^{d}\frac{\partial \bigUn}{\partial y_{m} y_{p}}(y) \frac{\partial Y_{m}}{\partial x_{k}}(x)
= \sum_{m=1}^{d}\frac{\partial \bigUn}{\partial y_{m} y_{p}}(y) \Jac^{mk}.
\]
Hence, the first summand where $D_{1}$ appears can be written as follows:
\begin{align*}
\sum_{p=1}^{d}\frac{\partial }{\partial x_{k}} \left( \frac{\partial \bigUn}{\partial y_{p}}(y) \right) 		\frac{\partial Y_{p}}{\partial x_{k}}(x) 
&=  \sum_{p=1}^{d}\left( \sum_{m=1}^{d}\frac{\partial \bigUn}{\partial y_{m} y_{p}}(y) \Jac^{mk} \right) \Jac^{pk}.
\end{align*}
Now, summing $k$ from $1$ to $d$, and after a few rearrangements, we get
\begin{align*}
\sum_{k=1}^{d} \sum_{p=1}^{d}\sum_{m=1}^{d}\Jac^{mk} \Jac^{pk} \frac{\partial^{2} \bigUn}{\partial y_{m} \partial y_{p}} 
&=  \sum_{m, p=1}^{d}\left( \sum_{k=1}^{d} \Jac^{mk} \Jac^{pk} \right) \frac{\partial^{2} \bigUn}{\partial y_{m} \partial y_{p}}\\
&=  \sum_{m, p=1}^{d}{{\matA}}_{mp} \frac{\partial^{2} \bigUn}{\partial y_{m} \partial y_{p}}, 
\end{align*}
where ${{\matA}}_{mp} = \sum_{k=1}^{d} \Jac^{mk} \Jac^{pk}$ are the entries of the matrix $A= \Jac^{-1} \Jac^{-\top}$. 
For the partial derivative $D_{2}$, we have the form
\[
	\sum_{k,m,p = 1}^{d}\Jac^{mp} \frac{\partial \Jac^{pk}}{\partial y_{m}} \frac{\partial \bigUn}{\partial y_{p}}
		= \sum_{k,m,p = 1}^{d}\frac{\partial Y_{m}}{\partial x_{k}}(x) \frac{\partial}{\partial y_{m}} \left( \frac{\partial Y_{p}}{\partial x_{k}}(x) \right) \frac{\partial \bigUn}{\partial y_{p}}(y). 
\]
The following computations yield the above expression:
\begin{align*}
D_{2} &= \frac{\partial}{\partial x_{k}} \left(  \frac{\partial Y_{p}}{\partial x_{k}}(x) \right)\\
	&= \frac{\partial}{\partial x_{k}} \left( \varphi \circ Y(x) \right) \quad \left( \varphi(y):=  \left(  \frac{\partial Y^{-1}_{p}}{\partial y_{k}}(y)  \right)^{-1},\ \Omega \ni y = Y(x,t),\ x \in \Omega(t) \right) \\
	&= \sum_{m=1}^{d}\frac{\partial \varphi}{\partial y_{m} }(Y(x)) \frac{\partial Y_{m}}{\partial x_{k}}(x)\\
	&= \sum_{m=1}^{d}\Jac^{mk} \frac{\partial \varphi}{\partial y_{m} }(y)\quad(\text{$m$-$k$: row-column entry})\\
	&= \sum_{m=1}^{d}\Jac^{mk} \frac{\partial }{\partial y_{m}} \Jac^{pk}.			
\end{align*}
Inserting the above expression for $D_{2}$ in the second summand earlier above, and then summing $k$ from $1$ to $d$, we get
\begin{align*}
	\sum_{k=1}^{d}  \sum_{p=1}^{d}\frac{\partial \bigUn}{\partial y_{p}}(y) \frac{\partial }{\partial x_{k}} \left( \frac{\partial Y_{p}}{\partial x_{k}}(x) \right)
%%%		&= \sum_{k=1}^{d} \sum_{p=1}^{d}\frac{\partial \bigUn}{\partial y_{p}}(y)  \left(  \sum_{m=1}^{d}\frac{\partial \varphi}{\partial y_{m} }(Y(x)) \frac{\partial Y_{m}}{\partial x_{k}}(x) \right)\\
%%%		&= \sum_{k=1}^{d} \sum_{p=1}^{d}\left(  \sum_{m=1}^{d}\frac{\partial \varphi}{\partial y_{m} }(Y(x))  \frac{\partial Y_{m}}{\partial x_{k}}(x) \right) \frac{\partial \bigUn}{\partial y_{p}}(y)\\
		&= \sum_{k=1}^{d} \sum_{m=1}^{d}\sum_{p=1}^{d}\frac{\partial Y_{m}}{\partial x_{k}}(x) \frac{\partial \varphi}{\partial y_{m} }(y)  \frac{\partial \bigUn}{\partial y_{p}}(y)\\
		&= \sum_{k,m,p = 1}^{d}\Jac^{mk} \frac{\partial \Jac^{pk}}{\partial y_{m}} \frac{\partial \bigUn}{\partial y_{p}},
\end{align*}	
where $\varphi(y)$ is again defined as before.
In summary, the Laplace equation $\Delta_{x} \unxt = 0$, $x\in\Omega(t)$, $t>0$, when expressed over the fixed domain $\Omega$, has the following form:
\[
	\sum_{m, p=1}^{d}{{\matA}}_{mp} \frac{\partial^{2} \bigUn}{\partial y_{m} \partial y_{p}} + \sum_{k,m,p = 1}^{d}\Jac^{mk} \frac{\partial \Jac^{pk}}{\partial y_{m}} \frac{\partial \bigUn}{\partial y_{p}} = 0,%%% \qquad (y\in \Omega,\ t>0),
\]	
where ${{\matA}}_{mp}$ is as mentioned earlier.
For the Laplace equation $\Delta_{x} \udxt = 0$, $x\in\Omega(t)$, $t>0$, simply replace $\bigUn$ by $\bigUd$ above.

%
%
%
%
%
%
%%%%%%%%%%%%%%%%%%%%%%%%%%%%%%%%%%%%%%%%%%%%%%%%%%%%%%%%%%%%%%%%%%%%%%%%%%%%%%%%%%%%%%%%%%%%

We next perform the transformations of equations on the boundary.
First, it is easy to check that $\bigUn(y,t) = \un(Y^{-1}(y,t),t) = 0$, $y \in \Gamma$, $t>0$.
Similarly, on $\Sigma$, we can write $g(x,t)$ ($x\in\Sigma$) as $g(Y^{-1}(y,t),t)$. 
Since ${\erho}(x,t) \equiv 0$ on $\Sigma$, we simply write the function $g(Y^{-1}(y,t),t)$ as $g(y,t)$.
Now, let us rewrite the expression $\nabla_x \unxt \cdot \nn(x,t)$, where $x \in \partial \Omega(t)$, in terms of $\bigUn(y,t)$ on the fixed boundary $\partial \Omega$.
This is done as follows (hereafter, we occasionally drop $t$ and write $\nno := \nno(y)$ for convenience):
\begin{align*}
	\nabla_x \un(x) \cdot \nn(x)
		&= \Jac^{-\top} \nabla_{y}^{\top} \bigUn(y) \cdot \frac{\Jac^{-\top}\nno}{|\Jac^{-\top}\nno|}(y).
\end{align*}
Clearly, on the fixed (interior) boundary $\Sigma$, we have
\[
	\ddno{}\bigUn(y,t) = g(y,t), \qquad y \in \Sigma,\ t>0.
\]
In the same manner, we obtain the following transformation of boundary conditions for the variable $\ud$
\[
	\bigUd(y,t) = \ud(Y^{-1}(y,t),t) = 0, \qquad y \in \Gamma,\ t>0,
\]	
and, on $\Sigma$, we can write $f(x,t)$ ($x\in\Sigma$) as $f(Y^{-1}(y,t),t)$. 
Again, because ${\erho}(x,t) \equiv 0$ on $\Sigma$, we simply write the function $f(Y^{-1}(y,t),t)$ as $f(y,t)$.
Moreover, from $\Gamma(t)$ to $\Gamma$, we have the following sequence of computations and transformations with regards to the equation for the normal velocity
\begin{align*}
	\Vn(x,t) &= \VV(x,t) \cdot \nn(x,t) = \frac{d}{dt} x(t) \cdot \nn(x,t), \\%%% \qquad (x = x(t)\in \Gamma(t), \ t>0)\\
		&= \frac{d}{dt} \left( y + \NN(y) {\erho}(y,t)\right) \cdot \nn(x,t)\\
		&= \ddt{}{\erho}(y,t) \NN(y) \cdot \frac{\Jac^{-\top}\nno}{|\Jac^{-\top}\nno|}(y), 
				\quad \left( {\erho_{t}}(y,t) := \frac{d}{dt}{\erho}(y,t),\ y\in \Gamma,\ t>0  \right).
\end{align*}		
and
\begin{align*}
	\left( \nabla \udxt  - \nabla \unxt \right)\cdot \nn(x,t)
		 = \Jac^{-\top}  \left( \nabla_{y}^{\top} \bigUd(y) - \nabla_{y}^{\top} \bigUn(y) \right)
					\cdot \frac{\Jac^{-\top}\nno}{|\Jac^{-\top}\nno|}(y), 
\end{align*}
for $x = x(t)\in \Gamma(t)$, $y\in \Gamma$, and $t>0$.
These identities, with ${\matA} := \Jac^{-1} \Jac^{-\top}$, gives us
\[
	\ddt{}{\erho}(y,t) 
	= -\left( \NN(y) \cdot \Jac^{-\top}\nno(y) \right)^{-1} \left( \nabla_{y}^{\top} \bigUd(y) - \nabla_{y}^{\top} \bigUn(y) \right) \cdot {{\matA}} \nno,
\]
for $y \in \Gamma$, $t>0$.
Taking into account the fact that $\bigUn(y,t) = \bigUd(y,t) = 0$ on $\Gamma$, $t>0$,
%%%\footnote{hence, the gradient has no tangential component} 
we can equivalently write the above equation as follows:
\begin{align*}
	\ddt{}{\erho}(y,t) + B_{{\erho}} \ddno{}\left( \bigUd(y,t) - \bigUn(y,t) \right) = 0,\qquad (y \in \Gamma,\ t>0),
\end{align*}	
where $B_{\erho} := B_{{\erho}}(y) = \left( \NN(y) \cdot \Jac^{-\top}\nno(y) \right)^{-1} \left( \nno \cdot {{\matA}} \nno \right)$.
We remind that throughout the paper we shall assume that ${\erho}(y,t)$, $y \in \Gamma$, $t \in [0,T]$, is small enough so that $\NN(y) \cdot \Jac^{-\top}\nno(y)$ and $|\Jac^{-1}\nno(y)|$ are strictly positive for $y \in \Gamma$, and that the map $Y^{-1}(y,t)$ is invertible.

In the rest of the section, we look at how the transformed Laplace equation splits into the operators given in \eqref{eq:splitting_operator_for_Laplacian}.	
The form of $\pazo{L}_{\erho}$ is already clear, so we focus on rewriting the expression
\begin{equation}
\label{ex:second_sum}
	\sum_{k,m,p = 1}^{d}\Jac^{mk} \frac{\partial \Jac^{pk}}{\partial y_{m}} \frac{\partial \bigUn}{\partial y_{p}}(y), \qquad (y\in \Omega),	
\end{equation}	
in terms of $\NN(y)$ and ${\erho}(y)$ (again, we drop the dependence of ${\erho}$ to $t$ for convenience) to confirm the forms of $\pazo{K}_{\erho}$ and $\pazo{M}_{\erho}$.
From the identity $\Jac \Jac^{-1} = \matI$, we have $\partial_y \Jac \Jac^{-1} + \Jac \partial_y \Jac^{-1} = 0$, or equivalently, $\partial_y \Jac^{-1} = - \Jac^{-1}  \left( \partial_y \Jac \right) \Jac^{-1}$. 
We compute $\partial_y \Jac$ by differentiating the equation $\Jac =  \matI + \NN \nabla_{y}^{\top} {\erho} + (\nabla_{y}^{\top} \NN) {\erho}$ with respect to $y$:
\[
	\partial_y \Jac = \NN \Hessy{{\erho}} + \nabla_{y}^{\top} \NN \left( \nabla_{y}^{\top} {\erho}\right) 
						+ \Hessy{\NN} {\erho} + (\nabla_{y}^{\top} \NN) \nabla_{y}^{\top} {\erho},		
\]
where $\Hessy{\cdot}$ denotes the bilinear form associated with the Hessian matrix, i.e., $\Hessy{{\erho}}:=\Hessy{{\erho}(y)} = \partial_{y}(\nabla^\top {\erho}(y))$, $y \in \Omega$.
Thus, we have
\begin{align*}
	\partial_y \Jac^{-1} 
		&= - \Jac^{-1} \NN \Hessy{{\erho}} \Jac^{-1} 
			- \Jac^{-1} \left[ \nabla_{y}^{\top} \NN \left( \nabla_{y}^{\top} {\erho}\right) + (\nabla_{y}^{\top} \NN) \nabla_{y}^{\top} {\erho} + \Hessy{\NN} {\erho} \right] \Jac^{-1}.
\end{align*}
The entries of the above matrix can be computed without difficulty.
For instance, the matrix $- \Jac^{-1} \NN \Hessy{{\erho}} \Jac^{-1}$
has entries (in Einstein's notation)
\[
	 - \Jac^{pj} \left( \NN \partial_{y_{m}}{(\nabla^\top {\erho})} \right)_{jq} \Jac^{qk} \\
		 = - \Jac^{pj} \left( N_{j} \frac{\partial }{\partial y_{m}} \left( \frac{\partial {\erho}}{\partial y_q} \right) \right) \Jac^{qk}.
\]
For the sake of clarity, we provide below a detailed computation of the matrix $\partial_y \Jac^{-1}$ in terms of its entries.
To start, we apply the matrix inversion lemma or the so-called Sherman--Morrison--Woodbury formula \cite{Hager1989,ShermanMorrison1949,Woodbury1949} to obtain the following identity
\begin{align*}
	D_{x} Y(x)
		&= \left[ D_{y} Y^{-1}(y)\right]^{-1}\\
		&= \left[ \matI + (\nabla_{y}^{\top} \NN) {\erho} + \NN \nabla_{y}^{\top} {\erho} \right]^{-1}, \qquad (\NN:=\NN(y), \ {\erho}:={\erho}(y), \ y \in \Omega),\\
		&= \matI - \left[ I + (\nabla_{y}^{\top} \NN) {\erho} + \NN \nabla_{y}^{\top} {\erho} \right]^{-1} \left( (\nabla_{y}^{\top} \NN) {\erho} + \NN \nabla_{y}^{\top} {\erho} \right),\\
		&= \matI - \left[ D_{y} Y^{-1}(y)\right]^{-1} \left( (\nabla_{y}^{\top} \NN) {\erho} + \NN \nabla_{y}^{\top} {\erho} \right),
\end{align*}
where $\matI := (\delta_{ij}) \in \mathbb{R}^{d\times d}$ ($i,j=1,\ldots,d$) denotes the $d \times d$ identity matrix.
The above equations mean that
\[
	\Jac^{-1} = \matI - \Jac^{-1} \left( (\nabla_{y}^{\top} \NN) {\erho} + \NN \nabla_{y}^{\top} {\erho} \right) =: \matI - \Jac^{-1} \matM,
\]
where, of course,
\[
	\Jac^{-1} = \left( \Jac^{ij} \right) = (\Jac_{ij})^{-1} = \left( \delta_{ij} + N_{i} \frac{\partial {\erho}}{\partial y_{j}} + \frac{\partial N_{i}}{\partial y_{j}} {\erho} \right)^{-1}. 
\]
Differentiating the above equation with respect to $y$, we get
\begin{align*}
	&&&&\partial_y \Jac^{-1} = - \partial_y \Jac^{-1} \matM - \Jac^{-1} \partial_y \matM\\
	&&\Longleftrightarrow&&
	\partial_y \Jac^{-1} (\matI + \matM) &= - \Jac^{-1} \partial_y \matM \\
	&&\Longleftrightarrow&&
	\partial_y \Jac^{-1} &= - \Jac^{-1} (\partial_y \matM) (\matI + \matM)^{-1}\\
	&& \Longleftrightarrow&&
	\partial_y \Jac^{-1} &= - \Jac^{-1} (\partial_y \matM) \Jac^{-1}.
\end{align*}
We recall that the product of three matrices, say $\matA = (\matA_{ij})$, $\matB = (\matB_{ij})$, and $\matC = (\matC_{ij})$, in terms of its $jm$-th entry, is given by (using Einstein's notation) $(\matA\matB\matC)_{ij} = \matA_{ik} \matB_{kl} \matC_{lj}$.
Therefore, $\partial_{y_{i}} \Jac^{-1} = - \Jac^{-1} (\partial_{y_{i}} \matM) \Jac^{-1}$, $i=1,\ldots, d$, in terms of its entries, is equivalent to
\begin{align*}
	\frac{\partial }{\partial y_{m}} \Jac^{pk}
	&= - \sum_{j,q=1}^{d}\Jac^{pj} (\partial_{y_{m}} \matM)_{jq} \Jac^{qk}, \qquad (\text{$p$-$k$: row-column entry}),\\
%%%	&=  - \sum_{j,q=1}^{d}\Jac^{pj} \left(\frac{\partial}{\partial y_{m}}\matM_{jq} \right) \Jac^{qk}, \qquad \left( \matM:=\matM(y), \ y\in \Omega \right),\\ 
	&=  - \sum_{j,q=1}^{d}\Jac^{pj} \frac{\partial}{\partial y_{m}} \left( \frac{\partial {N}_{j}}{\partial y_q} {\erho} + {N}_{j} \frac{\partial {\erho}}{\partial y_q} \right) \Jac^{qk}\\ 
	&=  - \sum_{j,q=1}^{d}\Jac^{pj} 
				\left( \frac{\partial^{2} {N}_{j}}{\partial y_{m} \partial y_q} {\erho} + \frac{\partial {N}_{j}}{\partial y_q}  \frac{\partial {\erho}}{\partial y_{m}}  
						+ \frac{\partial {N}_{j}}{\partial y_{m}} \frac{\partial {\erho}}{\partial y_q} + {N}_{j} \frac{\partial^{2} {\erho}}{\partial y_{m} \partial y_q}  \right) 
					\Jac^{qk}.
\end{align*}
Inserting the above expression to \eqref{ex:second_sum}, we get the equivalent sum
\begin{align*}
&- \sum_{k,m,p = 1}^{d}\Jac^{mk}
		 \left\{ 
		 \sum_{j,q=1}^{d}\Jac^{pj} 
				\left( \frac{\partial^{2} {N}_{j}}{\partial y_{m} \partial y_q} {\erho} + \frac{\partial {N}_{j}}{\partial y_q}  \frac{\partial {\erho}}{\partial y_{m}}  
						+ \frac{\partial {N}_{j}}{\partial y_{m}} \frac{\partial {\erho}}{\partial y_q}  \right) 
					\Jac^{qk}
		 \right\} 
\frac{\partial \bigUn}{\partial y_{p}} \\
%%%
&\qquad - \sum_{k,m,p = 1}^{d}\Jac^{mk}
		 \left(
			 \sum_{j,q=1}^{d} \Jac^{pj} {N}_{j} \frac{\partial^{2} {\erho}}{\partial y_{m} \partial y_q}  \Jac^{qk}
		 \right) 
\frac{\partial \bigUn}{\partial y_{p}}  \ (=: S_{1} + S_{2}).
%%%
\end{align*}
To get the desired expression \eqref{ex:second_sum} (with the same notation on indices), we simply interchange the indices $k$ and $m$ in $S_{1}$, and for $S_{2}$, we apply the change of notations on indices: $k \leftarrow j$, $j \leftarrow p$, $p \leftarrow q$, and note that ${{\matA}}_{mp} = \sum_{k=1}^{d} \Jac^{mk} \Jac^{pk} $.
The resulting expression with these changes in notations finally provides the desired expansion of \eqref{ex:second_sum} given in \eqref{eq:splitting_operator_for_Laplacian} from which the forms of the operators $\pazo{K}_{\erho}$ and $\pazo{M}_{\erho}$ are made clear.
%
%
%
%%%%%%%%%%%%%%%%%%%%%%%%%%%%%%%%%%%%%%%%%%%%%%%%%%%%%%%%%%%%%%%%%%%%%%%%%%%%%%%%%%%%%%%%%%%%
%%%%%%%%%%%%%%%%%%%%%%%%%%%%%%%%%%%%%%%%%%%%%%%%%%%%%%%%%%%%%%%%%%%%%%%%%%%%%%%%%%%%%%%%%%%%
%
%
%
\section{Computations of $\delta \Anaught$ and $\delta B_{0}$}
\label{appendix:computations_of_the_variations}
In this appendix, we compute the variations $\delta \Anaught$ and $\delta B_{0}$ for the more general variation
\[
	\delta \pazo{F}_{\rho_{0}} = \left. 	\frac{d}{d \lambda} \pazo{F}_{\rho_{0} + \lambda({\erho} - \rho_{0})}	\right|_{\lambda = 0},
\]
where, basically, $\rho_{0}(y,0) = 0$, $y \in \Sigma$, and $\rho_{0}(y,0) = 0$, $y \in \Gamma$ (see \cite[Sec.~2]{BizhanovaSolonnikov2000}).
In our case, $\rho_{0} \equiv 0$.
For convenience, let us first recall that the Jacobi matrix $\Jac$ has entries given by
\[
	\Jac_{km} = \delta_{km} + N_{k} \frac{\partial {\erho}}{\partial y_{m}} + \frac{\partial N_{k}}{\partial y_{m}} {\erho},
\]
and that $\Jac^{km}$ are, on the other hand, the entries of the inverse matrix $\Jac^{-1}$ which is the Jacobi matrix of the transform $Y(x, t)$, and that $\Jac^{-\top} = (\Jac^{-1})^\top$.
Here, $\delta \Anaught = \Jac_{0}^{-1} \Jac_{0}^{-\top}$, $\Jac_{0} = \Jac \big|_{{\erho} = {\erho}_{0}}$, where ${\erho}_{0}$ satisfies the PDE system given by \eqref{eq:harmonic_extension_of_rho} with ${\erho}_{0}(y,0) = 0$.
Moreover, we recall the entries ${{\matA}}_{mp} := \sum_{k=1}^{d} \Jac^{mk} \Jac^{pk}$ of the matrix ${{\matA}} := \Jac^{-1} \Jac^{-\top}$. 
We shall express $\delta A$ in terms of $\delta \Jac$, and the computation of $\delta \Anaught$ goes as follows.
First, we note that, from the identity $\Jac_{0} \Jac_{0}^{-1} = \matI$, we get 
	$\delta (\Jac_{0} \Jac_{0}^{-1}) = \Jac_{0} (\delta \Jac_{0}^{-1}) +  (\delta \Jac_{0}) \Jac_{0}^{-1} = \vect{0}$.
So, $\delta \Jac_{0}^{-1} = - \Jac_{0}^{-1} (\delta \Jac_{0}) \Jac_{0}^{-1}$.
Hence, we have the following sequence of identities (the index $0$ is dropped)
\begin{align*}
	\delta{\matA}= \delta (\Jac^{-1} \Jac^{-\top})
%%%		=  \Jac^{-1} (\delta\Jac^{-\top}) + (\delta \Jac^{-1}) \Jac^{-\top}
		&=  \Jac^{-1} (\delta\Jac^{-1})^{\top} + (\delta \Jac^{-1}) \Jac^{-\top}\\ %%% \qquad (\delta \Jac^{-\top} = [\delta \Jac^{-1}]^{\top} ) \\
%%%		&=  \Jac^{-1} [- \Jac^{-1} (\delta \Jac) \Jac^{-1}]^{\top} + [- \Jac^{-1} (\delta \Jac) \Jac^{-1}] \Jac^{-\top}\\
		&=  \Jac^{-1} [- \Jac^{-\top} (\delta \Jac)^\top \Jac^{-\top}] + [- \Jac^{-1} (\delta \Jac) \Jac^{-1}] \Jac^{-\top}\\
		&=  -{\matA}(\delta \Jac)^\top \Jac^{-\top} - \Jac^{-1} (\delta \Jac) {\matA}. 
\end{align*}
Here, 
\[
	\delta \Jac =  \left. \frac{d}{d \lambda} \left\{ \delta_{km} + {N}_{k} \frac{\partial (\lambda{\erho})}{\partial y_{m}} + \frac{\partial {N}_{k}}{\partial y_{m}} (\lambda{\erho})	\right\} \right|_{\lambda = 0}
		= {N}_{k} \frac{\partial {\erho}}{\partial y_{m}} + \frac{\partial {N}_{k}}{\partial y_{m}} {\erho}.
\]
So,
\[
	\delta{\matA}= -{\matA} (\nabla \otimes ({\NN} {\erho})) \Jac^{-\top} - \Jac^{-1}  (\nabla \otimes ({\NN} {\erho}))^\top {\matA},
\]
or, in terms of its entries,
\[
	\delta {\matA}_{ij}^{(0)} = -\sum_{k,m=1} \left( {\matA}_{ik}^{(0)} \frac{\partial ({N}_{m} {\erho}) }{\partial y_{k}} \Jac_{0}^{jm} + \Jac_{0}^{im} \frac{\partial ({N}_{m} {\erho})}{\partial y_{k}} {\matA}_{mj}^{(0)}  \right).
\]
Next, let us compute the variation $\delta B_{0}$.
To do so, let us first compute the variations $\delta (\NN \cdot \Jac^{-\top}\nno)$ and $\delta\left( \Anaught \nno/(\NN \cdot \Jac_{0}^{-\top} \nno) \right)$, and note that $\delta \NN \perp \nno$ and $\NN \perp \delta\nno$:
%
%
%
%-----------------------------------------------------------------------------------------------------------------------------------------------------------------------------------------------------------
\begin{align*}
	\delta (\NN \cdot \Jac_{0}^{-\top}\nno)
%%%	&= \delta \NN \cdot \Jac_{0}^{-\top}\nno +  \NN \cdot \delta (\Jac_{0}^{-\top}\nno)\\
	&= \delta \NN \cdot \Jac_{0}^{-\top}\nno +  \NN \cdot \left(  \delta \Jac_{0}^{-\top} \nno + \Jac_{0}^{-\top} \delta  \nno \right)\\
%%%	&= \NN \cdot \delta \Jac_{0}^{-\top} \nno\\
%%%	&= \NN \cdot (\delta \Jac_{0}^{-1} )^\top \nno\\
%%%	&= \NN \cdot [- \Jac_{0}^{-1} (\delta \Jac) \Jac_{0}^{-1}]^{\top}\nno\\
	&=  \left( - \Jac_{0}^{-1} (\delta \Jac) \Jac_{0}^{-1} \right) \nno^\top \NN\\ %%% \qquad (M \vect{X} \cdot \vect{Y} = \vect{X} \cdot (M^\top \vect{Y}) = \vect{X}^\top (M^\top \vect{Y})  )\\
	&= - \sum_{i,k,m,j = 1} \Jac_{0}^{ki} \nn_{0k} \frac{\partial (N_{i} {\erho}) }{\partial y_{j}} \Jac_{0}^{jm} N_{m}\\
	&= - \Jac_{0}^{-\top} \nno \cdot (\nabla \otimes ({\NN} {\erho}))^\top \Jac_{0}^{-1} \NN. 
\end{align*}
%----------------------------------------------------------------------------------------------------------------------------------------------------------------------------------------------------------
%
%
%
Therefore, we have the following computations
%
%
%
%-----------------------------------------------------------------------------------------------------------------------------------------------------------------------------------------------------------
\begin{align*}
	\delta\left( \frac{\Anaught \nno}{\NN \cdot \Jac_{0}^{-\top} \nno} \right)
		& =\frac{\delta \Anaught \nno}{\NN \cdot \Jac_{0}^{-\top} \nno} 
				- \Anaught \nno \frac{\delta(\NN \cdot \Jac_{0}^{-\top} \nno)}{(\NN \cdot \Jac_{0}^{-\top} \nno)^{2}}\\
		& = \frac{\left[ - \Anaught (\nabla \otimes ({\NN} {\erho})) \Jac_{0}^{-\top} - \Jac_{0}^{-1}  (\nabla \otimes ({\NN} {\erho}))^\top \Anaught\right] \nno}{\NN \cdot \Jac_{0}^{-\top} \nno} \\
		&\qquad + \Anaught \nno \frac{ \Jac_{0}^{-\top} \nno \cdot (\nabla \otimes ({\NN} {\erho}))^\top \Jac_{0}^{-1} \NN }{(\NN \cdot \Jac_{0}^{-\top} \nno)^{2}}\\
		& = - \Anaught \nabla {\erho} - \Jac_{0}^{-1} \NN \frac{ \Anaught \nno \cdot \nabla {\erho} }{\NN \cdot \Jac_{0}^{-\top} \nno}
			+ \Anaught \nno \frac{ \Jac_{0}^{-1} \NN \cdot \nabla {\erho} }{\NN \cdot \Jac_{0}^{-\top} \nno}\\
		& \qquad + \left[ - \frac{ \Anaught (\nabla \otimes \NN) \Jac_{0}^{-\top} \nno }{\NN \cdot \Jac_{0}^{-\top} \nno}
				- \frac{ \Jac_{0}^{-1} (\nabla \otimes \NN)^\top \Anaught\nno }{\NN \cdot \Jac_{0}^{-\top} \nno}  \right. \\	
		& \qquad \qquad  + \left. \frac{ \Anaught \nno }{ \left( \NN \cdot \Jac_{0}^{-\top}\nno  \right)^{2}} \left( \Jac_{0}^{-\top} \nno \cdot (\nabla \otimes \NN)^\top \Jac_{0}^{-1} \NN \right) \right] {\erho}.
\end{align*}
%-----------------------------------------------------------------------------------------------------------------------------------------------------------------------------------------------------------
%
%
%	
Now, from \eqref{eq:B_sub_rho}, we recall that
\[
	B_{\erho}:= B_{{\erho}}(y) 
		= \left( \NN(y) \cdot \Jac_{0}^{-\top}\nno(y) \right)^{-1} \left( \nno(y) \cdot {{\matA}} \nno(y) \right),
	\qquad y \in \Gamma.
\]
So, we have the following calculations
%-----------------------------------------------------------------------------------------------------------------------------------------------------------------------------------------------------------
\begin{align*}
	\delta \left( \frac{ \nno \cdot \Anaught \nno }{ \NN \cdot \Jac_{0}^{-\top}\nno } \right)
		&= - \nno \cdot \Anaught \nabla {\erho} 
			- \nno \cdot \Jac_{0}^{-1} \NN \frac{ \Anaught \nno \cdot \nabla {\erho} }{\NN \cdot \Jac_{0}^{-\top} \nno}
			+ \nno \cdot \Anaught \nno \frac{ \Jac_{0}^{-1} \NN \cdot \nabla {\erho} }{\NN \cdot \Jac_{0}^{-\top} \nno}\\
		&\qquad + \left[ - \frac{ \Anaught \nno \cdot (\nabla \otimes \NN)^\top \Jac_{0}^{-\top} \nno }{\NN \cdot \Jac_{0}^{-\top} \nno}
				- \frac{ \Jac_{0}^{-1} \nno \cdot (\nabla \otimes \NN)^\top \Anaught\nno }{\NN \cdot \Jac_{0}^{-\top} \nno} \right.\\	
		&\qquad \qquad  + \left. \frac{ \nno \cdot \Anaught \nno }{ \left( \NN \cdot \Jac_{0}^{-\top}\nno  \right)^{2}} \left( \Jac_{0}^{-\top} \nno \cdot (\nabla \otimes \NN)^\top \Jac_{0}^{-1} \NN \right) \right] {\erho}\\
		&= - 2 \Anaught \nno \cdot \nabla {\erho} 
			-  \frac{ (\nno \cdot \Anaught \nno)}{\NN \cdot \Jac_{0}^{-\top} \nno} \NN \cdot \Jac_{0}^{-\top} \nabla {\erho} \\
		&\qquad + \left[ - \frac{ \Anaught \nno \cdot (\nabla \otimes \NN)^\top \Jac_{0}^{-\top} \nno }{\NN \cdot \Jac_{0}^{-\top} \nno}
				- \frac{ \Jac_{0}^{-1} \nno \cdot (\nabla \otimes \NN)^\top \Anaught\nno }{\NN \cdot \Jac_{0}^{-\top} \nno} \right. \\	
		&\qquad \qquad  + \left. \frac{ \nno \cdot \Anaught \nno }{ \left( \NN \cdot \Jac_{0}^{-\top}\nno  \right)^{2}} \left( \Jac_{0}^{-\top} \nno \cdot (\nabla \otimes \NN)^\top \Jac_{0}^{-1} \NN \right) \right] {\erho}\\
		&=: - 2 \Anaught \nno \cdot \nabla {\erho} 
			-  \frac{ (\nno \cdot \Anaught \nno)}{\NN \cdot \Jac_{0}^{-\top} \nno} \NN \cdot  \Jac_{0}^{-\top} \nabla {\erho}
				+ h(\NN, \partial_y \NN) {\erho}.
\end{align*}		
%-----------------------------------------------------------------------------------------------------------------------------------------------------------------------------------------------------------
%
%
%
%%%%%%%%%%%%%%%%%%%%%%%%%%%%%%%%%%%%%%%%%%%%%%%%%%%%%%%%%%%%%%%%%%%%%%%%%%%%%%
%%%%%%%%%%%%%%%%%%%%%%%%%%%%%%%%%%%%%%%%%%%%%%%%%%%%%%%%%%%%%%%%%%%%%%%%%%%%%%%%%%%%%%%%%%%%%%%%%%%%%%%%%%%%%%%%%%%%%%%%%%%%%%%%%%%%%%%%%%%%%%%%%%%%%%%%%%%%
\section{Lemmata Proofs}\label{appx:lemma_proofs}
For the benefit of the reader, we will prove some of the lemmas used in the paper here.
%
%-----------------------------------------------------------------------------------------------------------------------------------------------------------------------------------------------------------
\subsection{Proof of Lemma~\ref{lem:how_to_show_diffeomorphism}}\label{appx:how_to_show_diffeomorphism}
%-----------------------------------------------------------------------------------------------------------------------------------------------------------------------------------------------------------
%
%
In the proof we require a simple matrix inequality.
For $\matA = (a_{ij}) \in \mathbb{R}^{d \times d}$, we define $\norm{\matA} = \sqrt{\sum_{i,j=1}^{d}{a_{ij}^{2}}}$ (i.e., $\norm{\cdot}$ denotes the usual entry-wise matrix norm).
Then, for $\matA, \matB \in \mathbb{R}^{d \times d}$, we have 
	\[
		\norm{\matA \matB}^{2} = \sum_{i=1}^{d} \sum_{j=1}^{d} \left( \sum_{k=1}^{d} a_{ik} b_{kj} \right)^{2} 
		\leqslant \sum_{i=1}^{d} \sum_{j=1}^{d} \left( \sum_{k=1}^{d} a_{ik}^{2} \right) \left( \sum_{k=1}^{d} b_{kj}^{2} \right)
		= \norm{\matA}^{2}\norm{\matB}^{2}.
	\]
So, $\norm{\matA \matB} \leqslant \norm{\matA} \norm{\matB}$.
\begin{proof}[{Proof of Lemma~\ref{lem:how_to_show_diffeomorphism}}]
	Let $k \in \mathbb{N}$, $\alpha \in [0,1)$, and $\phi \in {{C}}_{0}^{k+\alpha}(\Omega)$. 
	Since $\phi \in {{C}}_{0}^{k+\alpha}(\mathbb{R}^{d})$, we consider the case $\Omega = \mathbb{R}^{d}$ without loss of generality.
	To show that $\varphi \in \operatorname{Diffeo}^{k+\alpha}(\Omega,\Omega)$, we need to prove the following:
	\begin{itemize}
		\item[(i)] $\varphi$ is injective;
		\item[(ii)] $\varphi$ is surjective;
		\item[(iii)] $\op{det}(\nabla^{\top}\varphi) \neq 0$ for $x \in \baromega$.
	\end{itemize}
	%
	%
	%
	%	INJECTIVITY
	Let us prove that $\varphi$ is injective.
	That is, for every $x,y \in \Omega$ such that $\varphi(x) = \varphi(y)$, we have $x=y$.
	So, let us suppose that  $x + \phi(x) = y + \phi(y)$.
	Then, we have the following sequence of computations:
	\begin{align*}
		\abs{x-y} = \abs{\phi(y) - \phi(x)}
		&= \left|{\int_{0}^{1} \frac{d}{dt}\left[ \phi(x+t(y-x))\right] \, dt}\right|\\
		&= \left|{\int_{0}^{1} \left[ \nabla^{\top} \phi(x+t(y-x))\right] (y-x)\, dt}\right|\\
		&\leqslant \int_{0}^{1} \norm{\nabla^{\top} \phi(x+t(y-x))} \abs{y-x}\, dt.
	\end{align*}
	Because $\max_{x \in \baromega} \norm{\nabla^{\top}\phi(x)} < 1$, then there exists ${\varepsilon_{\star}} > 0$ such that $\norm{\nabla^{\top}\phi(x+t(y-x))}  \leqslant {\varepsilon_{\star}} < 1$.
	Hence, 
	\begin{align*}
		\abs{x-y} = \abs{\phi(y) - \phi(x)}
		& \leqslant {\varepsilon_{\star}} \int_{0}^{1}\abs{y-x} \, dt = {\varepsilon_{\star}} \abs{y-x}.
	\end{align*}
	This implies that $(1-{\varepsilon_{\star}}) \abs{y-x} = 0$, or equivalently, $y=x$.
	This proves the injectivity of $\varphi$. 
	
	%
	%	 SURJECTIVITY
	Next let us prove that $\varphi$ is surjective.
	That is, for every $y \in \mathbb{R}^{d}$, there exists $x\in \mathbb{R}^{d}$ such that $\varphi(x) = y$.
	Let us define the map $T_{y}: \mathbb{R}^{d} \to \mathbb{R}^{d}$ where $T_{y}(x):= y - \phi(x)$.
	We want to solve the equation $T_{y}(x) = x$. 
	Note that
	\[
		\abs{T_{y}(x)-T_{y}(z)} = \abs{\phi(x) - \phi(z)} \leqslant {\varepsilon_{\star}}\abs{x-z}.
	\]
	Because ${\varepsilon_{\star}} < 1$, then by Contraction Mapping Theorem, there exists an $x\in \mathbb{R}^{d}$ such that  $x = T_{y}(x)$.
	This holds for arbitrary $y \in \mathbb{R}^{d}$, proving the surjectivity of $\varphi$.
	
	%
	%	 NON-ZERO DETERMINANT
	Lastly, let us show that $\op{det}(\nabla^{\top}\varphi) \neq 0$ in $\mathbb{R}^{d}$.
	Note that $\nabla^{\top}\varphi = \matI + \nabla^{\top} \phi$, where $\matI = \matI(x) = x$ is the identity matrix/operator.
	If $\norm{\nabla^{\top} \phi(x)} < 1$, then $\op{det}(\matI + \nabla^{\top} \phi) \neq 0$.
	Let $\matA(x) := \nabla^{\top} \phi(x)$, $\matA \in {{C}}^{0}(\baromega,\mathbb{R}^{d})$, $\norm{\matA(x)} \leqslant {\varepsilon_{\star}} < 1$, for all $x \in \baromega$, and $\matB_{n}(x) := \sum_{k=0}^{n} (-\matA(x))^{k}$, $\matB_{n} \in {{C}}^{0}(\baromega;\mathbb{R}^{d \times d})$ and $\matB(x) := \sum_{k=0}^{\infty} (-\matA(x))^{k}$.
	Hence, 
	\[
		\norm{\matB_{n}(x)} \leqslant \sum_{k=0}^{n} \norm{\matA(x)}^{k} \leqslant \sum_{k=0}^{n} \varepsilon_{\star}^{k} = \frac{1-\varepsilon_{\star}^{n+1}}{1-{\varepsilon_{\star}}}
		\leqslant \frac{1}{1-{\varepsilon_{\star}}},
	\]
	and $\{\matB_{n}\}$ is a Cauchy sequence in ${{C}}^{0}(\baromega;\mathbb{R}^{d \times d})$ and $\matB(x) = \lim_{n\to\infty} \matB_{n}(x)$.
	Therefore, $\matB \in {{C}}^{0}(\baromega;\mathbb{R}^{d \times d})$ and $\lim_{n\to\infty} \matB_{n} = \matB$ in ${{C}}^{0}(\baromega;\mathbb{R}^{d \times d})$.
	These arguments show that $\matB(x) = \sum_{n=0}^{\infty} (-\matA(x))^{n}$ uniformly converges in $\baromega$ where $\matB \in {{C}}^{0}(\baromega; \mathbb{R}^{d \times d})$ and $\matB(x) = (\matI + \matA(x))^{-1}$.
	 
	Now, it follows that,
	\[
		(\matI + \matA(x)) \matB(x) = \lim_{n\to\infty} (\matI + \matA(x)) \matB_{n}(x) = \lim_{n\to\infty} (\matI - (- \matA(x))^{n+1}) = \matI,
	\]
	or equivalently, $\matI + \matA(x) = \matB^{-1}(x)$, for all $x \in \baromega$.
	Taking the determinant of both sides of this equation, together with the matrix inequality, we deduce that $\op{det}(\nabla^{\top}\varphi) \neq 0$ in $\mathbb{R}^{d}$.
	
	To conclude that $\op{det}(\nabla^{\top}\varphi) > 0$ in $\mathbb{R}^{d}$.
	We argue as follows.
	Consider the set $\mathcal{O}:=\{\matA \in \mathbb{R}^{d \times d} \mid \norm{\matA} < 1\}$.
	Clearly, $\mathcal{O}$ is a connected convex open set.
	Let us define $F(\matA) := \op{det}(\matI + \matA)$.
	We already prove that $F(\matA) \neq 0$ for all $\matA \in \mathcal{O}$ and $F \in {{C}}^{0}(\mathcal{O};\mathbb{R})$.
	Evidently, the zero matrix $\textsf{O} \in \mathcal{O}$ and $F(\textsf{O}) = \op{det} \matI = 1$.
	Hence, it follows that $F(\matA) > 0$, for all $\matA \in \mathcal{O}$, as desired.
	
	To finish the proof, let us note that, for every $x \in \Omega$, we have $\varphi(x) \in \Omega$.
	Indeed, if $y = \varphi(x) \in \Omega^{c}$, we have $\Omega \ni x = \varphi^{-1}(y) = y \in \Omega^{c}$, which is a contradiction.
	This concludes the proof.
\end{proof}
%

%
%-----------------------------------------------------------------------------------------------------------------------------------------------------------------------------------------------------------
\subsection{Proof of Lemma~\ref{lem:map_{i}s_bijective}}\label{appx:proof_map_is_bijective}
%-----------------------------------------------------------------------------------------------------------------------------------------------------------------------------------------------------------
%
%
\begin{proof}[Proof of Lemma~\ref{lem:map_{i}s_bijective}]
	We first show that for any $y \in \baromega$, we have that $Z(y) \in \overline{\Omega(\rho)}$.
	We prove this by contradiction.
	So, suppose the statement is not true, then there is a point $y_{a} \in \Omega$ such that $Z(y_{a}) \in \Gamma(\rho)\cup \Sigma$, $\Gamma(\rho) = {\StripS}(\rho)$ for sufficiently small $\rho > 0$ (i.e., $\norm{\rho}_{C^{2+\alpha}(\Gamma)} < \varepsilon$ where $0 < \varepsilon \ll 1$).
	We have the following situations:
	\begin{enumerate}
		\item[(i)] there is a point $z_{a} \in \Omega$ such that $Z(z_{a}) \in \Omega(\rho)$;
		\item[(ii)] there is a point $z_{b} \in \baromega$ such that $Z(z_{b}) \not\in \overline{\Omega(\rho)}$;
		\item[(iii)] there exists a curve $\mathcal{C}(z_{a},z_{b})$ such that $\mathcal{C}(z_{a},z_{b}) \setminus \{z_{b}\} \subset \Omega$;
		\item[(iv)] there is a point $y_{a} \in \Omega \cap \mathcal{C}(z_{a},z_{b})$ such that $Z(y_{a}) \in \Gamma(\rho) \cup \Sigma$.
	\end{enumerate}
	However, we can find a point $y_{b} \in \Gamma \cup \Sigma$ such that $Z(y_{b}) = Z(y_{a})$ because $Z$ is a ${{C}}^{2+\alpha}$-diffeomorphism from $\partial \Omega$ onto $\partial \Omega(\rho)$.
	But, as we shall show next, the map is injective, i.e., we have that $y_{b} = y_{a}$, a contradiction.
	
	Now, let us verify that the map is one-to-one.
	We need to show that for any point $y_{a}, y_{b} \in \baromega$ such that $Z(y_{b}) = Z(y_{a})$, it must be that $y_{b} = y_{a}$.
	Let us note that for any pair of points $y_{a}, y_{b} \in \baromega$, there exists a curve $\mathcal{C}(y_{a},y_{b}) \subset \baromega$ of class ${{C}}^{1}$ such that $\abs{\mathcal{C}(y_{a},y_{b})} \leqslant c(\Omega) |y_{b} - y_{a}|$, for some constant $c(\Omega) > 0$.
%%%	From \cite[p.~52]{Ciarlet1988}, we know that for a connected subset $\Omega$ of $\mathbb{R}^{d}$, that is a bounded and open having a Lipschitz continuous boundary, then there is a number $c(\Omega) > 0$, dependent only on $\Omega$, such that, for any given points $y_{a}, y_{b} \in \baromega$, one can find a finite sequence of points $z_{k}$, $k = 1, \ldots, l+1$, such that the following properties hold:
%%%	\begin{enumerate}
%%%		\item[(i)] $z_{1} = y_{a}$, $z_{k} \in \Omega$ for $2 \leqslant k \leqslant l$, $z_{l+1} = y_{b}$,
%%%		%
%%%		\item[(ii)] $(z_{k}, z_{k+1}) \subset \Omega$ for $1 \leqslant k \leqslant l$,
%%%		%
%%%		\item[(iii)] $\sum_{k=1}^{l} |z_{k+1} - z_{k}| \leqslant c(\Omega) |y_{b} - y_{a}|$.
%%%		%
%%%	\end{enumerate}
%%%	%
%%%	%
%%%	Applying the above result in our case, we connect $y_{a}$ and $y_{b}$ by a chain $z_{k}$, $k = 1, \ldots, l+1$, satisfying properties (i)--(iii) given previously.
	Hence, the equation $Z(y_{b}) = Z(y_{a})$ implies that $y_{b} + \NN(y_{b}) \erho(y_{b}) = y_{a} + \NN(y_{a}) \erho(y_{a})$, and we get the inequality
	\[
		\abs{y_{b} - y_{a}} = \abs{ \NN(y_{b}) \erho(y_{b}) - \NN(y_{a}) \erho(y_{a}) }
			\leqslant c(\Omega)  \max_{y \in \baromega} \abs{ \nabla( \NN(y) \erho(y) ) } \abs{y_{b} - y_{a}}.
	\]
	By successively applying the inequality, we will get $\abs{y_{b} - y_{a}} = 0$, or equivalently, $y_{b} = y_{a}$.
	This completes the proof of the lemma.
\end{proof}
\subsection{Proof of Lemma~\ref{lem:basis_lemma}}\label{appx:proof_of_basis_lemma}
\begin{proof}[Proof of Lemma~\ref{lem:basis_lemma}]
%%%{\color{red} The existence result follows from \cite[Sec.~3]{Antontsevetal2003}, where the general linear problem with the time-derivative in the boundary condition was studied (A rigorous proof has to be provided!)}.
The existence result follows from \cite[Sec.~3, Thm.~4, p.~1270]{Antontsevetal2003}, where the general linear problem with the time-derivative in the boundary condition was studied.
\fergy{We emphasize that the assumption of the negativity of $\bnormal$ is an essential condition in the existence proof presented in \cite{Antontsevetal2003} (see Equation~(3.4)). 
This same assumption also serves as a crucial requirement in the proofs of Theorems~4.2 and 5.2 in \cite{EscherSimonett1997} (see Remark~4.3).}

For the proof of the \textit{a priori} estimate \eqref{eq:basis_estimate} we apply Schauder's method.
That is, using the estimate for the solution to the corresponding model problem in a half-space \cite[Chap.~3, Sec.~2]{LadyzenskajaUralceva1968}, one first obtain an estimate of the form
\[
		\vertiii{\Theta}^{(2+\alpha)}_{\baromegagamma; \, [0, t]} 
		 \leqslant c \left(  \abs{\Theta}^{(2)}_{\maxtimebaromega} 
			+  \abs{\psi_{2}}^{(2+\alpha)}_{\maxtimesigma} 
			+  \abs{\Theta}^{(1+\alpha)}_{\maxtimegamma} 
			+  \abs{\psi_{1}}^{(1+\alpha)}_{\maxtimegamma}\right),	
\]
for some constant $c>0$.
Here we apply the interpolation inequality (see, e.g., \cite[Equ.~(2.1), p.~117]{LadyzenskajaUralceva1968} or \cite[Chap.~6, Sec.~8]{GilbargTrudinger2001})
\begin{equation}\label{eq:interpolation_for_u2}
	\abs{u}_{\baromega}^{(2)} \leqslant \varepsilon_{1} \sum_{\abs{k}=2} |{{D}}^ku|_{\baromega}^{(\alpha)} + c_{\varepsilon_{1}} {\maxbaromega}\abs{u},
\end{equation}
where $\varepsilon_{1}>0$ is an arbitrary small number and $c_{\varepsilon_{1}} \to \infty$ as $\varepsilon_{1} \to 0$, to $\abs{\Theta}^{(2)}_{{{{\baromega}}}}$.
We take $\varepsilon_{1}$ small enough so that we will get an estimate of the form
\begin{equation}\label{eq:estimate_1}
	\vertiii{\Theta}^{(2+\alpha)}_{\baromegagamma; \, [0, t]} 
	\leqslant
	c \left(  \abs{\Theta}^{(1+\alpha)}_{\maxtimegamma} 
									+ \abs{\psi_{1}}^{(1+\alpha)}_{\maxtimegamma} 
									+ \abs{\psi_{2}}^{(2+\alpha)}_{\maxtimesigma}
									+ \abs{\Theta}^{\infty}_{\baromega; \, [0, t]}  \right).
\end{equation}
%
%
%
%%%Hereinafter, we write ${D}u := \sum_{\abs{j} = 1} {D}^{j} u $, for simplicity.

Now, to get further with our computation, we note the following equalities for $u \in {{C}}^{2+\alpha}(\baromega)$:
\begin{equation}
\label{eq:interpolation_1_plusalpha}
\left\{
\begin{aligned}
	\abs{u}^{(1+\alpha)}_{\baromega} 
	&= \sum_{\abs{j} < 1 + \alpha} {\maxbaromega} \abs{{D}^{j} u(x)} 
		+ [ u ]^{(1+\alpha)}_{\baromega}\\
	&= {\maxbaromega} \abs{\fergy{u}}
		+ \sum_{\abs{j} = 1} {\maxbaromega} \abs{{D}^{j} u(x)} 
		+ \sum_{\abs{j} = 1} \max_{x, {{\hat{x}}} \in \baromega} \frac{\abs{{D}^{j}u(x) - {D}^{j}u(y)}}{\abs{x-y}^{\alpha}}\\
	[ {D}u ]^{(\alpha)}_{\baromega} 
	&=\sum_{\abs{j} = [\alpha]} \max_{x, {{\hat{x}}} \in \baromega} \frac{\abs{{D}^{j+1}u(x) - {D}^{j+1}u(y)}}{\abs{x-y}^{\alpha - [\alpha]}}\\
	&=\sum_{\abs{j} = [1+\alpha]} \max_{x, {{\hat{x}}} \in \baromega} \frac{\abs{{D}^{j}u(x) - {D}^{j}u(y)}}{\abs{x-y}^{1 + \alpha - [1 + \alpha]}} \\
	&\equiv [ u ]^{(1+\alpha)}_{\baromega}.
\end{aligned}\right.
\end{equation}
In above equalities, we apply the interpolation inequalities (see \cite[Equ.~(2.1), p.~117]{LadyzenskajaUralceva1968})
\begin{equation}\label{eq:interpolation_set2}
	[{D} {u} ]_{\baromega}^{(\alpha)} 
		%%%&\leqslant \left( \abs{{u}}_{\baromega}^{(2)} \right)^{\alpha} \left( \abs{{u}}_{\baromega} \right)^{1-\alpha}
		\leqslant \varepsilon_{2} \abs{{u}}_{\baromega}^{(2)} + c_{\varepsilon_{2}} \abs{{u}}_{\baromega}^{(0)}
		\qquad\text{and}\qquad
	[{u}]_{\baromega}^{(\alpha)} 
		\leqslant \varepsilon_{3} \abs{{u}}_{\baromega}^{(1)} + c_{\varepsilon_3} \abs{{u}}_{\baromega}^{(0)},
\end{equation}
together with \eqref{eq:interpolation_for_u2} to get an estimate for $\abs{\Theta}^{(1+\alpha)}_{\maxtimegamma}$.
We choose small enough values for $\varepsilon_{2} > 0$ and $\varepsilon_3 > 0$ such that the right-hand side of the inequality \eqref{eq:estimate_1} contains only the norms of known functions and the quantity ${\maxbaromega}|\Theta(\cdot,\tau)|$.
Since $\Theta$ is harmonic in $\Omega$, we can then apply the maximum principle (see, e.g., \cite[Chap.~I.3, III, p.~7]{Miranda1970}) on the aforesaid norm to obtain
\[
	\abs{\Theta}^{\infty}_{\baromega; \, [0, t]} 
	\leqslant \abs{\psi_{2}}^{\infty}_{\Sigma; \, [0, t]} + \abs{\Theta}^{\infty}_{\Gamma; \, [0, t]}.
\]
We next estimate $\abs{\Theta}^{\infty}_{\Gamma; \, [0, t]}$.
Applying the fundamental theorem of calculus, together with the homogenous initial condition for the function $\Theta\big|_{\Gamma}$, we get
\begin{equation}\label{eq:estimate_from_max_principle}
	\abs{\Theta}^{\infty}_{\Gamma; \, [0, t]}
	= \max_{0 \leqslant \tau \leqslant t} \max_{\Gamma} \abs{\int_{0}^{\tau} { \frac{\partial }{\partial s}} \Theta(\cdot, s) {{{d}}s} }
	 \leqslant \int_{0}^{t}  {\maxgamma} \left|   { \frac{\partial }{\partial s}} \Theta(\cdot, s)  \right| {{{d}}s}.	
\end{equation}
Inserting this estimate to \eqref{eq:estimate_1}, and letting 
${{{\Psi}}(t)} =  \abs{\psi_{1}}^{(1+\alpha)}_{\maxtimegamma} +   \abs{\psi_{2}}^{(2+\alpha)}_{\maxtimesigma}$, leads to
\begin{equation}\label{eq:estimate_2}
\begin{aligned}
& \vertiii{\Theta}^{(2+\alpha)}_{\baromegagamma; \, [0, t]}
			\leqslant c \left( {{{\Psi}}(t)} +  \int_{0}^{t}  {\maxgamma} \abs{  { \frac{\partial }{\partial s}} \Theta(\cdot, s) } {{{d}}s} \right).
\end{aligned}
\end{equation}
Because ${{{\Psi}}(t)}$ is non-negative, then we may apply Gr\"{o}nwall's inequality %%% for locally finite measures
(see, e.g., \cite[Appx. 5.1, p.~498]{EthierKurtz2005}) to \eqref{eq:estimate_2} deducing the desired estimate \eqref{eq:basis_estimate}.
\end{proof}
%
%
%-----------------------------------------------------------------------------------------------------------------------------------------------------------------------------------------------------------
\subsection{Proof of Lemma~\ref{lem:key_interpolation_inequalities}}\label{appx:proof_of_interpolation_inequalities}
%-----------------------------------------------------------------------------------------------------------------------------------------------------------------------------------------------------------
%
%
%%We provide here the proof of Lemma~\ref{lem:key_interpolation_inequalities}.
%
%
\begin{proof}[Proof of Lemma~\ref{lem:key_interpolation_inequalities}]
	For some $\alpha \in (0,1)$, let $\Omega \subset \mathbb{R}^{d}$ be an open, connected, bounded set of class ${{C}}^{2+\alpha}$ and $u \in {{C}}^{2+\alpha}(\baromega)$.
	Then, $\Omega$ satisfies the \textit{cone} condition and, in particular, the \textit{(interior/exterior) ball} condition.
	%%% In fact, from the point of view of the interior bfig/sphere condition, we have the sequence of sets ${{C}}^{2, \alpha} \subsetneq {{C}}^{1,1} = \text{ball condition} \subsetneq {{C}}^{0,1}$.
	Let us suppose that, in specific, $\Omega$ has cone property with the cone having the opening angle $\hat{\theta}:=\hat{\theta}(\Omega)$ and height $h$.
	To prove estimates \eqref{eq:interp_ineq1} and \eqref{eq:interp_ineq2}, it is enough to show that the following inequalities are true:
	\begin{align}
		[u]_{\baromega}^{(1)} &\leqslant \varepsilon^{1+\alpha} [u]_{\baromega}^{(2+\alpha)} + \frac{c_{7}}{\varepsilon} \abs{u}_{\baromega}^{(0)},
		\label{eq:interp_ineq1v2}\\
		[u]_{\baromega}^{(2)} &\leqslant \varepsilon^{\alpha} [u]_{\baromega}^{(2+\alpha)} + \frac{c_{8}}{\varepsilon^{2}} \abs{u}_{\baromega}^{(0)}.
		\label{eq:interp_ineq2v2}
	\end{align}
	We will only verify in detail inequality \eqref{eq:interp_ineq2v2}, but \eqref{eq:interp_ineq1v2} can be shown in a similar fashion.
	To this end, let us consider another cone $\unitepscone:=\unitepscone(\hat{\theta},1)$ with the same opening angle, but with cone height equal to $1$.
	Using the interpolation inequality \eqref{eq:append_interp_ineq4} (with $\varepsilon = 1$) in Lemma~\ref{lem:appendix_interpolation_inequalities}, we get
	\begin{equation}\label{eq:inequality_condition_{1}}
	[u]_{\unitepscone}^{(2)} \leqslant [u]_{\unitepscone}^{(2+\alpha)} + c(d,\hat{\theta})\abs{u}_{\unitepscone}^{(0)},
	\end{equation}
	for any function $u \in {{C}}^{2+\alpha}(\barunitepscone)$, where $c(d,\hat{\theta}) > 0$ is a constant that depends on $d$ and $\hat{\theta}$.
	To get the desired coefficients in the inequality condition, we observe that for any constant $\tilde{\mu} > 0$ and function $\tilde{u}(\tilde{x}) = u(\tilde{\mu}x)$ we have the equality
	\begin{equation}\label{eq:equality_identity}
		[\tilde{u}]_{\baromega}^{(l)} = \tilde{\mu}^l [u]_{\baromega}^{(l)},
	\end{equation}
	for all $0 \leqslant l \leqslant 2+\alpha$.
	So, for $u \in {{C}}^{2+\alpha}(\barepscone)$, where the cone ${\epscone}:={\epscone}(\hat{\theta},\varepsilon)$ with its vertex at the origin, we apply the change of variables $\tilde{x} = x/\varepsilon$, $0 < \varepsilon \leqslant h$, and consider the function $\tilde{u}(\tilde{x}) = u(\varepsilon \tilde{x}) = u(x)$.
	Obviously, $\tilde{u} \in {{C}}^{2+\alpha}(\barunitepscone)$, and \eqref{eq:inequality_condition_{1}} also holds for $\tilde{u}$.
	Applying identity \eqref{eq:equality_identity} to our case, we get
	\begin{equation}\label{eq:initial_estimate}
	[\tilde{u}]_{\epscone}^{(2)} \leqslant [\tilde{u}]_{\epscone}^{(2+\alpha)} + \tilde{c}(d,\hat{\theta})|\tilde{u}|_{\epscone}^{(0)}
	\quad \Longleftrightarrow \quad 
	[u]_{\epscone}^{(2)} \leqslant \varepsilon^{\alpha} [u]_{\epscone}^{(2+\alpha)} + \frac{\tilde{c}(d,\hat{\theta})}{\varepsilon^{2}}\abs{u}_{\epscone}^{(0)},
	\end{equation}
	for some constant $\tilde{c}(d,\hat{\theta})>0$.
	Now, note that for any $x \in \Omega \in {{C}}^{2+\alpha}$, we can construct a cone ${\epscone}$ with vertex $x$ and such that ${\epscone} \subset \Omega$.
	Hence, using \eqref{eq:initial_estimate}, we can further get the estimate
	\[
	[u]_{\epscone}^{(2)} \leqslant \varepsilon^{\alpha} [u]_{\baromega}^{(2+\alpha)} + \frac{\tilde{c}}{\varepsilon^{2}}\abs{u}_{\baromega}^{(0)}.
	\]	
	Because $x \in \Omega $ is taken arbitrarily, we conclude that
	\[
	[u]_{\baromega}^{(2)} \leqslant \varepsilon^{\alpha} [u]_{\baromega}^{(2+\alpha)} + \frac{c_{7}}{\varepsilon^{2}}\abs{u}_{\baromega}^{(0)},
	\]		
	for some constant $c_{7} > 0$, proving \eqref{eq:interp_ineq2v2}.
	Using the definition of the norm $\abs{\cdot}_{\baromega}^{(l)}$, we finally recover inequality \eqref{eq:interp_ineq2}.
	The other interpolation inequality \eqref{eq:interp_ineq1v2} from which \eqref{eq:interp_ineq1} relies can be shown in a similar manner as above with the help of the interpolation inequality \eqref{eq:append_interp_ineq3} in Lemma~\ref{lem:appendix_interpolation_inequalities} (stated in the subsection~\ref{appx:interpolation_inequalities} below).
\end{proof}
%
%
%-----------------------------------------------------------------------------------------------------------------------------------------------------------------------------------------------------------
\subsection{Interpolation inequalities}\label{appx:interpolation_inequalities}
%-----------------------------------------------------------------------------------------------------------------------------------------------------------------------------------------------------------
%
We justify in this subsection the interpolation inequalities used to proved Lemma~\ref{lem:key_interpolation_inequalities}.
Similar and more general interpolation inequalities are given and proved in \cite[Sec.~6.8, pp.~130--136]{GilbargTrudinger2001} (see also \cite[Cor. 5.3.4, p.~144]{Fiorenza2017} for analogous results).
%In connection with these results, we recall the definition of the 
%
\begin{lemma}\label{lem:appendix_interpolation_inequalities}
	Let $B_{r}$ be a ball in $\mathbb{R}^{d}$ with radius $r$ and consider a function $u \in {{C}}^{1+\alpha}(\overline{B}_{r})$, $\alpha \in (0,1)$.
	For any real number $\varepsilon \in (0, r]$, the following inequalities hold:
	\begin{align}
		[u]_{B_{r}}^{(1)} &\leqslant \varepsilon^{\alpha} [u]_{B_{r}}^{(1+\alpha)} + \frac{{{c}_{d}}}{\varepsilon} \abs{u}_{B_{r}}^{(0)},
		\label{eq:append_interp_ineq1}	\\
		[u]_{B_{r}}^{(\alpha)} &\leqslant \varepsilon [u]_{B_{r}}^{(1+\alpha)} + \frac{{{c}_{d}}}{\varepsilon^{\alpha}} \abs{u}_{B_{r}}^{(0)},
		\label{eq:append_interp_ineq2}
	\end{align}	
	where ${{c}_{d}}$ is a positive constant that depends only on the dimension $d$.
	Moreover, if $u \in {{C}}^{2+\alpha}(\overline{B}_{r})$, then we further have
	\begin{align}
		[u]_{B_{r}}^{(1)} &\leqslant \varepsilon^{(1+\alpha)} [u]_{B_{r}}^{(2+\alpha)} + \frac{{{c}_{d}}}{\varepsilon} \abs{u}_{B_{r}}^{(0)},
		\label{eq:append_interp_ineq3}\\
		[u]_{B_{r}}^{(2)} &\leqslant \varepsilon^{\alpha} [u]_{B_{r}}^{(2+\alpha)} + \frac{{{c}_{d}}}{\varepsilon^{2}} \abs{u}_{B_{r}}^{(0)}.
		\label{eq:append_interp_ineq4}
	\end{align}	
\end{lemma}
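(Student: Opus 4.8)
\textbf{Proof proposal for Lemma~\ref{lem:appendix_interpolation_inequalities}.}

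The plan is to establish the four interpolation inequalities for a ball by a scaling argument reducing everything to the unit ball, where the estimates are classical Schauder-type interpolation bounds. First I would record the two scaling facts that drive the proof: if $u \in {{C}}^{l+\alpha}(\overline{B}_{r})$ and we set $\tilde{u}(\tilde{x}) := u(r\tilde{x})$ for $\tilde{x} \in \overline{B}_{1}$, then $[\tilde{u}]^{(k)}_{B_{1}} = r^{k}[u]^{(k)}_{B_{r}}$ for every integer $k$ and, more generally, $[\tilde{u}]^{(k+\beta)}_{B_{1}} = r^{k+\beta}[u]^{(k+\beta)}_{B_{r}}$ for $\beta \in (0,1)$, while $\abs{\tilde{u}}^{(0)}_{B_{1}} = \abs{u}^{(0)}_{B_{r}}$. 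This is the same identity \eqref{eq:equality_identity} used in the proof of Lemma~\ref{lem:key_interpolation_inequalities}. Thus it suffices to prove each inequality on the fixed ball $B_{1}$ with a constant ${{c}_{d}}$ depending only on $d$, and the general-$r$, general-$\varepsilon$ forms follow by first rescaling from $B_{r}$ to $B_{1}$ and then applying the $B_{1}$-version with the rescaled small parameter $\varepsilon/r$.

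Next I would prove the baseline estimates on $B_{1}$. For a first-order derivative $D^{\beta}u$ with $\abs{\beta}=1$, fix two points $x,y \in B_{1}$; by a one-variable mean value / Taylor argument along the segment joining them (which stays in the convex set $B_{1}$), one gets $\abs{D^{\beta}u(x)} \leqslant c_{d}\big( \abs{u}^{(0)}_{B_{1}} + [D^{\beta}u]^{(\alpha)}_{B_{1}}\big)$ by integrating the difference quotient; combined with the trivial bound this yields $[u]^{(1)}_{B_{1}} \leqslant [u]^{(1+\alpha)}_{B_{1}} + c_{d}\abs{u}^{(0)}_{B_{1}}$, which is \eqref{eq:append_interp_ineq1} with $\varepsilon = 1$. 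The Hölder seminorm bound $[u]^{(\alpha)}_{B_{1}} \leqslant [u]^{(1)}_{B_{1}} + c_{d}\abs{u}^{(0)}_{B_{1}}$, together with the previous line, gives \eqref{eq:append_interp_ineq2} at $\varepsilon=1$. For the second-order estimates one argues one order higher: for $\abs{\beta}=2$, apply the first-order bound to $D^{\gamma}u$ with $\abs{\gamma}=1$ to control $[u]^{(2)}_{B_{1}}$ by $[u]^{(1)}_{B_{1}}$ and $[D^{\gamma}u]^{(1+\alpha)}_{B_{1}} = [u]^{(2+\alpha)}_{B_{1}}$, and then absorb $[u]^{(1)}_{B_{1}}$ using \eqref{eq:append_interp_ineq1}; this produces \eqref{eq:append_interp_ineq4} and, similarly, \eqref{eq:append_interp_ineq3} at $\varepsilon = 1$. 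Alternatively, all four unit-ball inequalities can simply be quoted from \cite[Sec.~6.8, pp.~130--136]{GilbargTrudinger2001} or \cite[Cor.~5.3.4, p.~144]{Fiorenza2017}.

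Finally I would run the scaling to recover the $\varepsilon$-dependence. Given $u \in {{C}}^{2+\alpha}(\overline{B}_{r})$ and $0 < \varepsilon \leqslant r$, set $\mu := \varepsilon/r \in (0,1]$ and $\tilde{u}(\tilde{x}) := u(\varepsilon\tilde{x})$ so that $\tilde{u} \in {{C}}^{2+\alpha}(\overline{B}_{r/\varepsilon})$ with $B_{1} \subset B_{r/\varepsilon}$; restrict attention to $B_{1}$, apply the unit-ball version (e.g. $[\tilde{u}]^{(2)}_{B_{1}} \leqslant [\tilde{u}]^{(2+\alpha)}_{B_{1}} + c_{d}\abs{\tilde{u}}^{(0)}_{B_{1}}$), and translate back via $[\tilde{u}]^{(k)}_{B_{1}} = \varepsilon^{k}[u]^{(k)}_{B_{\varepsilon}(0)} \leqslant \varepsilon^{k}[u]^{(k)}_{B_{r}}$ and $[\tilde{u}]^{(2+\alpha)}_{B_{1}} = \varepsilon^{2+\alpha}[u]^{(2+\alpha)}_{B_{\varepsilon}(0)} \leqslant \varepsilon^{2+\alpha}[u]^{(2+\alpha)}_{B_{r}}$, with $\abs{\tilde{u}}^{(0)}_{B_{1}} \leqslant \abs{u}^{(0)}_{B_{r}}$. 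Dividing by $\varepsilon^{2}$ yields exactly \eqref{eq:append_interp_ineq4}; the same recipe applied to the other three unit-ball bounds produces \eqref{eq:append_interp_ineq1}--\eqref{eq:append_interp_ineq3} with the stated powers of $\varepsilon$. I do not expect a genuine obstacle here — the only point that needs minor care is that, to center a cone (as actually needed in Lemma~\ref{lem:key_interpolation_inequalities}) or a ball of radius $\varepsilon$ at an arbitrary point while staying inside, one should use that $\Omega \in {{C}}^{2+\alpha}$ satisfies an interior ball/cone condition, so that the local estimate on $B_{\varepsilon}(x)$ can be upgraded to a global one on $\overline{\Omega}$ by taking the supremum over $x \in \Omega$; for the lemma as literally stated (estimates on a single ball $B_{r}$), even this is unnecessary and the pure scaling argument suffices.
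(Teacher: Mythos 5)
Your overall strategy (scale to $B_1$, prove or quote the baseline Schauder interpolation there, scale back) is a legitimate and conceptually clean alternative to what the paper does, but the final step as you have written it contains a genuine gap, and the dismissal at the end of your third paragraph is where it hides.

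Concretely, when you set $\tilde{u}(\tilde{x}) := u(\varepsilon\tilde{x})$ and apply the unit-ball baseline to $\tilde{u}\big|_{B_1}$, the seminorm on the left of the rescaled inequality is $[\tilde{u}]^{(2)}_{B_1} = \varepsilon^{2}[u]^{(2)}_{B_\varepsilon(0)}$. Dividing through by $\varepsilon^{2}$ and enlarging the \emph{right-hand} side via $[u]^{(k)}_{B_\varepsilon(0)} \leqslant [u]^{(k)}_{B_r}$ yields
\[
[u]^{(2)}_{B_\varepsilon(0)} \leqslant \varepsilon^{\alpha}[u]^{(2+\alpha)}_{B_r} + \frac{c_d}{\varepsilon^{2}}\,\abs{u}^{(0)}_{B_r},
\]
which only controls second derivatives on the small ball $B_\varepsilon(0)$, not on all of $B_r$; the inclusion $B_\varepsilon(0)\subset B_r$ goes the wrong way for the left-hand side. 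To obtain $[u]^{(2)}_{B_r}$ one must also translate the small ball: for each $x\in B_r$ choose a center $x_0$ with $x\in B_{\varepsilon/2}(x_0)\subset B_r$ (possible since $\varepsilon\leqslant r$, sliding $x_0$ towards the origin for $x$ near $\partial B_r$), apply the local estimate on $B_{\varepsilon/2}(x_0)$, and then take the supremum over $x$. Your closing remark that ``for the lemma as literally stated... even this is unnecessary and the pure scaling argument suffices'' is therefore incorrect — the covering/translation step is needed even for a single ball. Note also that your first paragraph promises to apply ``the $B_1$-version with the rescaled small parameter $\varepsilon/r$'', but your second paragraph only establishes the $\varepsilon=1$ baseline on $B_1$; obtaining the $B_1$-version with a genuinely free parameter $\mu\in(0,1]$ already requires the same translation argument inside $B_1$, so the circularity cannot be scaled away.

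For comparison, the paper avoids this by making the translation step explicit from the outset: it derives the gradient bound at an arbitrary $x\in B_r$ by centering $B_{\varepsilon/2}(x_0)\ni x$ inside $B_r$ and applying Green's formula / the mean value theorem there, so the supremum over $x$ is built in. It then bootstraps the second-order inequalities \eqref{eq:append_interp_ineq3}--\eqref{eq:append_interp_ineq4} from the first-order ones \eqref{eq:append_interp_ineq1}--\eqref{eq:append_interp_ineq2} using two free parameters $\varepsilon_1,\varepsilon_2$, later collapsed to a single $\varepsilon$ by the rescaling $\varepsilon_i = t_i\varepsilon$. Your ``one order higher'' iteration is in the same spirit and would be fine once the translation gap is closed.
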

\begin{proof} %%%%%%%%%%%%%%%%%%%%%%%%%%%%%%%%%%%%%%%%%%%%%%%%%%%
	Consider a ball $B_{r} \subset \mathbb{R}^{d}$ with radius $r$ and let $x \in B_{r}$.
	Let us choose a point in $B_{r}$, say $x_{0}$, such that $x \in B_{\varepsilon/{2}}(x_{0}) \subset  B_{r}$.
	Over $B_{\varepsilon/{2}}(x_{0})$, we integrate $\partial_{x_{i}} u$ and apply Green's formula to obtain
	\begin{equation}\label{eq:greens_applied}
		\int_{B_{\varepsilon/{2}}(x_{0})}{\partial_{x_{i}} u}{\, dx} = \int_{\partial B_{\varepsilon/{2}}(x_{0})}{ u \cos(\nn_{\partial B_{\varepsilon/{2}}}, x_{i})}{\, ds}, 
	\end{equation}
	where $\nn_{\partial B_{\varepsilon/{2}}}$ is the outward unit normal vector to the boundary $\partial B_{\varepsilon/{2}}$.
	Because $u \in {{C}}^{1+\alpha}(\overline{B}_{r})$, then the Mean Value Theorem can be applied and it tells us that there is a point $\bar{x} \in B_{\varepsilon/{2}}$ such that
	\[
		\partial_{x_{i}} u(\bar{x}) = \frac{1}{\abs{B_{\varepsilon/{2}}}} \int_{B_{\varepsilon/{2}}(x_{0})}{\partial_{x_{i}} u}{\, dx},
	\]
	from which, together with \eqref{eq:greens_applied}, we can obtain the estimate (cf. \cite[Equ.~(2.1), p.~22]{GilbargTrudinger2001})
	\begin{align*}
		\abs{\partial_{x_{i}} u(\bar{x})}
		= \dfrac{1}{\abs{B_{\varepsilon/{2}}}} \abs{\int_{B_{\varepsilon/{2}}(x_{0})}{\partial_{x_{i}} u}{\, dx}}
		\leqslant \dfrac{\abs{\partial B_{\varepsilon/{2}}}}{\abs{B_{\varepsilon/{2}}}}
		\leqslant \dfrac{\dfrac{2 \pi^{d/2}}{\Gamma_{f}({d}/{2})} \left(\dfrac{\varepsilon}{2}\right)^{d-1}}{\dfrac{\pi^{d/2}}{\Gamma_{f}({d}/{2}+1)} \left(\dfrac{\varepsilon}{2}\right)^{d}} \abs{u}_{B_{r}}^{(0)}
		= \frac{2d}{\varepsilon}\abs{u}_{B_{r}}^{(0)},
	\end{align*}
	where $\Gamma_f$ denotes the gamma-function.
	For $\bar{x} \neq x$, we can then make the following estimation of $\abs{\partial_{x_{i}} u(x)}$:
	\begin{align*}
	\abs{\partial_{x_{i}} u(x)} &\leqslant \abs{\partial_{x_{i}} u(x) - \partial_{x_{i}} u(\bar{x})} + \abs{\partial_{x_{i}} u(\bar{x})}
	\leqslant \frac{|\partial_{x_{i}} u(x) - \partial_{x_{i}} u(\bar{x})|}{\abs{x - \bar{x}}^{\alpha}} \abs{x - \bar{x}}^{\alpha} +  \frac{2d}{\varepsilon}\abs{u}_{B_{r}}^{(0)}\\
	&\leqslant \varepsilon^{\alpha} [\partial_{x_{i}} u]_{B_{r}}^{(\alpha)} +  \frac{2d}{\varepsilon}\abs{u}_{B_{r}}^{(0)}.
	\end{align*}%
	This verifies estimate \eqref{eq:append_interp_ineq1}.\\%
	
	Next, we validate \eqref{eq:append_interp_ineq2} using \eqref{eq:append_interp_ineq1}.
	For any $x, {{\hat{x}}} \in \overline{B}_{r}$, $x \neq {{\hat{x}}}$, we have either $\abs{x - {{\hat{x}}}} < \varepsilon$ or $\abs{x - {{\hat{x}}}} \geqslant \varepsilon$ which would give us the inequalities
	\[
	\frac{|u(x) - u({{\hat{x}}})|}{\abs{x - {{\hat{x}}}}^{\alpha}} 
		= \abs{x - {{\hat{x}}}}^{1-\alpha} \frac{|u(x) - u({{\hat{x}}})|}{\abs{x - {{\hat{x}}}}}
		<\varepsilon^{1-\alpha} \abs{u}_{B_{r}}^{(1)},
	\]
	and
	\[\frac{|u(x) - u({{\hat{x}}})|}{\abs{x - {{\hat{x}}}}^{\alpha}} 
		\leqslant \frac{1}{\varepsilon^{\alpha}}\left( \max_{x \in \Omega} |u(x)| + \max_{{{\hat{x}}} \in \Omega} |u({{\hat{x}}})| \right)
		= \frac{2}{\varepsilon^{\alpha}} \abs{u}_{B_{r}}^{(0)},
	\]
	respectively.
	In either case, we get the estimate
	\[
	\frac{|u(x) - u({{\hat{x}}})|}{\abs{x - {{\hat{x}}}}^{\alpha}} 
	\leqslant \varepsilon^{1-\alpha} \abs{u}_{B_{r}}^{(1)} + \frac{2}{\varepsilon^{\alpha}} \abs{u}_{B_{r}}^{(0)}.
	\]
	Combining the above estimate with \eqref{eq:append_interp_ineq1}, we obtain \eqref{eq:append_interp_ineq2}.\\%
	
	We next prove \eqref{eq:append_interp_ineq3} and \eqref{eq:append_interp_ineq4} using \eqref{eq:append_interp_ineq1} and \eqref{eq:append_interp_ineq2}, respectively.
	To this end, let us assume that $u \in {{C}}^{2+\alpha}(\overline{B}_{r})$.
	Then, \eqref{eq:append_interp_ineq2} implies that
	\begin{equation}\label{eq:estimate_a1}
	[u]_{B_{r}}^{(1+\alpha)} \leqslant \varepsilon_{1} [u]_{B_{r}}^{(2+\alpha)} + \frac{{{c}_{d}}}{\varepsilon_{1}^{\alpha}} \abs{u}_{B_{r}}^{(1)},
	\end{equation}
	for any $\varepsilon_{1} \in (0,r]$.
	Let us denote $\varepsilon = \varepsilon_{2}$ in \eqref{eq:append_interp_ineq1} and use the resulting inequality to estimate the quantity $\abs{u}_{B_{r}}^{(1)}$ in \eqref{eq:estimate_a1}.
	These lead us to
	\begin{equation}\label{eq:estimate_a2}
	\left( 1-{{c}_{d}}\left(\frac{\varepsilon_{2}}{\varepsilon_{1}}\right)^{\alpha}\right)[u]_{B_{r}}^{(1+\alpha)} \leqslant \varepsilon_{1} [u]_{B_{r}}^{(2+\alpha)} + \frac{{{c}_{d}}^{2}}{\varepsilon_{1}^{\alpha} \varepsilon_{2}} \abs{u}_{B_{r}}^{(1)}.
	\end{equation}
	To force a single parameter $\varepsilon \in (0,r]$ in the above inequality, we apply a scaling technique. 
	To this end, we introduce the scaling $\varepsilon_{1} = \scalet_{1}\varepsilon$ and $\varepsilon_{2} = \scalet_{2}\varepsilon$, where $\scalet_{1}, \scalet_{2} \in (0,1)$ and substitute these quantities to \eqref{eq:estimate_a2} to obtain
%%%	\begin{equation}\label{eq:estimate_{A3}}
%%%	\left( 1-{{c}_{d}}\left(\frac{\scalet_{2}}{\scalet_{1}}\right)^{\alpha}\right)[u]_{B_{r}}^{(1+\alpha)} \leqslant \scalet_{1} \varepsilon [u]_{B_{r}}^{(2+\alpha)} + \frac{{{c}_{d}}^{2}}{\scalet_{1}^{\alpha} \scalet_{2} \varepsilon^{1+\alpha}} \abs{u}_{B_{r}}^{(0)}.
%%%	\end{equation}
	\begin{equation}\label{eq:estimate_{A3}}
	\frac{1}{\scalet_{1}} \left( 1-{{c}_{d}}\left(\frac{\scalet_{2}}{\scalet_{1}}\right)^{\alpha}\right)[u]_{B_{r}}^{(1+\alpha)} \leqslant \varepsilon [u]_{B_{r}}^{(2+\alpha)} + \frac{{{c}_{d}}^{2}}{\scalet_{1}^{1+\alpha} \scalet_{2} \varepsilon^{1+\alpha}} \abs{u}_{B_{r}}^{(0)}.
	\end{equation}
	Now, we only need to choose the values of $\scalet_{1}$ and $\scalet_{2}$ such that $\scalet_{1}^{-1} \left( 1-{{c}_{d}}\left(\scalet_{2}\scalet_{1}^{-1}\right)^{\alpha}\right) = 1$ and denote by ${{\tilde{c}}_{d}} := {{c}_{d}}^{2} ({\scalet_{1}^{1+\alpha} \scalet_{2}})^{-1} > 0$ to obtain
	\[
	\varepsilon^{\alpha} [u]_{B_{r}}^{(1+\alpha)} \leqslant \varepsilon^{1+\alpha} [u]_{B_{r}}^{(2+\alpha)} + {{\tilde{c}}_{d}} \abs{u}_{B_{r}}^{(0)}.
	\]
	Using the above estimate for the quantity $\varepsilon^{\alpha} [u]_{B_{r}}^{(1+\alpha)}$ in \eqref{eq:append_interp_ineq1}, and after some change of notations, we finally get \eqref{eq:append_interp_ineq3}.
	Of course, the inequality holds for any $\varepsilon \in (0, r]$ since we choose $\scalet_{1}, \scalet_{2} \in (0,1)$.\\%
	
	We apply the same technique to prove \eqref{eq:append_interp_ineq4} using \eqref{eq:append_interp_ineq1} and \eqref{eq:append_interp_ineq3}.
	That is, using \eqref{eq:append_interp_ineq1} with $u \in {{C}}^{2+\alpha}(\overline{B}_{r})$, and then applying \eqref{eq:append_interp_ineq3} to estimate $\varepsilon \abs{u}_{B_{r}}^{(1)}$, we obtain
	\begin{align*} 
		\varepsilon^{2} [u]_{B_{r}}^{(2)} \leqslant \varepsilon^{2+\alpha} [u]_{B_{r}}^{(2+\alpha)} + {{c}_{d}} \varepsilon \abs{u}_{B_{r}}^{(1)}
		\leqslant (1 + {{c}_{d}}) \varepsilon^{2+\alpha} [u]_{B_{r}}^{(2+\alpha)} + {{c}_{d}}^{2} \abs{u}_{B_{r}}^{(0)}.
	\end{align*}
	Again, to force the constant, which is dependent only on the dimension $d$, to appear only as a coefficient of $\abs{u}_{B_{r}}^{(0)}$, we consider the scaled free parameter $\scalet \tilde{\varepsilon} = \varepsilon \in (0, r]$, where $\scalet \in (0,1)$, to obtain
	\[
	\tilde{\varepsilon}^{2} [u]_{B_{r}}^{(2)} 
		\leqslant (1 + {{c}_{d}}) \scalet^{\alpha} \tilde{\varepsilon}^{2+\alpha} [u]_{B_{r}}^{(2+\alpha)} + \left(\frac{{{c}_{d}}}{\scalet} \right)^{2} \abs{u}_{B_{r}}^{(0)}.
	\]
	We choose $\scalet \in (0,1)$ such that $(1 + {{c}_{d}}) \scalet^{\alpha} = 1$ and let ${{\tilde{c}}_{d}}:=\left( {{c}_{d}} \scalet^{-1} \right)^{2}$ in order to finally get
	\[
	 [u]_{B_{r}}^{(2)} \leqslant \tilde{\varepsilon}^{\alpha} [u]_{B_{r}}^{(2+\alpha)} + \frac{{{\tilde{c}}_{d}}}{\tilde{\varepsilon}^{2}} \abs{u}_{B_{r}}^{(0)},
	\]
	which holds for all $\tilde{\varepsilon} \in (0,r]$.
	This verifies the last estimate \eqref{eq:append_interp_ineq4}, completing the proof of the lemma.
\end{proof} %%%%%%%%%%%%%%%%%%%%%%%%%%%%%%%%%%%%%%%%%%%%%%%%%%%
%--------------------------------------------------------------------------------------------------------------------------

\begin{acknowledgement}
\noindent \textit{Acknowledgements.} The work of JFTR is supported by the JSPS Postdoctoral Fellowships for Research in Japan (Grant Number JP24KF0221), and partially by the JSPS Grant-in-Aid for Early-Career Scientists (Grant Number JP23K13012) and the JST CREST (Grant Number JPMJCR2014).
The work of MK is supported by the JSPS KAKENHI Grant Numbers JP25K00920 and JP24H00184.
We sincerely thank the two reviewers for their valuable feedback, careful corrections to the initial version of the manuscript, and insightful suggestions, all of which have significantly enhanced the quality of this work.
\end{acknowledgement}

%%-----------------------------
%%      your bibliography
%%-----------------------------
\bibliographystyle{plain}
\bibliography{main.bib}   % name your BibTeX data base
\end{document}